\newtheorem{thm}{Theorem}[section]
\newtheorem{lem}[thm]{Lemma}%
\newtheorem{prop}[thm]{Proposition}%
\newtheorem{cor}[thm]{Corollary}%
\theoremstyle{remark}
\newtheorem{remark}[thm]{Remark}
\theoremstyle{plain}
\numberwithin{equation}{section}
\def\QQ{{\mathbb Q}}
\def\RR{{\mathbb R}}
\def\ZZ{{\mathbb Z}}
\def\veca{{\text{\boldmath$a$}}}
\def\vecb{{\text{\boldmath$b$}}}
\def\vecc{{\text{\boldmath$c$}}}
\def\vece{{\text{\boldmath$e$}}}
\def\vech{{\text{\boldmath$h$}}}
\def\veck{{\text{\boldmath$k$}}}
\def\vecm{{\text{\boldmath$m$}}}
\def\vecn{{\text{\boldmath$n$}}}
\def\vecq{{\text{\boldmath$q$}}}
\def\vecQ{{\text{\boldmath$Q$}}}
\def\vecp{{\text{\boldmath$p$}}}
\def\vecv{{\text{\boldmath$v$}}}
\def\vecV{{\text{\boldmath$V$}}}
\def\vecw{{\text{\boldmath$w$}}}
\def\vecx{{\text{\boldmath$x$}}}
\def\vecy{{\text{\boldmath$y$}}}
\def\vecz{{\text{\boldmath$z$}}}
\def\vecalf{{\text{\boldmath$\alpha$}}}
\def\vecbeta{{\text{\boldmath$\beta$}}}
\def\vecomega{{\text{\boldmath$\omega$}}}
\def\vecxi{{\text{\boldmath$\xi$}}}
\def\vecnull{{\text{\boldmath$0$}}}
\def\scrA{{\mathcal A}}
\def\scrB{{\mathcal B}}
\def\scrE{{\mathcal E}}
\def\scrK{{\mathcal K}}
\def\scrL{{\mathcal L}}
\def\scrN{{\mathcal N}}
\def\scrQ{{\mathcal Q}}
\def\scrP{{\mathcal P}}
\def\scrT{{\mathcal T}}
\def\scrW{{\mathcal W}}
\def\fA{{\mathfrak A}}
\def\fB{{\mathfrak B}}
\def\fC{{\mathfrak C}}
\def\fD{{\mathfrak D}}
\def\fU{{\mathfrak U}}
\def\fZ{{\mathfrak Z}}
\def\e{\mathrm{e}}
\def\i{\mathrm{i}}
\def\diag{\operatorname{diag}}
\def\C{\operatorname{C{}}}
\def\L{\operatorname{L{}}}
\def\M{\operatorname{M{}}}
\def\GL{\operatorname{GL}}
\def\S{\operatorname{S{}}}
\def\SL{\operatorname{SL}}
\def\ASL{\operatorname{ASL}}
\def\SO{\operatorname{SO}}
\def\O{\operatorname{O{}}}
\def\T{\operatorname{T{}}}
\def\vol{\operatorname{vol}}
\def\Proj{\operatorname{Proj}}
\def\GamG{\Gamma\backslash G}
\def\ASLASL{\ASL(d,\ZZ)\backslash\ASL(d,\RR)}
\def\ASLZ{\ASL(d,\ZZ)}
\def\ASLR{\ASL(d,\RR)}
\def\SLSL{\SL(d,\ZZ)\backslash\SL(d,\RR)}
\def\SLZ{\SL(d,\ZZ)}
\def\SLR{\SL(d,\RR)}
\def\trans{\,^\mathrm{t}\!}
\def\Lalf{\scrL_\vecalf}
\def\Onder#1#2#3#4#5{#1 \setbox0=\hbox{$#1$}\setbox1=\hbox{$#2$}
       \dimen0=.5\wd0 \dimen1=\dimen0 \dimen2=\dp0 \dimen3=\dimen2
       \advance\dimen0 by .5\wd1 \advance\dimen0 by -#4
       \advance\dimen1 by -.5\wd1 \advance\dimen1 by -#4
       \advance\dimen2 by -#3 \advance\dimen2 by \ht1
       \advance\dimen2 by 0.3ex \advance\dimen3 by #5
        \kern-\dimen0\raisebox{-\dimen2}[0ex][\dimen3]{\box1}
       \kern\dimen1}
\newcommand{\Edomain}{D}
\newcommand{\Q}{\mathbb{Q}}
\newcommand{\R}{\mathbb{R}}
\newcommand{\Z}{\mathbb{Z}}
\newcommand{\Far}{\mathfrak{F}}
\newcommand{\minmod}{\text{ mod }}
\newcommand{\HS}{{{\S'_1}^{d-1}}}
\renewcommand{\aa}{\mathsf{a}}
\newcommand{\kk}{\mathsf{k}}
\newcommand{\nn}{\mathsf{n}}
\newcommand{\sfrac}[2]{{\textstyle \frac {#1}{#2}}}
\newcommand{\col}{\: : \:}
\newcommand{\Si}{\mathcal{S}}
\newcommand{\bn}{\mathbf{0}}
\newcommand{\tN}{M}
\newcommand{\txi}{\hat{\xi}}
\newcommand{\tx}{\widetilde{\vecx}}
\newcommand{\ve}{\varepsilon}
\newcommand{\F}{\mathcal{F}}
\newcommand{\matr}[4]{\left( \begin{matrix} #1 & #2 \\ #3 & #4 \end{matrix} \right) }
\newcommand{\smatr}[4]{\left( \begin{smallmatrix} #1 & #2 \\ #3 & #4 \end{smallmatrix} \right) }
\title[The distribution of free path lengths in the periodic Lorentz gas]{The distribution of free path lengths in the periodic Lorentz gas and related lattice point problems}
\author{Jens Marklof}
\author{Andreas Str\"ombergsson}
\address{School of Mathematics, University of Bristol,
Bristol BS8 1TW, U.K.\newline
\rule[0ex]{0ex}{0ex} \hspace{8pt}{\tt j.marklof@bristol.ac.uk}}
\address{Department of Mathematics, Royal Institute of Technology,
SE-100 44 Stockholm, Sweden\newline
\rule[0ex]{0ex}{0ex} \hspace{8pt}{\tt astrombe@math.kth.se}\newline
\rule[0ex]{0ex}{0ex} \hspace{8pt}\textit{Present address:} 
Dept.\ of Mathematics, Box 480, 
Uppsala University,
SE-75106 Uppsala, Sweden\newline
\rule[0ex]{0ex}{0ex} \hspace{8pt}{\tt astrombe@math.uu.se}}
\date{29 June 2007, revised 14 January 2008, to appear in the Annals of Mathematics}
\thanks{J.M.\ has been supported by an EPSRC Advanced Research Fellowship and a Philip Leverhulme Prize.
A.S.\ is a Royal Swedish Academy of Sciences Research Fellow supported by
a grant from the Knut and Alice Wallenberg Foundation.}
\begin{document}

\begin{abstract}
The periodic Lorentz gas describes the dynamics of a point particle in a periodic array of spherical scatterers, and is one of the fundamental models for chaotic diffusion. In the present paper we investigate the Boltzmann-Grad limit, where the radius of each scatterer tends to zero, and prove the existence of a limiting distribution for the free path length. We also discuss related problems, such as the statistical distribution of directions of lattice points that are visible from a fixed position. 
\end{abstract}

\maketitle
\tableofcontents

\section{Introduction}\label{secIntro}
\subsection{The periodic Lorentz gas}

The Lorentz gas, originally introduced by Lorentz \cite{Lorentz} in 1905
to model the motion of electrons in a metal, describes an ensemble of
non-interacting point particles in an infinite array of
spherical scatterers. Lorentz was in particular interested in the
stochastic properties of the dynamics that emerge in the 
Boltzmann-Grad limit, where the radius $\rho$ of each scatterer tends to
zero. 

In the present and subsequent papers \cite{partII}, \cite{partIII}, \cite{partIV} we investigate the periodic set-up, where the scatterers are placed at the vertices of a euclidean lattice $\scrL\subset\RR^d$ (Figure \ref{figLorentz}). We will identify a new random process that governs the macroscopic dynamics of a particle cloud in the Boltzmann-Grad limit. In the case of a Poisson-distributed (rather than periodic) configuration of scatterers, the limiting process is described by the linear Boltzmann equation, see Gallavotti \cite{Gallavotti69}, Spohn \cite{Spohn78}, and Boldrighini, Bunimovich and Sinai \cite{Boldrighini83}. It already follows from the estimates in \cite{Bourgain98}, \cite{Golse00} that the linear Boltzmann equation does not hold in the periodic set-up; this was pointed out recently by Golse \cite{Golse07}. 

The first step towards the proof of the existence of a limiting process for the periodic Lorentz gas is the understanding of the distribution of the free path length in the limit $\rho\to 0$, which is the key result of the present paper. The distribution of the free path lengths in the periodic Lorentz gas was already investigated by Polya, who rephrased the problem in terms of the visibility in a (periodic) forest \cite{Polya18}. We complete the analysis of the limiting process in \cite{partII}, \cite{partIII} and \cite{partIV}, where we establish a Markov property, and provide explicit formulas and asymptotic estimates for the limiting distributions. 

Our results complement classical studies in ergodic theory, where one is interested in the stochastic properties in the limit of long times, with the radius of each scatterer being {\em fixed}. Here Bunimovich and Sinai \cite{Bunimovich80} proved, in the case of a finite horizon and in dimension $d=2$, that the dynamics is diffusive in the limit of large times, and satisfies a central limit theorem. ``Finite horizon'' means that the scatterers are sufficiently large so that the path length between consecutive collisions is bounded; this hypothesis was recently removed by Szasz and Varju \cite{Szasz07} after initial work by Bleher \cite{Bleher92}. For related recent studies of statistical properties of the two-dimensional periodic Lorentz gas, see also \cite{dolgopyat}, \cite{Melbourne05}, \cite{Melbourne07}. There is at present no proof of the central limit theorem for higher dimensions, even in the case of finite horizon \cite{Chernov94}, \cite{Balint07}.

\begin{figure}
\begin{center}
\framebox{
\begin{minipage}{0.4\textwidth}
\unitlength0.1\textwidth
\begin{picture}(10,10)(0,0)
\put(0.5,1){\includegraphics[width=0.9\textwidth]{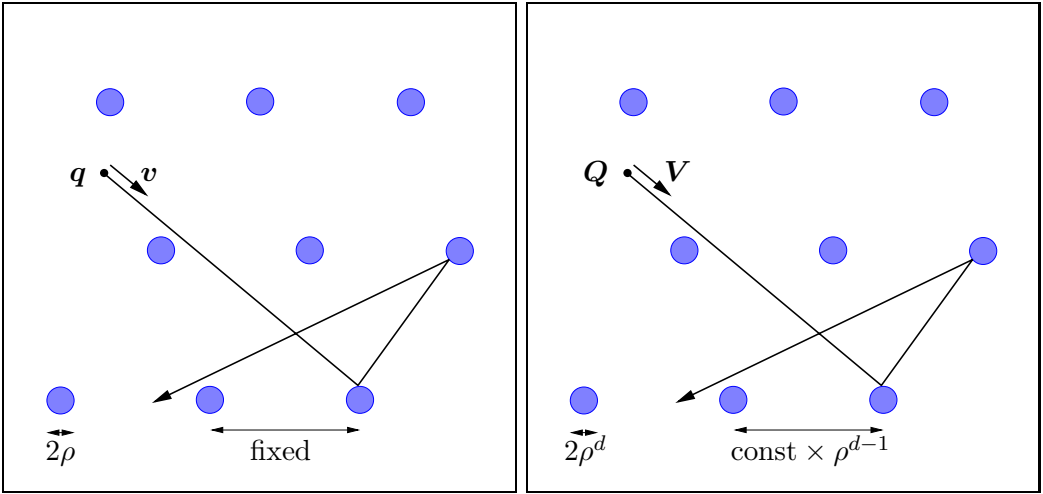}}
\put(1,6.4){$\vecq$} \put(2.5,6.4){$\vecv$}
\put(0.5,0.5){$2\rho$} \put(4.8,0.5){fixed}
\end{picture}
\end{minipage}
}
\framebox{
\begin{minipage}{0.4\textwidth}
\unitlength0.1\textwidth
\begin{picture}(10,10)(0,0)
\put(0.5,1){\includegraphics[width=0.9\textwidth]{lorentzgas.eps}}
\put(0.8,6.4){$\vecQ$} \put(2.5,6.4){$\vecV$}
\put(0.4,0.5){$2\rho^d$} \put(3.9,0.5){$\text{const}\times\rho^{d-1}$}
\end{picture}
\end{minipage}
}
\end{center}
\caption{Left: The periodic Lorentz gas in ``microscopic'' coordinates---the lattice $\scrL$ remains fixed as the radius $\rho$ of the scatterer tends to zero. Right: The periodic Lorentz gas in ``macroscopic'' coordinates ---both the lattice constant and the radius of each scatter tend to zero, in such a way that the mean free path length remains finite.} \label{figLorentz}
\end{figure}

Since the point particles of the Lorentz gas are non-interacting, we can reduce the problem to the study of the billiard flow
\begin{equation}\label{Lorentzflow}
	\varphi_t : \T^1(\scrK_\rho) \to \T^1(\scrK_\rho), \qquad (\vecq_0,\vecv_0) \mapsto (\vecq(t),\vecv(t))
\end{equation}
where $\scrK_\rho\subset\RR^d$ \label{Krho}
is the complement of the set $\scrB^d_\rho + \scrL$ (the ``billiard domain''), and $\T^1(\scrK_\rho)=\scrK_\rho\times\S_1^{d-1}$ is its unit tangent bundle (the ``phase space''). $\scrB^d_\rho$ \label{openball}
denotes the open ball of radius $\rho$, centered at the origin. A point in $\T^1(\scrK_\rho)$ is parametrized by $(\vecq,\vecv)$, with $\vecq\in\scrK_\rho$ denoting the position and $\vecv\in\S_1^{d-1}$ the velocity of the particle. The Liouville measure of $\varphi_t$ is 
\begin{equation}\label{Liouville}
	d\nu(\vecq,\vecv)=d\!\vol_{\RR^d}(\vecq)\, d\!\vol_{\S_1^{d-1}}(\vecv)
\end{equation}
where $\vol_{\RR^d}$ and $\vol_{\S_1^{d-1}}$ refer to the Lebesgue measures on $\RR^d$ (restricted to $\scrK_\rho$) and $\S_1^{d-1}$, respectively. 

The free path length for the initial condition $(\vecq,\vecv)\in\T^1(\scrK_\rho)$ is defined as
\begin{equation} \label{TAU1DEF0}
	\tau_1(\vecq,\vecv;\rho) = \inf\{ t>0 : \vecq+t\vecv \notin\scrK_\rho \}. 
\end{equation}
That is, $\tau_1(\vecq,\vecv;\rho)$ is the first time at which a particle with initial data $(\vecq,\vecv)$ hits a scatterer.

From now on we will assume, without loss of generality, that $\scrL$ has
covolume one.

\begin{thm}\label{freeThm1}
Fix a lattice $\scrL$ of covolume one,
let $\vecq\in\RR^d\setminus\scrL$,
and let $\lambda$ be a Borel probability measure on $\S_1^{d-1}$ absolutely continuous with respect to Lebesgue measure.\footnote{The condition $\vecq\in\RR^d\setminus\scrL$ ensures that $\tau_1$ is defined for $\rho$ sufficiently small. In Section \ref{secLorentz} we also consider variants of Theorem \ref{freeThm1} where the initial position is near $\scrL$, e.g., $\vecq\in\partial\scrK_\rho$.} Then there exists a continuous probability density $\Phi_{\scrL,\vecq}$ on $\R_{>0}$ such that, for every $\xi\geq 0$,
\begin{equation}
\lim_{\rho\to 0} \lambda(\{ \vecv\in\S_1^{d-1} : \rho^{d-1} 
\tau_1(\vecq,\vecv;\rho)\geq \xi \})
= \int_\xi^\infty \Phi_{\scrL,\vecq}(\xi') d\xi' .
\end{equation}
\end{thm}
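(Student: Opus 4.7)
The plan is to reformulate the free-path-length problem as the event that a long thin cylinder contains no lattice point, then to renormalize and appeal to equidistribution on the space of affine lattices of covolume one.

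Paragraph 1 (reformulation and renormalization): The inequality $\rho^{d-1}\tau_1(\vecq,\vecv;\rho)\ge\xi$ holds iff the open cylinder of length $\xi\rho^{-(d-1)}$ and radius $\rho$ based at $\vecq$ in direction $\vecv$ contains no lattice point of $\scrL$; equivalently, the affine lattice $\scrL-\vecq$ has no point in this cylinder. Choose a measurable section $R(\vecv)\in\SO(d)$ sending $\vecv$ to $\vece_1$, and set $D(\rho)=\diag(\rho^{d-1},\rho^{-1},\ldots,\rho^{-1})\in\SL(d,\RR)$. The unimodular map $R(\vecv)D(\rho)$ (acting on the right on row vectors) carries the cylinder onto the fixed reference cylinder $\scrC(\xi)=(0,\xi)\times\{\vecy\in\RR^{d-1}:|\vecy|<1\}$, whose volume is linear in $\xi$. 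Thus the condition becomes: the affine lattice $\Lambda_\rho(\vecv):=(\scrL-\vecq)\,R(\vecv)\,D(\rho)$ has no point in $\scrC(\xi)$.

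Paragraph 2 (equidistribution): The family $\{\Lambda_\rho(\vecv):\vecv\in\S_1^{d-1}\}$ is an embedded $(d-1)$-dimensional submanifold in the homogeneous space $X=\ASL(d,\ZZ)\backslash\ASL(d,\RR)$ parametrizing affine unimodular lattices. The key dynamical input is an equidistribution statement: as $\rho\to0$, the pushforward of $\lambda$ under $\vecv\mapsto\Lambda_\rho(\vecv)$ converges weakly to an explicit probability measure $\mu_{\scrL,\vecq}$ on $X$. Generically, when $\vecq\notin\QQ\scrL$, this is the Haar measure on $X$; when $\vecq\in\QQ\scrL\setminus\scrL$, the orbit is confined to a proper closed homogeneous subvariety and $\mu_{\scrL,\vecq}$ is the natural invariant measure there. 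Because $R(\vecv)$ parametrizes an orbit of the stabilizer of $\vece_1$ (essentially $\SO(d-1)$), and $D(\rho)$ expands the horospherical directions complementary to this stabilizer, the statement follows in the spirit of the equidistribution of expanding translates of compact orbits (Dani--Margulis, Eskin--McMullen), ultimately grounded in Ratner's theorems on unipotent flows.

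Paragraph 3 (conclusion and main difficulty): The set $E(\xi)\subset X$ of $\scrC(\xi)$-free affine lattices has $\mu_{\scrL,\vecq}$-null boundary (lattices with a point on $\partial\scrC(\xi)$ form a null set), so weak convergence gives $\lim_{\rho\to0}\lambda(\{\vecv:\rho^{d-1}\tau_1(\vecq,\vecv;\rho)\ge\xi\})=\mu_{\scrL,\vecq}(E(\xi))$. Writing $F(\xi):=\mu_{\scrL,\vecq}(E(\xi))$, this is monotone decreasing with $F(0)=1$ and $F(\xi)\to0$ as $\xi\to\infty$, the latter following from a Siegel-type mean-value bound (the expected number of affine-lattice points in $\scrC(\xi)$ grows linearly in $\xi$, so long cylinders are almost never empty). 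The function $F$ is continuous in $\xi$, hence $\Phi_{\scrL,\vecq}:=-F'$ is the claimed continuous density on $\RR_{>0}$.

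The main obstacle is the equidistribution in Paragraph 2. Two difficulties must be handled: (i) the observable $\mathbf{1}_{E(\xi)}$ is neither continuous nor compactly supported on the non-compact space $X$, so one needs quantitative non-divergence and cusp-tail estimates (uniform in $\rho$) to reduce to continuous compactly supported test functions via Siegel's mean-value theorem; and (ii) when $\vecq$ is rational with respect to $\scrL$, one must correctly identify the invariant sub-homogeneous space supporting $\mu_{\scrL,\vecq}$ and prove equidistribution there. These steps together constitute the technical heart of the argument.
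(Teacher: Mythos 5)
Your overall strategy is the same as the paper's: reformulate $\rho^{d-1}\tau_1\ge\xi$ as emptiness of a fixed reference cylinder for the renormalized affine lattice $(\scrL-\vecq)R(\vecv)D(\rho)$, establish equidistribution of this expanding spherical translate in $X=\ASLASL$ (or in $X_q=\Gamma(q)\backslash\SLR$ when $\vecq\in\QQ\scrL$), invoke the measure-zero boundary of the cylinder-free set, and pass to the limit. This matches Sections~\ref{secEqui}, \ref{secThin} and \ref{secProofs} of the paper, including the case split on the rationality of $\vecalf=-\vecq M_0^{-1}$ and the reliance on Ratner/Shah for the affine case.

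There is, however, a genuine gap in the last step. You write that since $F(\xi)=\mu_{\scrL,\vecq}(E(\xi))$ is continuous and decreasing, ``hence $\Phi_{\scrL,\vecq}:=-F'$ is the claimed continuous density.'' Continuity of a distribution function does not imply differentiability, nor even absolute continuity (the Cantor function is a standard counterexample); yet Theorem~\ref{freeThm1} asserts the existence of a \emph{continuous probability density} on $\R_{>0}$, which requires $F$ to be $C^1$. Proving this is a substantial part of the paper: it requires the integration and disintegration machinery of Section~\ref{FOLIATIONSEC} (in particular the Siegel-type formulas of Propositions~\ref{XQYSIEGELPROP} and \ref{XYSIEGELPROP}, and the measures $\nu_\vecy$ on the submanifolds $X_q(\vecy)$ and $X(\vecy)$), together with the continuity estimate of Proposition~\ref{CONT3DPROP} and the ``a thin region seldom contains an extra lattice point'' bounds (Lemmas~\ref{FEWWITHTWOLEMMA2}, \ref{XYFEWWITHTWOLEMMA}), culminating in Proposition~\ref{GENC1PROP}. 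None of this is recoverable from the weak-convergence argument alone, so the final sentence of your Paragraph~3 cannot be asserted without additional input. A secondary issue: your argument that $F(\xi)\to 0$ as $\xi\to\infty$ (``expected number grows linearly, so long cylinders are almost never empty'') is only a heuristic --- a first-moment bound gives nothing about $\Prob(N=0)$; the paper establishes the required $O(V^{-1})$ decay via a Siegel-set decomposition in Lemma~\ref{TRIVBOUNDVLARGEGENLEMMA}.
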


The limiting density is in fact ``universal'' for generic $\vecq$, i.e., 
\begin{equation}\label{uniPhi}
	\Phi(\xi):=\Phi_{\scrL,\vecq}(\xi)
\end{equation}
is independent of $\scrL$ and $\vecq$, for Lebesgue-almost every $\vecq$. Theorem~\ref{freeThm1} is proved in Section~\ref{secLorentz}, it is closely related to the lattice point problem studied in Section \ref{secVisible}.
The asymptotic tails of the limiting distribution $\Phi_{\scrL,\vecq}(\xi)$ are calculated in \cite{partIV}. In Section \ref{secLorentz} we generalize Theorem~\ref{freeThm1} in several ways. We consider for instance the distribution of free paths that hit a given point on the scatterer, which will be crucial in the characterization of the limiting random process in \cite{partII}.

Theorem~\ref{freeThm1} shows that the free path length scales like $\rho^{-(d-1)}$. This suggests to re-define position and time and use the ``macroscopic'' coordinates
\begin{equation}
	(\vecQ(t),\vecV(t)) = (\rho^{d-1} \vecq(\rho^{-(d-1)} t),\vecv(\rho^{-(d-1)} t)) .
\end{equation}
We now state a macroscopic version of Theorem \ref{freeThm1}, 
which is a corollary of the proof of Theorem \ref{freeThm1}
(see Section \ref{fT1corproofsec}). Here 
\begin{equation} \label{TAU1TAU1DEF}
\scrT_1(\vecQ,\vecV;\rho)=\rho^{d-1} \tau_1(\rho^{-(d-1)}\vecQ,\vecV;\rho)
\end{equation}
is the corresponding macroscopic free path length. 

\begin{thm}\label{freeThm1cor}
Fix a lattice $\scrL$ of covolume one and
let $\Lambda$ be a Borel probability measure on $\T^1(\RR^d)$ absolutely continuous with respect to Lebesgue measure. Then, for every $\xi\geq 0$,
\begin{equation} \label{freeThm1correl}
\lim_{\rho\to 0} \Lambda(\{ (\vecQ,\vecV)\in\T^1(\rho^{d-1}\scrK_\rho) \col \scrT_1(\vecQ,\vecV;\rho)\geq \xi \})
= \int_\xi^\infty \Phi(\xi') \, d\xi' 
\end{equation}
with $\Phi(\xi)$ as in \eqref{uniPhi}.
\end{thm}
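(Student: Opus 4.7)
The plan is to deduce Theorem~\ref{freeThm1cor} from Theorem~\ref{freeThm1} together with the universality property~\eqref{uniPhi}, by using the $\scrL$-periodicity of $\tau_1(\,\cdot\,,\vecV;\rho)$ to fold the rescaled initial density into a single fundamental domain. Write $d\Lambda=f(\vecQ,\vecV)\,d\vecQ\,d\sigma(\vecV)$ with $f\in L^1(\RR^d\times\S_1^{d-1})$ nonnegative of unit integral, where $d\sigma$ is the uniform measure on $\S_1^{d-1}$. Substituting $\vecq=\rho^{-(d-1)}\vecQ$ in the left-hand side of \eqref{freeThm1correl} and invoking \eqref{TAU1TAU1DEF}, the probability equals
\begin{equation*}
\int_{\scrK_\rho\times\S_1^{d-1}}\rho^{d(d-1)}f(\rho^{d-1}\vecq,\vecV)\,\mathbf{1}\bigl[\rho^{d-1}\tau_1(\vecq,\vecV;\rho)\geq\xi\bigr]\,d\vecq\,d\sigma(\vecV).
\end{equation*}

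Let $F\subset\RR^d$ be a fundamental domain for $\scrL$. Using the periodicity $\tau_1(\vecq+\vecn,\vecV;\rho)=\tau_1(\vecq,\vecV;\rho)$ for all $\vecn\in\scrL$ and summing over lattice translates, the above integral is
\begin{equation*}
\int_{(F\cap\scrK_\rho)\times\S_1^{d-1}}G_\rho(\vecq,\vecV)\,\mathbf{1}\bigl[\rho^{d-1}\tau_1(\vecq,\vecV;\rho)\geq\xi\bigr]\,d\vecq\,d\sigma(\vecV),
\end{equation*}
where $G_\rho(\vecq,\vecV):=\sum_{\vecn\in\scrL}\rho^{d(d-1)}f(\rho^{d-1}(\vecq+\vecn),\vecV)$ is a Riemann sum with lattice spacing $\rho^{d-1}$ approximating the velocity marginal $h(\vecV):=\int_{\RR^d}f(\vecQ,\vecV)\,d\vecQ$. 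A standard $L^1$-approximation of $f$ by continuous, compactly supported functions (for which uniform Riemann-sum convergence is immediate) shows that $G_\rho\to h$ in $L^1(F\times\S_1^{d-1})$ as $\rho\to0$. Since the indicator is bounded by $1$ and $\meas(F\setminus\scrK_\rho)=O(\rho^d)$, replacing $G_\rho$ by $h$ and $F\cap\scrK_\rho$ by $F$ introduces an error of $o(1)$.

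It remains to prove
\begin{equation*}
\int_{F}\left(\int_{\S_1^{d-1}}h(\vecV)\,\mathbf{1}\bigl[\rho^{d-1}\tau_1(\vecq,\vecV;\rho)\geq\xi\bigr]\,d\sigma(\vecV)\right)d\vecq\;\longrightarrow\;\int_\xi^\infty\Phi(\xi')\,d\xi'.
\end{equation*}
For Lebesgue-a.e.\ $\vecq\in F$ we have $\vecq\notin\scrL$, and since $\int h\,d\sigma=1$, the measure $h(\vecV)\,d\sigma(\vecV)$ is a probability measure on $\S_1^{d-1}$ absolutely continuous with respect to Lebesgue. Theorem~\ref{freeThm1} then gives pointwise convergence of the inner integral to $\int_\xi^\infty\Phi_{\scrL,\vecq}(\xi')\,d\xi'$, which by the universality assertion~\eqref{uniPhi} equals $\int_\xi^\infty\Phi(\xi')\,d\xi'$ for a.e.\ $\vecq$. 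Dominated convergence in $\vecq$, with the constant dominating function $1$ on $F$, together with $\vol(F)=1$ completes the proof.

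The main obstacle is decoupling the two simultaneous $\rho$-dependencies: the rescaled density $\rho^{d(d-1)}f(\rho^{d-1}\vecq,\vecV)$ spreads out on the microscopic scale $\rho^{-(d-1)}$, whereas Theorem~\ref{freeThm1} yields convergence only for a \emph{fixed} microscopic starting point $\vecq$. The periodicity of $\tau_1$ resolves this by allowing the spread-out density to be folded into a single fundamental domain, where it converges in $L^1$ to the pure-velocity marginal $h$; this reduces matters to a $\vecq$-pointwise application of Theorem~\ref{freeThm1}, where the universality of the limit density $\Phi$ is exactly what is needed to produce a $\vecq$-independent right-hand side.
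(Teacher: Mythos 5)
Your proof is correct and follows essentially the same route as the paper: lattice-periodicity folding into a fundamental domain, Riemann-sum convergence of the rescaled density to the velocity marginal $h$ (via approximation by continuous compactly supported $f$), pointwise application of the fixed-$\vecq$ result together with the universality of $\Phi$, and dominated convergence in $\vecq$. The only organizational difference is that the paper factors the $\vecq$-averaging step out as a separate Corollary~\ref{freeCor1-ave} before invoking it, whereas you carry it out inline; the underlying argument is the same.
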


Variants of Theorem \ref{freeThm1cor} were recently established by Boca and Zaharescu \cite{Boca07} in dimension $d=2$, using methods from analytic number theory; cf. also their earlier work with Gologan \cite{Boca03}, and the paper by Calglioti and Golse \cite{Caglioti03}. Our approach uses dynamics and equidistribution of flows on homogeneous spaces (the details are developed in Section \ref{secEqui}), and works in arbitrary dimension. Previous work in higher dimension $d>2$ includes the papers by Bourgain, Golse and Wennberg \cite{Bourgain98}, \cite{Golse00} who provide tail estimates of possible limiting distributions of converging subsequences. More details on the existing literature can be found in the survey \cite{Golse06}.

\subsection{Related lattice point problems}

\begin{figure}
\begin{center}
\framebox{
\begin{minipage}{0.4\textwidth}
\unitlength0.1\textwidth
\begin{picture}(10,10)(0,0)
\put(0.5,1){\includegraphics[width=0.9\textwidth]{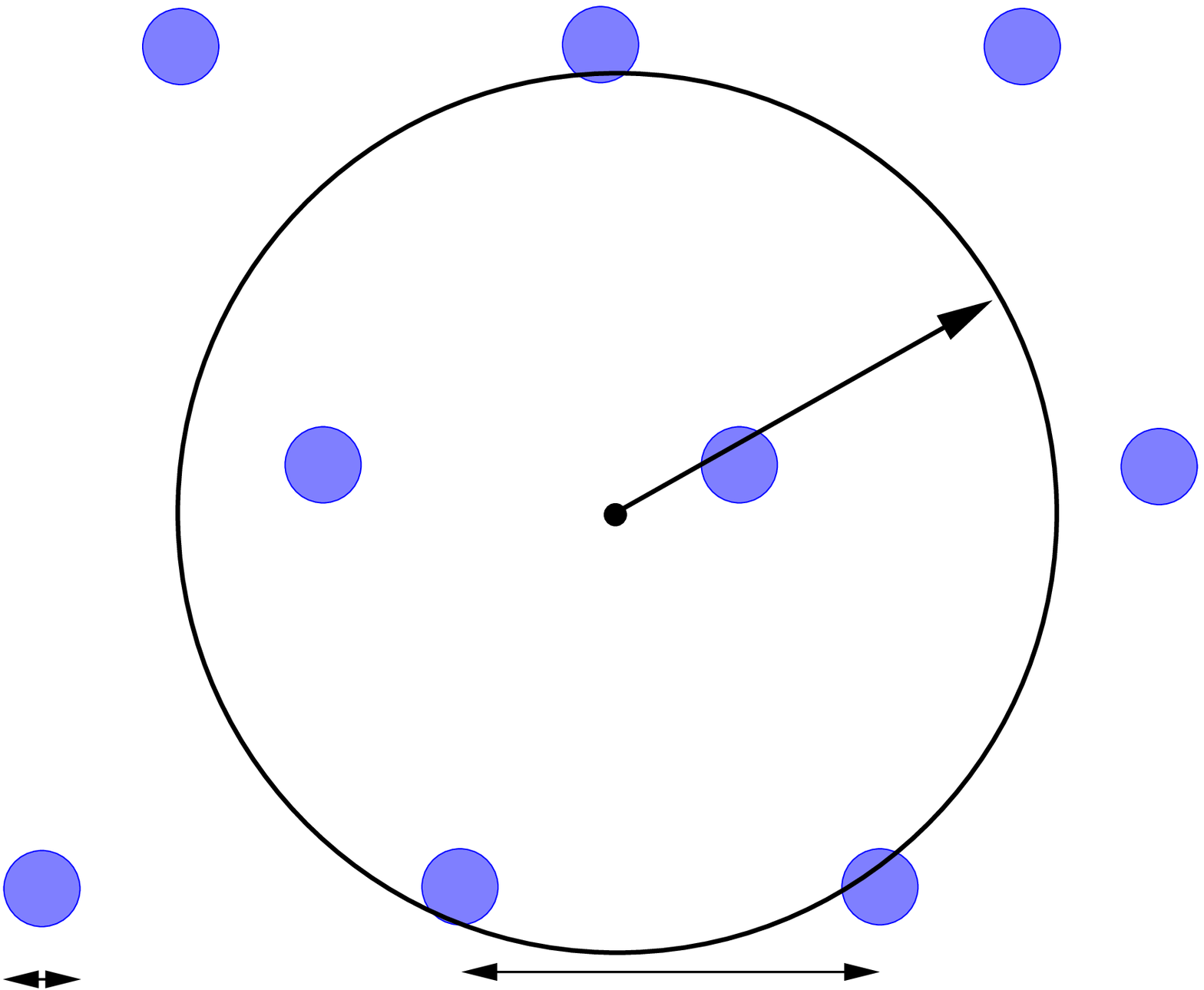}}
\put(5,4){$\vecq$}\put(7.2,5.2){$T\vecv$}
\put(0.5,0.5){$2\rho$} \put(4.8,0.5){fixed}
\end{picture}
\end{minipage}
}
\framebox{
\begin{minipage}{0.4\textwidth}
\unitlength0.1\textwidth
\begin{picture}(10,10)(0,0)
\put(0.5,1.5){\includegraphics[width=0.9\textwidth]{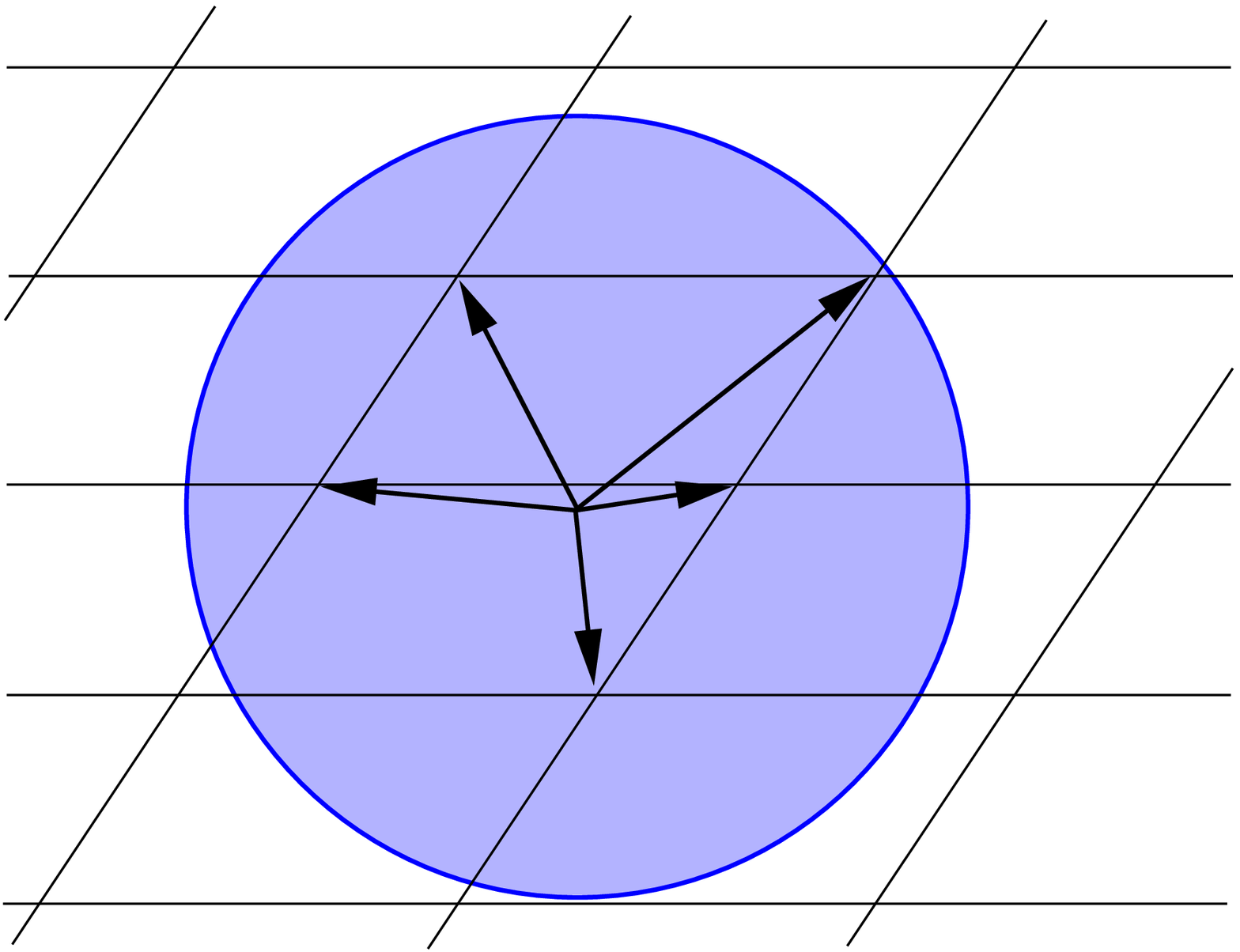}}
\put(4.3,4.2){$\vecnull$}\put(5.8,4.5){$\vecalf$}
\end{picture}
\end{minipage}
}
\end{center}
\caption{Left: How many lattice balls of radius $\rho$ does a random ray of length $T=\text{const}\times \rho^{-(d-1)}$ intersect? Right: What are the statistical properties of the directions of the affine lattice points $\scrL+\vecalf$ inside a large ball?}\label{figLattice}
\end{figure}

The key to the understanding of the Boltzmann-Grad limit of the periodic Lorentz gas are lattice point problems for thinly stretched domains, which are randomly rotated or sheared. In Sections \ref{secVisible0} and \ref{secVisible} we discuss two problems of independent interest that fall into this category: the distribution of spheres that intersect a randomly directed ray, and the statistical properties of the directions of lattice points (Figure~\ref{figLattice}). Section~\ref{secThin} discusses the general class of problems of this type.

Let us for example consider the affine lattice $\ZZ^2+\vecalf$, with the observer located at the origin. The directions of all lattice points with distance $< T$ are represented by points on the unit circle,
\begin{equation}
\frac{\vecm+\vecalf}{\|\vecm+\vecalf\|}\in\S_1^1, \qquad \text{for }\:
\vecm\in\ZZ^2\setminus\{-\vecalf\}, \quad \|\vecm+\vecalf\|< T. 
\end{equation}
We identify the circle with the unit interval via the map $(x,y)\mapsto (2\pi)^{-1}\arg(x+\i y)$, and therefore the distribution of directions is reformulated as a problem of distribution mod 1 of the numbers
\begin{equation}\label{ll}
\sfrac 1{2\pi} %
\arg(m+\alpha_1+\i (n+\alpha_2)), \qquad \text{for }\:
(m,n)\in\ZZ^2\setminus\{-\vecalf\}, \quad (m+\alpha_1)^2+(n+\alpha_2)^2 < T^2.
\end{equation}
We label these $N=N(T)$ numbers in order by
\begin{equation} \label{XISEQ}
-\tfrac12 < \xi_{N,1} \leq \xi_{N,2} \leq \ldots \leq \xi_{N,N} \leq \tfrac12
\end{equation}
and define in addition $\xi_{N,0}=\xi_{N,N}-1$. It is not hard to see that this sequence (or rather: this sequence of sequences) is uniformly distributed mod 1, i.e., for every $-\tfrac12\leq a< b \leq \tfrac12$,
\begin{equation}
	\lim_{N\to\infty} \frac{\#\{ 1\leq j\leq N : \xi_{N,j} \in [a,b) \}}{N}  = b-a.
\end{equation}
This (classical) equidistribution statement follows from the fact that the asymptotic number of lattice points in a fixed sector of a large disc is proportional to the volume of the sector.

A popular way to characterize the ``randomness'' of a uniformly distributed sequence is the statistics of gaps. The following theorem, which is a corollary of more general results in Section \ref{secVisible0}, shows that there is a limiting gap distribution when $N\to\infty$.

\begin{thm}\label{thmGap}
For every $\vecalf\in\RR^2$ there exists a distribution function 
$P_\vecalf(s)$ on $\R_{\geq 0}$ 
(continuous except possibly at $s=0$) 
such that for every $s\geq 0$,
\begin{equation} \label{thmGapformula}
	\lim_{N\to\infty} \frac{\#\{ 1\leq j\leq N : N(\xi_{N,j}-\xi_{N,j-1})\geq s\}}{N} = P_\vecalf(s).
\end{equation}
\end{thm}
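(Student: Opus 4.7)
The strategy is to recast the gap-counting problem as an integral over a curve in the space of two-dimensional affine unimodular lattices, and then to invoke the equidistribution theorem of Section~\ref{secEqui}. Introduce $T$ by $N = N(T) \sim \pi T^2$. For each $\vecm \in \ZZ^2$ with $0 < \|\vecm + \vecalf\| < T$, the event that the gap in \eqref{XISEQ} just after the direction of $\vecm+\vecalf$ has length at least $s/N$ is equivalent to the following emptiness condition: the sector with apex at $\vecnull$, one edge through $\vecm+\vecalf$, opening angle $s/N$, and outer radius $T$ contains no point of $(\ZZ^2+\vecalf)\setminus\{\vecm+\vecalf\}$.

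For each such $\vecm$, apply the area-preserving map $R_{-\theta_\vecm}\, a_T$, where $\theta_\vecm = \arg(\vecm+\vecalf)$, $R_{-\theta_\vecm} \in \SO(2)$, and $a_T = \diag(T^{-1}, T) \in \SL(2, \RR)$. Under this change of coordinates, the sector converges to a fixed open triangle $\Delta_s \subset \RR^2$ with vertices $\vecnull$, $(1,0)$, $(1, s/\pi)$ (up to negligible curvature of the outer boundary), and $\ZZ^2 + \vecalf$ becomes the affine unimodular lattice
\[
\Lambda_\vecalf(\theta_\vecm, T) := (\ZZ^2+\vecalf)\, R_{-\theta_\vecm}\, a_T,
\]
regarded as a point in $\ASLASL$ with $d = 2$. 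Since the directions $\theta_\vecm$ are uniformly distributed on $[0, 2\pi)$ with typical spacing $\sim 2\pi/N$, one can choose a bounded, almost-everywhere continuous test function $F_{s,T}$ on $\ASLASL$ such that
\[
\frac{1}{N}\#\{1 \leq j \leq N : N(\xi_{N,j}-\xi_{N,j-1}) \geq s\} = \int_0^{2\pi} F_{s,T}\bigl(\Lambda_\vecalf(\theta, T)\bigr)\, \frac{d\theta}{2\pi} + o(1).
\]
Heuristically, $F_{s,T}(\Lambda)$ counts points of $\Lambda$ lying in a thin horizontal strip near the positive $x$-axis (acting as a direction sampler) whose associated forward wedge $\Delta_s$ contains no other $\Lambda$-point; the strip converts the sum over lattice directions into an integral over $\theta$.

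Now invoke the equidistribution theorem of Section~\ref{secEqui} for the curve $\theta \mapsto \Lambda_\vecalf(\theta, T)$ in $\ASLASL$. As $T\to\infty$, the integral converges to $\int F_s\, d\mu_\vecalf$, where $\mu_\vecalf$ is the ergodic invariant probability measure on the orbit-closure through the coset of $(\ZZ^2+\vecalf)$: this is all of $\ASLASL$ for $\vecalf \in \RR^2 \setminus \QQ^2$, and a proper sub-homogeneous space for $\vecalf \in \QQ^2$ --- which is precisely why $P_\vecalf$ genuinely depends on $\vecalf$. Continuity of $P_\vecalf$ on $(0,\infty)$ follows because, for $\mu_\vecalf$-almost every affine lattice, no point lies on $\partial \Delta_s$.

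The main technical obstacle is that $F_{s,T}$ is an unbounded lattice-point counting function in the cuspidal geometry of $\ASLASL$, and integrability along the curve is not automatic. Applying the equidistribution theorem therefore requires a uniform tightness argument (non-escape of mass into the cusp) for the family of curves $\{\theta \mapsto \Lambda_\vecalf(\theta, T) : T > 1\}$. This non-escape estimate, together with the attendant truncation of $F_{s,T}$, is the delicate part of the analysis; the possible atom of $P_\vecalf$ at $s = 0$ (potentially arising from rational $\vecalf$ with clustered directions) appears naturally within the same framework.
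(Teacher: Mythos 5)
Your framework is right — equidistribution of a curve $\theta \mapsto (\Z^2+\vecalf)R_{-\theta}a_T$ in a space of affine unimodular lattices, with Ratner/Shah supplying the limit measure, and orbit-closure size explaining the $\vecalf$-dependence — but you have chosen the harder of two available routes, and the obstacle you flag at the end (unbounded lattice-counting integrand, non-escape of mass into the cusp) is precisely what the paper's route is designed to avoid. The paper does not integrate an unbounded "number of sampled directions with empty wedge" functional over $\theta$. Instead it first proves Theorem~\ref{visThm0}, which is the \emph{void/count statistics for a random disc}: the limit $E_{c,\vecalf}(r,\sigma)$ of $\lambda(\{\vecv : \scrN_{c,T}(\sigma,\vecv)=r\})$. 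There the integrand is the indicator of the event $\#(\fB_t|_\vecx\Phi^{-t}\,\cdot\,\cap(\Z^d+\vecalf)M)=r$, which takes values in $\{0,1\}$; the general theorems of Section~\ref{secThin} (and hence Theorem~\ref{charThm}, Theorem~\ref{equiThm}) are stated for bounded test functions, and no tightness or cusp-truncation argument is needed at this level. Theorem~\ref{thmGap} is then deduced as a corollary via the standard void-to-gap conversion (Remark~\ref{GENSTATREM}, citing \cite{Elkies04}, \cite{Marklof07}): with $p_N(s)$ the probability that a uniformly random point on the circle has no $\xi_{N,j}$ within $s/N$ to its right, one has $p_N(s)\to E_{0,\vecalf}(0,s)$ and $-\tfrac{d}{ds}p_N(s)$ equals the gap-counting quantity in \eqref{thmGapformula}, so $P_\vecalf(s)=-\tfrac{d}{ds}E_{0,\vecalf}(0,s)$, with continuity on $(0,\infty)$ coming from the $\C^1$-regularity of $E_{0,\vecalf}(0,\cdot)$ (Remark~\ref{visThm0rem}).

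Your "thin strip conditioning on a lattice direction" idea is not wrong — in fact it is essentially the foliation/disintegration technique the paper develops in Section~\ref{FOLIATIONSEC} (the submanifolds $X_q(\vecy)$, $X(\vecy)$, the Siegel-type formulas, and the "thin region rarely contains an extra lattice point" bounds) — but the paper reserves that heavier machinery for the genuinely conditional statistics of Theorem~\ref{exactpos1}, where it is unavoidable. For Theorem~\ref{thmGap} alone it is an overkill that reintroduces unboundedness and forces the tightness argument you acknowledge you have not supplied. If you want to complete your route, the missing ingredients are exactly Propositions~\ref{SIEGELQPROP}, \ref{FOLINTPROP}, \ref{XQYSIEGELPROP}/\ref{XQYSIEGELPROPD2} and the bounds of Lemmas~\ref{FEWWITHTWOLEMMA2}, \ref{XYFEWWITHTWOLEMMA}. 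Otherwise, the economical fix is to replace your averaging over lattice directions by averaging over a continuous random direction $\vecv$ (i.e.\ prove $E_{0,\vecalf}(0,s)$ first), which keeps the test functions bounded and lets the equidistribution theorem apply verbatim.
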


We will provide explicit formulas for $P_\vecalf(s)$, which clearly deviate from the statistics of independent random variables from a Poisson process, where $P(s)=\exp(-s)$. It is remarkable that, for $\vecalf\notin\QQ^2$, the limiting distribution $P_\vecalf(s)$ is independent of $\vecalf$ and coincides with the gap distribution for the fractional parts of $\sqrt n$ calculated by Elkies and McMullen \cite{Elkies04}; cf. Figure \ref{figStats}. There is a deep reason for this apparent coincidence, which we will return to in the next section. 

The statistics are different for $\vecalf\in\QQ^2$. In particular 
$P_\vecalf(s)$ has a jump discontinuity at $s=0$ for every $\vecalf\in\Q^2$,
which exactly accounts for the multiplicities in the sequence \eqref{XISEQ};
removing all repetitions from that sequence results in a limiting gap distribution
which is continuous on all $\R_{\geq 0}$, see Corollary \ref{prim-thmGap}
below. In the particular case $\vecalf=\vecnull$ this recovers a result of
Boca, Cobeli and Zaharescu \cite{Boca00}, which is closely related to
the statistical distribution of Farey fractions (see also Boca and Zaharescu \cite{Boca05}). 

The only previously known result for non-zero values of $\vecalf$ is by Boca and Zaharescu \cite{Boca06}, who calculated the limit of the pair correlation function {\em on average} over $\vecalf$. (The pair correlation function is essentially the variance of the probability $E_{0,\vecalf}(r,\sigma)$ studied in Section \ref{secVisible0}.) Contrary to the behaviour of the gap probability $P_\vecalf(s)$, the limiting pair correlation function is the same as for random variables from a Poisson process.\footnote{Boca and Zaharescu consider a slightly different sequence of directions, which is obtained by replacing the last condition in \eqref{ll} with $\max(|m+\alpha_1|,|n+\alpha_2|) < T$. This sequence is however not uniformly distributed modulo one, which explains the discrepancy with the Poisson pair correlation function observed in \cite{Boca06}.}

\begin{figure}
\begin{center}
\begin{minipage}{0.49\textwidth}
\unitlength0.1\textwidth
\begin{picture}(10,8)(0,0)
\put(-0.2,-1.5){\includegraphics[width=\textwidth]{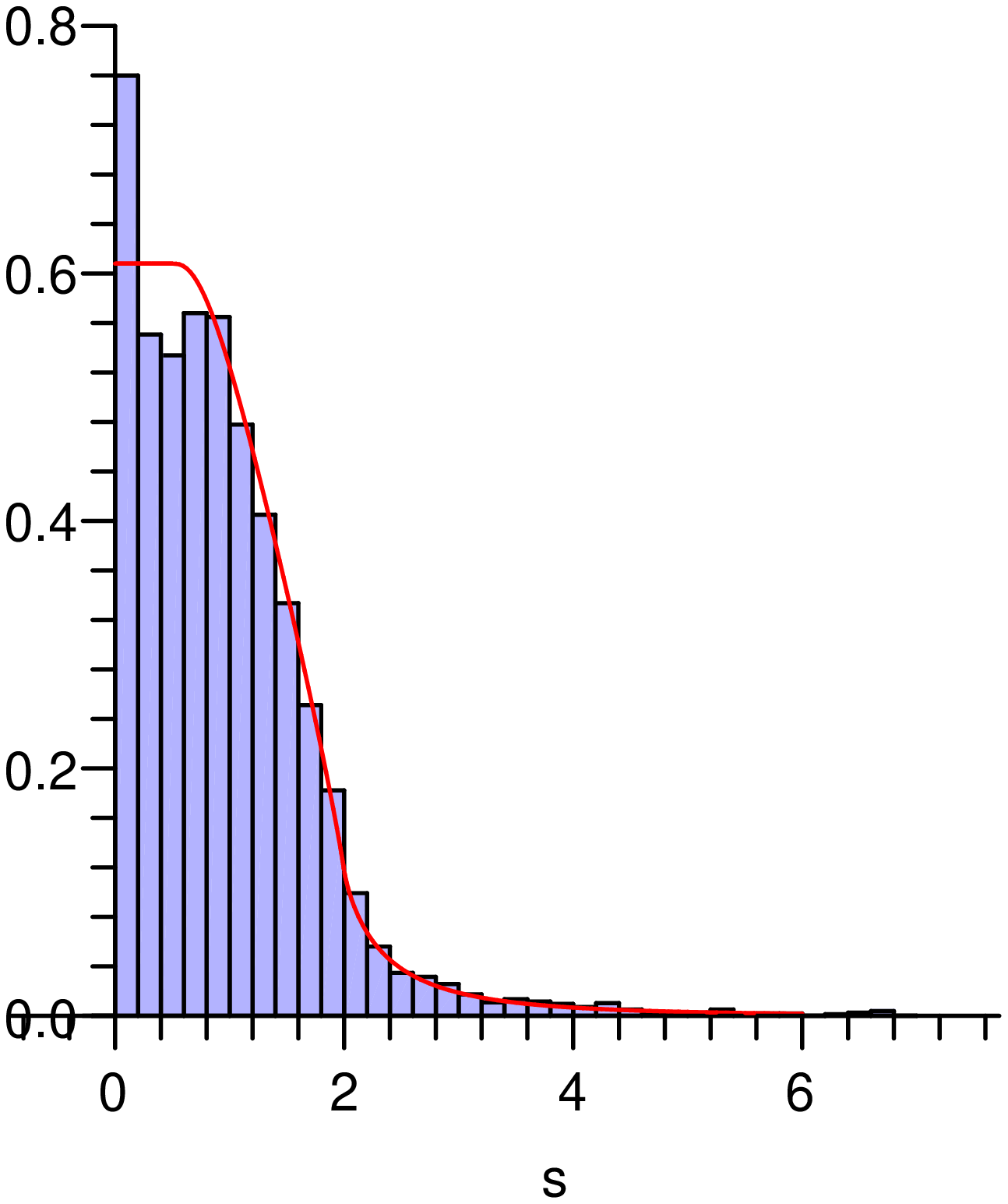}}
\end{picture}
\end{minipage}
\begin{minipage}{0.49\textwidth}
\unitlength0.1\textwidth
\begin{picture}(10,8)(0,0)
\put(-0.2,-1.5){\includegraphics[width=\textwidth]{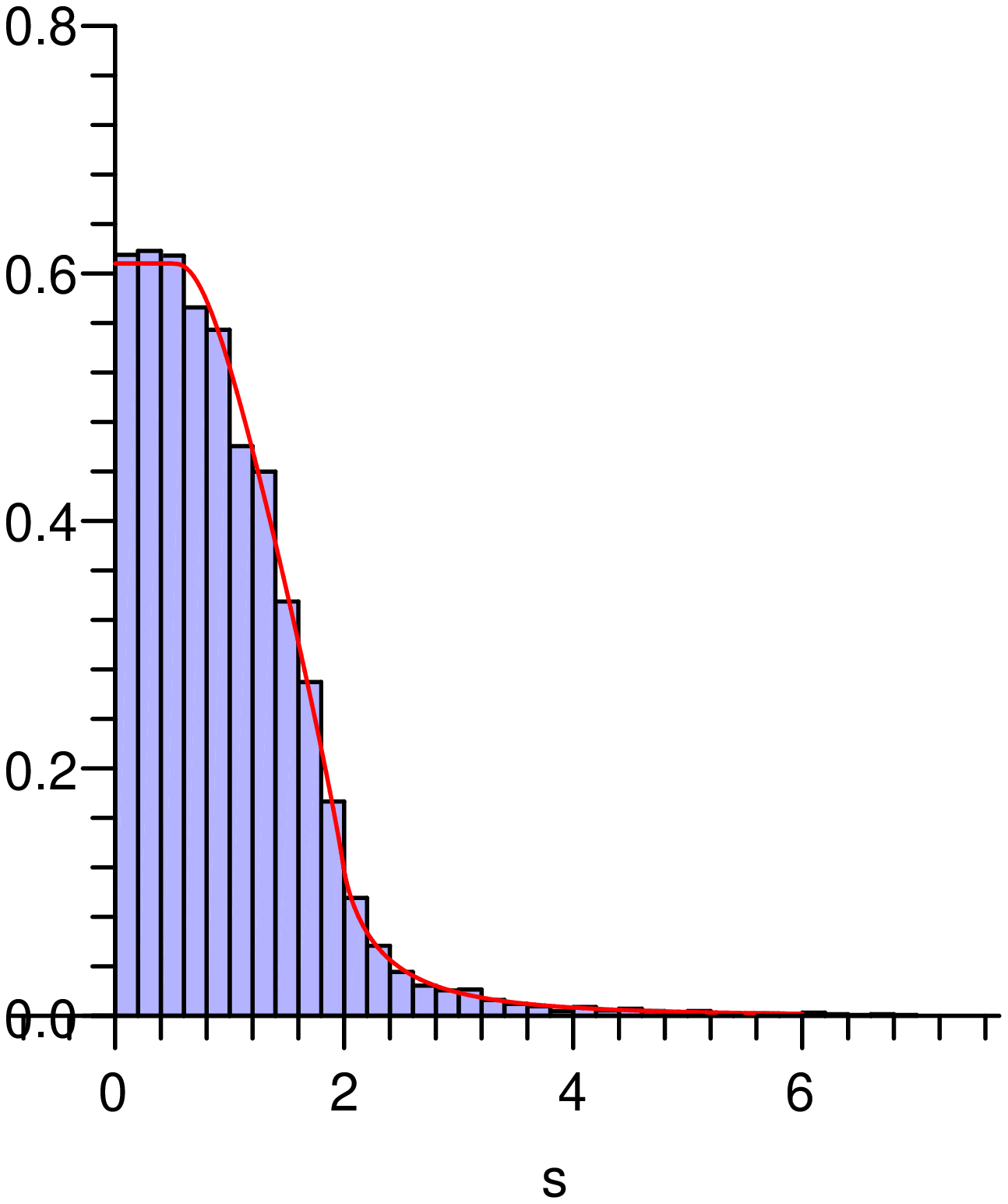}}
\end{picture}
\end{minipage}
\end{center}
\caption{Left: The distribution of gaps in the sequence $\sqrt{n}\bmod 1$, $n=1,\ldots,7765$, vs.~the Elkies-McMullen distribution. Right: Gap distribution for the directions of the vectors $(m-\sqrt{2},n)\in\RR^2$ with $m\in\ZZ$, $n\in\ZZ_{\geq 0}$, $(m-\sqrt2)^2+n^2< 4900$. The continuous curve is the Elkies-McMullen distribution.}\label{figStats}
\end{figure}

\subsection{Outline of the paper}

Sections \ref{secVisible0}--\ref{secLorentz} give a detailed account of the main results of this paper. Section \ref{secVisible0} discusses the statistical properties of affine lattice points inside a large sphere that are projected onto the unit sphere. A dual problem is the question of the probability that a ray of length $T$ pointing in a random direction intersects exactly $r$ lattice spheres whose radius scales as $T^{-1/(d-1)}$. The solution of the latter problem is provided in Section \ref{secVisible}, and applied in Section \ref{secLorentz} to the distribution of the free path lengths of the Lorentz gas. Both of the above lattice point problems fall into a general class of lattice point problems in randomly sheared or rotated domains, which are discussed in Section \ref{secThin}. The central idea for the solution of such questions is to exploit equidistribution results for flows on the homogeneous spaces $\SLSL$ and $\ASLASL$, which represent the space of lattices (resp. affine lattices) of covolume one. We establish the required ergodic-theoretic results in Section \ref{secEqui}. The key ingredient is Ratner's theorem \cite{Ratner91} on the classification of ergodic measures invariant under a unipotent flow. We provide useful integration formulas on $\SLSL$ and $\ASLASL$ in Section \ref{FOLIATIONSEC} and in Section \ref{PROPLIMFCNSEC} we apply these to our limit functions. Detailed proofs of the main limit theorems in Sections \ref{secVisible0}--\ref{secLorentz} are given in Section \ref{secProofs}. The proofs for Section \ref{secVisible0} are virtually identical to those of the corresponding theorems in Section \ref{secVisible}.

\section{Distribution of visible lattice points}\label{secVisible0}

\subsection{Lattices}

Let $\scrL\subset \RR^d$ be a euclidean lattice of covolume one. Recall that $\scrL=\ZZ^d M$ for some $M\in\SLR$ and that therefore the homogeneous space $X_1=\SLSL$ parametrizes the space of lattices of covolume one. 

Let $\ASLR=\SLR\ltimes \RR^d$ be the semidirect product group with multiplication law
\begin{equation} \label{ASLMULTLAW}
	(M,\vecxi)(M',\vecxi')=(MM',\vecxi M' +\vecxi') .
\end{equation}
An action of $\ASLR$ on $\RR^d$ can be defined as
\begin{equation}
	\vecy \mapsto \vecy(M,\vecxi):=\vecy M+\vecxi .
\end{equation}
Each \textit{affine} lattice (i.e.\ translate of a lattice) 
of covolume one in $\RR^d$ can then be expressed as
$\ZZ^d g$ for some $g\in\ASLR$, 
and the space of affine lattices is then represented by $X=\ASLASL$ where $\ASLZ=\SLZ\ltimes \ZZ^d$.
We denote by $\mu_1$ and $\mu$ the Haar measure on $\SLR$ and $\ASLR$, respectively, normalized in such a way that they represent probability measures on $X_1$ and $X$. 

If $\vecalf\in\QQ^d$, say $\vecalf=\vecp/q$ for $\vecp\in\ZZ^d$, 
$q\in\Z_{>0}$, we see that
\begin{equation}
	\bigg(\ZZ^d + \frac{\vecp}{q}\bigg) \gamma M = \bigg(\ZZ^d + \frac{\vecp}{q}\bigg) M 
\end{equation}
for all
\begin{equation}\label{Gamq}
	\gamma \in \Gamma(q):= \{ \gamma\in\SLZ \col \gamma \equiv 1_d \bmod q \},
\end{equation}
the principal congruence subgroup. This means that the space of affine lattices with $\vecalf=\vecp/q$ can be parametrized by the homogeneous space $X_q=\Gamma(q)\backslash\SLR$ (this is not necessarily one-to-one). We denote by $\mu_q$ the Haar measure on $\SLR$ which is normalized as a probability measure on $X_q$.

\subsection{Basic set-up}

We fix a lattice $\scrL\subset \RR^d$ of covolume one,
and fix, once and for all, a choice of $M_0\in\SL(d,\RR)$ such that
$\scrL=\ZZ^d M_0$.\label{latt} Given $\vecalf \in \RR^d$ we then define the
affine lattice
\begin{equation}\label{afflatt}
	\Lalf:=(\ZZ^d+\vecalf)M_0 = \ZZ^d (1,\vecalf)(M_0,\vecnull).
\end{equation}

Consider the set $\scrP_T$ of lattice points $\vecy\in\Lalf$ inside the ball $\scrB^d_T$ of radius $T$, or, more generally, the spherical shell
\begin{equation}\label{shell}
	\scrB^d_T(c)=\{ \vecx\in\RR^d : c T \leq \|\vecx\| < T \} , \qquad 0\leq c < 1.
\end{equation}
For $T$ large there are asymptotically $(1-c^d)\vol(\scrB^d_1) T^d$ such points, where $\vol(\scrB^d_1)=\pi^{d/2}/\Gamma(\tfrac{d+2}{2})$ is the volume of the unit ball. 
For each $T$, we study the corresponding \textit{directions,}
\begin{equation}
\|\vecy\|^{-1}\vecy\in\S_1^{d-1},\qquad
\text{for }\: \vecy \in\scrP_T=\Lalf\cap \scrB^d_T(c)\setminus\{\vecnull\},
\end{equation}
where $\S_\rho^{d-1}\subset\RR^d$ denotes the $(d-1)$-sphere of radius $\rho$.
It is well known that, as $T\to\infty$, these points become uniformly distributed on $\S_1^{d-1}$: For any set $\fU\subset\S_1^{d-1}$ with boundary of measure zero (with respect to the volume element $\vol_{\S_1^{d-1}}$ on $\S_1^{d-1}$) we have
\begin{equation}\label{udi}
\lim_{T\to\infty} \frac{\# \{\vecy\in\scrP_T\col\|\vecy\|^{-1}\vecy\in\fU\}}
{\#\scrP_T} 
= \frac{\vol_{\S_1^{d-1}}(\fU)}{\vol_{\S_1^{d-1}}(\S_1^{d-1})} . 
\end{equation}
Recall that $\vol_{\S_1^{d-1}}(\S_1^{d-1})=d \vol(\scrB^d_1)$.

\subsection{Distribution in small discs}\label{distrinsmalldiscssec}

We are interested in the fine-scale distribution of the directions to points
in $\scrP_T$, e.g., in the probability of finding $r$ directions in a small 
disc with random center $\vecv\in\S_1^{d-1}$.
We define $\fD_T(\sigma,\vecv)\subset\S_1^{d-1}$ to be the open disc
with center $\vecv$ and volume 
\begin{equation} \label{DISCSIZE}
\vol_{\S_1^{d-1}}(\fD_T(\sigma,\vecv)) =\frac{\sigma d}{1-c^d} T^{-d}.
\end{equation}
The radius of $\fD_T(\sigma,\vecv)$ is thus 
$\asymp T^{-d/(d-1)}$ (if $\sigma>0$). 
We introduce the counting function
\begin{equation}\label{disc00}
	\scrN_{c,T}(\sigma,\vecv)=\#\{\vecy\in\scrP_T \col
\|\vecy\|^{-1}\vecy\in \fD_T(\sigma,\vecv)\}
\end{equation}
for the number of points in $\fD_T(\sigma,\vecv)$.
The motivation for the definition \eqref{DISCSIZE} is that it implies, 
via \eqref{udi}, that
the expectation value for the counting function is asymptotically equal to 
$\sigma$ (for $T\to\infty$ and $\sigma$ fixed):
\begin{equation}\label{expval}
	\int_{\S_1^{d-1}} \scrN_{c,T}(\sigma,\vecv)\, d\lambda(\vecv) \sim 
\vol(\scrB^d_T(c)) \frac{\vol_{\S_1^{d-1}}(\fD_T(\sigma))}{\vol_{\S_1^{d-1}}(\S_1^{d-1})}  
	= \frac{1-c^d}{d}  \vol_{\S_1^{d-1}}(\fD_T(\sigma)) T^d=\sigma,
\end{equation}
where $\lambda$ is any probability measure on $\S_1^{d-1}$ with continuous density, and
$\fD_T(\sigma)=\fD_T(\sigma,\vece_1)$ is the disc centered at $\vece_1=(1,0,\ldots,0)$.

\begin{thm}\label{visThm0}
Let $\lambda$ be a Borel probability measure on $\S_1^{d-1}$ absolutely continuous with respect to Lebesgue measure. Then, for every $\sigma \geq 0$ and $r\in\ZZ_{\geq 0}$, the limit 
\begin{equation} \label{EDEF}
	E_{c,\vecalf}(r,\sigma):=\lim_{T\to\infty} \lambda(\{ \vecv\in\S_1^{d-1} : \scrN_{c,T}(\sigma,\vecv)=r \})
\end{equation}
exists, and for fixed $c,\vecalf,r$
the convergence is uniform with respect to $\sigma$ in any 
compact subset of $\RR_{\geq 0}$.
The limit function is given by
\begin{equation}
E_{c,\vecalf}(r,\sigma)=
	\begin{cases}
	\mu_1(\{ M\in X_1: \#( \ZZ_*^d M \cap \fC(c,\sigma))= r \}) & \text{if $\vecalf\in\ZZ^d$}\\
	\mu_q(\{ M\in X_q: \#( (\ZZ^d+\frac{\vecp}{q}) M \cap \fC(c,\sigma))= r \}) & \text{if $\vecalf=\frac{\vecp}{q}\in\QQ^d\setminus\ZZ^d $}\\
	\mu(\{ (M,\vecxi)\in X: \#((\ZZ^d M +\vecxi) \cap \fC(c,\sigma))= r \}) & \text{if $\vecalf\notin\QQ^d$,}
	\end{cases}
\end{equation}
where
\begin{equation} \label{FCCSDEF}
	\fC(c,\sigma) =\bigg\{(x_1,\ldots,x_d)\in\RR^d \col c < x_1 < 1, \: \|(x_2,\ldots,x_d)\|\leq x_1 A(c,\sigma)  \bigg\} ,
\end{equation}
\begin{equation} \label{ACSDEF}
	A(c,\sigma)=\bigg(\frac{\sigma d}{(1-c^d)\vol(\scrB_1^{d-1})}\bigg)^{\frac{1}{d-1}}, \qquad \vol(\scrB_1^{d-1})=\frac{\pi^{(d-1)/2}}{\Gamma(\tfrac{d+1}{2})}.
\end{equation}
In particular, $E_{c,\vecalf}(r,\sigma)$ is continuous in $\sigma$ and independent of $\scrL$ and $\lambda$.
\end{thm}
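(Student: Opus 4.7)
The strategy is to convert counting lattice directions in the shrinking disc $\fD_T(\sigma,\vecv)$ into counting lattice points of a rescaled affine lattice in the \emph{fixed} bounded set $\fC(c,\sigma)$, and then to apply the homogeneous dynamics established in Section~\ref{secEqui}.

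\emph{Step 1: geometric rescaling.} Pick a measurable section $\vecv\mapsto k(\vecv)\in\SO(d)$ with $\vece_1 k(\vecv)=\vecv$, and set
\begin{equation*}
D_T=\diag\bigl(T^{-1},T^{1/(d-1)},\ldots,T^{1/(d-1)}\bigr)\in\SLR .
\end{equation*}
A direct computation from \eqref{DISCSIZE}, \eqref{FCCSDEF} and \eqref{ACSDEF} shows that the cone $\fC_T\subset\RR^d$ of vectors $\vecy$ with $cT\le\|\vecy\|<T$ and $\vecy/\|\vecy\|\in\fD_T(\sigma,\vece_1)$ satisfies $\fC_T D_T\to\fC(c,\sigma)$ as $T\to\infty$, with the symmetric difference contained in any prescribed neighbourhood of $\partial\fC(c,\sigma)$ for $T$ large. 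Rotational invariance of the discs and the right-action identification
\begin{equation*}
\Lalf k(\vecv)^{-1}D_T=\ZZ^d g_T(\vecv),\qquad g_T(\vecv):=(1,\vecalf)(M_0 k(\vecv)^{-1}D_T,\vecnull)\in\ASLR,
\end{equation*}
yield
\begin{equation*}
\scrN_{c,T}(\sigma,\vecv)=\#\bigl(\ZZ^d g_T(\vecv)\cap\fC_T D_T\bigr).
\end{equation*}

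\emph{Step 2: equidistribution and passage to the limit.} Let $F_r\colon X\to\{0,1\}$ be the indicator of $\{g\in X:\#(\ZZ^d g\cap\fC(c,\sigma))=r\}$. Since $\fC(c,\sigma)$ is bounded, $F_r$ is bounded; its locus of discontinuity consists of affine lattices having a point on $\partial\fC(c,\sigma)$, which is a Haar-null subset of $X=\ASLASL$. The equidistribution theorems of Section~\ref{secEqui} will assert that the push-forward of $\lambda$ under $\vecv\mapsto g_T(\vecv)$ converges weakly in $X$ to $\mu_1$ supported on $X_1$ when $\vecalf\in\ZZ^d$, to $\mu_q$ supported on $X_q$ when $\vecalf=\vecp/q\in\QQ^d\setminus\ZZ^d$, and to $\mu$ on $X$ itself when $\vecalf\notin\QQ^d$. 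Applying this weak convergence to $F_r$, and using a sandwich by slightly enlarged/contracted fixed cones $\fC(c,\sigma\pm\ve)$ to absorb the boundary discrepancy $\fC_T D_T\triangle\fC(c,\sigma)$ together with monotonicity of $\fC(c,\cdot)$ in $\sigma$, delivers precisely the three cases of $E_{c,\vecalf}(r,\sigma)$ claimed in the theorem. Continuity of $\sigma\mapsto E_{c,\vecalf}(r,\sigma)$ follows because $\sigma\mapsto\fC(c,\sigma)$ is continuous and monotone and the set of lattices with a boundary point of $\fC(c,\sigma)$ is Haar-null for each $\sigma$; local uniformity in $\sigma$ on compact sets is a Dini-type consequence of the same monotonicity.

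\emph{Main obstacle.} The heart of the argument is the equidistribution step. The family $\vecv\mapsto k(\vecv)^{-1}D_T$ is not a translate of a unipotent orbit, so Ratner's theorem does not apply to it directly. The standard remedy, which is executed in Section~\ref{secEqui}, is to factorise $k(\vecv)^{-1}$ over an open Bruhat cell of $\SLR$ in an Iwasawa-type fashion; this expresses the $\SO(d)$-average as a translate by $D_T$ of a piece of a horospherical subgroup whose conjugation by $D_T$ is expanding, and equidistribution of such expanding translates then follows from Ratner's classification of unipotent-invariant measures, together with the Dani-Margulis nondivergence theorem to prevent mass escaping to the cusp. For $\vecalf\notin\QQ^d$ the argument must be run on $\ASLASL$ in place of $\SLSL$, and the irrationality of $\vecalf$ must be used (again via Ratner's theorem, now on $\ASLR$) to rule out concentration of the limiting measure on any proper closed intermediate orbit passing through an $X_q$-fibre.
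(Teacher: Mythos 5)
Your proposal follows essentially the same route as the paper's proof: recast $\scrN_{c,T}(\sigma,\vecv)$ as a lattice‑point count of a sheared affine lattice $\ZZ^d g_T(\vecv)$ in a fixed cone via the rescaling $\fC_T(c,\sigma)\to\fC(c,\sigma)$ (Section~\ref{diff0}, Lemma~\ref{lemCon1}), and invoke equidistribution of expanding spherical translates via Ratner's theorem (Theorems~\ref{thmShah}, \ref{equiThm} and Corollary~\ref{also2}, which carries out precisely the Bruhat/Iwasawa factorisation you sketch), with separate spaces $X_1$, $X_q$, $X$ according to the rationality of $\vecalf$. The only place where the paper is more careful than your sketch is the boundary bookkeeping: a literal sandwich $\fC(c,\sigma-\ve)\subset\fC_T(c,\sigma)\subset\fC(c,\sigma+\ve)$ fails near $x_1=c$ and $x_1=1$ because the $x_1$‑range of $\fC_T$ is only approximately $(c,1)$, so the paper instead uses the $\lim(\inf\cdot)^\circ$/$\lim\overline{\sup\cdot}$ machinery of Theorem~\ref{thinThm}, Theorem~\ref{charThm} and Lemma~\ref{limlem}, together with the measure‑zero boundary of $\fC(c,\sigma)$—this also delivers the uniformity in $\sigma$ cleanly by allowing $\sigma_T\to\sigma_\infty$ in Lemma~\ref{lemCon1}.
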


In the above, we use the notation $\ZZ_*^d:=\ZZ^d\setminus\{\vecnull\}$.\label{ZZSTARDDEF} Although the use of $\ZZ_*^d$ is superfluous at this point (since $\fC(c,\sigma)$ does not contain zero), it appears as the natural object in the proof. This subtlety is due to the fact that for generic $M$ we have $\ZZ^d M \cap \fC(0,\sigma)\neq\ZZ^d M \cap \overline{\fC(0,\sigma)}$ but $\ZZ_*^d M \cap \fC(0,\sigma)=\ZZ_*^d M \cap \overline{\fC(0,\sigma)}$.

Theorem \ref{visThm0} says that the limiting distribution $E_{c,\vecnull}(r,\sigma)$ is given by the probability that there are $r$ points of a random lattice in the cone $\fC(c,\sigma)$, and $E_{c,\vecalf}(r,\sigma)$ for $\vecalf\notin\QQ^d$ is the corresponding probability for a random affine lattice. Hence in particular $E_{c,\vecalf}(r,\sigma)$ is independent of $\vecalf$ when $\vecalf\notin\QQ^d$. 

\begin{remark} \label{visThm0rem}
We will furthermore prove that when $c=0$ the function $E_{c,\vecalf}(r,\sigma)$ is $\C^1$ with 
respect to $\sigma>0$; see Section~\ref{IMPORTANTSECV0}.
We expect that the same statement should also be true for any fixed $0<c<1$.
\end{remark}

\begin{remark}
In the case $c=0$, $d=2$ and $\vecalf\notin\QQ^2$ our distribution coincides with Elkies and McMullen's limiting distribution \cite{Elkies04} for the probability of finding $r$ elements of the sequence $\sqrt n$ \mbox{mod 1} ($n=1,\ldots,N$) in a randomly shifted interval of length $\sigma/N$ ($N\to\infty$).
Although the two problems are seemingly unrelated, the reason for this coincidence is that both results use equidistribution of translates of different orbits on the space of affine lattices $X$ with respect to the same test functions.
\end{remark}

\begin{remark} \label{GENSTATREM}
By a general statistical argument, cf.~e.g.~\cite{Elkies04}, \cite{Marklof07}, Theorem \ref{thmGap} is an immediate corollary of Theorem \ref{visThm0} in 
the case $d=2$, $r=0$, 
with the limit function $P_\vecalf(s)$ explicitly given by
\begin{equation} \label{PALFFORMULA}
	P_\vecalf(s):= -\frac{d}{ds} E_{0,\vecalf}(0,s) \quad(s>0);
\qquad P_\vecalf(0):=1.
\end{equation}
The continuity of $P_\vecalf(s)$ for $s>0$ follows from 
Remark \ref{visThm0rem}.
\end{remark}

To exhibit explicitly the group action which will play a central role in the proof of the above statements, it is convenient to realize $\S_1^{d-1}$ as the homogeneous space $\SO(d-1)\backslash\SO(d)$ by setting $\vecv=\vece_1 K$ with $\vece_1=(1,0,\ldots,0)$ and $K\in\SO(d)$. The stabilizer of $\vece_1$ is isomorphic to $\SO(d-1)$ (acting from the right), where $\SO(d-1)$ is identified with the subgroup
\begin{equation} \label{SODINBED}
	\begin{pmatrix}
	1 &  \vecnull \\
	\trans\vecnull & \SO(d-1)
	\end{pmatrix}
	\subset  \SO(d).
\end{equation} 
Then
\begin{equation}
	\fD_T(\sigma,\vecv)=\fD_T(\sigma) K=\{ \vecx : \vecx K^{-1}\in \fD_T(\sigma) \}
\end{equation}
and
\begin{equation}\label{disc1}
	\scrN_{c,T}(\sigma,K)=\#(\scrP_T \cap \fD_T(\sigma) K)
\end{equation}
is the number of points in $\fD_T(\sigma) K$. Note that $\scrN_{c,T}(\sigma,K)$ is left-invariant under the action of $\SO(d-1)$ and thus may be viewed as a function on $\SO(d-1)\backslash\SO(d)$. 
The statement equivalent to Theorem \ref{visThm0} is now that, if $\lambda$ is a Borel probability measure on $\SO(d)$ absolutely continuous with respect to Haar measure, then
\begin{equation}
	\lim_{T\to\infty} \lambda(\{ K\in\SO(d) : \scrN_{c,T}(\sigma,K)=r \}) = E_{c,\vecalf}(r,\sigma).
\end{equation}

\subsection{Visible lattice points} \label{VISLPTS}
In the study of directions of affine lattice points it is natural to
restrict our attention to those points that are visible from the origin.
That is, we consider the set of directions without counting
multiplicities. Non-trivial multiplicities are %
only obtained when the $\Q$-linear span of %
$1$ and the components of $\vecalf$ has dimension $\leq 2$.
If $\vecalf\notin\Q^d$ then the multiplicities are statistically
insignificant; in fact they can only occur along at most a single line
through the origin,
and thus restricting to considering only the visible lattice 
points still yields the same limit distribution as in Theorem \ref{visThm0}.

Hence from now on we will assume $\vecalf\in\Q^d$.
If $\vecalf=\bn$ then the visible lattice points are exactly the
\textit{primitive} lattice points, i.e.\ those points $\vecm M_0\in\scrL$
for which $\vecm\in\Z^d_*$, $\gcd(\vecm)=1$.
In the general case $\vecalf=\frac{\vecp}q\in\Q^d$ 
($q\in\Z_{>0}$, $\vecp\in\Z^d$),
the set of visible lattice points is:
\begin{align} \label{VISIBLECHAR}
\widehat\scrL_\vecalf=\widehat\Z_\vecalf^d M_0, \qquad 
\widehat\Z_\vecalf^d := \{\vecx\in(\Z^d+\vecalf)\setminus\{\bn\} \col
\gcd(q\vecx)\leq q\}.
\end{align}
From now on in this section we will assume that $q\in\Z_{>0}$ is 
the \textit{minimal} integer which gives $q\vecalf\in\Z^d$. 
Given $0\leq c<1$ we set
$\widehat\scrP_T=\widehat\scrL_\vecalf \cap \scrB_T^d(c)$; \label{WHPDEF}
then
by a sieving argument using \eqref{VISIBLECHAR} and \eqref{udi} one shows that
for any set $\fU\subset\S_1^{d-1}$ with boundary of measure zero,
\begin{align} \label{KAPPAQDEF}
& \lim_{T\to\infty} \frac{\#\{\vecy\in\widehat\scrP_T\cap\scrB_T^d(c)\col
\|\vecy\|^{-1}\vecy\in\fU\}}{\vol{\scrB_T^d(c)}}
=\kappa_q \frac{\vol_{\S_1^{d-1}}(\fU)}{\vol_{\S_1^{d-1}}(\S_1^{d-1})},
\\ \notag
& \text{with}\qquad 
\kappa_q:=\Bigl(\sum_{\substack{n\geq 1\\(n,q)=1}} \mu(n) n^{-d}\Bigr)
\sum_{\substack{1\leq t\leq q\\(t,q)=1}} t^{-d}
=\Bigl(\sum_{\substack{n\geq 1\\(n,q)=1}} n^{-d}\Bigr)^{-1} 
\sum_{\substack{1\leq t\leq q\\(t,q)=1}} t^{-d}.
\end{align}
When $\vecalf\in\Z^d$ this specializes to the well-known fact that
the asymptotic density of the primitive points in $\Z^d$ is $\zeta(d)^{-1}$.
It follows from \eqref{KAPPAQDEF} that if we introduce the following analogue 
of \eqref{disc00} for visible lattice points:
\begin{equation} \label{WHNDEF}
\widehat{\scrN}_{c,T}(\sigma,\vecv)=\#\{\vecy\in\widehat\scrP_T \col
\|\vecy\|^{-1}\vecy\in \fD_T(\kappa_q^{-1}\sigma,\vecv)\},
\qquad
\widehat\scrP_T=\widehat\scrL_\vecalf \cap \scrB_T^d(c);
\end{equation}
then the expectation value for $\widehat\scrN$ is again asymptotically equal
to $\sigma$:
\begin{align}
\lim_{T\to\infty}
\int_{\S_1^{d-1}} \widehat \scrN_{c,T}(\sigma,\vecv)\,d\lambda(\vecv)=\sigma,
\end{align}
for any fixed $\sigma\geq 0$, $0\leq c<1$ and $\lambda$ as in \eqref{expval}.

\begin{thm}\label{prim-visThm0}
Let $\lambda$ be a Borel probability measure on $\S_1^{d-1}$ absolutely continuous with respect to Lebesgue measure. Then, for every $\sigma \geq 0$ and $r\in\ZZ_{\geq 0}$, the limit 
\begin{equation}
	\widehat E_{c,\vecalf}(r,\sigma):=\lim_{T\to\infty} \lambda(\{ \vecv\in\S_1^{d-1} : \widehat\scrN_{c,T}(\sigma,\vecv)=r \})
\end{equation}
exists, and for fixed $c,r$
the convergence is uniform with respect to $\sigma$ in any 
compact subset of $\RR_{\geq 0}$.
The limit function is given by
\begin{equation}
	\mu_q(\{ M\in X_q\col 
\#( \widehat\Z_\vecalf^d M \cap \fC(c,\kappa_q^{-1}\sigma))= r \}) 
\qquad (\vecalf=\sfrac{\vecp}q\in\Q^d).
\end{equation}
In particular, $\widehat E_{c,\vecalf}(r,\sigma)$ is continuous in $\sigma$ and independent of $\scrL$ and $\lambda$.
\end{thm}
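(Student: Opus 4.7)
Theorem~\ref{prim-visThm0} is the visible-lattice-point analog of Theorem~\ref{visThm0}, and I would prove it by running the same argument with $\widehat\Z_\vecalf^d$ in place of $\Z^d+\vecalf$. The key observation is that $\widehat\Z_\vecalf^d$ is right-invariant under $\Gamma(q)$: for $\gamma\in\Gamma(q)\subset\SLZ$, the gcd of the integer components of $q\vecx\gamma$ equals that of $q\vecx$, and $(\Z^d+\vecalf)\gamma=\Z^d+\vecalf$ because $\vecalf(\gamma-1_d)\in\Z^d$. Consequently $\widehat\Z_\vecalf^d\gamma=\widehat\Z_\vecalf^d$ and the indicator
\[
F_{r,\sigma}(M):=\mathbf{1}\bigl[\,\#(\widehat\Z_\vecalf^d M\cap\fC(c,\kappa_q^{-1}\sigma))=r\,\bigr]
\]
descends to a well-defined function on $X_q=\Gamma(q)\backslash\SLR$. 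After the standard geometric reduction (identify $\S_1^{d-1}$ with $\SO(d-1)\backslash\SO(d)$ via $\vecv=\vece_1 K$ and apply a diagonal stretching $D_T\in\SLR$ that straightens the thin sector $\scrB_T^d(c)\cap\fD_T(\kappa_q^{-1}\sigma)K\cdot\R_{>0}$ into $\fC(c,\kappa_q^{-1}\sigma)$), the counting function takes the form $\widehat\scrN_{c,T}(\sigma,K)=\#(\widehat\Z_\vecalf^d M_0 K D_T\cap\fC(c,\kappa_q^{-1}\sigma))$, so
\[
\lambda\bigl(\{K\in\SO(d):\widehat\scrN_{c,T}(\sigma,K)=r\}\bigr)=\int_{\SO(d)}F_{r,\sigma}(M_0 K D_T)\,d\lambda(K).
\]
The factor $\kappa_q^{-1}$ in the disc radius is exactly what compensates, via \eqref{KAPPAQDEF}, for the reduced asymptotic density of visible points, so that the expected count is again asymptotically $\sigma$.

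The heart of the argument is the equidistribution theorem of Section~\ref{secEqui}, which asserts that for every bounded continuous $f$ on $X_q$ with adequate decay at infinity, and every probability measure $\lambda$ absolutely continuous with respect to Haar measure on $\SO(d)$,
\[
\int_{\SO(d)}f(M_0 K D_T)\,d\lambda(K)\;\longrightarrow\;\int_{X_q}f\,d\mu_q\qquad (T\to\infty).
\]
To apply this to the non-continuous indicator $F_{r,\sigma}$, I would sandwich it between continuous cut-offs and establish (i) the $\mu_q$-measure of lattices $M$ with a point of $\widehat\Z_\vecalf^d M$ on $\partial\fC(c,\kappa_q^{-1}\sigma)$ is zero, and (ii) a uniform-in-$K$ tail estimate showing that the counting function is negligible off of large compact subsets of $X_q$ (non-escape of mass). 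Together these yield the pointwise limit $\widehat E_{c,\vecalf}(r,\sigma)=\int_{X_q}F_{r,\sigma}\,d\mu_q$; carrying the sandwich argument through uniformly in $\sigma$ delivers the uniform convergence on compact subsets of $\R_{\geq 0}$ and the continuity of $\widehat E_{c,\vecalf}$ in $\sigma$.

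The main obstacle I anticipate is (i)--(ii): the required Siegel-type volume estimates on $X_q$ for the set $\widehat\Z_\vecalf^d$, which is no longer a single coset of a sublattice. I would reduce them to the estimates already used in the proof of Theorem~\ref{visThm0} via the decomposition
\[
\widehat\Z_\vecalf^d=\bigsqcup_{\substack{1\leq n\leq q\\(n,q)=1}} \frac{n}{q}\bigl\{\vecy\in\Z^d\col \gcd(\vecy)=1,\;\vecy\equiv n^{-1}\vecp\minmod q\bigr\},
\]
which follows from the minimality of $q$ (if $\gcd(q\vecx)=n\leq q$ and $\vecx\in\Z^d+\vecp/q$, then $(n,q)$ divides $\gcd(\vecp,q)=1$), combined with a Möbius sieve on the primitivity condition inside each class. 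Each resulting term is then an estimate for a full affine lattice already controlled in Theorem~\ref{visThm0}. The independence of $\widehat E_{c,\vecalf}$ from $\scrL$ and $\lambda$ is automatic once the limit is identified as $\int_{X_q}F_{r,\sigma}\,d\mu_q$.
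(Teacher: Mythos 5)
Your overall strategy matches the paper's: geometrize the sphere-count as a lattice-point count in a thin, sheared region, invoke the equidistribution of spherical translates on $X_q$ (Theorems \ref{thmShah}, \ref{equiThm-rat}, Corollary \ref{also2-rat}), and pass to the characteristic function by a sandwich argument. The right-$\Gamma(q)$-invariance of $\widehat\Z_\vecalf^d$ is correct and is indeed the reason the problem lives on $X_q$. However, there is a technical slip in your geometric reduction: the diagonal stretching $D_T$ does \emph{not} carry the thin sector $\scrB_T^d(c)\cap(\fD_T(\kappa_q^{-1}\sigma)K\cdot\R_{>0})$ exactly onto the cone $\fC(c,\kappa_q^{-1}\sigma)$ — the shell's spherical caps become curved faces, so you actually get a $T$-dependent region $\fC_T(c,\kappa_q^{-1}\sigma)$ (cf.\ \eqref{FCTCSDEF}) which only converges to the cone as $T\to\infty$. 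The paper addresses this precisely with the $\liminf/\limsup$ machinery of Section \ref{secThin} and Lemma \ref{lemCon1}, which shows $\lim(\inf\fC_T)^\circ\supset\fC^\circ$ and $\lim\overline{\sup\fC_T}\subset\overline{\fC}$ and that $\partial\fC$ is Lebesgue-null; your item (i) only addresses the limiting cone and would need to be paired with this convergence statement to close the argument.

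Two further remarks. First, your item (ii) (non-escape of mass) is automatic: Theorem \ref{equiThm-rat} already gives convergence for arbitrary \emph{bounded} continuous test functions (not just compactly supported ones), so once one has boundary-regularity no separate Siegel-type tail estimate on $X_q$ is required. Second, the Möbius decomposition of $\widehat\Z_\vecalf^d$ into scaled primitive residue classes is correct (it is in fact how the density constant $\kappa_q$ in \eqref{KAPPAQDEF} is computed), but it is not needed for the measure-zero statement that makes the sandwich work: the paper (cf.\ the discussion around \eqref{SCREQDEF} and Lemma \ref{limlem}\,(vi)) uses only the trivial inclusion $\widehat\Z_\vecalf^d\subset(\Z^d+\vecalf)\setminus\{\bn\}$ together with the observation that $\int_{\SL(d,\R)}I(\vecm M\in\fC)\,d\mu_1(M)=0$ for any Lebesgue-null $\fC$ and any fixed $\vecm\neq\bn$. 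So your sieve is a workable but more labour-intensive route where the paper gets away with a one-line monotonicity argument.
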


\begin{remark} \label{prim-visThm0rem}
The function $\widehat E_{0,\vecalf}(r,\sigma)$ is $\C^1$
with respect to $\sigma>0$. This is proved by
adapting the arguments of Sections \ref{SUBMSECTION}  
and \ref{IMPORTANTSECV0} to the setting of visible lattice points.
\end{remark}

In dimension $d=2$, considering only visible lattice points gives a 
variant of Theorem \ref{thmGap} with
an everywhere continuous distribution function:
Take $\vecalf\in\Q^2$, and consider the set of rescaled directions
\begin{align}
\bigl\{
\sfrac 1{2\pi} %
\arg(x_1+\i x_2)\col
\vecx=(x_1,x_2)\in\widehat\Z^2_\vecalf, \quad x_1^2+x_2^2<T^2\bigr\}.
\end{align}
Let us label these $\tN=\tN(T)$ numbers 
in order by
\begin{align}
-\tfrac12 < \txi_{\tN,1}<\txi_{\tN,2}<\ldots<\txi_{\tN,\tN}\leq \tfrac12
\end{align}
and define in addition $\txi_{\tN,0}=\txi_{\tN,\tN}-1$.
Note that this is exactly the sequence which is obtained from \eqref{XISEQ}
by removing all repetitions. We now have:
\begin{cor} \label{prim-thmGap}
There exists a distribution function $\widehat P_\vecalf(s)$ on 
$\R_{\geq 0}$, continuous on all of $\R_{\geq 0}$, such that for every 
$s\geq 0$,
\begin{equation} \label{TXITHM}
\lim_{\tN\to\infty} \tN^{-1}
\#\bigl\{ 1\leq j\leq \tN : \tN(\txi_{\tN,j}-\txi_{\tN,j-1}) \geq s\bigr\} 
=\widehat P_\vecalf(s).
\end{equation}
\end{cor}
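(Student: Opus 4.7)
The plan is to proceed exactly as in Remark \ref{GENSTATREM}, where Theorem \ref{thmGap} was deduced from Theorem \ref{visThm0}, but now using the visible-point version Theorem \ref{prim-visThm0}. Apply that theorem with $c=0$ and $r=0$ to obtain the limit $\widehat E_{0,\vecalf}(0,\sigma)$ of the void probability, and set
\[
\widehat P_\vecalf(s) := -\frac{d}{ds}\widehat E_{0,\vecalf}(0,s) \quad (s>0), \qquad \widehat P_\vecalf(0) := 1.
\]
The standard statistical argument connecting the void probability on the circle to the gap distribution then yields \eqref{TXITHM}: for any arrangement of $\tN$ distinct points on the unit interval with mean gap $1/\tN$, a direct computation shows that the Lebesgue measure of those $v$ whose disc of length $\sigma/\tN$ around $v$ contains no point equals $\sum_j \max(g_j-\sigma/\tN,0)$, which after integration by parts (using that the gaps sum to $1$) equals $\int_\sigma^\infty(1-F_\tN(u))\,du$, with $F_\tN$ the empirical CDF of the rescaled gaps $\tN g_j$. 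Letting $T\to\infty$ and invoking Theorem \ref{prim-visThm0} for the left-hand side identifies $\widehat P_\vecalf(u)=1-F(u)$ as $-(d/d\sigma)\widehat E_{0,\vecalf}(0,\sigma)$. The continuity of $\widehat P_\vecalf$ on $(0,\infty)$ is then immediate from Remark \ref{prim-visThm0rem}.

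The genuinely new point, relative to Theorem \ref{thmGap}, is continuity at $s=0$: the jump in $P_\vecalf$ at $0$ present for $\vecalf\in\Q^d$ reflects lattice points with coinciding directions (e.g.\ non-primitive integer vectors when $\vecalf=\vecnull$), and passing to $\widehat\scrL_\vecalf$ removes these multiplicities. To make this rigorous I would prove
\[
\widehat E_{0,\vecalf}(0,\sigma) = 1 - \sigma + O(\sigma^2) \qquad (\sigma\to 0^+),
\]
which, via the identity above, forces $\widehat P_\vecalf(0^+)=1$. The lower bound $\widehat E_{0,\vecalf}(0,\sigma)\geq 1-\sigma$ is immediate from $\int_{\S_1^{d-1}}\widehat\scrN_{0,T}(\sigma,\vecv)\,d\lambda(\vecv)=\sigma$ and Markov's inequality. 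The matching upper bound $\widehat E_{0,\vecalf}(0,\sigma)\leq 1-\sigma+O(\sigma^2)$ follows from Bonferroni together with a pair-correlation bound
\[
\int_{\S_1^{d-1}} \widehat\scrN_{0,T}(\sigma,\vecv)\bigl(\widehat\scrN_{0,T}(\sigma,\vecv)-1\bigr)\,d\lambda(\vecv) = O(\sigma^2),
\]
uniformly in $T$, counting ordered pairs of visible lattice points lying simultaneously in a thin disc of size $\sim\sigma T^{-2}$.

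I expect the main obstacle to be precisely this pair-correlation estimate. It does not follow formally from Theorem \ref{prim-visThm0}, since that theorem concerns only the distribution of $\widehat\scrN$ for fixed $r$; it requires an additional second-moment calculation on the space $X_q$, essentially a Siegel-type integration formula applied to the characteristic function of $\fC(0,\kappa_q^{-1}\sigma)\times\fC(0,\kappa_q^{-1}\sigma)$ on pairs of (visible) lattice points. Such a formula is available in the framework of Sections \ref{FOLIATIONSEC}--\ref{PROPLIMFCNSEC}, and the geometric point is transparent: any two points of $\widehat\scrL_\vecalf$ sharing a direction within $O(T^{-d/(d-1)})$ must have radial coordinates in a short interval, whose contribution to the pair count in a cone of volume $\asymp\sigma$ is itself $O(\sigma^2)$.
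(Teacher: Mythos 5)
The first half of your proposal---the reduction via the standard void--gap correspondence to the identity $\widehat P_\vecalf(s)=-\frac{d}{ds}\widehat E_{0,\vecalf}(0,s)$ for $s>0$, $\widehat P_\vecalf(0):=1$, and continuity on $(0,\infty)$ from Remark~\ref{prim-visThm0rem}---matches the paper's argument.

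Where you diverge, and where a genuine gap remains, is at $s=0$. You propose to show $\widehat E_{0,\vecalf}(0,\sigma)=1-\sigma+O(\sigma^2)$ via Markov's inequality and a Bonferroni step resting on a second-factorial-moment bound $\EE[\widehat\scrN(\widehat\scrN-1)]=O(\sigma^2)$. You flag this estimate yourself as unproved and as ``the main obstacle,'' and that self-assessment is right: it does not follow from Theorem~\ref{prim-visThm0}, which controls level sets $\{\widehat\scrN=r\}$ but not moments, and it would require a Rogers-type second-moment formula for the visible-point Siegel transform on $X_q$ --- a formula the paper does not establish and which in $d=2$ needs care. So as written the route to continuity at $0$ has a hole.

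The paper's own route is both simpler and strictly sharper. It first notes $\widehat E_{0,\vecalf}(0,s)=E_{0,\vecalf}(0,\kappa_q^{-1}s)$, which holds because $\fC(0,\kappa_q^{-1}s)$ is star-shaped about the origin (so a ray through a lattice point meets the cone iff the first, i.e.\ visible, lattice point on it lies in the cone), and then proves in Section~\ref{IMPORTANTSECV0} the \emph{exact} identity $E_{0,\vecalf}(0,\sigma)=1-\kappa_q\sigma$ for all $\sigma\in[0,(2q)^{-1}]$. This gives $\widehat E_{0,\vecalf}(0,s)=1-s$ for small $s$, hence $\widehat P_\vecalf\equiv 1$ in a neighborhood of $0$, which is considerably stronger than $\widehat P_\vecalf(0^+)=1$. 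The geometric reason is that for $\sigma$ small the cone is so thin that a lattice in $X_q(\vecy)$ cannot contain a second visible point there; in particular your desired second-moment bound is actually identically zero for small $\sigma$, but is most naturally \emph{deduced from} the exact formula rather than proved independently and then used to recover a weaker asymptotic.

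One minor point: passing from $\widehat E_{0,\vecalf}(0,\sigma)=1-\sigma+O(\sigma^2)$ to $\widehat P_\vecalf(0^+)=1$ is not automatic, since an $O(\sigma^2)$ bound on an antiderivative does not control the derivative pointwise; you need to invoke the monotonicity of $\widehat P_\vecalf$ (so that $\widehat P_\vecalf(0^+)=\lim_{\sigma\to 0^+}\sigma^{-1}\int_0^\sigma\widehat P_\vecalf$). This is routine but worth saying explicitly.
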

\begin{proof}
Just as in Remark \ref{GENSTATREM},
the limit relation \eqref{TXITHM} follows from
Theorem \ref{prim-visThm0} together with the fact 
$\tN\sim \kappa_q\pi T^2$ as $T\to\infty$ (cf.\ \eqref{KAPPAQDEF}),
and $\widehat P_\vecalf(s)$ is explicitly given by
\begin{align}
\widehat P_\vecalf(s):= -\frac{d}{ds}\widehat E_{0,\vecalf}(0,s)
\quad(s>0);
\qquad \widehat P_\vecalf(0):=1.
\end{align}
Note that $\widehat E_{0,\vecalf}(0,s)
=E_{0,\vecalf}(0,\kappa_q^{-1}s)$ for all $s\geq 0$, 
since $\fC(0,\kappa_q^{-1}s)$ is star shaped. Hence
\begin{align}
\widehat P_\vecalf(s)=
\kappa_q^{-1} P_\vecalf(\kappa_q^{-1}s) \qquad \text{for } \: s>0.
\end{align}
The continuity of $\widehat P_\vecalf(s)$ for $s>0$ follows from 
Remark \ref{visThm0rem}, or Remark \ref{prim-visThm0rem}.
Furthermore, in Section \ref{IMPORTANTSECV0} we will prove that (for $d=2$),
\begin{align} \label{E0ALF0SMALLS}
E_{0,\vecalf}(0,\sigma)=1-\kappa_q \sigma, \qquad \forall \sigma\in \bigl[0,(2q)^{-1}\bigr],
\end{align}
and this implies that $\widehat P_\vecalf(s)$ is also continuous at $s=0$.
\end{proof}

When $\vecalf=\bn$, Corollary \ref{prim-thmGap} specializes to give the 
limiting
gap distribution for directions of primitive lattice points in $\Z^2$,
which was proved earlier by Boca, Cobeli and Zaharescu \cite{Boca00}.

\vspace{5pt}

The proofs of Theorems \ref{visThm0} and \ref{prim-visThm0} are virtually identical
to those of Theorems \ref{visThm} and \ref{prim-visThm2}; we will therefore only outline the differences in Section \ref{diff0}. In \cite{visible} we carry out a more detailed statistical analysis of the distribution of visible lattice points, which yields generalizations of Theorems \ref{visThm0} and \ref{prim-visThm0}, and also provide explicit formulas and tail estimates of the limiting distributions.

\section{The number of spheres in a random direction} \label{secVisible}

We now turn to a lattice point problem that is in some sense dual to the one studied in the previous Section \ref{secVisible0}. Its solution will also answer the question of the distribution of free path lengths in the periodic Lorentz gas, see Section \ref{secLorentz} below for details.

\subsection{Spheres centered at lattice points} \label{secVisiblefirst}

We place at each lattice point $\vecy\in\Lalf$ a ball of small radius $\rho$ and consider the set $\scrB^d_\rho + \Lalf$. 
The set of balls with centers inside the shell \eqref{shell} is
\begin{equation}\label{BBB}
	\{ \vecx\in \scrB^d_\rho + \vecy\col \vecy\in\Lalf\cap \scrB^d_T(c)\setminus\{\vecnull\} \}. 
\end{equation}
Note that we remove any ball at $\vecy=\vecnull$ (this is only relevant in the case $\vecalf\in\ZZ^d$). 
Furthermore we will always keep 
$\rho \leq m(\Lalf):=\min \{\|\vecy\| \col \vecy\in\Lalf\setminus\{\bn\}\}$, 
so that $\bn$ lies outside each of the balls in our set.
We are interested in the number $\scrN_{c,T}(\rho,\vecv)$ of intersections of this set with a ray starting at the origin $\vecnull$ that points in the random direction $\vecv\in\S_1^{d-1}$ distributed according to the probability measure $\lambda$. That is
\begin{equation} \label{NCTRHOVDEF}
	\scrN_{c,T}(\rho,\vecv):=\#\big\{ \vecy\in\Lalf\cap \scrB^d_T(c)\setminus\{\vecnull\}\col
	\R_{>0}\vecv \cap (\scrB^d_\rho + \vecy) \neq \emptyset \big\} .
\end{equation}

If $\rho\leq\|\vecy\|$, then
a ray in direction $\vecv$ hits the ball $\scrB^d_\rho + \vecy$ if and only if
\begin{equation}
	\|\vecy\|^{-1} \vecy \in \fD(\|\vecy\|^{-1}\rho,\vecv)
\end{equation}
with the disc
\begin{align}\label{disceps}
& \fD(\epsilon,\vecv) = (\scrB^d_{\epsilon}+\vecv)(1-\epsilon^2)^{-1/2} \cap \S_1^{d-1} 
\qquad (0<\epsilon<1);
\\ \notag
& \fD(1,\vecv)=\{\vecw\in\S_1^{d-1}\col \vecw\cdot\vecv>0\}.
\end{align}
We will again use the shorthand $\fD(\epsilon)=\fD(\epsilon,\vece_1)$.
The radius of this disc is $\sim\epsilon$, for $\epsilon\to 0$.
Hence the number of balls hit by a ray in direction $\vecv$ is 
\begin{equation}\label{asdef}
	\scrN_{c,T}(\rho,\vecv)=\#\bigg\{ \vecy\in\Lalf\cap \scrB^d_T(c)\setminus\{\vecnull\}\col  \frac{\vecy}{\|\vecy\|} \in \fD(\|\vecy\|^{-1}\rho,\vecv)\bigg\} ,
\end{equation}
compare \eqref{disc1}.

For any $\lambda$ as in \eqref{expval}, 
one finds for the expectation value as $T\to\infty$, $\rho\to 0$
\begin{equation}\label{expval2}
\begin{split}
	\int_{\S_1^{d-1}} \scrN_{c,T}(\rho,\vecv) \, d\lambda(\vecv) 
	& \sim \int_{\vecy\in\scrB^d_T(c),\: \|\vecy\|>\rho} \frac{\vol(\fD(\|\vecy\|^{-1}\rho))}{\vol(\S_1^{d-1})} \, d\!\vol(\vecy) \\
	& \sim \frac{\vol(\scrB^{d-1}_1)}{\vol(\S_1^{d-1})} \rho^{d-1} \int_{\scrB^d_T(c)} \frac{d\!\vol(\vecy)}{\|\vecy\|^{d-1}} \\
	& = \vol(\scrB^{d-1}_1) (1-c) \rho^{d-1} T .
\end{split}
\end{equation}
This suggests the scaling $\rho=\sigma T^{-1/(d-1)}$ with $\sigma\geq 0$ fixed.

\begin{thm}\label{visThm}
Let $\lambda$ be a Borel probability measure on $\S_1^{d-1}$ absolutely continuous with respect to $\vol_{\S_1^{d-1}}$. 
Then, for every $\sigma\geq 0$ and $r\in\ZZ_{\geq 0}$, the limit 
\begin{equation}
	F_{c,\vecalf}(r,\sigma):=\lim_{T\to\infty} \lambda(\{ \vecv\in\S_1^{d-1} : \scrN_{c,T}(\sigma T^{-1/(d-1)},\vecv)=r \})
\end{equation}
exists, and for fixed $\vecalf,r$
the convergence is uniform with respect to $\sigma$ in any 
compact subset of $\RR_{\geq 0}$, and with respect to $c\in [0,1]$.
The limit function is given by
\begin{equation} \label{FCALFRSIGMDEF}
F_{c,\vecalf}(r,\sigma)=
	\begin{cases}
	\mu_1(\{ M\in X_1: \#( \ZZ_*^d M \cap \fZ(c,\sigma))= r \}) & \text{if $\vecalf\in\ZZ^d$}\\
	\mu_q(\{ M\in X_q: \#( (\ZZ^d+\frac{\vecp}{q}) M \cap \fZ(c,\sigma))= r \}) & \text{if $\vecalf=\frac{\vecp}{q}\in\QQ^d\setminus\ZZ^d $}\\
	\mu(\{ (M,\vecxi)\in X: \#((\ZZ^d M +\vecxi) \cap \fZ(c,\sigma))= r \}) & \text{if $\vecalf\notin\QQ^d$,}
	\end{cases}
\end{equation}
where
\begin{equation}\label{zyl}
	\fZ(c,\sigma) =\big\{(x_1,\ldots,x_d)\in\RR^d : c < x_1 < 1, \|(x_2,\ldots,x_d)\|< \sigma \big\} .
\end{equation}
In particular, $F_{c,\vecalf}(r,\sigma)$ is continuous in $\sigma$ and independent of $\scrL$ and $\lambda$.
\end{thm}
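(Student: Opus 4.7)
The plan is to turn the counting problem into a lattice-point count inside a fixed, bounded region of $\RR^d$ — namely the cylinder $\fZ(c,\sigma)$ — for a randomly translated and sheared lattice, and then to invoke the equidistribution results of Section \ref{secEqui} (which themselves rest on Ratner's theorem). First I would parametrize directions by $\vecv = \vece_1 K$ with $K \in \SO(d)$; the hitting condition \eqref{asdef}--\eqref{disceps} then says that the rotated point $\vecy K^{-1} = (y_1,\ldots,y_d)$ lies in a thin cone around the positive $x_1$-axis whose transverse radius is essentially $\rho$. Setting $\rho = \sigma T^{-1/(d-1)}$ and applying on the right the diagonal element $\Phi_T := \diag\bigl(T^{-1}, T^{1/(d-1)},\ldots, T^{1/(d-1)}\bigr) \in \SLR$ (which contracts the radial direction by $T$ and expands each transverse direction by $T^{1/(d-1)}$), the image of this thin cone intersected with the spherical shell $\scrB_T^d(c)$ differs from the bounded cylinder $\fZ(c,\sigma)$ of \eqref{zyl} by a set of vanishing measure. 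Consequently,
\begin{equation*}
\scrN_{c,T}\bigl(\sigma T^{-1/(d-1)},\vece_1 K\bigr) = \#\bigl( \Lalf K^{-1} \Phi_T \cap \fZ(c,\sigma) \setminus\{\vecnull\}\bigr) + \text{(error)},
\end{equation*}
where the error reflects the discrepancy between the exact disc \eqref{disceps} and its linearization, and between the shell $\scrB_T^d(c)$ and the slab $\{c < y_1/T < 1\}$; both contribute only through lattice points near $\partial\fZ(c,\sigma)$.

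Writing $\Lalf = \ZZ^d(1,\vecalf)(M_0,\vecnull)$ in the $\ASLR$-action, the rescaled (affine) lattice is $\ZZ^d g_T(K)$ with $g_T(K) := (1,\vecalf)(M_0 K^{-1}\Phi_T,\vecnull)$. The central ergodic input, which I would take from Section \ref{secEqui}, is that as $T\to\infty$ with $K$ sampled according to $\lambda$, the projection of $g_T(K)$ equidistributes in the appropriate homogeneous space: toward $\mu$ on $X = \ASLASL$ when $\vecalf\notin\Q^d$; toward $\mu_q$ on $X_q = \Gamma(q)\backslash\SLR$ when $\vecalf = \vecp/q \in \Q^d\setminus\ZZ^d$; and toward $\mu_1$ on $X_1 = \SLSL$ when $\vecalf\in\ZZ^d$. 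Ratner's measure classification produces this equidistribution by asserting that the compact $\SO(d)$-translate of the base point, pushed out by the expanding one-parameter subgroup $\Phi_T$, equidistributes in the smallest closed subgroup orbit containing it, which for our specific $g_T(K)$ is (by a group-theoretic check distinguishing the three cases) the whole of $X$, $X_q$, or $X_1$. Applying this to the indicator
\[
f_r(g) := \mathbf{1}\bigl\{\#(\ZZ^d g \cap \fZ(c,\sigma) \setminus\{\vecnull\}) = r\bigr\}
\]
then yields exactly the right-hand side of \eqref{FCALFRSIGMDEF}.

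Two technical points have to be settled before equidistribution may be applied to $f_r$. First, since $f_r$ is an indicator rather than continuous, one must verify that its discontinuity set $\{g : \ZZ^d g \cap \partial\fZ(c,\sigma) \neq \emptyset\}$ has measure zero for the limit measures $\mu$, $\mu_q$, $\mu_1$; this follows by Fubini from the smoothness of $\partial\fZ(c,\sigma)$ in $\RR^d$. Second, and more seriously, the target spaces are non-compact and one must prevent mass escape into the cusp: for every $\epsilon > 0$ one needs $R = R(\epsilon)$ with
\[
\lambda\bigl(\bigl\{K : \scrN_{c,T}(\sigma T^{-1/(d-1)},\vece_1 K) > R\bigr\}\bigr) < \epsilon
\]
uniformly in $T$. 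The natural tool is a Siegel-type mean-value bound on the space of lattices combined with a quantitative non-divergence estimate in the spirit of Dani--Margulis applied to the flow $\Phi_T$. This uniform cusp control, together with the equidistribution of the $\SO(d)$-orbit under the expanding flow, is in my view the main technical obstacle; the raw equidistribution is only qualitative and must be promoted. Once both estimates are available, one sandwiches $f_r$ between continuous, compactly supported approximants, applies equidistribution to them, and lets the truncation level $R\to\infty$. The continuity of $F_{c,\vecalf}(r,\sigma)$ in $\sigma$, and the uniformity of the convergence in $\sigma$ on compact subsets of $\RR_{\geq 0}$ and in $c\in[0,1]$, then follow from the same Fubini argument on $\partial\fZ(c,\sigma)$ together with the monotonicity of $\scrN_{c,T}$ in $\sigma$.
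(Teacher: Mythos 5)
Your overall strategy — rescale so the counting problem is lattice-point counting in the fixed cylinder $\fZ(c,\sigma)$ for the affine lattice $\ZZ^d g_T(K)$, and invoke Ratner/Shah equidistribution of the $\SO(d)$-translate pushed out by $\Phi_T$ in the relevant homogeneous space ($X$, $X_q$ or $X_1$ depending on the rationality of $\vecalf$) — is precisely what the paper does (via Theorem \ref{visThm2gen} and the machinery of Section \ref{secThin}). But you misidentify the real technical content. What you call the second and ``more serious'' obstacle — proving uniform tightness $\lambda\{K \col \scrN_{c,T} > R\} < \epsilon$ via Siegel mean-value bounds and Dani--Margulis non-divergence — is simply not needed. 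The indicator $f_r$ is bounded by one, and Shah's theorem (the paper's Theorem \ref{thmShah}) already gives equidistribution against \emph{all} bounded continuous functions on the non-compact quotient; the implicit non-divergence of the translated orbit is part of Shah's conclusion, not an extra input to be supplied. Passing from bounded continuous test functions to indicators is then the standard portmanteau argument, valid because the limit measure is a Radon probability measure and the discontinuity set of $f_r$ has measure zero; the paper packages this as Theorem \ref{charThm} together with Lemma \ref{limlem}(vi).

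The place that genuinely requires care — and which your sketch waves past with one sentence — is the replacement of the true, $T$-dependent hitting region (a thin solid spherical-shell cone) by the fixed cylinder $\fZ(c,\sigma)$, together with the stated uniformity of convergence in $\sigma$ on compacta and in $c\in[0,1]$. The paper introduces the $T$-dependent family of sets $\fZ_T$, computes its $\liminf$ and $\limsup$ explicitly (Lemma \ref{lemCon2fU}), and reduces uniform convergence in $(\sigma,c)$ to pointwise convergence along arbitrary sequences $(\sigma_T,c_T)\to(\sigma_\infty,c_\infty)$; the control of the $\limsup$/$\liminf$ of such families is exactly what Lemma \ref{limlem} and the set-theoretic apparatus of Section \ref{secThin} are built to provide. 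Your appeal to ``monotonicity of $\scrN_{c,T}$ in $\sigma$'' plus Fubini does not by itself deliver the uniformity in $c$ — there is no useful monotonicity in $c$, and the ``error'' in replacing the cone by the cylinder must be shown to vanish uniformly in both parameters, which is the substance of Lemma \ref{lemCon2fU}.
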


\begin{remark} \label{visThmrem}
In the case $c=0$ the function $F_{c,\vecalf}(r,\sigma)$ is $\C^1$ with 
respect to $\sigma>0$; we will prove this in Section \ref{subsecC1}.
(We expect the same should be true also for any fixed $0<c<1$.)
If $\vecalf\notin\Q^d$ then $F_{c,\vecalf}(r,\sigma)$
is independent of $\vecalf$; we denote this ``universal''
limit function simply by $F_c(r,\sigma)$.\label{FCRSDEF}
We prove in Section \ref{subsecC1} that $F_c(r,\sigma)$ is $\C^2$
with respect to $\sigma>0$, for any fixed $0\leq c<1$. 
\end{remark}

\begin{remark} \label{visThmremexpl}
We will give tail estimates for 
$F_{c,\vecalf}(r,\sigma)$ for general dimension $d$ in \cite{partIV}.
In the special case $d=2$, explicit formulas for $F_0(r,\sigma)$ and 
$F_{0,\bn}(r,\sigma)$ were given in \cite{SV}, where these 
limit functions came up in a different set of problems.
Specifically, $F_0(r,\sigma)=f_r^{\text{box,ASL}_2}(2\sigma)$ and
$F_{0,\bn}(r,\sigma)=f_{2r+1}^{\text{box,SL}_2}(4\sigma)$ in the notation of
\cite[Section 7]{SV}.
\end{remark}

\subsection{A variation}\label{secVar}

Instead of rays emerging from the origin we consider now the family of rays starting at the points $\rho \vecbeta(\vecv)$ in direction 
$\vecv$, where $\vecbeta:\S_1^{d-1}\to\RR^{d}$ is some fixed continuous function.
We will keep $\rho$ so small that,
for all $\vecy\in\Lalf\setminus\{\bn\}$ and all $\vecv\in\S^{d-1}_1$,
the point $\rho\vecbeta(\vecv)$ lies outside the ball $\scrB_\rho^d+\vecy$.
Then the ray $\rho \vecbeta(\vecv)+\R_{>0} \vecv$ hits the ball 
$\scrB^d_\rho + \vecy$ if and only if
\begin{equation}
	\frac{\vecy-\rho \vecbeta(\vecv)}{\|\vecy-\rho \vecbeta(\vecv)\|}  \in 
	\fD(\|\vecy-\rho \vecbeta(\vecv)\|^{-1}\rho,\vecv),
\end{equation}
compare the analogous argument in the previous section.
Hence the number of balls in \eqref{BBB} intersecting this ray is
$\scrN_{c,T}(\rho,\vecv,\vecbeta(\vecv))$, where
\begin{equation} \label{asin}
	\scrN_{c,T}(\rho,\vecv,\vecw) := \#\bigg\{ \vecy \in (\Lalf \cap \scrB^d_T(c)\setminus\{\vecnull\})-\rho \vecw \col 
	\frac{\vecy}{\|\vecy\|}   \in \fD(\|\vecy\|^{-1}\rho,\vecv) \bigg\} .
\end{equation}

\begin{thm}\label{visThm2}
Let $\lambda$ be a Borel probability measure on $\S_1^{d-1}$ absolutely continuous with respect to Lebesgue measure. Then, for every $\sigma\geq 0$ and $r\in\ZZ_{\geq 0}$, the limit 
\begin{equation} \label{visThm2limit}
	F_{c,\vecalf,\vecbeta}(r,\sigma):=\lim_{T\to\infty} \lambda(\{ \vecv\in\S_1^{d-1} : \scrN_{c,T}(\sigma T^{-1/(d-1)},\vecv,\vecbeta(\vecv))=r \})
\end{equation}
exists, and for fixed $\vecalf,\vecbeta,\lambda,r$ 
the convergence is uniform with respect to $\sigma$ in any 
compact subset of $\RR_{\geq 0}$, and with respect to $c\in [0,1]$.
The limit function is given by
\begin{equation} \label{defF}
\begin{split}
&F_{c,\vecalf,\vecbeta}(r,\sigma)
\\
&=	\begin{cases}
	(\mu_1\times\lambda)(\{ (M,\vecv)\in X_1\times\S_1^{d-1}: 
	\#( \ZZ_*^d M \cap \fZ_\vecv(c,\sigma))= r \}) & \text{if $\vecalf\in\ZZ^d$}\\
	(\mu_q\times\lambda)(\{ (M,\vecv)\in X_q\times\S_1^{d-1}: 
	\#( (\ZZ^d+\frac{\vecp}{q}) M \cap \fZ_\vecv(c,\sigma))= r \}) 
	& \text{if $\vecalf=\frac{\vecp}{q}\in\QQ^d\setminus\ZZ^d $}\\
	\mu(\{ (M,\vecxi)\in X: \#((\ZZ^d M +\vecxi) \cap \fZ(c,\sigma))= r \}) & \text{if $\vecalf\notin\QQ^d$,}
	\end{cases}
\end{split}
\end{equation}
where
\begin{equation} \label{FZVCSDEF}
	\fZ_\vecv(c,\sigma)=\fZ(c,\sigma)+\sigma
\bigl\|\Proj_{\{\vecv\}^\perp} \vecbeta(\vecv)\bigr\|\cdot \vece_2.
\end{equation}
($\Proj_{\{\vecv\}^\perp}$ denotes the orthogonal projection from
$\R^d$ onto the orthogonal complement of $\vecv$, and 
$\vece_2=(0,1,0,\ldots,0)$.)
In particular $F_{c,\vecalf,\vecbeta}(r,\sigma)$ is continuous in $\sigma$ and independent of $\scrL$, and if $\vecalf\notin\QQ^d$ then 
$F_{c,\vecalf,\vecbeta}(r,\sigma)%
=F_{c}(r,\sigma)$,
independently of $\vecbeta$ and $\lambda$.
\end{thm}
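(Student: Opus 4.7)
The plan is to imitate the proof of Theorem \ref{visThm}, of which the present statement is a direct generalization: reduce the counting problem to a lattice-points-in-a-region problem via rotation and rescaling, and then invoke the equidistribution results of Section \ref{secEqui}.

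First I would rotate and rescale as follows. For each $\vecv \in \S_1^{d-1}$ fix $K(\vecv) \in \SO(d)$ with $\vece_1 K(\vecv) = \vecv$, measurable in $\vecv$, and choose $R(\vecv) \in \SO(d-1) \subset \SO(d)$ (via the embedding \eqref{SODINBED}) that takes $\Proj_{\vece_1^\perp}(\vecbeta(\vecv) K(\vecv)^{-1})$ to a nonnegative multiple of $\vece_2$. Set $\Psi(\vecv) := K(\vecv) R(\vecv)$ and let $D(T) := \diag(T^{-1}, T^{1/(d-1)}, \ldots, T^{1/(d-1)})$, which is volume preserving. With $\rho = \sigma T^{-1/(d-1)}$, the affine transformation $\vecy \mapsto (\vecy - \rho \vecbeta(\vecv))\Psi(\vecv) D(T)$ carries the condition in \eqref{asin} on $\vecy$ into the statement that the image lies in a region $\fZ^{(T)}(c,\sigma)$ that converges to $\fZ(c,\sigma)$ as $T \to \infty$. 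Equivalently, after undoing the translation, $\scrN_{c,T}(\sigma T^{-1/(d-1)}, \vecv, \vecbeta(\vecv))$ equals the number of points of the affine lattice $\Lalf \Psi(\vecv) D(T)$ lying in $\fZ^{(T)}(c,\sigma) + \vect(T,\vecv)$, with $\vect(T,\vecv) = \sigma T^{-1/(d-1)} \vecbeta(\vecv) \Psi(\vecv) D(T)$.

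Next I would identify the limiting region. Writing $\vecbeta(\vecv) \Psi(\vecv) = (b_1, b_2, 0, \ldots, 0)$, one has $b_1 = \vecbeta(\vecv)\cdot\vecv$ and $b_2 = \|\Proj_{\vecv^\perp}\vecbeta(\vecv)\|$ by construction of $\Psi(\vecv)$, so a direct computation gives $\vect(T,\vecv) = (\sigma b_1 T^{-1-1/(d-1)}, \sigma b_2, 0, \ldots, 0) \to \sigma \|\Proj_{\vecv^\perp}\vecbeta(\vecv)\|\vece_2$, uniformly in $\vecv$. Consequently $\fZ^{(T)}(c,\sigma) + \vect(T,\vecv) \to \fZ_\vecv(c,\sigma)$. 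Then I would apply the equidistribution theorems of Section \ref{secEqui} (ultimately resting on Ratner's theorem) to show that the family $\{\Lalf \Psi(\vecv) D(T) : \vecv \in \S_1^{d-1}\}$ equidistributes jointly with $\vecv$ on the appropriate homogeneous space: on $X_1 \times \S_1^{d-1}$ with $\mu_1 \times \lambda$ when $\vecalf \in \ZZ^d$; on $X_q \times \S_1^{d-1}$ with $\mu_q \times \lambda$ when $\vecalf = \vecp/q \in \QQ^d \setminus \ZZ^d$; and on $X \times \S_1^{d-1}$ with $\mu \times \lambda$ when $\vecalf \notin \QQ^d$. Right-invariance of each Haar measure under $R(\vecv) \in \SO(d-1)$ (which commutes with $D(T)$) ensures that inserting $R(\vecv)$ into $\Psi(\vecv)$ does not affect the limit. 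Applied to the indicator ``a lattice has exactly $r$ points in $\fZ_\vecv(c,\sigma)$'', this yields \eqref{defF}. In the irrational case, translation invariance of $\mu$ under $\vecxi \mapsto \vecxi + \vect$ absorbs the shift into $\vecxi$, eliminating the $\vecbeta$ and $\lambda$ dependence and recovering $F_c(r,\sigma)$.

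The principal obstacle is that $\fZ(c,\sigma)$ and $\fZ_\vecv(c,\sigma)$ are not compact when lifted to the non-compact spaces $X_1, X_q, X$, so equidistribution does not apply to their indicators directly. One must show that points of random lattices lying in the long, thin cusp near the $\vece_1$-axis contribute negligibly, which requires uniform tail estimates of Siegel-transform type in the spirit of \cite{Bourgain98}, \cite{Golse00}, followed by a truncation of $\fZ(c,\sigma)$. The same tail bounds combined with uniformity of the equidistribution yield uniform convergence in $\sigma$ on compact sets and in $c \in [0,1]$, and ensure continuity of $F_{c,\vecalf,\vecbeta}(r,\sigma)$ in $\sigma$ by ruling out (almost surely) lattice points on the boundary of the limiting region.
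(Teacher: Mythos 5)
Your high-level strategy is the right one — rescale by $\diag(T^{-1},T^{1/(d-1)},\ldots,T^{1/(d-1)})$ to convert the counting problem into lattice points in a bounded region, then invoke equidistribution on the relevant homogeneous space — and your treatment of the rotation-invariance step (replacing $(\vecbeta(\vecv)K(\vecv))_\perp$ by its norm times $\vece_2$) and of the $\vecxi$-translation argument in the irrational case both match the paper's reduction from Theorem~\ref{visThm2gen} to Theorem~\ref{visThm2}. One small omission there: when $d=2$ the group $\SO(1)$ is trivial, so $R(\vecv)$ cannot always rotate $\Proj_{\vece_1^\perp}(\vecbeta(\vecv)K(\vecv)^{-1})$ to a \emph{nonnegative} multiple of $\vece_2$; the paper handles the sign flip via conjugation by $\smatr 100{-1}$ composed with an appropriate $\gamma_0\in\SL(2,\Z)$ that preserves $\mu_q$.

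The genuine gap is in your treatment of the passage from continuous test functions to indicators. You claim the obstacle is non-compactness and propose Siegel-transform tail estimates à la \cite{Bourgain98}, \cite{Golse00} followed by truncation of $\fZ(c,\sigma)$. But $\fZ(c,\sigma)$ is already a bounded set in $\R^d$, and the event $\{\#(\ZZ_*^d M\cap\fZ(c,\sigma))=r\}$ is an indicator (bounded by $1$) on a probability space; there is no divergent cusp contribution to control. The real issue is that the indicators are discontinuous and that the region $\fZ_T$ varies with $T$, and the paper's resolution is not truncation: it is the $\liminf/\limsup$ sandwich of Theorem~\ref{charThm}, using the closed and open envelopes $\lim\overline{\sup\scrE_t}$ and $\lim(\inf\scrE_t)^\circ$, together with Lemma~\ref{limlem}~(vi), which invokes the Siegel integral formula only to show that the discrepancy set $\scrE(\fB,1)$ has $(\vol_{\R^{d-1}}\times\mu)$-measure zero whenever $\fB$ has Lebesgue measure zero. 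The Bourgain--Golse--Wennberg estimates give tail bounds only; they do not establish existence of the limit, and a truncation argument built on them would essentially amount to rederiving the machinery of Section~\ref{secThin}. Likewise, the stated uniformity in $\sigma$ and $c$ in the paper comes not from tail bounds but from reducing to convergence along arbitrary continuous paths $T\mapsto(\sigma_T,c_T)$ with $\sigma_T\to\sigma_\infty$, $c_T\to c_\infty$, which is then fed into Lemma~\ref{lemCon2fU} and the theorems of Section~\ref{secThin}.
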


\begin{remark} \label{visThm2rem}
Again, we prove in the case $c=0$ that the function $F_{c,\vecalf,\vecbeta}(r,\sigma)$ is $\C^1$ 
with respect to $\sigma>0$; see Section \ref{subsecC1}.
\end{remark}

\begin{remark} \label{GENBETAUPPERBOUNDREM}
It will be useful for several of the results in Section \ref{secLorentz} below,
as well as in the proofs in \cite{partII}, to know that 
$\lim_{\sigma\to 0} F_{c,\vecalf,\vecbeta}(0,\sigma)=1$ 
and $\lim_{\sigma\to\infty} F_{c,\vecalf,\vecbeta}(r,\sigma)=0$, and that
this holds uniformly with respect to the various parameters.
This follows from the following two basic bounds, which we prove
in Section \ref{GENBETAUPPERBOUNDPROOFSEC}.
More exact asymptotic formulas will be given in \cite{partIV}.

Let $v_d:=\vol(\scrB_1^{d-1})=\pi^{(d-1)/2}/\Gamma(\frac{d+1}2)$.
Then for all $\sigma>0$ we have
\begin{align} \label{GENBETAUPPERBOUND1}
F_{c,\vecalf,\vecbeta}(0,\sigma)\geq 1-v_d(1-c)\sigma^{d-1}
\qquad\text{and thus}\qquad
\sum_{r=1}^\infty F_{c,\vecalf,\vecbeta}(r,\sigma)\leq v_d(1-c)\sigma^{d-1}.
\end{align}
Furthermore, there exists a constant $C>0$ which only depends on
$r,d$ (thus $C$ is independent of $c$, $\vecalf$, $\vecbeta$, $\lambda$) 
such that for all $\sigma>0$ we have
\begin{align} \label{GENBETAUPPERBOUND2}
F_{c,\vecalf,\vecbeta}(r,\sigma) \leq C (1-c)^{-1}\sigma^{1-d}.
\end{align}
\end{remark}

\subsection{Spheres centered at visible lattice points}

Now assume $\vecalf=\frac{\vecp}q\in\Q^d$ and set
\begin{equation}
	\widehat\scrN_{c,T}(\rho,\vecv,\vecw) := \#\bigg\{ \vecy \in (\widehat\scrL_\vecalf \cap \scrB^d_T(c))-\rho \vecw \col 
	\frac{\vecy}{\|\vecy\|}   \in \fD(\|\vecy\|^{-1}\rho,\vecv) \bigg\} .
\end{equation}

\begin{thm}\label{prim-visThm2}
Let $\lambda$ be a Borel probability measure on $\S_1^{d-1}$ absolutely continuous with respect to Lebesgue measure. Then, for every $\sigma\geq 0$ and $r\in\ZZ_{\geq 0}$, the limit 
\begin{equation}
	\widehat F_{c,\vecalf,\vecbeta}(r,\sigma):=\lim_{T\to\infty} \lambda(\{ \vecv\in\S_1^{d-1} : \widehat\scrN_{c,T}(\sigma T^{-1/(d-1)},\vecv,\vecbeta(\vecv))=r \})
\end{equation}
exists, and for fixed $\vecalf,\vecbeta,\lambda,r$ 
the convergence is uniform with respect to $\sigma$ in any 
compact subset of $\RR_{\geq 0}$, and with respect to $c\in [0,1]$.
The limiting function is given by
\begin{equation}
\widehat F_{c,\vecalf,\vecbeta}(r,\sigma)=
(\mu_q\times\lambda)(\{ (M,\vecv)\in X_q\times\S_1^{d-1} \col
\#( \widehat\ZZ_\vecalf^d M \cap \fZ_\vecv(c,\sigma))= r \}) .
\end{equation}
In particular, $\widehat F_{c,\vecalf,\vecbeta}(r,\sigma)$ is continuous in $\sigma$ and independent of $\scrL$.
\end{thm}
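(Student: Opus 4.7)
The plan is to run the proof of Theorem \ref{visThm2} essentially unchanged, but restricting from $\Z^d+\vecalf$ to the subset $\widehat\Z_\vecalf^d$ of visible representatives. The single new ingredient required is that the counting function $\#(\widehat\Z_\vecalf^d M\cap \fZ_\vecv(c,\sigma))$ still descends to a well-defined function on $X_q=\Gamma(q)\backslash\SLR$. This is clear from \eqref{VISIBLECHAR}: for any $\gamma\in\Gamma(q)$ we have $\gamma\equiv 1_d\bmod q$, so left-multiplication by $\gamma$ preserves $\Z^d+\vecalf$, and since $\gamma\in\GL(d,\Z)$ it satisfies $\gcd(q\vecm\gamma)=\gcd(q\vecm)$; thus $\widehat\Z_\vecalf^d \gamma = \widehat\Z_\vecalf^d$.

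First I would fix, for each $\vecv\in\S_1^{d-1}$, a continuous section $K(\vecv)\in\SO(d)$ with $\vece_1 K(\vecv)=\vecv$, so that $\fD(\epsilon,\vecv)=\fD(\epsilon)K(\vecv)$ and
\begin{equation*}
\widehat\scrN_{c,T}(\rho,\vecv,\vecbeta(\vecv))=\#\bigl\{\vecm\in\widehat\Z_\vecalf^d\col (\vecm M_0-\rho\vecbeta(\vecv))K(\vecv)^{-1}\in\fE_T(\rho)\bigr\},
\end{equation*}
where $\fE_T(\rho)$ is the thin spherical cap between radii $cT$ and $T$ inside the cone through $\fD(\|\cdot\|^{-1}\rho)$. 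Substituting $\rho=\sigma T^{-1/(d-1)}$ and applying the diagonal element $D(T)=\diag(T^{-1},T^{1/(d-1)},\ldots,T^{1/(d-1)})\in\SLR$ from the right stretches $\fE_T(\rho)$ to an approximation of the cylinder $\fZ(c,\sigma)$, while the shift $-\rho\vecbeta(\vecv)D(T)K(\vecv)^{-1}$ converges to $\sigma\|\Proj_{\{\vecv\}^\perp}\vecbeta(\vecv)\|\vece_2$ (the $\vece_1$-component being negligible because it is scaled by $T^{-1}$). Hence the counting function becomes
\begin{equation*}
\#\bigl(\widehat\Z_\vecalf^d \,M_0 D(T)K(\vecv)^{-1}\cap \fZ_\vecv(c,\sigma)+o(1)\bigr),
\end{equation*}
with a controlled $o(1)$-boundary perturbation.

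Next I would invoke the equidistribution result of Section \ref{secEqui}: since $\lambda$ is absolutely continuous, the pushed-forward measure on $X_q$ obtained from $\vecv\mapsto \Gamma(q)M_0 D(T)K(\vecv)^{-1}$ converges weakly to $\mu_q$ (this is precisely the ingredient used for Theorem \ref{visThm2} in the rational case, and it works for the visible-lattice counting function by the $\Gamma(q)$-invariance observed above). Combining this with a sandwich argument approximating the indicator of $\{\text{count}=r\}$ between continuous compactly supported functions produces the asserted limit
\begin{equation*}
(\mu_q\times\lambda)\bigl(\{(M,\vecv)\in X_q\times\S_1^{d-1}\col \#(\widehat\Z_\vecalf^d M\cap\fZ_\vecv(c,\sigma))=r\}\bigr).
\end{equation*}
Continuity in $\sigma$ follows because the boundary of $\fZ_\vecv(c,\sigma)$ has $\mu_q$-measure zero (a generic lattice has no point exactly on it), independence of $\scrL$ follows because $M_0$ is absorbed into the equidistribution, and uniformity in $\sigma$ on compacta and in $c\in[0,1]$ is obtained exactly as for Theorem \ref{visThm2}.

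The main obstacle is the uniform control of the boundary and tail contributions: one needs that, uniformly in $T$ and $\vecv$, lattice points lying in a thin shell around $\partial\fZ_\vecv(c,\sigma)$ contribute negligibly in $\lambda$-measure, and that the residual counts from the $o(1)$-discrepancy between $\fE_T(\rho)$ and its cylindrical limit do not spoil the sandwich. Both follow from the Siegel-type integration formulas of Section \ref{FOLIATIONSEC} applied on $X_q$, combined with the bound $\widehat\Z_\vecalf^d\subset\Z^d+\vecalf$ which reduces everything to the tail estimates already established in Remark \ref{GENBETAUPPERBOUNDREM} for the non-visible version.
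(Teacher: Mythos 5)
Your proposal is correct and follows essentially the same route as the paper, which dispatches Theorem \ref{prim-visThm2} in one line by saying it is proved exactly like Theorem \ref{visThm2} with the theorems of Section \ref{SUBSEC:PRIMLATP} in place of those for $(\Z^d+\vecalf)\setminus\{\bn\}$. You correctly identify the two facts that make the substitution harmless: the right-$\Gamma(q)$-invariance $\widehat\Z_\vecalf^d\gamma=\widehat\Z_\vecalf^d$ (needed so that the counting set descends to $X_q$ and so that the analogue of $\scrE_q(\fB,r)$ in Section \ref{SUBSEC:PRIMLATP} is well defined), and the inclusion $\widehat\Z_\vecalf^d\subset(\Z^d+\vecalf)\setminus\{\bn\}$, which transfers the Siegel-type bound in Lemma \ref{limlem}(vi) and the tail estimates by monotonicity — note that this inclusion also handles the measure-zero verification in the analogue of Lemma \ref{limlem}(vi), not only the tails.
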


\begin{remark}
The function $\widehat F_{0,\vecalf,\vecbeta}(r,\sigma)$ 
is $\C^1$ with respect to $\sigma>0$.
This is proved by adapting the arguments of Sections \ref{SUBMSECTION},
\ref{IMPORTANTVOLUMESEC} and \ref{subsecC1}
to the setting of visible lattice points.
\end{remark}

\subsection{Non-spherical objects}

Instead of balls we now consider more general objects 
\begin{equation} \label{QTDEF}
	\scrQ_T = T^{-1/(d-1)} \scrQ= \{ \vecx\in\RR^d \col T^{1/(d-1)} \vecx\in\scrQ\}
\end{equation}
where $\scrQ$ is a bounded open subset of $\RR^d$ which satisfies the technical
condition that, for Lebesgue-almost every $\vecv\in\S_1^{d-1}$, the subset
$\text{Proj}_{\{\vecv\}^\perp}\scrQ\subset\{\vecv\}^\perp$ has 
boundary of ($(d-1)$-dimensional) volume measure zero. 
This assumption is readily verified to
hold for any ``nice'' set $\scrQ$; for instance it certainly holds whenever
$\scrQ$ is convex, 
but also for much more general sets $\scrQ$.

As before we place translates of $\scrQ$ at lattice points, 
and consider the set
\begin{equation}\label{BBB2}
	\{ \vecx\in \scrQ_T + \vecy\col \vecy\in\Lalf\cap \scrB^d_T(c)\setminus\{\vecnull\} \}.
\end{equation} 
The number of intersections with a ray starting at the origin in direction $\vecv$ is
\begin{equation}\label{asin2}
	\scrN_{c,T}(\scrQ,\vecv):=\#\big\{ \vecy\in\Lalf\cap \scrB^d_T(c)\setminus\{\vecnull\}\col 
	\R_{>0}\vecv \cap (\scrQ_T + \vecy) \neq \emptyset \big\} .
\end{equation}

\begin{thm}\label{visThm3}
Let $\lambda$ be a Borel probability measure on $\S_1^{d-1}$ absolutely continuous with respect to $\vol_{\S_1^{d-1}}$. 
Then, for every $r\in\ZZ_{\geq 0}$, the limit 
\begin{equation} \label{visThm3limit}
	F_{c,\vecalf}(r,\scrQ):=\lim_{T\to\infty} \lambda(\{ \vecv\in\S_1^{d-1} \col  \scrN_{c,T}(\scrQ,\vecv)=r \})
\end{equation}
exists, and is given by
\begin{equation}
	\begin{cases}
	(\lambda\times\mu_1)(\{ (\vecv,M)\in \S_1^{d-1}\times X_1\col  \#( \ZZ_*^d M \cap \fZ(c,\scrQ,\vecv)) = r \}) & \text{if $\vecalf\in\ZZ^d$}\\
	(\lambda\times\mu_q)(\{ (\vecv,M) \in \S_1^{d-1}\times X_q\col  \#( (\ZZ^d+\frac{\vecp}{q}) M \cap \fZ(c,\scrQ,\vecv))= r \}) & \text{if $\vecalf=\frac{\vecp}{q}\in\QQ^d\setminus\ZZ^d $}\\
	(\lambda\times\mu)(\{ (\vecv,g) \in \S_1^{d-1}\times X\col  \#(\ZZ^d g \cap \fZ(c,\scrQ,\vecv))= r \}) & \text{if $\vecalf\notin\QQ^d$,}
	\end{cases}
\end{equation}
where
\begin{equation}
	\fZ(c,\scrQ,\vecv) =\big\{\vecx \in\RR^d \col  c < \vecx\cdot\vecv < 1,
\: \RR\vecv \cap (\scrQ+\vecx)\neq\emptyset  \big\} .
\end{equation}
In particular $F_{c,\vecalf}(r,\scrQ)$ is independent of $\scrL$.
\end{thm}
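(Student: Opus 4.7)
The plan is to reduce Theorem~\ref{visThm3} to the same equidistribution-on-homogeneous-spaces machinery that drives Theorems~\ref{visThm} and~\ref{visThm2}, with the cylinder $\fZ(c,\sigma)$ (respectively $\fZ_\vecv(c,\sigma)$) replaced by the $\vecv$-dependent region $\fZ(c,\scrQ,\vecv)$.

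First I would rewrite $\scrN_{c,T}(\scrQ,\vecv)$ in terms of a rescaled lattice. For $\vecy\in\Lalf$ with $\|\vecy\|$ of order $T$, the condition $\R_{>0}\vecv\cap(\scrQ_T+\vecy)\neq\emptyset$ says precisely that $\vecy\in\R_{>0}\vecv-T^{-1/(d-1)}\scrQ$, i.e.\ that $\vecy\cdot\vecv>-O(T^{-1/(d-1)})$ and $\Proj_{\{\vecv\}^\perp}\vecy\in -T^{-1/(d-1)}\Proj_{\{\vecv\}^\perp}\scrQ$. Let $\Phi_T(\vecv)\in\SLR$ denote the volume-preserving linear map which contracts the $\vecv$-direction by $T^{-1}$ and dilates the orthogonal complement $\{\vecv\}^\perp$ by $T^{1/(d-1)}$. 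Because $\scrQ$ is bounded, a point $\vecy$ satisfying both conditions has $\|\vecy\|$ equal to $\vecy\cdot\vecv$ up to $O(T^{-2/(d-1)-1})$, so the shell constraint $cT\leq\|\vecy\|<T$ becomes $c\leq(\vecy\Phi_T(\vecv))\cdot\vecv<1$ up to a boundary layer of thickness $O(T^{-1/(d-1)})$. Consequently
\begin{equation*}
\scrN_{c,T}(\scrQ,\vecv)=\#\bigl(\Lalf\Phi_T(\vecv)\cap\fZ(c,\scrQ,\vecv)\bigr)+E_T(\vecv),
\end{equation*}
where $E_T(\vecv)$ counts lattice points landing in a $T$-dependent neighbourhood of $\partial\fZ(c,\scrQ,\vecv)$ of vanishing volume.

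Second, I would invoke the joint equidistribution result established in Section~\ref{secEqui}: as $T\to\infty$, the random element $(\vecv,\Lalf\Phi_T(\vecv))$, with $\vecv$ sampled according to $\lambda$, becomes equidistributed in $\S_1^{d-1}\times X$ (respectively in $\S_1^{d-1}\times X_q$ or $\S_1^{d-1}\times X_1$) with respect to $\lambda\times\mu$ (respectively $\lambda\times\mu_q$ or $\lambda\times\mu_1$). This is exactly the input used for Theorems~\ref{visThm} and~\ref{visThm2}; the difference is only in the test region. Applied to the bounded function $(\vecv,g)\mapsto \mathbf{1}\bigl[\#(\Z^d g\cap\fZ(c,\scrQ,\vecv))=r\bigr]$, this gives the claimed formula, provided the indicator is almost-everywhere continuous on $\S_1^{d-1}\times X$.

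Third, I would verify this continuity and absorb $E_T(\vecv)$. The hypothesis that $\Proj_{\{\vecv\}^\perp}\scrQ$ has boundary of $(d-1)$-volume zero for a.e.\ $\vecv\in\S_1^{d-1}$ implies that $\fZ(c,\scrQ,\vecv)$ has $d$-dimensional Lebesgue boundary of measure zero for a.e.\ $\vecv$. A Fubini argument on $X$ (over the affine translation $\vecxi$) or a countable-union-of-null-sets argument on $X_1,X_q$ (one null set $\{M\col \vecm M\in\partial\fZ(c,\scrQ,\vecv)\}$ per $\vecm\in\Z_*^d$) then shows that for such $\vecv$ the set of lattices with a point on $\partial\fZ(c,\scrQ,\vecv)$ has zero Haar measure. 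Sandwiching the indicator between continuous functions supported slightly inside and slightly outside $\fZ(c,\scrQ,\vecv)$, applying equidistribution to each, and using the uniform volume estimate of the boundary neighbourhood to control $E_T(\vecv)$ (after integrating against $\lambda$) completes the argument.

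The main obstacle is the third step: the weak regularity hypothesis on $\scrQ$ must be leveraged carefully to obtain simultaneous control on the $\vecv$-dependent boundary of $\fZ(c,\scrQ,\vecv)$ and on the error $E_T(\vecv)$, uniformly enough in $\vecv$ for the integration against $\lambda$ to pass to the limit. Once one checks that for $\lambda$-a.e.\ $\vecv$ the region $\fZ(c,\scrQ,\vecv)$ is a continuity set for the pushforward of Haar measure under the lattice-point counting functional, the remainder of the argument is a mechanical adaptation of the proofs of Theorems~\ref{visThm} and~\ref{visThm2}.
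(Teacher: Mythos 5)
Your proposal follows essentially the same route as the paper's proof: parametrize $\vecv$ via $E_1:\Edomain\to\SO(d)$, rescale by $\Phi^{-t}(E_1(\vecx)^{-1},\bn)$, invoke the joint equidistribution/thin-set machinery of Sections~\ref{secEqui}--\ref{secThin}, and use the regularity hypothesis on $\scrQ$ to show $\partial\fZ(c,\scrQ,\vecv)$ is Lebesgue-null for a.e.\ $\vecv$. The only cosmetic difference is that the paper works directly with the $T$-dependent sets $\fZ_T(c,\scrQ)$ of \eqref{ZcQ} and verifies the $\lim(\inf)^\circ$ and $\lim\overline{\sup}$ containments of Lemma~\ref{lemCon3} rather than peeling off a separate error term $E_T(\vecv)$, but this is the same bookkeeping packaged in the form required by Theorem~\ref{thinThm}.
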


The analogous statement holds for visible lattice points. 
Assume $\vecalf=\frac{\vecp}q\in\Q^d$ and set
\begin{equation}
	\widehat\scrN_{c,T}(\scrQ,\vecv):=\#\big\{ \vecy\in\widehat\scrL_\vecalf\cap \scrB^d_T(c)\col  
	\R_{>0}\vecv \cap (\scrQ_T + \vecy) \neq \emptyset \big\} .
\end{equation}

\begin{thm}\label{prim-visThm3}
Let $\lambda$ be a Borel probability measure on $\S_1^{d-1}$ absolutely continuous with respect to Haar measure. Then, for every $\sigma>0$ and $r\in\ZZ_{\geq 0}$, the limit 
\begin{equation}
	\widehat F_{c,\vecalf}(r,\scrQ):=\lim_{T\to\infty} \lambda(\{ \vecv\in\S_1^{d-1} \col  \widehat \scrN_{c,T}(\scrQ,\vecv)=r \})
\end{equation}
exists, and is given by
\begin{equation}
	(\lambda\times\mu_q)(\{ (\vecv,M)\in \S_1^{d-1}\times X_q\col  \#( \widehat\ZZ_\vecalf^d M \cap \fZ(c,\scrQ,\vecv)) = r \}) .
\end{equation}
In particular, $\widehat F_{c,\vecalf}(r,\scrQ)$ is %
independent of $\scrL$.
\end{thm}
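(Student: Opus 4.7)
The plan is to adapt the proof of Theorem \ref{visThm3} to the restricted setting of visible lattice points. The key structural observation is that the set $\widehat\ZZ^d_\vecalf$ defined in \eqref{VISIBLECHAR} is invariant under right-multiplication by the principal congruence subgroup $\Gamma(q)$: both membership in $\ZZ^d+\vecalf$ and the condition $\gcd(q\vecx)\leq q$ are preserved modulo $\Gamma(q)$. Consequently, for any bounded Borel set $\fZ\subset\R^d$ the map $M\mapsto\#(\widehat\ZZ^d_\vecalf M\cap\fZ)$ descends to a well-defined function on $X_q=\Gamma(q)\backslash\SLR$, and one can work with equidistribution on $X_q$ rather than on the larger space $X$ relevant for $\vecalf\notin\Q^d$.

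The geometric reduction parallels that of Theorem \ref{visThm3}. For each $\vecv\in\S_1^{d-1}$ pick a rotation $R(\vecv)\in\SO(d)$ with $\vecv R(\vecv)=\vece_1$, and let $a_T\in\SLR$ be the diagonal matrix with first entry $T^{-1}$ and remaining $d-1$ entries $T^{1/(d-1)}$. Under the substitution $\vecu=\vecy R(\vecv) a_T$, the conjunction of $\vecy\in\scrB^d_T(c)$ and the ray-hitting condition $\R_{>0}\vecv\cap(\scrQ_T+\vecy)\neq\emptyset$ becomes the condition that $\vecu$ lies in the fixed, $T$-independent bounded region $\fZ(c,\scrQ,\vece_1)$, up to errors that vanish as $T\to\infty$ (arising from the lower-order $\|\vecy\|$-dependence of the transverse window). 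This yields
\[
\widehat\scrN_{c,T}(\scrQ,\vecv)=\#\bigl(\widehat\ZZ^d_\vecalf\cdot M_0 R(\vecv) a_T\cap\fZ(c,\scrQ,\vece_1)\bigr)+o(1)\qquad(T\to\infty).
\]
The equidistribution results of Section \ref{secEqui}, based on Ratner's theorem on unipotent flows, then assert that as $T\to\infty$ the pair $(\vecv, M_0 R(\vecv) a_T)$ with $\vecv$ drawn from $\lambda$ becomes equidistributed in $\S_1^{d-1}\times X_q$ with respect to $\lambda\times\mu_q$. Approximating the indicator of the event $\{(\vecv,M)\col\#(\widehat\ZZ^d_\vecalf M\cap\fZ(c,\scrQ,\vecv))=r\}$ between continuous functions on $\S_1^{d-1}\times X_q$ then yields the claimed formula.

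The principal obstacle is the genuinely non-spherical shape of $\scrQ$, which makes $\partial\fZ(c,\scrQ,\vecv)$ depend on $\vecv$ in a non-trivial way. To convert weak convergence into a limit of the specific counting functional, one must verify that for $\lambda$-almost every $\vecv$ the indicator above is a continuity function for the pushforward of $\mu_q$: i.e., $\mu_q$-almost no $M$ places a point of $\widehat\ZZ^d_\vecalf M$ on $\partial\fZ(c,\scrQ,\vecv)$. This is exactly where the technical hypothesis on $\scrQ$ enters: since $\Proj_{\{\vecv\}^\perp}\scrQ$ has $(d-1)$-dimensional boundary of measure zero for $\lambda$-a.e.~$\vecv$, the set $\partial\fZ(c,\scrQ,\vecv)$ has Lebesgue measure zero, and for each fixed $\vecx\in\widehat\ZZ^d_\vecalf$ the locus $\{M\in X_q\col\vecx M\in\partial\fZ(c,\scrQ,\vecv)\}$ has $\mu_q$-measure zero; a countable union combined with a Fubini argument in $(\vecv,M)$ then disposes of the exceptional set. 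Tail control for lattice points with $\|\vecy\|$ near $T$ and uniformity in $c\in[0,1]$ are inherited from the proof of Theorem \ref{visThm3}, while the sieving aspect of restricting from $\Lalf$ to $\widehat\scrL_\vecalf$ is handled as in the proof of Theorem \ref{prim-visThm2}.
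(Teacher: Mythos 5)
Your proposal takes essentially the paper's route: reduce to equidistribution on $X_q$ by exploiting the right $\Gamma(q)$-invariance of $\widehat\Z^d_\vecalf$, transform the counting problem via a rotation and the anisotropic diagonal scaling, and dispose of the boundary issue via the technical hypothesis on $\scrQ$. In the paper this is exactly the proof of Theorem \ref{visThm3} with Theorem \ref{thinThm-null} (Remark \ref{thinThm-null-cor}) of Section \ref{SUBSEC:PRIMLATP} replacing Theorem \ref{thinThm-rat-multi}, and it is for exactly the $\Gamma(q)$-invariance reason you give that a separate equidistribution theorem for the sets $\widehat\Z^d_\vecalf M$ exists there.

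Two points should be tightened. (1) The displayed relation $\widehat\scrN_{c,T}(\scrQ,\vecv)=\#(\cdots)+o(1)$ is not a pointwise statement: both sides are integers, and for a fixed $\vecv$ the difference need not eventually vanish. The paper instead records the \emph{exact} identity \eqref{NQTFORMULA} in terms of the $T$-dependent region $\fZ_T(c,\scrQ)$, and passes to the limit via the $\liminf$/$\limsup$ sandwich of Lemma \ref{lemCon3} together with Theorem \ref{thinThm-null}; the measure-zero boundary condition is what collapses the sandwich, as you correctly observe. (2) The reference to ``sieving'' is a misnomer here: no sieving from $\Lalf$ to $\widehat\scrL_\vecalf$ occurs in these proofs; sieving appears in the paper only to compute the density constant $\kappa_q$ in \eqref{KAPPAQDEF}. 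Finally, after your change of variables the limiting region should carry the $\vecv$-dependent orientation of $\scrQ$ (i.e.\ $\fZ(c,\scrQ,\vecv)R(\vecv)$, not $\fZ(c,\scrQ,\vece_1)$); you do acknowledge this dependence when discussing the boundary, but the displayed formula should reflect it.
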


All statements in this section are proved in Section \ref{secProofs}.

\section{The periodic Lorentz gas \label{secLorentz}}

We now show how the results of the previous Section \ref{secVisible} can be applied to the distribution of free path lengths (Section \ref{secFree}). We will then generalize these results to provide joint distributions of free path lengths and exact location of impact on the scatterer (Section \ref{firstcoll}), and the distribution of the velocity vector after the first hit (Section \ref{secVelo}).

\subsection{Free path lengths}\label{secFree}
 
Recall that the free path length for the initial condition $(\vecq,\vecv)\in\T^1(\scrK_\rho)$ is defined as  
\begin{equation} \label{TAU1DEF}
	\tau_1(\vecq,\vecv;\rho) = \inf\{ t>0 \col  \vecq+t\vecv \notin\scrK_\rho \}. 
\end{equation}

The crucial observation is that if $\lambda$ is any given probability measure 
on $\S_1^{d-1}$ and $0<\rho<T$, $(\vecq,\vecv)\in\T^1(\scrK_\rho)$, 
then we have
\begin{multline} \label{CRUCIALINEQ}
\lambda(\{ \vecv\in\S_1^{d-1} \col  \scrN_{0,T+\rho}(\rho,\vecv)=0 \}) \\
\leq
\lambda(\{ \vecv\in\S_1^{d-1} \col  \tau_1(\vecq,\vecv;\rho)\geq T \}) \\
\leq \lambda(\{ \vecv\in\S_1^{d-1} \col  \scrN_{0,T-\rho}(\rho,\vecv)=0 \}),
\end{multline}
where $\scrN_{0,T}$ is as defined in \eqref{asdef} with affine lattice 
$\Lalf=\scrL-\vecq$ (thus $\vecalf\equiv-\vecq M_0^{-1}\bmod\ZZ^d$).

Let
\begin{equation} \label{P1}
\Phi_\vecalf(\xi) = -\frac{d}{d\xi} F_{0,\vecalf}(0,\xi^{1/(d-1)}) .
\end{equation}
This defines a continuous probability density on $\R_{>0}$
(cf.\ Remark \ref{visThmrem}).
If $\vecalf\notin\Q^d$ then $\Phi_\vecalf(\xi)$ is independent
of $\vecalf$ and we write $\Phi(\xi)$ for this function 
(as in \eqref{uniPhi}).

The following is a restatement of Theorem \ref{freeThm1}.

\begin{cor} \label{freeCor1}
Fix a lattice $\scrL=\Z^d M_0$.
Let $\vecq\in\RR^d\setminus\scrL$ 
and $\vecalf=-\vecq M_0^{-1}$,
and let $\lambda$ be a Borel probability measure on $\S_1^{d-1}$ absolutely continuous with respect to Lebesgue measure. Then, for every $\xi\geq 0$,
\begin{equation}
\lim_{\rho\to 0} \lambda(\{ \vecv\in\S_1^{d-1} \col  \rho^{d-1} 
\tau_1(\vecq,\vecv;\rho)\geq \xi \})
= \int_\xi^\infty \Phi_\vecalf(\xi') d\xi' .
\end{equation}
\end{cor}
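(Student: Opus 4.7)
The plan is to deduce Corollary~\ref{freeCor1} directly from Theorem~\ref{visThm} (specialized to $c=0$, $r=0$) via the sandwich inequality \eqref{CRUCIALINEQ}. Translating so that the particle sits at the origin identifies the scatterer centers with the affine lattice $\Lalf=\scrL-\vecq$, where $\vecalf=-\vecq M_0^{-1}$; note that $\vecq\notin\scrL$ forces $\vecnull\notin\Lalf$, so $\scrN_{0,T}$ is well defined once $\rho<m(\Lalf)$. The event $\{\tau_1(\vecq,\vecv;\rho)\geq T\}$ is exactly the statement that the ray $\R_{>0}\vecv$ meets no ball $\scrB_\rho^d+\vecy$ ($\vecy\in\Lalf$) at distance $<T$. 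A ball centered at $\vecy$ can be met at distance $<T$ only if $\|\vecy\|<T+\rho$, while any ball with $\|\vecy\|<T-\rho$ which is intersected at all is intersected at distance $<T$; this is precisely \eqref{CRUCIALINEQ}.

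Second, I would set $T=\xi\rho^{-(d-1)}$ and let $\rho\to 0$ for fixed $\xi>0$. Introducing $\sigma_\pm:=\rho(T\pm\rho)^{1/(d-1)}$, the outer probabilities in \eqref{CRUCIALINEQ} become
\begin{equation*}
\lambda\bigl(\bigl\{\vecv\in\S_1^{d-1}\col\scrN_{0,T\pm\rho}\bigl(\sigma_\pm(T\pm\rho)^{-1/(d-1)},\vecv\bigr)=0\bigr\}\bigr),
\end{equation*}
which is exactly the object controlled by Theorem~\ref{visThm}. A direct calculation gives
\begin{equation*}
\sigma_\pm=\xi^{1/(d-1)}\bigl(1\pm\xi^{-1}\rho^d\bigr)^{1/(d-1)}\longrightarrow \xi^{1/(d-1)}\qquad(\rho\to 0).
\end{equation*}
Since Theorem~\ref{visThm} supplies convergence that is uniform in $\sigma$ on compact sets, and since $\sigma\mapsto F_{0,\vecalf}(0,\sigma)$ is continuous, both bounds in \eqref{CRUCIALINEQ} converge to $F_{0,\vecalf}(0,\xi^{1/(d-1)})$, and the sandwich yields
\begin{equation*}
\lim_{\rho\to 0}\lambda\bigl(\bigl\{\vecv\in\S_1^{d-1}\col\rho^{d-1}\tau_1(\vecq,\vecv;\rho)\geq\xi\bigr\}\bigr)=F_{0,\vecalf}\bigl(0,\xi^{1/(d-1)}\bigr).
\end{equation*}

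Finally, by the definition \eqref{P1} of $\Phi_\vecalf$ and the fundamental theorem of calculus,
\begin{equation*}
F_{0,\vecalf}\bigl(0,\xi^{1/(d-1)}\bigr)=\int_\xi^\infty\Phi_\vecalf(\xi')\,d\xi'+\lim_{\eta\to\infty}F_{0,\vecalf}\bigl(0,\eta^{1/(d-1)}\bigr),
\end{equation*}
so the corollary reduces to the tail statement $F_{0,\vecalf}(0,\sigma)\to 0$ as $\sigma\to\infty$. Using the explicit description of $F_{0,\vecalf}(0,\sigma)$ from Theorem~\ref{visThm} as the measure of those (affine) unimodular lattices disjoint from the expanding cylinder $\fZ(0,\sigma)$, this is a soft lattice-point fact which can be read off from Siegel-type mean value formulas of the kind developed in Section~\ref{FOLIATIONSEC}. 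The main obstacle in the overall argument is of course Theorem~\ref{visThm} itself; once that is granted, the proof of the corollary is a squeeze-and-continuity argument together with the tail vanishing just noted. The one technical point to watch is that $\sigma_\pm$ varies with $\rho$, but this is precisely handled by the uniform-on-compact-sets clause in Theorem~\ref{visThm}, since $\sigma_\pm$ stays in a fixed neighbourhood of $\xi^{1/(d-1)}$ for all sufficiently small $\rho$.
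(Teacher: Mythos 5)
Your argument is correct and follows essentially the same route as the paper, which proves Corollary \ref{freeCor1} exactly via \eqref{CRUCIALINEQ}, the uniform-in-$\sigma$ convergence in Theorem \ref{visThm}, continuity of $F_{0,\vecalf}(0,\cdot)$, and the tail vanishing $\lim_{\sigma\to\infty}F_{0,\vecalf}(0,\sigma)=0$ (cf.\ the proof of Corollary \ref{freeCor2}). One small imprecision: the tail vanishing is not quite a consequence of the Siegel mean value formulas of Section \ref{FOLIATIONSEC} alone, but is established in the paper via the Siegel-set reduction argument of Lemma \ref{TRIVBOUNDVLARGEGENLEMMA} (Remark \ref{GENBETAUPPERBOUNDREM}); the spirit of your remark is nonetheless right.
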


Note here that the condition $\vecq\notin\scrL$ is ensures that 
$\tau_1(\vecq,\vecv;\rho)$ is defined for all sufficiently small $\rho$.
Corollary \ref{freeCor1} follows directly from \eqref{CRUCIALINEQ} and
Theorem \ref{visThm}; cf.\ the proof of Corollary~\ref{freeCor2} below.

The analogous result corresponding to the set-up of Section \ref{secVar} is as follows.
As in that section we let $\vecbeta:\S_1^{d-1}\to\RR^d$ be a continuous
function, and again
let $\lambda$ be a Borel probability measure on $\S_1^{d-1}$ absolutely continuous with respect to Lebesgue measure.

If $\vecq\in\scrL$, it is possible that the trajectory $\vecq+\rho\vecbeta(\vecv)+\R_{>0}\vecv$ starts inside the scatterer (if $\|\vecbeta(\vecv)\|<1$), or will hit the scatterer at $\vecq$ (if $\|\vecbeta(\vecv)\|\geq 1$ and $\vecv$ is suitably chosen). In the first case the corresponding free path length is undefined; in the second case $\tau_1(\vecq+\rho\vecbeta(\vecv),\vecv;\rho)=O(\rho)$. The measure of directions with short free path lengths,
\begin{equation}
	\lambda\bigl(\bigl\{ \vecv\in\S_1^{d-1}\col  \tau_1(\vecq+\rho\vecbeta(\vecv),\vecv;\rho)\leq 
\sfrac 12 m(\Lalf) \bigr\}\bigr)
\end{equation}
is independent of $\rho$, for $\rho$ sufficiently small.

In order to avoid these pathological cases we will from now on assume that $\vecbeta$ is such that \textit{if $\vecq\in\scrL$, then the ray $\vecbeta(\vecv)+\R_{>0}\vecv$ lies completely outside 
$\scrB_1^d$, for each $\vecv\in\S_1^{d-1}$}.
This assumption will be in force throughout the remainder of 
Section \ref{secLorentz}.

Set
\begin{equation} \label{PALFBETDEF}
\Phi_{\vecalf,\vecbeta}(\xi) =  -\frac{d}{d\xi} F_{0,\vecalf,\vecbeta}(0,\xi^{1/(d-1)}) ,
\end{equation}
which, unlike $\Phi_\vecalf$, depends on the choice of the measure $\lambda$; 
cf.\ \eqref{defF}.
The function $\Phi_{\vecalf,\vecbeta}(\xi)$ again defines a continuous 
probability density on $\RR_{>0}$, see Remark \ref{visThm2rem}.

\begin{cor}\label{freeCor2}
For every $\xi\geq 0$,
\begin{equation} \label{freeCor2rel}
\lim_{\rho\to 0} \lambda(\{ \vecv\in\S_1^{d-1} \col  \rho^{d-1} \tau_1(\vecq+\rho\vecbeta(\vecv),\vecv;\rho)\geq \xi \})
= \int_\xi^\infty \Phi_{\vecalf,\vecbeta}(\xi') d\xi' .
\end{equation}
\end{cor}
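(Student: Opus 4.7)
The plan is to adapt the sandwich strategy of \eqref{CRUCIALINEQ} to the shifted starting point and then invoke Theorem~\ref{visThm2}. After translating by $-\vecq$, the scatterer lattice becomes $\Lalf=(\ZZ^d+\vecalf)M_0$, and the particle starts at $\rho\vecbeta(\vecv)$. The standing hypothesis on $\vecbeta$ (when $\vecq\in\scrL$), together with $\vecq\notin\scrL$ in the generic case and $\rho$ sufficiently small, ensures that $\rho\vecbeta(\vecv)$ lies outside every scatterer and that the scatterer at $\vecnull$ (when present) is never hit. In this translated picture, $\{\tau_1(\vecq+\rho\vecbeta(\vecv),\vecv;\rho)\geq T\}$ is precisely the event that no $\vecy'\in\Lalf\setminus\{\vecnull\}$ yields an entry time $t_-$ of the ray $\rho\vecbeta(\vecv)+\RR_{>0}\vecv$ into the ball $\scrB_\rho^d+\vecy'$ satisfying $t_-\leq T$.

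The sandwich step is the technical heart of the argument. Since $|t_--\|\vecy'-\rho\vecbeta(\vecv)\||\leq\rho$ whenever the ray intersects the ball, and since $\vecbeta$ is continuous on the compact sphere so that $B:=\sup_{\vecv\in\S_1^{d-1}}\|\vecbeta(\vecv)\|<\infty$, one has $|\|\vecy'-\rho\vecbeta(\vecv)\|-\|\vecy'\||\leq\rho B$. Combining these two estimates with the definition \eqref{asin} of $\scrN_{0,T}(\rho,\vecv,\vecbeta(\vecv))$, there is a constant $C=C(B)$ so that for all sufficiently small $\rho>0$,
\begin{equation*}
\{\scrN_{0,T+C\rho}(\rho,\vecv,\vecbeta(\vecv))=0\}\subseteq\{\tau_1(\vecq+\rho\vecbeta(\vecv),\vecv;\rho)\geq T\}\subseteq\{\scrN_{0,T-C\rho}(\rho,\vecv,\vecbeta(\vecv))=0\}.
\end{equation*}
This is where care is needed to make the bounds uniform in $\vecv\in\S_1^{d-1}$.

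Next I would set $T=\xi\rho^{-(d-1)}$ so that $\rho^{d-1}T=\xi$ and $\sigma:=\rho T^{1/(d-1)}=\xi^{1/(d-1)}$. Replacing $T$ by $T_{\pm}=T\pm C\rho$ alters the effective $\sigma$ by a factor $(T_\pm/T)^{1/(d-1)}=1+O(\rho^d/\xi)=1+o(1)$ as $\rho\to 0$, so both $\sigma_\pm=\rho T_\pm^{1/(d-1)}$ remain in a fixed compact neighborhood of $\xi^{1/(d-1)}$. Theorem~\ref{visThm2} with $c=0$ gives $\lambda(\{\scrN_{0,T'}(\sigma' T'^{-1/(d-1)},\vecv,\vecbeta(\vecv))=0\})\to F_{0,\vecalf,\vecbeta}(0,\sigma')$ uniformly for $\sigma'$ in this neighborhood, so both sides of the sandwich converge to $F_{0,\vecalf,\vecbeta}(0,\xi^{1/(d-1)})$.

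Finally, the definition \eqref{PALFBETDEF} of $\Phi_{\vecalf,\vecbeta}$ together with the fundamental theorem of calculus, using $\lim_{\sigma\to\infty}F_{0,\vecalf,\vecbeta}(0,\sigma)=0$ from Remark~\ref{GENBETAUPPERBOUNDREM}, gives
\begin{equation*}
F_{0,\vecalf,\vecbeta}(0,\xi^{1/(d-1)})=\int_\xi^\infty\Phi_{\vecalf,\vecbeta}(\xi')\,d\xi',
\end{equation*}
which combined with the previous step yields \eqref{freeCor2rel}. The main obstacle is getting the sandwich inequality uniformly in $\vecv$; once this is in hand, everything else is routine given Theorem~\ref{visThm2} and its uniformity in $\sigma$.
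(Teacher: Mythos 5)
Your proof is correct and follows essentially the same route as the paper: establish the sandwich inequality \eqref{CRUCIALINEQBETA} with constant $C$ depending on $\sup\|\vecbeta\|$, plug in $T\sim\xi\rho^{1-d}$, and invoke the uniform-in-$\sigma$ convergence of Theorem~\ref{visThm2} together with \eqref{PALFBETDEF} and Remark~\ref{GENBETAUPPERBOUNDREM}. You spell out the triangle-inequality bookkeeping behind the sandwich a bit more explicitly than the paper does, but the skeleton is identical.
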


In this statement, $\tau_1(\vecq+\rho\vecbeta(\vecv),\vecv;\rho)$ 
is well-defined for all $\vecv\in\S^{d-1}_1$ so long as $\rho$ is 
sufficiently small. (For if $\vecq\in\scrL$ then, by our assumptions
on $\vecbeta$, we have in particular 
$\|\vecbeta(\vecv)\|\geq 1$ for all $\vecv$.)

\begin{proof}[Proof of Corollary \ref{freeCor2}]
Set $C=1+\sup_{\S_1^{d-1}} \|\vecbeta\|$.
Generalizing \eqref{CRUCIALINEQ} we note that when $\rho$ is sufficiently
small and $T$ is sufficiently large, we have
\begin{multline} \label{CRUCIALINEQBETA}
\lambda(\{ \vecv\in \S_1^{d-1} \col
\scrN_{0,T+C\rho}(\rho,\vecv,\vecbeta(\vecv))=0 \}) \\
\leq
\lambda(\{ \vecv\in \S_1^{d-1} \col
\tau_1(\vecq+\rho\vecbeta(\vecv),\vecv;\rho)\geq T \}) \\
\leq \lambda(\{ \vecv\in \S_1^{d-1} \col 
\scrN_{0,T-C\rho}(\rho,\vecv,\vecbeta(\vecv))=0 \}),
\end{multline}
where $\scrN_{0,T}$ is as defined in \eqref{asin}
with affine lattice $\Lalf=\scrL-\vecq$
(in \eqref{CRUCIALINEQBETA} we used our assumption that if $\vecq\in\scrL$ then
$(\vecbeta(\vecv)+\R_{>0}\vecv)\cap\scrB_1^d=\emptyset$
for all $\vecv\in\S_1^{d-1}$). 
In particular, 
writing $T_1=\xi\rho^{1-d}+C\rho$ and $\sigma(\rho)=T_1^{\frac 1{d-1}}\rho$
we have, for any $\rho>0$ sufficiently small,
\begin{multline}
\lambda(\{  \vecv\in \S_1^{d-1} \col 
\rho^{d-1} \tau_1(\vecq+\rho\vecbeta(\vecv),\vecv;\rho)\geq \xi \})
\\
\geq \lambda(\{  \vecv\in \S_1^{d-1} \col  
\scrN_{0,T_1}(\sigma(\rho) T_1^{-\frac 1{d-1}},
\vecv,\vecbeta(\vecv))=0\}).
\end{multline}
But $T_1\to\infty$ and $\sigma(\rho)\to\xi^{1/(d-1)}$ as 
$\rho\to 0^+$; hence by Theorem \ref{visThm2} the right hand side
above tends to $F_{0,\vecalf,\vecbeta}(0,\xi^{1/(d-1)})$.
This equals $\int_\xi^\infty \Phi_{\vecalf,\vecbeta}(\xi') \, d\xi'$,
because of \eqref{PALFBETDEF} and 
$\lim_{\sigma\to\infty}F_{0,\vecalf,\vecbeta}(0,\sigma)=0$
(see Remark \ref{GENBETAUPPERBOUNDREM}).
Hence we have proved
\begin{equation}
\liminf_{\rho\to 0}
\lambda(\{  \vecv\in \S_1^{d-1} \col \rho^{d-1} \tau_1(\vecq+\rho\vecbeta(\vecv),\vecv;\rho)\geq \xi \})
\geq \int_\xi^\infty \Phi_{\vecalf,\vecbeta}(\xi') \, d\xi'.
\end{equation}
But using the last inequality in \eqref{CRUCIALINEQBETA}
we obtain the same upper bound for the corresponding $\limsup$, 
and hence \eqref{freeCor2rel} is proved.
\end{proof}

\begin{remark}
When $\scrL=\Z^2$, $\vecq=\bn$, $\vecbeta(\vecv)=\vecv$ (say) and 
$\lambda=\text{uniform measure on }\S_1^1$, 
Corollary~\ref{freeCor2} specializes to the limit result proved in
Boca, Gologan and Zaharescu \cite{Boca03}.
Similarly for $\scrL=\Z^2$, Theorem \ref{freeThm1cor}
(which is basically a $\vecq$-averaged version of Corollary \ref{freeCor1};
cf.\ also Corollary~\ref{freeCor1-ave} below)
specializes to the limit result proved in Boca and Zaharescu \cite{Boca07}.
The known explicit formulas 
for the volumes $F_{0,\bn}(0,\sigma)$ and $F_{0}(0,\sigma)$ in 
\eqref{FCALFRSIGMDEF} in the case $d=2$ 
(cf.\ \cite{SV} and Remark \ref{visThmremexpl})
indeed agree, via \eqref{P1} and \eqref{PALFBETDEF}, with 
the limit formulas obtained in \cite{Boca03} and \cite{Boca07}
using methods of analytic number theory.
\end{remark}

Analogous results are valid for non-spherical scatterers, as direct corollaries of Theorem \ref{visThm3}.

\subsection{Location of the first collision}\label{firstcoll}

The position of the particle when hitting the first scatterer is
\begin{equation}
	\vecq_1(\vecq,\vecv;\rho) := \vecq+\tau_1(\vecq,\vecv;\rho) \vecv .
\end{equation}
We are now interested in the joint distribution of the free path length (considered in the previous section), and the precise location {\em on} the scatterer where the particle hits.

By definition there is a unique $\vecm\in\scrL$ such that
$\vecq_1(\vecq,\vecv;\rho)\in \S_\rho^{d-1} + \vecm$;
hence there is a unique point
$\vecw_1=\vecw_1(\vecq,\vecv;\rho)\in \S_1^{d-1}$ such that
$\vecq_1(\vecq,\vecv;\rho)=\rho \vecw_1+\vecm$.
Let us fix a map $K:\S_1^{d-1}\to\SO(d)$ such that
$\vecv K(\vecv)=\vece_1$ for all $\vecv\in\S_1^{d-1}$;
we assume that $K$ is smooth when restricted to $\S_1^{d-1}$ minus
one point.\footnote{For example, we may choose $K$ as 
$K(\vece_1)=I$, $K(-\vece_{1})=-I$ and
$K(\vecv)=
E\Bigl(-\frac{2\arcsin\bigl(\|\vecv-\vece_1\|/2\bigr)}
{\|\vecv_\perp\|} \vecv_\perp\Bigr)$ for
$\vecv\in\S_1^{d-1}\setminus\{\vece_1,-\vece_1\}$,
where $\vecv_\perp:=(v_2,\ldots,v_d)\in\R^{d-1}$ 
and $E(\vecw)=\exp\matr 0\vecw{-\trans\vecw}\bn\in\SO(d)$.
Then $K$ is smooth when restricted to $\S_1^{d-1}\setminus\{-\vece_1\}$.}
It is evident that $-\vecw_1 K(\vecv)\in \HS$, with the hemisphere $\HS=\{\vecv=(v_1,\ldots,v_d)\in\S_1^{d-1} \col v_1>0\}$. \label{HS}

Recall that we are assuming that $\vecbeta$ is a continuous function 
$\S_1^{d-1}\to\RR^d$ such that if $\vecq\in\scrL$ then
$(\vecbeta(\vecv)+\R_{>0}\vecv)\cap\scrB_1^d=\emptyset$
for all $\vecv\in\S_1^{d-1}$.
We will use the shorthand $\vecq_{\rho,\vecbeta}(\vecv)=\vecq+\rho\vecbeta(\vecv)$ for the initial position.\label{inpo}
For the statement of the theorem below, we define the following
submanifolds of $X_q$ and $X$, respectively:
\begin{align}
& X_q(\vecy):=\bigl\{M\in X_q \col \vecy\in (\Z^d+\vecalf)M\bigr\}
& & \text{(for $\vecy\in\R^d\setminus\{\bn\}$ and fixed 
$\vecalf\in q^{-1}\Z^d$);}
\\ \notag
& X(\vecy):=\bigl\{g\in X \col \vecy\in \Z^d g\bigr\}
& & \text{(for $\vecy\in\R^d$)}.
\end{align}
These submanifolds will be studied in Section \ref{FOLIATIONSEC},
where we will introduce a natural Borel probability measure
$\nu_\vecy$ on each of them.

We will also use the notation $\vecx_\perp=\vecx-(\vecx\cdot\vece_1)\vece_1$
for $\vecx\in\R^d$.\label{perpos}

\begin{thm}\label{exactpos1}
Fix a lattice $\scrL=\Z^d M_0$.
Let $\vecq\in\R^d$ and $\vecalf=-\vecq M_0^{-1}$.
There exists a function
$\Phi_{\vecalf}:\R_{>0}\times(\{0\}\times\scrB_1^{d-1})\times(\{0\}\times\R^{d-1})\to\R_{\geq 0}$ such that for any Borel probability measure 
$\lambda$ on $\S_1^{d-1}$ absolutely 
continuous with respect to $\vol_{\S_1^{d-1}}$, any
subset $\fU\subset\HS$ with $\vol_{\S_1^{d-1}}(\partial\fU)=0$, 
and any $0\leq \xi_1<\xi_2$, we have
\begin{multline} \label{exactpos1eq}
\lim_{\rho\to 0}  \lambda\bigl(\bigl\{ \vecv\in\S_1^{d-1} \col 
\rho^{d-1} \tau_1(\vecq_{\rho,\vecbeta}(\vecv),\vecv;\rho)\in [\xi_1,\xi_2), \:  
-\vecw_1(\vecq_{\rho,\vecbeta}(\vecv),\vecv;\rho)K(\vecv)\in\fU\bigr\}\bigr) \\
=\int_{\xi_1}^{\xi_2} \int_{\fU_\perp} \int_{\S_1^{d-1}} 
\Phi_{\vecalf}\bigl(\xi,\vecw,(\vecbeta(\vecv)K(\vecv))_\perp\bigr) 
\, d\lambda(\vecv) d\vecw \, d\xi,
\end{multline}
where $d\vecw$ denotes the $(d-1)$-dimensional
Lebesgue volume measure on $\{0\}\times\R^{d-1}$.
The function $\Phi_\vecalf$ is explicitly given by
\begin{align} \label{exactpos1limitrat}
\Phi_\vecalf(\xi,\vecw,\vecz)
=\begin{cases}
\nu_\vecy\bigl(\bigl\{M\in X_q(\vecy) \col 
(\Z^d+\vecalf)M \cap (\fZ(0,\xi,1)+\vecz)=\emptyset\bigr\}\bigr)
& \text{if } \: \vecalf\in q^{-1}\Z^d
\\
\nu_\vecy\bigl(\bigl\{g\in X(\vecy) \col \Z^d g \cap (\fZ(0,\xi,1)+\vecz)=\emptyset
\bigr\}\bigr)
& \text{if } \: \vecalf\notin \Q^d,
\end{cases}
\end{align}
where $\vecy=\xi\vece_1+\vecw+\vecz$, and 
\begin{align} \label{FZC1C2DEF}
	\fZ(c_1,c_2,\sigma) =\big\{(x_1,\ldots,x_d)\in\RR^d \col  c_1 < x_1 < c_2, \|(x_2,\ldots,x_d)\|< \sigma \big\} .
\end{align}
\end{thm}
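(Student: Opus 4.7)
The strategy refines Theorem~\ref{visThm2} by tracking not merely the \emph{count} of scatterers intersecting the trajectory, but the \emph{position} of the first one hit. Write $T = \rho^{-(d-1)}$ and set $D_\rho := \diag(\rho^{d-1}, \rho^{-1}, \ldots, \rho^{-1})$, which has determinant~$1$. Suppose that the particle first hits the scatterer $\scrB_\rho^d + \vecm$ with impact direction $\vecw_1$. Setting $\vecy := \vecm - \vecq \in \Lalf$ and parameterizing $-\vecw_1 K(\vecv) = \sqrt{1-\|\vecw\|^2}\,\vece_1 + \vecw$ with $\vecw \in \fU_\perp \subset \{0\}\times\R^{d-1}$, a short calculation based on $\vecy = \tau_1\vecv + \rho(\vecbeta(\vecv) - \vecw_1)$ yields
$$(\vecy K(\vecv))\, D_\rho = \xi\vece_1 + \vecw + \vecz + O(\rho^d),$$
with $\xi := \rho^{d-1}\tau_1$ and $\vecz := (\vecbeta(\vecv) K(\vecv))_\perp$. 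The requirement that no other point of $\Lalf$ lies in the cylinder swept out by the trajectory becomes, after the same rotation and rescaling, the requirement that the rescaled affine lattice $\Lalf K(\vecv) D_\rho$ have no point in the open pre-cylinder $\fZ(0, \xi, 1) + \vecz$ (cf.\ \eqref{FZC1C2DEF}). Thus the event in \eqref{exactpos1eq} is, up to an $O(\rho^d)$ error, a joint condition on the rescaled lattice and the parameters $(\xi, \vecw)$.

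The main step is a joint equidistribution result: by the methods of Section~\ref{secEqui} (ultimately Ratner's theorem), the pushforward of $\lambda$ under the map $\vecv \mapsto \Lalf K(\vecv) D_\rho$ converges weakly in $X_q$ (or $X$ if $\vecalf \notin \Q^d$) to Haar measure $\mu_q$ (respectively $\mu$) as $\rho \to 0$. To extract the joint law of $(\xi, \vecw)$ one then disintegrates $\mu_q$ (resp.\ $\mu$) along the fibres $X_q(\vecy)$ (resp.\ $X(\vecy)$). Section~\ref{FOLIATIONSEC} will establish the Siegel-type integration formula decomposing Haar measure as $d\vecy \otimes d\nu_\vecy$, with $d\vecy$ the Lebesgue element on $\R^d \setminus \{\bn\}$ and $\nu_\vecy$ the conditional probability measure on $X_q(\vecy)$ (resp.\ $X(\vecy)$).

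Combining these ingredients, one rewrites the left-hand side of \eqref{exactpos1eq} as the $\lambda$-integral of $\sum_{\vecy \in \Lalf} \mathbf{1}\{\vecy \text{ is the first hit and } (\xi, \vecw) \in [\xi_1,\xi_2) \times \fU_\perp\}$. The change of variables $\vecy \leftrightarrow (\xi, \vecw)$ has unit Jacobian (since $\det D_\rho = 1$ and $K(\vecv)$ is an isometry), so passing $\rho \to 0$ converts the sum into the triple integral on the right of \eqref{exactpos1eq} with $\Phi_\vecalf$ as in \eqref{exactpos1limitrat}. The $O(\rho^d)$ corrections and the ``first-hit'' boundary effects are absorbed using the continuity of $\Phi_\vecalf$ in $\xi$ asserted in Remark~\ref{visThm2rem}, the hypothesis $\vol_{\S_1^{d-1}}(\partial\fU) = 0$, and the uniform tail bounds of Remark~\ref{GENBETAUPPERBOUNDREM}.

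The principal obstacle is the disintegration step. One must upgrade the weak equidistribution of $\Lalf K(\vecv) D_\rho$ in $X_q$ to equidistribution against the foliated family $\{\nu_\vecy\}$, uniformly for $\vecy$ in compact subsets of $\R^d \setminus \{\bn\}$. This requires both the concrete construction of $\nu_\vecy$ on the non-compact submanifolds $X_q(\vecy)$ and $X(\vecy)$ (to be carried out in Section~\ref{FOLIATIONSEC}) and an equidistribution statement strong enough to handle the quasi-singular test functions corresponding to ``the lattice contains a point in a shrinking neighbourhood of $\vecy$''. The latter is delicate precisely because the level set $X_q(\vecy)$ meets the cusp of $X_q$ in a non-trivial way, and control of mass escape at the cusp is needed to validate the change of variables in the third paragraph.
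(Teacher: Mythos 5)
Your high-level structure matches the paper's: convert the first-collision event into a lattice-point condition, equidistribute the rescaled affine lattice via the Ratner/Shah machinery of Section \ref{secEqui}, and then disintegrate the limiting Haar measure along the fibres $X_q(\vecy)$ using the integration formula of Section \ref{FOLIATIONSEC}. Your normalization $(\vecy K(\vecv))D_\rho = \xi\vece_1+\vecw+\vecz+O(\rho^d)$ is also correct. But you misidentify the key step in going from the equidistribution to the disintegrated limit.

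You propose to ``upgrade the weak equidistribution of $\Lalf K(\vecv)D_\rho$ in $X_q$ to equidistribution against the foliated family $\{\nu_\vecy\}$.'' The paper does not prove, and does not need, any such strengthened equidistribution result (which would be a nontrivial new statement, since each $\nu_\vecy$ is supported on a codimension-$d$ submanifold and is singular with respect to $\mu_q$). Instead the paper \emph{sieves} the first-hit event $\{\rho^{d-1}\tau_1\in[\xi_1,\xi_2),\ -\vecw_1K(\vecv)\in\fU\}$ between two events expressible in terms of the counting functions $\scrN^{(\fU)}_{c,T}$: the sandwich \eqref{exactpos2ul} expresses ``first hit in $[\xi_1,\xi_2)$ with location in $\fU$'' via the three conditions ``no hit before $\xi_1$,'' ``at least one $\fU$-hit in $[\xi_1,\xi_2)$,'' ``at most one hit in $[\xi_1,\xi_2)$.'' Each of these is a plain counting condition, so the \emph{already established} limit theorems for indicator functions (the multi-set thin theorem \ref{thinThm-multi} and its $E_1(\vecx)$-variant, together with Lemma \ref{lemCon2fU}) apply. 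Only \emph{after} the limit is taken, and one is working with exact Haar volumes of subsets of $X_q$, does the foliation formula (Proposition \ref{FOLINTPROP}) enter, as a purely measure-theoretic identity.

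There is a second ingredient you omit which is essential for the disintegration step to converge to the continuous density $\Phi_\vecalf$: the interval $[\xi_1,\xi_2)$ must be split into short subintervals $[\theta_j,\theta_{j+1})$. On each piece, the condition ``at most one hit'' is needed precisely because the foliation formula $\mu_q(\scrE)\le\int_U\nu_\vecy(\scrE\cap X_q(\vecy))\,d\vecy$ overcounts unless lattice points are essentially unique in $U$; Lemma \ref{FEWWITHTWOLEMMA2} (a thin slab seldom contains two lattice points) controls the overcount, and the quasi-continuity result Proposition \ref{CONT3DPROP}/Lemma \ref{FrfrRELLEMMA} is what lets one replace $f_0(0,\theta_j,1,\vecz,\vecy)$ by the target density $F_0(\xi,\vecw,\vecz)$ on the short interval. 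Your sketch identifies the delicate point correctly but proposes a resolution (foliated equidistribution) that the paper neither needs nor proves; the actual resolution is the sieving-plus-splitting argument above, and the ``unit Jacobian'' change of variables enters only through the exact formula $d\mu_q = (I_q\zeta(d))^{-1}\,d\mu_H(h)\,d\vecy$ from Lemma \ref{FULLHYDECLEMMA}, which carries the normalization constant you suppress.
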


\begin{remark} \label{PALFSYMMREMARK}
Note that $\Phi_\vecalf(\xi,\vecw,\vecz)$ is independent of $\vecbeta$.
For $\vecalf\in \Q^d$
the function $\Phi_\vecalf(\xi,\vecw,\vecz)$ is Borel measurable,
and in fact only depends on ($\vecalf$ and)
the four real numbers $\xi,\|\vecz\|,\|\vecw\|,\vecz\cdot\vecw$.
Also for $\vecalf\in \Q^d$,
if we restrict to $\|\vecz\|\leq 1$ [and if $d=2$: $\vecz+\vecw\neq \bn$],
then $\Phi_\vecalf(\xi,\vecw,\vecz)$ is jointly continuous
in the three variables $\xi,\vecw,\vecz$.
If $\vecalf\notin\Q^d$ then $\Phi_\vecalf(\xi,\vecw,\vecz)$
is everywhere continuous in the three variables,
and it is independent of both $\vecalf$ and $\vecz$;
in fact it only depends on $\xi$ and $\|\vecw\|$.
All these statements will be proved in Sections \ref{IMPORTANTVOLUMESEC}
and \ref{IMPORTANTVOLUMEIRRSEC}.
In particular, if $\vecalf\notin\Q^d$ then
the limit in \eqref{exactpos1eq} is independent of 
$\vecalf$, $\vecbeta$, $\lambda$.
\end{remark}

\begin{remark} \label{PALFBETSPECREM}
It follows from \eqref{exactpos1eq} that
\begin{align} \label{PALFINTEQ1}
\int_0^\infty \int_{\{0\}\times\scrB_1^{d-1}} 
\Phi_\vecalf(\xi,\vecw,\vecz)\,d\vecw\,d\xi=1
\end{align}
holds for almost all $\vecz\in\{0\}\times\R^{d-1}$,
and from \eqref{exactpos1eq} and Corollary \ref{freeCor2} that
\begin{equation} \label{PALFBETSPECREMFORMULA}
\int_{\{0\}\times\scrB_1^{d-1}} \int_{\S_1^{d-1}}
\Phi_{\vecalf}\bigl(\xi,\vecw,(\vecbeta(\vecv)K(\vecv))_\perp\bigr) 
\, d\lambda(\vecv) d\vecw
=\Phi_{\vecalf,\vecbeta}(\xi)
\end{equation}
holds for almost all $\xi>0$.
As a consistency check we derive in Section \ref{subsecC1}
(see Remark \ref{AFTERGENC1PROP}) the relations
\eqref{PALFINTEQ1} and \eqref{PALFBETSPECREMFORMULA} directly from the
explicit formula \eqref{exactpos1limitrat}. In fact it turns out that
\eqref{PALFINTEQ1} holds for \textit{all} $\vecz\in\{0\}\times\R^{d-1}$
and \eqref{PALFBETSPECREMFORMULA} holds for \textit{all} $\xi>0$.
\end{remark}

As a preparation for Theorem \ref{exactpos2-1hit} below and for the results
in \cite{partII},
we also state a version of Theorem \ref{exactpos1} involving an arbitrary
continuous test function.

\begin{cor}\label{exactpos2}
Let $\lambda$ be a Borel probability measure $\lambda$ on $\S_1^{d-1}$ absolutely 
continuous with respect to $\vol_{\S_1^{d-1}}$. For any bounded continuous function $f:\S_1^{d-1}\times \R_{>0} \times \S_1^{d-1} \to \RR$,
\begin{multline} \label{exactpos2eq}
\lim_{\rho\to 0}  \int_{\S_1^{d-1}} f\big(\vecv, \rho^{d-1} \tau_1(\vecq_{\rho,\vecbeta}(\vecv),\vecv;\rho), 
\vecw_1(\vecq_{\rho,\vecbeta}(\vecv),\vecv;\rho)\big) d\lambda(\vecv) \\
=\int_{\HS} \int_{\R_{>0}} \int_{ S_1^{d-1}}
f\bigl(\vecv,\xi,-\vecomega K(\vecv)^{-1}\bigr) \,
\Phi_\vecalf\bigl(\xi,\vecomega_\perp,(\vecbeta(\vecv)K(\vecv))_\perp\bigr)\,
\omega_1 \, d\lambda(\vecv) \, d\xi \, d\!\vol_{\S^{d-1}_1}(\vecomega),
\end{multline}
where $\vecomega=(\omega_1,\ldots,\omega_d)$.
\end{cor}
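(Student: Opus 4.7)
Corollary \ref{exactpos2} asserts weak convergence of probability measures: if we define $\mu_\rho$ on $\S_1^{d-1}\times\R_{>0}\times\S_1^{d-1}$ by
\[
\int f\,d\mu_\rho := \int_{\S_1^{d-1}} f\bigl(\vecv,\rho^{d-1}\tau_1(\vecq_{\rho,\vecbeta}(\vecv),\vecv;\rho),\vecw_1(\vecq_{\rho,\vecbeta}(\vecv),\vecv;\rho)\bigr)\,d\lambda(\vecv),
\]
and let $\int f\,d\mu$ denote the right hand side of \eqref{exactpos2eq}, then the claim is $\int f\,d\mu_\rho\to\int f\,d\mu$ for every bounded continuous $f$. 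The plan is to deduce this from Theorem~\ref{exactpos1} by the standard approximation of continuous functions by simple ones.

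First I rewrite the RHS of \eqref{exactpos1eq} as a hemisphere integral. The map $\vecomega=\omega_1\vece_1+\vecw$ with $\omega_1=\sqrt{1-\|\vecw\|^2}$ identifies $\{0\}\times\scrB_1^{d-1}$ with $\HS$, and $d\!\vol_{\S_1^{d-1}}(\vecomega)=\omega_1^{-1}\,d\vecw$. In these coordinates Theorem~\ref{exactpos1} is precisely the statement $\int f_0\,d\mu_\rho\to\int f_0\,d\mu$ for every $f_0(\vecv,\xi,\vecw_1)=\chi_{[\xi_1,\xi_2)}(\xi)\chi_\fU(-\vecw_1 K(\vecv))$ with $\fU\subset\HS$ satisfying $\vol_{\S_1^{d-1}}(\partial\fU)=0$. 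A marker $\chi_A(\vecv)$ for Borel $A\subset\S_1^{d-1}$ with $\lambda(A)>0$ and $\lambda(\partial A)=0$ is incorporated by applying Theorem~\ref{exactpos1} to the absolutely continuous probability measure $\lambda(A)^{-1}\chi_A\lambda$ in place of $\lambda$. By linearity, convergence then extends to every finite linear combination of indicator test functions of the form $\chi_A(\vecv)\chi_{[\xi_1,\xi_2)}(\xi)\chi_\fU(-\vecw_1 K(\vecv))$.

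Next I truncate in $\xi$. By Corollary~\ref{freeCor2} one has $\mu_\rho(\{\xi\geq R\})\to\int_R^\infty\Phi_{\vecalf,\vecbeta}(\xi')\,d\xi'$ as $\rho\to 0$, and the latter tends to $0$ as $R\to\infty$; the same holds for $\mu$. Given $\varepsilon>0$, fix $R>0$ such that both tails are bounded by $\varepsilon$ for all sufficiently small $\rho$; the corresponding contributions to $\int f\,d\mu_\rho$ and $\int f\,d\mu$ are then at most $\|f\|_\infty\varepsilon$. Setting $F(\vecv,\xi,\vecomega):=f(\vecv,\xi,-\vecomega K(\vecv)^{-1})$, the function $F$ is bounded on $\S_1^{d-1}\times[0,R]\times\S_1^{d-1}$ and uniformly continuous on the complement of $\{\vece_0\}\times[0,R]\times\S_1^{d-1}$, where $\vece_0$ is the single point at which $K$ fails to be smooth. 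After excising an arbitrarily small open neighbourhood of $\vece_0$ (whose contribution is absorbed into $\varepsilon$ since it carries arbitrarily small $\lambda$-mass), I approximate $F$ uniformly on the resulting compact set by a finite linear combination $\sum_i c_i\chi_{A_i}(\vecv)\chi_{[\xi_i,\xi_i')}(\xi)\chi_{\fU_i}(\vecomega)$ with $\lambda(\partial A_i)=0$ and $\vol_{\S_1^{d-1}}(\partial\fU_i)=0$, using for example a partition into small coordinate boxes. Applying the convergence from the preceding paragraph summand-by-summand and letting $\varepsilon\to 0$ completes the proof.

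The principal technical point is the isolated discontinuity of $K$ at $\vece_0$, which is handled by absorbing an $\vece_0$-neighborhood into the approximation error as above. A minor bookkeeping remark is that the boundary $\partial\HS=\{\omega_1=0\}$ is a $\mu$-null set thanks to the weight $\omega_1$ in the definition of $\mu$, so extending integration of $F$ from $\HS$ to its closure causes no issue when passing to simple-function approximants.
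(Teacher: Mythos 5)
Your proposal is correct and takes essentially the same approach as the paper's (very terse) proof: approximate the test function by linear combinations of characteristic functions compatible with Theorem \ref{exactpos1}, then handle the non-compactness in $\xi$ via the tail bound coming from $\int_0^\infty\Phi_{\vecalf,\vecbeta}=1$ (Remark \ref{GENBETAUPPERBOUNDREM} together with \eqref{PALFBETDEF} and \eqref{PALFBETSPECREMFORMULA}, which you invoke implicitly through Corollary \ref{freeCor2}). You spell out two genuinely necessary bookkeeping points that the paper leaves under the phrase ``in a standard way'': excising a $\lambda$-null neighbourhood of the single point where $K$ fails to be continuous, and noting that the equator $\partial\HS$ is null for the limit measure because of the $\omega_1$ weight.
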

\begin{proof}
For $f$ with compact support the result follows in a standard way by 
approximating $f$ from above and below by linear combinations of 
characteristic functions and applying Theorem~\ref{exactpos1}.
When extending to arbitrary bounded continuous functions $f$ one uses
\eqref{PALFBETSPECREMFORMULA}, \eqref{PALFBETDEF} and
Remark \ref{GENBETAUPPERBOUNDREM}.
\end{proof}

\subsection{Velocity after the first collision}\label{secVelo}

If a particle moving with velocity $\vecv_0$ hits a spherical scatterer at the point $\vecq_1$ and is elastically reflected, its velocity changes to
\begin{equation} \label{VECV1DEF}
	\vecv_1 = \vecv_0 - 2 (\vecv_0\cdot\vecw_1) \vecw_1 ,
\end{equation}
where $\vecw_1\in\S_1^{d-1}$ is the location of the hit relative to the center of the sphere, as defined in Section \ref{firstcoll}.
This implies
\begin{equation}
	\vecw_1 = \frac{\vecv_1-\vecv_0}{\|\vecv_1-\vecv_0\|} .
\end{equation}

\begin{thm}\label{exactpos2-1hit}
Let $\lambda$ be a Borel probability measure on $\S_1^{d-1}$ absolutely 
continuous with respect to $\vol_{\S_1^{d-1}}$. For any bounded continuous function $f:\S_1^{d-1}\times \R_{>0} \times \S_1^{d-1} \to \RR$,
\begin{multline} \label{exactpos2eq-1hit}
\lim_{\rho\to 0}  \int_{\S_1^{d-1}} f\big(\vecv_0, \rho^{d-1} \tau_1(\vecq_{\rho,\vecbeta}(\vecv_0),\vecv_0;\rho), 
\vecv_1(\vecq_{\rho,\vecbeta}(\vecv_0),\vecv_0;\rho)\big) d\lambda(\vecv_0) \\
=\int_{\S_1^{d-1}} \int_{\R_{>0}} \int_{\S_1^{d-1}} f\big(\vecv_0,\xi,\vecv_1\big) 
p_{\vecalf,\vecbeta}(\vecv_0,\xi,\vecv_1) \, d\lambda(\vecv_0)\, d\xi 
d\!\vol_{\S_1^{d-1}}(\vecv_1) ,
\end{multline}
with the probability density $p_{\vecalf,\vecbeta}$ defined by
\begin{equation} \label{exactpos2-1hit-tpdef}
p_{\vecalf,\vecbeta}(\vecv_0,\xi,\vecv_1)\,d\!\vol_{\S_1^{d-1}}(\vecv_1)
=\Phi_\vecalf\bigl(\xi,\vecomega_\perp,(\vecbeta(\vecv_0)K(\vecv_0))_\perp\bigr)\,
\omega_1 \, d\!\vol_{\S^{d-1}_1}(\vecomega)
\end{equation}
where
\begin{equation} \label{V1OMEGASUBST}
\vecv_1=(\vece_1 - 2 (\vece_1\cdot\vecomega) \vecomega) K(\vecv_0)^{-1},
\qquad \vecomega\in\HS.
\end{equation}
\end{thm}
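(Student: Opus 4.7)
The plan is to deduce Theorem \ref{exactpos2-1hit} formally from Corollary \ref{exactpos2} by absorbing the elastic reflection rule \eqref{VECV1DEF} into the test function and then changing variables on the hemisphere. No new equidistribution input is required, so the argument is really a bookkeeping exercise once Corollary \ref{exactpos2} is in hand.

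The first step is to fix a bounded continuous $f:\S_1^{d-1}\times\R_{>0}\times\S_1^{d-1}\to\R$ and define
$$
\tilde f(\vecv_0,\xi,\vecw):=f\bigl(\vecv_0,\xi,\,\vecv_0-2(\vecv_0\cdot\vecw)\vecw\bigr),
$$
which is again bounded and continuous since the reflection map is smooth on $\S_1^{d-1}\times\S_1^{d-1}$. By \eqref{VECV1DEF}, $\tilde f(\vecv_0,\xi,\vecw_1)=f(\vecv_0,\xi,\vecv_1)$, so the left-hand side of \eqref{exactpos2eq-1hit} equals the left-hand side of \eqref{exactpos2eq} applied to $\tilde f$. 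Corollary \ref{exactpos2} then rewrites this as an integral in $(\vecv_0,\xi,\vecomega)$ over $\S_1^{d-1}\times\R_{>0}\times\HS$. Using $\vecv_0=\vece_1K(\vecv_0)^{-1}$ and the fact that $K(\vecv_0)\in\SO(d)$ preserves the inner product, one computes
$$
\tilde f\bigl(\vecv_0,\xi,-\vecomega K(\vecv_0)^{-1}\bigr)=f\bigl(\vecv_0,\xi,(\vece_1-2(\vece_1\cdot\vecomega)\vecomega)K(\vecv_0)^{-1}\bigr)=f(\vecv_0,\xi,\vecv_1),
$$
with $\vecv_1$ as in \eqref{V1OMEGASUBST}.

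It remains to convert the $\vecomega$-integral over $\HS$ into an integral over $\vecv_1\in\S_1^{d-1}$. For each fixed $\vecv_0$, the map $\Psi:\vecomega\mapsto\vecv_1$ is a smooth bijection of the open hemisphere $\HS$ onto $\S_1^{d-1}\setminus\{\vecv_0\}$, with smooth inverse $\vecv_1\mapsto(\vecv_0-\vecv_1)K(\vecv_0)/\|\vecv_0-\vecv_1\|$ (as one checks directly from $\vecv_1K(\vecv_0)=\vece_1-2(\vece_1\cdot\vecomega)\vecomega$). The definition \eqref{exactpos2-1hit-tpdef} is precisely the statement that the pushforward under $\Psi$ of the measure $\Phi_\vecalf(\xi,\vecomega_\perp,(\vecbeta(\vecv_0)K(\vecv_0))_\perp)\,\omega_1\,d\!\vol_{\S_1^{d-1}}(\vecomega)$ on $\HS$ coincides with $p_{\vecalf,\vecbeta}(\vecv_0,\xi,\vecv_1)\,d\!\vol_{\S_1^{d-1}}(\vecv_1)$ on $\S_1^{d-1}$. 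Substituting yields \eqref{exactpos2eq-1hit}. The only nonroutine point is the smoothness and bijectivity of $\Psi$, which is immediate from the explicit inverse formula; hence there is no genuine obstacle beyond Corollary \ref{exactpos2} itself.
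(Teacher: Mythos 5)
Your proof is correct and follows essentially the same route as the paper's: absorb the reflection rule \eqref{VECV1DEF} into an auxiliary test function $g=\tilde f$, apply Corollary \ref{exactpos2}, and then push forward the $\vecomega$-measure over $\HS$ to a $\vecv_1$-measure on $\S_1^{d-1}$ via the diffeomorphism \eqref{V1OMEGASUBST}, observing that \eqref{exactpos2-1hit-tpdef} is exactly the definition of that pushforward. Your explicit formula for the inverse map agrees with the paper's (note $(\vecv_0-\vecv_1)K(\vecv_0)=\vece_1-\vecv_1K(\vecv_0)$), so there is no discrepancy.
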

\begin{remark} \label{PTILDEEXPLICITREM}
The relationship between $p_{\vecalf,\vecbeta}(\vecv_0,\xi,\vecv_1)$
and $\Phi_\vecalf(\xi,\vecw,\vecz)$ can be expressed more explicitly as
\begin{align} \label{PTILDEEXPLICIT}
p_{\vecalf,\vecbeta}(\vecv_0,\xi,\vecv_1)
=
\frac 14 \,\|\vecv_1-\vecv_0\|^{3-d}\,
\Phi_\vecalf\Bigl(\xi,-\frac{(\vecv_1 K(\vecv_0))_\perp}{\|\vecv_1-\vecv_0\|},
(\vecbeta(\vecv_0)K(\vecv_0))_\perp\Bigr).
\end{align}
The function $p_{\vecalf,\vecbeta}(\vecv_0,\xi,\vecv_1)$ is independent 
of the choice of the function $K:\S_1^{d-1}\to\SO(d)$, since 
$\Phi_\vecalf(\xi,\vecw,\vecz)$ only depends on the four real numbers 
$\xi$, $\|\vecw\|$, $\|\vecz\|$, $\vecw\cdot\vecz$
(cf.\ Remark \ref{PALFSYMMREMARK}), which in \eqref{PTILDEEXPLICIT} 
can be expressed as
$\xi, \frac{\sqrt{1-(\vecv_0\cdot \vecv_1)^2}}{\|\vecv_1-\vecv_0\|},
\sqrt{1-(\vecbeta(\vecv_0)\cdot \vecv_0)^2},
\frac{(\vecv_1\cdot \vecv_0)(\vecbeta(\vecv_0)\cdot \vecv_0)
-\vecv_1\cdot\vecbeta(\vecv_0)}{\|\vecv_1-\vecv_0\|}$, respectively.
\end{remark}

\section{Equidistribution in homogeneous spaces} \label{secEqui}

This section provides the ergodic-theoretic results, which are the key ingredients in the proofs of the main theorems. These equidistribution theorems are consequences of Ratner's classification of measures that are invariant under the action of a unipotent flow \cite{Ratner91}, and may in particular be viewed as variants of Shah's %
Theorem 1.4 in \cite{Shah96}.

\subsection{Translates of expanding unipotent orbits} \label{transexporbsec}

The following is a special case of Shah's Theorem 1.4 in \cite{Shah96}.
Let $G$ be a connected Lie group and let $\Gamma$ be a lattice in $G$.

\begin{thm}\label{thmShah}
Suppose $G$ contains a Lie subgroup $H$ isomorphic to $\SLR$ (we denote the corresponding embedding by $\varphi:\SLR\to G$), such that the set $\Gamma\backslash\Gamma H$ is dense in $\GamG$. Let $\lambda$ be a Borel probability measure on $\RR^{d-1}$ %
which is absolutely continuous with respect to Lebesgue measure, and let $f:\GamG\to\RR$ be bounded continuous.
Then
\begin{equation}
	\lim_{t\to\infty} \int_{\RR} f\left(\varphi\left(
	\begin{pmatrix} 1 & \vecx \\ \trans\vecnull & 1_{d-1} \end{pmatrix}
	\begin{pmatrix} \e^{-(d-1) t} & \vecnull \\ \trans\vecnull & \e^{t}1_{d-1} \end{pmatrix}
	\right)\right) d\lambda(\vecx)
	= \int_{\GamG} f \, d\mu,
\end{equation}
where $\mu$ is the unique $G$-right-invariant probability measure on $\GamG$.
\end{thm}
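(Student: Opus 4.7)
The theorem is a special case of \cite[Thm.~1.4]{Shah96}, so my plan is to outline how Shah's argument specializes here, built on Ratner's measure classification \cite{Ratner91}.

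\textbf{Setup and reduction.} Write $n(\vecx)=\smatr{1}{\vecx}{\trans\vecnull}{1_{d-1}}$ and $a(t)=\smatr{\e^{-(d-1)t}}{\vecnull}{\trans\vecnull}{\e^t 1_{d-1}}$, and define probability measures $\mu_t$ on $\GamG$ by $\mu_t(f):=\int_{\RR^{d-1}} f(\Gamma\,\varphi(n(\vecx)a(t)))\,d\lambda(\vecx)$; the goal is $\mu_t\to\mu$ in the weak-$*$ topology. By absolute continuity of $\lambda$ and a routine approximation, I reduce to the case $d\lambda(\vecx)=\psi(\vecx)\,d\vecx$ with $\psi$ continuous, compactly supported, of total mass $1$. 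The conjugation identity $a(t)^{-1}n(\vecx)a(t)=n(\e^{dt}\vecx)$ then rewrites $\mu_t(f)$ as an average of $f$ along a long piece of the unipotent orbit $\{\Gamma\varphi(a(t)n(\vecy))\}$, with the slowly varying cutoff $\e^{-d(d-1)t}\psi(\e^{-dt}\vecy)$ whose support in $\vecy$ has diameter growing like $\e^{dt}$.

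\textbf{Non-divergence and invariance.} Every weak-$*$ subsequential limit $\mu_\infty$ of $\{\mu_t\}$ must be a probability measure, which amounts to ruling out escape of mass; this I obtain from the Dani--Margulis quantitative non-divergence estimates for unipotent flows, applied uniformly to the family of long unipotent trajectories described above. Next, $U$-invariance of $\mu_\infty$, where $U=\varphi(\{n(\vecy):\vecy\in\RR^{d-1}\})$, follows by translating the variable of integration: the substitution $\vecy\mapsto\vecy+\vecy_0$ moves the cutoff $\psi(\e^{-dt}\vecy)$ by a vanishingly small amount uniformly in $\vecy$, while translating the argument of $f$ by $\varphi(n(\vecy_0))$ on the right, so in the limit $\mu_\infty$ is right $U$-invariant.

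\textbf{Classification and conclusion.} By Ratner's theorem, $\mu_\infty$ disintegrates into ergodic $U$-invariant components, each being the Haar measure on a closed orbit $\Gamma x L$ with $U\subseteq L\subseteq G$ closed. The main obstacle, and the technical heart of the argument, is ruling out proper intermediate $L$. This I do via the linearization machinery of Dani--Margulis and Mozes--Shah: one parametrizes the countably many possible conjugacy classes of such intermediate subgroups by finite-dimensional representations of $G$, and then absolute continuity of $\psi$ prevents the translated measures from concentrating on any of the (lower-dimensional) algebraic subvarieties where extra invariance could appear. Together with the hypothesis that $\Gamma\backslash\Gamma H$ is dense in $\GamG$, which excludes the scenario that $\Gamma H$ itself is a closed orbit of a proper subgroup containing $U$, this forces every ergodic component of $\mu_\infty$ to satisfy $L=G$, hence $\mu_\infty=\mu$. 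Since every weak-$*$ subsequential limit equals the unique $G$-invariant probability measure $\mu$, the full convergence $\mu_t\to\mu$ follows and the theorem is proved.
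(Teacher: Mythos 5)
The paper supplies no proof of this statement: the sentence immediately preceding it reads ``The following is a special case of Shah's Theorem 1.4 in \cite{Shah96},'' and the theorem is cited without further argument. Your sketch of the Ratner/Dani--Margulis/linearization machinery behind Shah's proof is a reasonable, essentially correct high-level account of that argument (the conjugation identity $a(t)^{-1}n(\vecx)a(t)=n(\e^{dt}\vecx)$, the rescaled cutoff, nondivergence, $U$-invariance of weak-$*$ limits, and the role of the density of $\Gamma\backslash\Gamma H$ in ruling out intermediate closed orbits all match), so there is nothing in the paper for it to diverge from.
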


Let us set
\begin{equation} \label{NMINUSDEF}
	n_-(\vecx)=\left(\begin{pmatrix} 1 & \vecx \\ \trans\vecnull & 1_{d-1} \end{pmatrix},\vecnull\right) \in \ASLR
\end{equation}
and
\begin{equation} \label{PHITDEF}
	\Phi^t = \left(\begin{pmatrix} \e^{-(d-1) t} & \vecnull \\ \trans\vecnull & \e^{t}1_{d-1} \end{pmatrix},\vecnull\right) \in \ASLR.
\end{equation}

Theorem \ref{thmShah} implies the following.
\begin{thm}\label{equiThm0}
Let $\lambda$ be a Borel probability measure on $\RR^{d-1}$ 
which is absolutely continuous with respect to Lebesgue measure, and let $f:X\to\RR$ be bounded continuous. Then, for every $\vecalf\in\RR^d\setminus\QQ^d$ and every $M\in\SL(d,\RR)$
\begin{equation}
	\lim_{t\to\infty} \int_{\RR^{d-1}} f\bigl((1_d,\vecalf)(M,\vecnull)n_-(\vecx) \Phi^t\bigr)\, d\lambda(\vecx) = \int_{X} f(g) \, d\mu(g) .
\end{equation}
\end{thm}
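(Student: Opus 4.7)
The strategy is to deduce Theorem \ref{equiThm0} from Theorem \ref{thmShah} applied with $G=\ASLR$, $\Gamma=\ASLZ$, but with the embedding conjugated so as to absorb the shift by $(1_d,\vecalf)(M,\vecnull)$. Set $g_0:=(1_d,\vecalf)(M,\vecnull)=(M,\vecalf M)$ and let $\varphi:\SLR\to\ASLR$ be the standard embedding $\varphi(A)=(A,\vecnull)$, so that $H:=\varphi(\SLR)$ is the ``linear part'' and $n_-(\vecx)\Phi^t=\varphi\bigl(n(\vecx)a_t\bigr)$, where $n(\vecx)$ and $a_t$ denote the unipotent and diagonal matrices appearing in Theorem \ref{thmShah}. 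In these terms the integrand of Theorem \ref{equiThm0} is $f\bigl(g_0\,\varphi(n(\vecx)a_t)\bigr)$.

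To remove $g_0$ I would introduce the conjugated embedding $\varphi'(A):=g_0\,\varphi(A)\,g_0^{-1}$ and the auxiliary test function $\tilde f(g):=f(gg_0)$. Right-multiplication by $g_0$ commutes with the left $\ASLZ$-action on $\ASLR$, so $\tilde f$ descends to a bounded continuous function on $X$. The identity $g_0\,\varphi(na_t)=\varphi'(na_t)\,g_0$ then gives $f\bigl(g_0\,\varphi(na_t)\bigr)=\tilde f\bigl(\varphi'(na_t)\bigr)$, so that Theorem \ref{thmShah} applied to $\varphi'$ and $\tilde f$, together with the right-$\ASLR$-invariance of the Haar measure $\mu$ on $X$, would yield
\begin{equation*}
\lim_{t\to\infty}\int_{\R^{d-1}} f\bigl(g_0\,\varphi(n(\vecx)a_t)\bigr)\,d\lambda(\vecx)=\int_X\tilde f\,d\mu=\int_X f\,d\mu.
\end{equation*}

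All that remains is to verify that $\ASLZ\backslash\ASLZ\,\varphi'(\SLR)$ is dense in $X$. Since $\varphi'(\SLR)=g_0 H g_0^{-1}$ and right-translation by $g_0$ is a self-homeomorphism of $X$, this is equivalent to the density in $X$ of the right $H$-orbit of $[g_0]$. Under the identification $\ASLZ g\leftrightarrow \ZZ^d g$ of $X$ with the space of affine lattices of covolume one, this orbit is precisely the family $\{(\ZZ^d+\vecalf)B\col B\in\SLR\}$ (using that $MA$ ranges over $\SLR$ as $A$ does), so the question is whether this family is dense among all covolume-one affine lattices.

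The main obstacle is to establish this density under the hypothesis $\vecalf\notin\QQ^d$. Given a target affine lattice $(\ZZ^d+\vecbeta)C$ with $C\in\SLR$, $\vecbeta\in\R^d$, I would approximate it by $(\ZZ^d+\vecalf)\gamma_n C$ for a suitable sequence $\gamma_n\in\SLZ$; since $(\ZZ^d+\vecalf)\gamma_n C=(\ZZ^d+\vecalf\gamma_n)C$, this reduces to producing $\gamma_n\in\SLZ$ with $\vecalf\gamma_n\to\vecbeta\pmod{\ZZ^d}$, i.e.\ to showing that the $\SLZ$-orbit of $\vecalf$ is dense in $\TT^d=\R^d/\ZZ^d$. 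For $d\geq 2$ and $\vecalf\notin\QQ^d$ this is a classical fact, provable e.g.\ by combining the ergodicity of a hyperbolic element of $\SLZ$ on $\TT^d$ with the classification of closed $\SLZ$-invariant subsets of $\TT^d$ as finite sets of rational points or all of $\TT^d$. Alternatively, Ratner's orbit closure theorem \cite{Ratner91} applies directly to the unipotent subgroup of $H$ acting on $X$: the only closed connected subgroups of $\ASLR$ containing $H$ are $H$ and $\ASLR$ (since the only $H$-invariant subspaces of $\R^d$ are $\{\vecnull\}$ and $\R^d$), and for $\vecalf\notin\QQ^d$ the subgroup $g_0^{-1}\ASLZ g_0\cap H$, which corresponds to the stabilizer $\{\gamma\in\SLZ\col\vecalf(\gamma-1_d)\in\ZZ^d\}$ of $\vecalf\bmod\ZZ^d$, fails to be a lattice in $H\cong\SLR$, forcing the orbit closure to be all of $X$.
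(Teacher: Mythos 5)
Your proof takes essentially the same route as the paper's: conjugate the standard embedding by $g_0=(M,\vecalf M)$, pass to the test function $\tilde f(g)=f(gg_0)$, apply Theorem \ref{thmShah}, and reduce its density hypothesis to the density of $\vecalf\SLZ\bmod\Z^d$ in $\R^d/\Z^d$. The conjugated embedding $\varphi'(A)=g_0(A,\vecnull)g_0^{-1}$ you introduce is exactly the paper's $\tilde M\mapsto(1_d,\vecalf)(M\tilde MM^{-1},\vecnull)(1_d,-\vecalf)$, and the reduction of the density requirement to an orbit-closure statement on the torus is identical.

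The only genuine difference is in how the density of $\vecalf\SLZ$ in $\R^d/\Z^d$ is justified. The paper gives a short self-contained elementary proof: move an irrational coordinate to the front by a (signed) permutation in $\SLZ$, then successively approximate the other coordinates with integer shears and an integer scaling of the first coordinate. You instead invoke the result as classical, and offer Ratner's orbit closure theorem as an alternative. Both are valid, but two points deserve tightening. First, your ``classical'' justification is circular as worded: once one has the classification of closed $\SLZ$-invariant subsets (equivalently, of orbit closures) on the torus, the density of irrational orbits follows directly and the ergodicity of a hyperbolic element is not needed; and ergodicity by itself yields density only for almost every point, not for every irrational $\vecalf$. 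Second, in the Ratner route one should apply the orbit-closure theorem to the full $H$-orbit $[g_0]H$ (legitimate, since $H\cong\SLR$ is connected and generated by one-parameter unipotent subgroups), not merely ``to the unipotent subgroup of $H$''; this is what makes your classification of intermediate closed connected $L$ with $H\subset L\subset\ASLR$ the relevant dichotomy. And the claim that $g_0^{-1}\ASLZ g_0\cap H$ is not a lattice in $H$ needs the observation that the $\SLZ$-orbit of $\vecalf\bmod\Z^d$ is infinite (a finite-index subgroup of $\SLZ$ \emph{would} be a lattice in $\SLR$); this is easily seen by shearing with a one-parameter unipotent in a direction of an irrational coordinate, and it is exactly here that the hypothesis $\vecalf\notin\Q^d$ is used, so it should be made explicit.
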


\begin{proof}
Let $G=\ASLR$, $\Gamma=\ASLZ$ and define the embedding
\begin{equation}
	\varphi: \SLR \to G, \quad \tilde M \mapsto (1_d,\vecalf) (M\tilde MM^{-1},\vecnull) (1_d,-\vecalf).
\end{equation}
We now wish to establish that $\Gamma\backslash\Gamma H$ with $H=\varphi(\SLR)$ is dense in $\GamG$. To this end it suffices to show that 
\begin{equation}
(\gamma,\vecm)(1_d,\vecalf) (M \tilde M,\vecnull)=(\gamma M \tilde M , (\vecalf+\vecm) M \tilde M)	
\end{equation}
are dense in $\ASLR$, as $\gamma$, $\vecm$ and $\tilde M$ vary over 
$\SLZ$, $\ZZ^d$ and $\SLR$, respectively. It is evident that this is 
in turn is equivalent to showing that $\{(\vecalf+\vecm)\gamma^{-1}\}$ is 
dense in $\RR^d$.

Letting $C\subset \RR^d/\ZZ^d$ be the closure of the image 
of $\vecalf\SLZ\subset \RR^d$ under the natural projection
$\RR^d\to\RR^d/\ZZ^d$, our task is to show $C=\RR^d/\ZZ^d$.
Since $\vecalf\notin \QQ^d$ there is a choice of $\gamma\in\SLZ$ 
either a permutation matrix or %
$\smatr 0{-1}10$ which gives
$\vecw=(w_1,\ldots,w_d):=\vecalf \gamma \in C$ with $w_1\notin\QQ$.
Then by choosing $\gamma'=\matr 1\veca{^t\bn}{1_{d-1}}\in\SLZ$
with appropriate $\veca\in\Z^{d-1}$, the point
$\vecw \gamma'$ can be made to lie arbitrarily close to 
$(w_1,0,\ldots,0)$ in $\RR^d/\ZZ^d$.
Hence since $C$ is closed we have $(w_1,0,\ldots,0)\in C$.
Now let $\vecy=(y_1,\ldots,y_d) \in \R^d$ and $\ve>0$ be given.
Then there is $m\in\Z\setminus\{0\}$ such that $\|mw_1-y_1\|<\ve$
(where $\|x\|=\inf_{n\in\Z} |x-n|$ as usual).
Letting $\gamma''$ be any matrix in $\SLZ$ with top left entry $m$
we have $(mw_1,*,\ldots,*)=(w_1,0,\ldots,0)\gamma''\in C$,
and hence since $C$ is right $\SL(d,\Z)$ invariant and $mw_1\notin\Q$,
an argument as above shows $(mw_1,0,\ldots,0)\in C$.
Finally by choosing (again)
$\gamma'''=\matr 1\veca{^t\bn}{1_{d-1}}\in\SLZ$
with appropriate $\veca\in\Z^{d-1}$, the point
$(mw_1,0,\ldots,0)\gamma'''\in C$ can be made to lie
arbitrarily close to $(mw_1,y_2,\ldots,y_d)$.
Since %
$\ve$ is arbitrary and $C$ is closed we obtain $\vecy\in C$.
Hence $C=\R^d/\Z^d$, as desired.

Having established the required density, Theorem \ref{thmShah} implies 
that for any bounded continuous $\tilde f:X\to\R$
\begin{equation}
	\lim_{t\to\infty} \int_{\RR^{d-1}} \tilde f((M,\vecalf M)n_-(\vecx) \Phi^t(M,\vecalf M)^{-1}) d\lambda(\vecx)  = \int_{X} \tilde f(g) d\mu(g) .
\end{equation}
Choosing the test function $\tilde f(g)=f(g (M,\vecalf M))$ completes the proof.
\end{proof}

We now extend Theorem \ref{equiThm0} by considering sequences of
test functions with additional parameter dependence.
\begin{thm}\label{equiThm}
Let $\lambda$ be a Borel probability measure on $\RR^{d-1}$ which is absolutely continuous with respect to Lebesgue measure. Let $f:\RR^{d-1}\times X\to\RR$ be bounded continuous and $f_t:\RR^{d-1}\times X\to\RR$ a family of uniformly bounded (i.e., $|f_t|<K$ for some absolute constant $K$), continuous functions such that $f_t\to f$ as $t\to\infty$, uniformly on compacta.
Then, for every $\vecalf\in\RR^d\setminus\QQ^d$, $M\in\SL(d,\RR)$,
\begin{equation} \label{equiThmformula}
	\lim_{t\to\infty} \int_{\RR^{d-1}} f_t\bigl(\vecx,(1_d,\vecalf)(M,\vecnull)n_-(\vecx) \Phi^t\bigr)\, d\lambda(\vecx) \\ = \int_{\RR^{d-1}\times X} f(\vecx,g) \, d\mu(g)d\lambda(\vecx) .
\end{equation}
\end{thm}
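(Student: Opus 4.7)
The plan is to deduce Theorem \ref{equiThm} from Theorem \ref{equiThm0} by handling two issues separately: the extra dependence of the test function on $\vecx$, and the $t$-dependence of $f_t$. Write $g_t(\vecx):=(1_d,\vecalf)(M,\vecnull)n_-(\vecx)\Phi^t$ and let $\nu_t$ be the Borel probability measure on $\R^{d-1}\times X$ obtained as the pushforward of $\lambda$ under $\vecx\mapsto(\vecx,g_t(\vecx))$. The crucial preliminary observation is that the family $\{\nu_t\}_{t\geq 0}$ is tight: its marginal on $\R^{d-1}$ is the fixed probability measure $\lambda$, which is tight on the Polish space $\R^{d-1}$; its marginal on $X$, namely $(g_t)_*\lambda$, converges weakly to the probability measure $\mu$ by Theorem~\ref{equiThm0}, and is hence tight on the Polish space $X$ by Prokhorov's theorem; tightness of both marginals yields tightness of $\{\nu_t\}$ on the product.

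First I would prove the theorem in the special case $f_t\equiv f$. For a product test function $f(\vecx,g)=f_1(\vecx)f_2(g)$ with $f_1,f_2$ bounded continuous, this is immediate from Theorem~\ref{equiThm0}: after splitting $f_1$ into positive and negative parts, absorbing a nonnegative multiple of $f_1$ into $\lambda$ yields a new probability measure on $\R^{d-1}$ that is absolutely continuous with respect to Lebesgue measure, to which Theorem~\ref{equiThm0} applies with test function $f_2$. Linearity extends the limit identity to finite $\R$-linear combinations of products. For a general bounded continuous $f$ and $\ve>0$, I would use tightness to pick a compact $C\subset\R^{d-1}\times X$ with $\nu_t(C^c)<\ve$ for all sufficiently large $t$ and $(\lambda\times\mu)(C^c)<\ve$, then use a Stone--Weierstrass argument on a compact metric neighbourhood of $C$ to approximate $f$ uniformly on $C$ by a finite sum of products $h$ with $\|h\|_\infty\leq\|f\|_\infty+\ve$. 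Combining the product case for $h$ with the two $\ve$-bounds proves the limit for $f$.

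Once the $f_t\equiv f$ case is established, the general statement follows by writing
\begin{equation*}
\int f_t(\vecx,g_t(\vecx))\,d\lambda(\vecx)
=\int f(\vecx,g_t(\vecx))\,d\lambda(\vecx)
+\int(f_t-f)(\vecx,g_t(\vecx))\,d\lambda(\vecx).
\end{equation*}
The first term tends to $\int f\,d(\lambda\times\mu)$ by the special case. For the second, given $\eta>0$ one uses tightness to pick a compact $C$ with $\nu_t(C^c)<\eta$ for all large $t$; since $f_t\to f$ uniformly on $C$ and $|f_t-f|\leq 2K$ globally, the integral is bounded by $\sup_C|f_t-f|+2K\eta$, which can be made arbitrarily small.

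The main obstacle is the tightness of $\{\nu_t\}$: without it, mass of these measures could in principle escape to infinity in the noncompact space $X$, and neither the Stone--Weierstrass approximation step (needed to reduce bounded continuous $f$ to finite sums of products) nor the treatment of $f_t-f$ would be valid. Fortunately tightness is delivered for free by Theorem~\ref{equiThm0} combined with Prokhorov's theorem, so the remainder of the argument is a routine density-plus-boundedness manipulation.
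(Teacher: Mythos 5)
Your proof is correct in outline, and it takes a genuinely different route from the paper's. The paper proceeds by first assuming $f_t,f$ compactly supported, using uniform continuity to partition $\RR^{d-1}$ into small $\epsilon$-boxes, freezing the $\vecx$-variable to $\epsilon\veck$ on each box, and applying Theorem~\ref{equiThm0} to the restriction of $\lambda$ to each box; the extension to general bounded test functions is then done by a cutoff argument, which is in effect an ad hoc tightness estimate packaged inside the proof. You instead make tightness of the joint distributions $\nu_t$ the explicit organising principle (deduced from the weak convergence $(g_t)_*\lambda\to\mu$ in Theorem~\ref{equiThm0} together with Prokhorov's theorem), and reduce bounded continuous $f$ to sums of product test functions $f_1(\vecx)f_2(g)$ via Stone--Weierstrass; for products, the key trick of absorbing $f_1$ into $\lambda$ works precisely because Theorem~\ref{equiThm0} holds for \emph{any} absolutely continuous probability measure. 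Both proofs thus lean on the same underlying flexibility of Theorem~\ref{equiThm0} in the measure, but your version is more conceptual and modular, whereas the paper's is more elementary and self-contained (it never invokes Prokhorov or Stone--Weierstrass). Two technical points in your write-up deserve a bit more care. First, ``weak convergence implies tightness'' is usually stated for sequences; your family is indexed by a continuous parameter $t$, so you should either handle $t$ in a bounded interval $[0,T_0]$ separately (using continuity of $t\mapsto g_t$ and tightness of $\lambda$) and use local compactness of $X$ for the tail $t\ge T_0$, or note that it suffices to consider sequences $t_n\to\infty$. Second, the Stone--Weierstrass approximant $h$ is only controlled on the compact set $C$; to use it as a globally bounded test function you need to multiply by product cutoffs supported in a slightly larger compact set, so that $h$ remains a finite sum of bounded continuous products with $\|h\|_\infty$ controlled by $\sup_C|f|+\ve$ rather than by the uncontrolled Tietze extension. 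Both points are standard and do not affect the soundness of the argument.
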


\begin{proof}
Let us first assume that $f_t$ and $f$ have support in the compact set $\scrK\subset\R^{d-1}\times X$. 
Hence the convergence $f_t\to f$ is uniform and all functions are uniformly continuous. Therefore, given $\delta>0$ there exist $\epsilon>0,t_0>0$ such that
\begin{equation}
	f(\vecx_0,g)-\delta \leq f(\vecx,g) \leq f(\vecx_0,g)+\delta 
\end{equation}
and
\begin{equation}
	f(\vecx_0,g)-\delta \leq f_t(\vecx,g) \leq f(\vecx_0,g)+\delta 
\end{equation}
for all $\vecx\in\vecx_0+[0,\epsilon)^{d-1}$, $t> t_0$. Now
\begin{equation}
\begin{split}
	\int_{\RR^{d-1}} & f_t(\vecx,(1_d,\vecalf)(M,\vecnull)n_-(\vecx) \Phi^t) d\lambda(\vecx) \\
	& = \sum_{\veck\in\ZZ^{d-1}} \int_{\epsilon\veck+[0,\epsilon)^{d-1}} f_t(\vecx,(1_d,\vecalf)(M,\vecnull)n_-(\vecx) \Phi^t) d\lambda(\vecx) \\
	& \leq \sum_{\veck\in\ZZ^{d-1}} \int_{\epsilon\veck+[0,\epsilon)^{d-1}} f(\epsilon\veck,(1_d,\vecalf)(M,\vecnull)n_-(\vecx) \Phi^t) d\lambda(\vecx) +\delta . 
\end{split}
\end{equation}
By Theorem \ref{equiThm0},
\begin{equation}
\begin{split}
\lim_{t\to\infty}	\int_{\epsilon\veck+[0,\epsilon)^{d-1}} & f(\epsilon\veck,(1_d,\vecalf)(M,\vecnull)n_-(\vecx) \Phi^t) d\lambda(\vecx) \\
  & =\int_{X} f(\epsilon\veck,g) d\mu(g) 	\int_{\epsilon\veck+[0,\epsilon)^{d-1}} d\lambda(\vecx) \\
  & \leq \int_{X} \int_{\epsilon\veck+[0,\epsilon)^{d-1}} [f(\vecx,g)+\delta] 	 d\lambda(\vecx)d\mu(g) ,
\end{split}
\end{equation}
and so
\begin{equation}
	\limsup_{t\to\infty} \int_{\RR^{d-1}}  f_t(\vecx,(1_d,\vecalf)(M,\vecnull)n_-(\vecx) \Phi^t) d\lambda(\vecx) \\
	\leq  \int_{X} \int_{\RR^{d-1}} f(\vecx,g)	 d\lambda(\vecx) d\mu(g)  + 2\delta .
\end{equation}
An analogous argument shows
\begin{equation}
	\liminf_{t\to\infty} \int_{\RR^{d-1}}  f_t(\vecx,(1_d,\vecalf)(M,\vecnull)n_-(\vecx) \Phi^t) d\lambda(\vecx) \\
	\geq  \int_{X} \int_{\RR^{d-1}} f(\vecx,g)	 d\lambda(\vecx) d\mu(g)  - 2\delta .
\end{equation}
It therefore follows that the limit exists and
\begin{equation}
	\lim_{t\to\infty} \int_{\RR^{d-1}}  f_t(\vecx,(1_d,\vecalf)(M,\vecnull)n_-(\vecx) \Phi^t) d\lambda(\vecx) \\
	=  \int_{X} \int_{\RR^{d-1}} f(\vecx,g)	 d\lambda(\vecx) d\mu(g) .
\end{equation}

We now extend the result to bounded continuous test functions 
$f_t$, uniformly bounded by $|f_t|<K$.
Given $\delta>0$ we choose compact sets $\scrK_1\subset\R^{d-1}$ and
$\scrK_2\subset X$ so large that 
$(1-\lambda(\scrK_1))+(1-\mu(\scrK_2)) \leq\delta/K$.
Let $c_1:\R^{d-1}\to [0,1]$ and $c_2:X\to [0,1]$ be continuous
functions which have compact support and satisfy
$\chi_{_{\scrK_1}}\leq c_1$ and
$\chi_{_{\scrK_2}}\leq c_2$, respectively. Write
\begin{equation}
f_t=f_t^1 + f_t^2,
\qquad\text{with} \quad
f_t^1(\vecx,g)=c_1(\vecx)c_2(g)f_t(\vecx,g),\quad
f_t^2=f_t-f_t^1. 
\end{equation}
Then $f_t^1$ is compactly supported as in the previous paragraph.
For $f_t^2$ we have, using Theorem~\ref{equiThm0},
\begin{align} \notag
&	\limsup_{t\to\infty} \int_{\RR^{d-1}} 
\bigl | f_t^2(\vecx,(1_d,\vecalf)(M,\vecnull)n_-(\vecx) \Phi^t) \bigr |
\, d\lambda(\vecx) 
\\ 
& \leq K(1-\lambda(\scrK_1))
+\limsup_{t\to\infty} \int_{\scrK_1}
K\bigl(1-c_2\bigl((1_d,\vecalf)(M,\vecnull)n_-(\vecx) \Phi^t\bigr)\bigr)
\, d\lambda(\vecx) 
\\ \notag
& = K(1-\lambda(\scrK_1))+ K \int_{X} \bigl(1-c_2(g)\bigr) \, d\mu(g)
\leq K\bigl(1-\lambda(\scrK_1)\bigr)+ K \bigl(1-\mu(\scrK_2)\bigr) 
\leq \delta .
\end{align}
This upper bound shows that the statement of the theorem can be extended from compactly supported  to bounded test functions.
\end{proof}

\subsection{Spherical averages}

We will now show that the statement of Theorem \ref{equiThm} 
(and thus of Theorem \ref{equiThm0}) holds when $n_-(\vecx)$ is replaced by
\begin{equation} \label{KANDEDEF}
	(E(\vecx),\vecnull)=\bigg(\exp\begin{pmatrix} 0 & \vecx \\ -\trans\vecx & 0_{d-1} \end{pmatrix},\vecnull\bigg).
\end{equation}
In fact we can prove a more general fact with almost no extra effort:
\begin{cor}\label{also2}
Let $\Edomain\subset \R^{d-1}$ be an open subset and
let $E_1:\Edomain\to\SO(d)$ be a smooth map such that the map
$\Edomain\ni\vecx\mapsto \vece_1 E_1(\vecx)^{-1}\in\S^{d-1}_1$ 
has non-singular differential at \mbox{(Lebesgue-)almost} all $\vecx\in\Edomain$.
Let $\lambda$ be a Borel probability measure on $\Edomain$, 
absolutely continuous with respect to Lebesgue measure.
We then have, for any bounded continuous function $f:\Edomain\times X\to \R$
and any family of uniformly bounded continuous functions 
$f_t:\Edomain\times X\to \R$ such that $f_t\to f$ as $t\to\infty$,
uniformly on compacta, and for every
$\vecalf\in\R^d\setminus\Q^d$, $M\in\SL(d,\R)$,
\begin{equation} \label{also2identity}
	\lim_{t\to\infty} \int_{\Edomain} f_t(\vecx,(1_d,\vecalf)(M,\vecnull)
(E_1(\vecx),\bn) \Phi^t) \, d\lambda(\vecx) \\ 
	= \int_{\Edomain\times X} f(\vecx,g) \, d\mu(g)d\lambda(\vecx).
\end{equation}
\end{cor}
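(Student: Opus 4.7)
The strategy is to reduce Corollary \ref{also2} to Theorem \ref{equiThm} via a local Bruhat-type factorization of $E_1(\vecx)$ that exposes an $n_-$ factor. On the big Bruhat cell of $\SL(d,\R)$ relative to the maximal parabolic stabilizing $\R\vece_1$, every element $g$ admits a unique smooth factorization
\begin{equation*}
g = n_-(\vecy)\, L(a,B) \, N^-(\vecz),
\qquad L(a,B) := \begin{pmatrix} a & \vecnull \\ \trans\vecnull & B \end{pmatrix},
\qquad N^-(\vecz) := \begin{pmatrix} 1 & \vecnull \\ \trans\vecz & 1_{d-1}\end{pmatrix}.
\end{equation*}
A direct computation gives $\vece_1 g^{-1} = (a^{-1}, -a^{-1}\vecy)$, so when $g = E_1(\vecx) \in \SO(d)$ with $v_1(\vecx) := (\vece_1 E_1(\vecx)^{-1})_1 \neq 0$, we have $a(\vecx) = 1/v_1(\vecx)$ and $\vecy(\vecx) = -(v_2,\ldots,v_d)/v_1$. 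In particular $\vecy(\vecx)$ is a rational function of $\vecv(\vecx) := \vece_1 E_1(\vecx)^{-1}$ and has non-singular differential at every $\vecx$ where $\vecv$ does; moreover the set $\{\vecx\in\Edomain : v_1(\vecx) = 0\}$, being (off a measure-zero bad set) the preimage of a codimension-one submanifold of $\S_1^{d-1}$ under a local diffeomorphism, has Lebesgue measure zero.

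First I perform the standard reductions. By the cutoff argument used at the end of the proof of Theorem~\ref{equiThm}, it suffices to treat $f_t = f$ continuous and compactly supported on $\Edomain \times X$. Combined with the measure-zero observation above and a smooth partition of unity, I may further assume that $\lambda$ is supported in a small open set $U \subset \Edomain$ on which $\vecv$ is a diffeomorphism onto its image in $\S_1^{d-1}$ and $v_1$ is nowhere vanishing. On $U$ the Bruhat factors $\vecy, L, \vecz$ are then smooth, and $\vecy : U \to \R^{d-1}$ is a diffeomorphism onto its image.

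The core identity is obtained by pushing $\Phi^t$ to the left past $L$ and $N^-$. Since $L$ commutes with $\Phi^t$ (both are block diagonal of shape $(1, d-1)$) and a direct computation gives $N^-(\vecz)\Phi^t = \Phi^t N^-(e^{-dt}\vecz)$, the factorization yields
\begin{equation*}
(E_1(\vecx),\vecnull)\Phi^t = (n_-(\vecy(\vecx)),\vecnull)\Phi^t \cdot h_t(\vecx),
\qquad h_t(\vecx) := \bigl(L(\vecx)\, N^-(e^{-dt}\vecz(\vecx)),\vecnull\bigr),
\end{equation*}
with $h_t \to h_\infty := (L(\vecx),\vecnull)$ uniformly for $\vecx$ in compacta as $t\to\infty$. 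Changing variables $\vecy = \vecy(\vecx)$ produces an absolutely continuous compactly-supported measure $\lambda'$ on $\R^{d-1}$. Setting
\begin{equation*}
g_t(\vecy, g) := f\bigl(\vecx(\vecy),\, g \cdot h_t(\vecx(\vecy))\bigr)
\end{equation*}
and extending to all of $\R^{d-1}\times X$ by a continuous cutoff supported in a slightly larger neighborhood of $\vecy(U)$, one has $g_t \to g_\infty$ uniformly on compacta with $g_\infty(\vecy,g) := f(\vecx(\vecy), g\cdot h_\infty(\vecx(\vecy)))$, and Theorem~\ref{equiThm} applied with parameter $\vecy$ yields
\begin{equation*}
\int_{\R^{d-1}} g_t\bigl(\vecy, (1_d,\vecalf)(M,\vecnull) n_-(\vecy)\Phi^t\bigr)\, d\lambda'(\vecy)
\longrightarrow
\int_{\R^{d-1}\times X} g_\infty(\vecy, g')\, d\mu(g')\, d\lambda'(\vecy).
\end{equation*}
By the key identity and the change of variables the left-hand side equals $\int_\Edomain f\bigl(\vecx, (1_d,\vecalf)(M,\vecnull)(E_1(\vecx),\vecnull)\Phi^t\bigr)\,d\lambda(\vecx)$, while right-invariance of the Haar measure $\mu$ absorbs the $h_\infty(\vecx(\vecy))$ factor on the right, which thus collapses to $\int_\Edomain\int_X f(\vecx,g')\,d\mu(g')\,d\lambda(\vecx)$, as required.

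The principal substantive step is producing the Bruhat factors smoothly on $U$ and verifying that $\vecy(\vecx)$ is a local diffeomorphism; both follow from the explicit formulas above together with the hypothesis on $\vecv$. The remaining work — the standard reductions, the commutation computations with $\Phi^t$, and the absorption via right-invariance of $\mu$ — is routine, so the main obstacle is really the bookkeeping of the factorization rather than any hard analytic estimate.
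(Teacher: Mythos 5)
Your argument is correct and is essentially the same as the paper's: in both cases one factors the rotation locally so as to expose an $n_-(\cdot)$ factor (with a parameter that is a local diffeomorphism of $\vecx$), commutes $\Phi^t$ past the remaining lower-block-triangular factor (producing a correction $h_t\to h_\infty$ uniformly on compacta), absorbs $h_t$ into the $t$-dependent test function, applies Theorem~\ref{equiThm}, and uses right-invariance of $\mu$ at the end. The paper factors $E_1(\vecx_0)^{-1}E_1(\vecx)$ centered at a basepoint (so the $(1,1)$-entry is automatically near $1$ and the factorization is valid without further excision) and uses a covering argument rather than a partition of unity, while you factor $E_1(\vecx)$ directly via Bruhat and exclude the measure-zero set $\{v_1(\vecx)=0\}$; these are cosmetic variants of the same proof.
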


\begin{remark}
Taking $E_1(\vecx)=E(\vecx)$ as in \eqref{KANDEDEF} is indeed a valid
choice in Corollary \ref{also2},
for note that $\vece_1 E(\vecx)^{-1}=\Bigl(\cos \|\vecx\|, 
-\frac{\sin\|\vecx\|}{\|\vecx\|}\vecx\Bigr)$, and one checks that 
this map has nonsingular differential except when
$\|\vecx\|\in\{\pi,2\pi,3\pi,\ldots\}$.
\end{remark}

\begin{proof}[Proof of Corollary \ref{also2}.]
We first prove that if $\vecx_0\in\Edomain$ is any point where the map
$\vecx\mapsto\vece_1 E_1(\vecx)^{-1}$ has nonsingular differential,
then there is some open neighborhood 
$\Edomain_0\subset\Edomain$ of $\vecx_0$ such that \eqref{also2identity}
holds when $\Edomain$ is replaced 
by $\Edomain_0$ or by any Borel subset of $\Edomain_0$.

To see this, write $E_0=E_1(\vecx_0)$ and
\begin{align}
E_2(\vecx):=E_0^{-1} E_1(\vecx)=\begin{pmatrix} c & \vecw \\ 
\trans\vecv & A \end{pmatrix}
=\begin{pmatrix} c(\vecx) & \vecw(\vecx) \\ 
\trans\vecv(\vecx) & A(\vecx) \end{pmatrix},
\qquad c\in\R,\: \vecv,\vecw\in\R^{d-1}.%
\end{align}
Then $E_2(\vecx_0)=1_d$ and thus $c(\vecx_0)=1$ and $\vecv(\vecx_0)=\bn$.
Furthermore the map
$\vecx\mapsto (c(\vecx),\vecv(\vecx)) \in \S^{d-1}_1$ has
nonsingular differential at $\vecx=\vecx_0$,
since $(c(\vecx),\vecv(\vecx))=\vece_1 E_2(\vecx)^{-1}
=(\vece_1 E_1(\vecx)^{-1})E_0$, and thus also the map
$\vecx \mapsto \tx:=-c(\vecx)^{-1}\vecv(\vecx) \in \R^{d-1}$
must have nonsingular differential at $\vecx=\vecx_0$.
Hence there exists some bounded open neighborhood
$\Edomain_0'$ of $\vecx_0$ with $\overline{\Edomain_0'}\subset \Edomain$ such that
$c(\vecx)>1/2$ for all $\vecx\in\Edomain_0'$ and such that
$\vecx \mapsto \tx$ %
is a diffeomorphism of $\Edomain_0'$ onto a bounded open subset
$\widetilde{\Edomain}_0'\subset\R^{d-1}$.
Now for each $\vecx\in\Edomain_0'$ we have
\begin{equation}
\begin{split}
\Bigl(E_2(\vecx),\bn\Bigr)
&=n_-(\tx) \left(\matr 1{-\tx}{^t\bn}{1_{d-1}}
\matr{c}{\vecw}{\trans\vecv}{A},\bn\right)
=n_-(\tx) \left(\matr{c^{-1}}{\bn}{\trans\vecv}A,\bn\right),
\end{split}
\end{equation}
since writing out %
$\trans E_2(\vecx) E_2(\vecx)=1_d$ one gets the relations
$c\vecw+\vecv A=\bn$ and $c^2+\vecv\trans\vecv=1$,
viz.\ $\vecw-\tx A=\bn$ and $c-\tx\trans\vecv=c^{-1}$.

Hence also
\begin{equation} \label{also2keyid}
\Bigl( E_1(\vecx),\bn \Bigr) \Phi^t = \bigl(E_0,\bn\bigr) 
n_-(\tx) \Phi^t \left( 
\matr{c(\vecx)^{-1}}{\bn}{\trans\vecv(\vecx) e^{-dt}}{A(\vecx)},\bn\right).
\end{equation}

Now fix $\Edomain_0$ as an open neighborhood of $\vecx_0$ such that
$\overline{\Edomain}_0\subset\Edomain_0'$,
and consider any Borel subset $B$ of $\Edomain_0$.
Write $\widetilde{B}\subset \widetilde{\Edomain}_0\subset \widetilde{\Edomain}_0'$ 
for the images of $B$ and $\Edomain_0$ under $\vecx\to\tx$.
Let us assume $\lambda(B)>0$, and
let $\widetilde\lambda$ be the measure on $\R^{d-1}$
which corresponds to $\lambda(B)^{-1}\lambda|_{B}$ under the 
diffeomorphism $\vecx\to\tx$;
then $\widetilde\lambda$ is
a Borel probability measure with bounded support 
and absolutely continuous with respect to Lebesgue measure.
Since $\overline{\widetilde{\Edomain}}_0\subset\widetilde{\Edomain}_0'$,
we may choose a continuous cutoff function $h:\R^{d-1}\to [0,1]$ such that 
$\chi_{\widetilde{\Edomain}_0}\leq h \leq \chi_{\widetilde{\Edomain}_0'}$.

If %
$f_t$ and $f$ are given as in the statement of the 
corollary, we may define a family of continuous 
functions $\widetilde f_t:\R^{d-1}\times X\to\R$  and a continuous function
$\widetilde f:\R^{d-1}\times X\to\R$ through
\begin{align}
& \widetilde f_t(\tx,g) := h(\tx) f_t\left(\vecx,g
\left(\matr{c(\vecx)^{-1}}{\bn}{\trans\vecv(\vecx) e^{-dt}}{A(\vecx)}
,\bn\right)\right) & & \text{if } \:
\tx\in\widetilde{\Edomain}_0';
\\ \notag
& \widetilde f(\tx,g) := h(\tx) f\left(\vecx,g
\left(\matr{c(\vecx)^{-1}}{\bn}{\trans\bn}{A(\vecx)}
,\bn\right)\right) & & \text{if } \:
\tx\in\widetilde{\Edomain}_0';
\\ \notag
& \widetilde f_t(\tx,g)=\widetilde f(\tx,g) := 0 & & \text{if } \:
\tx\notin\widetilde{\Edomain}_0'.
\end{align}
(We here view $\vecx\in\Edomain_0'$ as a %
function of $\tx\in\widetilde{\Edomain}_0'$.)
We then have
$\widetilde f_t(\tx,g) \to \widetilde f(\tx,g)$ as $t\to\infty$,
uniformly on compacta.  %
Applying Theorem \ref{equiThm} for $\widetilde{\lambda}$,
$\widetilde f_t$, $\widetilde f$, and with $M$ replaced by $ME_0$,
we get
\begin{equation} 
\lim_{t\to\infty} \int_{\R^{d-1}} \widetilde 
f_t\Bigl(\tx,(1_d,\vecalf)(ME_0,\bn)
n_-(\tx)\Phi^t\Bigr)\, d\widetilde{\lambda}(\tx)  
= \int_{\R^{d-1}\times X} \widetilde f(\tx,g) 
\, d\mu(g)d\widetilde{\lambda}(\tx).
\end{equation}
Here the left hand side equals,
using $\widetilde{\lambda}=\widetilde{\lambda}|_{\widetilde B}$ 
and \eqref{also2keyid},
\begin{equation}
\begin{split}
& \lim_{t\to\infty} \int_{\widetilde B} 
f_t\Bigl(\vecx,(1_d,\vecalf)(M,\bn)
\bigl(E_1(\vecx),\bn\bigr)\Phi^t \Bigr) \, d\widetilde{\lambda}(\tx)  
\\ & 
=\lambda(B)^{-1}\lim_{t\to\infty} \int_{B} 
f_t\Bigl(\vecx,(1_d,\vecalf)(M,\bn)
\bigl(E_1(\vecx),\bn\bigr)\Phi^t \Bigr) \, d\lambda(\vecx),
\end{split}
\end{equation}
and right hand side equals (using the right invariance of $\mu$)
\begin{align}
\int_{\widetilde{B}\times X}
f(\vecx,g)\,
d\mu(g)d\widetilde{\lambda}(\tx)
=\lambda(B)^{-1} \int_{B\times X}
f\left(\vecx,g\right) \, d\mu(g)d\lambda(\vecx).
\end{align}
This proves our claim: 
\eqref{also2identity} holds when $\Edomain$ is replaced by any
Borel subset $B$ of $\Edomain_0$.
We have proved this under the assumption $\lambda(B)>0$,
but it is trivially true also in the case $\lambda(B)=0$.

Now the proof of Corollary \ref{also2} is completed by a
simple covering argument: Given $\ve>0$ there is some compact subset 
$K\subset\Edomain$ such that $\lambda(K)>1-\ve$ and the map
$\Edomain\ni\vecx\mapsto \vece_1 E_1(\vecx)^{-1}\in\S^{d-1}_1$ 
has non-singular differential at every $\vecx\in K$.
Then by what we have proved and since $K$ is compact, there exists a 
finite family $\Edomain_1,\ldots,\Edomain_n$ of open subsets of $\Edomain$
which cover $K$ and which have the same property as $\Edomain_0$ above.
Set $B_1:=\Edomain_1\cap K$
and, recursively, $B_j:=(\Edomain_j\cap K)\setminus (B_1\cup\ldots\cup B_{j-1})$
for $j=2,\ldots,n$. Then each $B_j$ is a Borel subset of $\Edomain_j$
so that \eqref{also2identity} holds when $\Edomain$ is replaced by $B_j$.
Furthermore $K$ is the disjoint union of $B_1,\ldots,B_n$;
hence by adding we obtain that
\eqref{also2identity} holds when $\Edomain$ is replaced by $K$.
Using $\lambda(K)>1-\ve$ and our assumption that the family $f_t$ 
is uniformly bounded, we obtain \eqref{also2identity} upon letting
$\ve\to 0$.
\end{proof}

\subsection{Characteristic functions} \label{charfcnssec}

We recall the definition of limits of a family of sets 
$\{\scrE_t\}_{t\geq t_0}$, where $t_0$ is a fixed real constant:
\begin{equation}
	\liminf \scrE_t := \bigcup_{t\geq t_0} \bigcap_{s\geq t} \scrE_s , \qquad
	\limsup \scrE_t := \bigcap_{t\geq t_0} \bigcup_{s\geq t} \scrE_s .
\end{equation}
We will also use the notation
\begin{equation} \label{LIMINFSUPTOP}
	\lim(\inf \scrE_t)^\circ := \bigcup_{t\geq t_0} \bigg(\bigcap_{s\geq t} \scrE_s\bigg)^\circ , \qquad
	\lim\overline{\sup \scrE_t} := \bigcap_{t\geq t_0} \overline{\bigcup_{s\geq t} \scrE_s} .
\end{equation}
Note that $\lim(\inf \scrE_t)^\circ$ is open and
$\lim\overline{\sup \scrE_t}$ is closed.

If $\{\scrE_t\}_{t\geq t_0}$ is a decreasing family and $\scrE = \bigcap_{t\geq t_0} \scrE_t$ we write $\scrE_t\downarrow \scrE$; 
if $\{\scrE_t\}_{t\geq t_0}$ is an increasing family and $\scrE = \bigcup_{t\geq t_0} \scrE_t$ we write $\scrE_t\uparrow \scrE$. \label{decinc}

\begin{thm}\label{charThm}
Let $\lambda$ be a Borel probability measure on $\RR^{d-1}$ which is absolutely continuous with respect to Lebesgue measure, and let $\scrE_t$ be a family of subsets of $\RR^{d-1}\times X$. Then, for $\vecalf\in\RR^d\setminus\QQ^d$ and $M\in\SL(d,\RR)$,
\begin{equation}\label{inf}
	\liminf_{t\to\infty} \int_{\RR^{d-1}} \chi_{\scrE_t}(\vecx,(1_d,\vecalf)(M,\vecnull)n_-(\vecx) \Phi^t) d\lambda(\vecx) \geq \int_{\lim(\inf \scrE_t)^\circ} d\mu(g)d\lambda(\vecx) ,
\end{equation}
and
\begin{equation}\label{sup}
	\limsup_{t\to\infty} \int_{\RR^{d-1}} \chi_{\scrE_t}(\vecx,(1_d,\vecalf)(M,\vecnull)n_-(\vecx) \Phi^t) d\lambda(\vecx) \leq \int_{\lim\overline{\sup \scrE_t}} d\mu(g)d\lambda(\vecx) .
\end{equation}
If furthermore the set $\lim\overline{\sup \scrE_t}\setminus\lim(\inf \scrE_t)^\circ$ has measure zero,
\begin{equation}\label{lim}
	\lim_{t\to\infty} \int_{\RR^{d-1}} \chi_{\scrE_t}(\vecx,(1_d,\vecalf)(M,\vecnull)n_-(\vecx) \Phi^t) d\lambda(\vecx) = \int_{\limsup \scrE_t} d\mu(g)d\lambda(\vecx) .
\end{equation}
\end{thm}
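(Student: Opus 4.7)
The plan is to sandwich $\chi_{\scrE_t}$ pointwise between bounded continuous test functions and then invoke Theorem \ref{equiThm} on the constant family $f_t\equiv f$. To abbreviate, write $g_t(\vecx):=(1_d,\vecalf)(M,\vecnull)n_-(\vecx)\Phi^t$.

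For \eqref{inf}, I approximate $\chi_{\lim(\inf\scrE_t)^\circ}$ from below by nonnegative continuous functions of compact support. Let $K\subset\lim(\inf\scrE_t)^\circ=\bigcup_{t\geq t_0}(\bigcap_{s\geq t}\scrE_s)^\circ$ be compact. Since the right-hand side is an increasing union of open sets, compactness of $K$ produces a single $T$ with $K\subset(\bigcap_{s\geq T}\scrE_s)^\circ\subset\scrE_s$ for all $s\geq T$. For any continuous $f:\RR^{d-1}\times X\to[0,1]$ with $\supp f\subset K$ we then have $f\leq\chi_{\scrE_s}$ pointwise for every $s\geq T$, and Theorem \ref{equiThm} (applied to the constant family $f_t\equiv f$) yields
\[\int_{\RR^{d-1}\times X} f\,d\mu\,d\lambda=\lim_{t\to\infty}\int_{\RR^{d-1}} f(\vecx,g_t(\vecx))\,d\lambda(\vecx)\leq\liminf_{t\to\infty}\int_{\RR^{d-1}}\chi_{\scrE_t}(\vecx,g_t(\vecx))\,d\lambda(\vecx).\]
Taking the supremum over all such $K$ and $f$, inner regularity of the Radon measure $\mu\times\lambda$ on the open set $\lim(\inf\scrE_t)^\circ$ (i.e.\ the standard Riesz-type identification of its measure with the supremum of such integrals) gives \eqref{inf}.

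For \eqref{sup}, set $C_T:=\overline{\bigcup_{s\geq T}\scrE_s}$, so that $\chi_{\scrE_t}\leq\chi_{C_T}$ for every $t\geq T$ and $C_T\downarrow\lim\overline{\sup\scrE_t}$. Given any open $V\supset C_T$, Urysohn's lemma on the metrizable space $\RR^{d-1}\times X$ yields a continuous $f:\RR^{d-1}\times X\to[0,1]$ with $\chi_{C_T}\leq f\leq\chi_V$; Theorem \ref{equiThm} applied to this bounded continuous $f$ gives
\[\limsup_{t\to\infty}\int_{\RR^{d-1}}\chi_{\scrE_t}(\vecx,g_t(\vecx))\,d\lambda(\vecx)\leq\int_{\RR^{d-1}\times X} f\,d\mu\,d\lambda\leq(\mu\times\lambda)(V).\]
Taking the infimum over open $V\supset C_T$ (outer regularity) produces the upper bound $(\mu\times\lambda)(C_T)$, and letting $T\to\infty$, continuity from above of the finite measure $\mu\times\lambda$ on the decreasing family $\{C_T\}$ yields \eqref{sup}. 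Then \eqref{lim} is immediate: the hypothesis that $\lim\overline{\sup\scrE_t}\setminus\lim(\inf\scrE_t)^\circ$ has measure zero forces the bounds \eqref{inf} and \eqref{sup} to agree, and every set sandwiched between these two---in particular $\limsup\scrE_t$---then has the same $\mu\times\lambda$-measure. There is no real obstacle here; the only subtlety worth flagging is that $C_T$ need not be compact, which is precisely why Theorem \ref{equiThm} is needed in its bounded continuous rather than compactly supported form, and why outer regularity (rather than a direct compactness argument) is used for the limsup half.
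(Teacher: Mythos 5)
Your proof is correct and uses essentially the same mechanism as the paper's: deduce the bound for closed sets $\overline{\bigcup_{s\geq t}\scrE_s}$ (your $C_T$, the paper's $\widetilde\scrE_t$) from Theorem \ref{equiThm} via a portmanteau-type approximation of characteristic functions by continuous ones, then pass to the limit by monotone convergence. The only cosmetic differences are that you prove \eqref{inf} directly by inner regularity and compact exhaustion where the paper obtains it by taking complements in \eqref{sup}, and you spell out the Urysohn/outer-regularity step that the paper delegates to a citation of Shiryaev.
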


\begin{proof}
We begin with the proof of \eqref{sup}. Define the closed set
\begin{equation}
	\widetilde\scrE_t := \overline{\bigcup_{s\geq t} \scrE_s} .
\end{equation}
Clearly $\scrE_t\subset\widetilde\scrE_t\subset \widetilde\scrE_{t_1}$ for $t\geq t_1$. So
\begin{multline}
	\limsup_{t\to\infty} \int_{\RR^{d-1}} \chi_{\scrE_t}(\vecx,(1_d,\vecalf)(M,\vecnull)n_-(\vecx) \Phi^t) d\lambda(\vecx) \\
\leq \limsup_{t_1\to\infty} \limsup_{t\to\infty} \int_{\RR^{d-1}} \chi_{\widetilde\scrE_{t_1}}(\vecx,(1_d,\vecalf)(M,\vecnull)n_-(\vecx) \Phi^t) d\lambda(\vecx) .
\end{multline}
It follows from Theorem \ref{equiThm} (for a constant family of test functions $f=f_t$) by a standard probabilistic argument in which characteristic functions are approximated by bounded continuous functions $f$ (see e.g. \cite{Shiryaev}, Chap.~III) that
\begin{equation}
	\limsup_{t\to\infty} \int_{\RR^{d-1}} \chi_{\widetilde\scrE_{t_1}}(\vecx,(1_d,\vecalf)(M,\vecnull)n_-(\vecx) \Phi^t) d\lambda(\vecx)
	\leq \int_{\widetilde\scrE_{t_1}} d\mu(g)d\lambda(\vecx) .
\end{equation}

Since $\widetilde\scrE_{t_1} \downarrow \lim\overline{\sup \scrE_t}$, 
\begin{equation}
	\limsup_{t_1\to\infty}\int_{\widetilde\scrE_{t_1}} d\mu(g)d\lambda(\vecx) =
	\int_{\lim\overline{\sup \scrE_t}} d\mu(g)d\lambda(\vecx),
\end{equation}
and \eqref{sup} follows. Relation \eqref{inf} is established by taking complements, and \eqref{lim} then follows from \eqref{inf} and \eqref{sup}.
\end{proof}

\begin{remark} \label{charCor}
Let $E_1:\Edomain\to\SO(d)$ be any map as in Corollary \ref{also2};
then the assertions of Theorem \ref{charThm} also hold with
$n_-(\vecx)$ replaced by $(E_1(\vecx),\bn)$:
Let $\lambda$ be a Borel probability measure on $\Edomain$, absolutely 
continuous with respect to Lebesgue measure,
and let $\scrE_t$ be a family of subsets of $\RR^{d-1}\times X$.
Then, for $\vecalf\in\RR^d\setminus\QQ^d$ and $M\in\SL(d,\RR)$,
\begin{equation}\label{cCinf}
	\liminf_{t\to\infty} \int_{\Edomain} \chi_{\scrE_t}\bigl(\vecx,(1_d,\vecalf)(M,\vecnull)(E_1(\vecx),\bn) \Phi^t\bigr) \, d\lambda(\vecx) \geq \int_{\lim(\inf \scrE_t)^\circ} \, d\mu(g)d\lambda(\vecx) ,
\end{equation}
and we have corresponding analogues of \eqref{sup} and \eqref{lim}.
The proof is exactly as the proof of Theorem \ref{charThm},
except that Corollary \ref{also2} is used in place
of Theorem \ref{equiThm}.
\end{remark}

\subsection{Corresponding results for $\SLR$}\label{CORRESSLR}

By following the same line of arguments as for $\ASLR$, one can prove the analogous equidistribution results for any homogeneous space $\Gamma\backslash\SLR$ with $\Gamma$ a lattice in $\SLR$. 
The lattices relevant for our application are the congruence subgroups 
$\Gamma=\Gamma(q)$. 
The main results are as follows
(cf.\ Theorem \ref{equiThm}, Corollary~\ref{also2}, Theorem \ref{charThm}
and Remark \ref{charCor} above).
\begin{thm}\label{equiThm-rat}
Let $\lambda$ be a Borel probability measure on $\RR^{d-1}$ which is absolutely continuous with respect to Lebesgue measure. Let $f:\RR^{d-1}\times X_q\to\RR$ be bounded continuous and $f_t:\RR^{d-1}\times X_q\to\RR$ a family of uniformly bounded, continuous functions such that $f_t\to f$ as $t\to\infty$, uniformly on compacta.
Then, for every $M\in\SL(d,\RR)$,
\begin{equation} \label{equiThm-rat-formula}
	\lim_{t\to\infty} \int_{\RR^{d-1}} f_t\left(\vecx,M 
\matr 1{\vecx}{\trans\vecnull}{1_{d-1}}   
\matr{\e^{-(d-1)t}}{\bn}{\trans\bn}{\e^{t}1_{d-1}}\right)
\, d\lambda(\vecx) 
\\ = \int_{\RR^{d-1}\times X_q} f(\vecx,M) \, d\mu_q(M)d\lambda(\vecx) .
\end{equation}
\end{thm}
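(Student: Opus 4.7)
The plan is to reduce Theorem \ref{equiThm-rat} to Shah's theorem \cite{Shah96} (stated above as Theorem \ref{thmShah}) applied now inside $G=\SLR$ with lattice $\Gamma=\Gamma(q)$, and then to upgrade to parameter-dependent test functions by the same partition-and-approximation scheme used in the proof of Theorem \ref{equiThm}. The key simplification compared to the $\ASLR$ setting is that the density hypothesis in Shah's theorem becomes automatic here: there is no semidirect-product factor, so no irrationality condition on an affine shift is needed.

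First I would establish the analog of Theorem \ref{equiThm0} in $X_q$, namely
\begin{equation} \label{EQUITHMRATAUX}
\lim_{t\to\infty} \int_{\RR^{d-1}} f\bigl(M\,n_-(\vecx)\,\Phi^t\bigr)\, d\lambda(\vecx) = \int_{X_q} f\, d\mu_q,
\end{equation}
for every bounded continuous $f\colon X_q\to\RR$ and every $M\in\SLR$, where $n_-(\vecx)$ and $\Phi^t$ now denote the $\SLR$-factors of \eqref{NMINUSDEF} and \eqref{PHITDEF}. To this end I apply Theorem \ref{thmShah} with $G=\SLR$, $\Gamma=\Gamma(q)$, and the Lie-group isomorphism $\varphi\colon\SLR\to G$, $\varphi(\tilde M)=M\tilde M M^{-1}$. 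Then $\varphi(\SLR)=G$, so $\Gamma(q)\backslash\Gamma(q)\varphi(\SLR)=X_q$ is trivially dense in $X_q$. Applying Theorem \ref{thmShah} to the bounded continuous test function $\tilde f(g):=f(gM)$, and using $\varphi(n_-(\vecx)\Phi^t)=M\,n_-(\vecx)\,\Phi^t\,M^{-1}$ together with the right-invariance of $\mu_q$ (recall that $\SLR$ is unimodular), then yields \eqref{EQUITHMRATAUX}.

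Next I would extend to a uniformly bounded family $f_t\to f$ (uniformly on compacta) verbatim as in the proof of Theorem \ref{equiThm}. One first reduces to the compactly supported case by writing $f_t=c_1(\vecx)c_2(g)f_t+\bigl(1-c_1(\vecx)c_2(g)\bigr)f_t$ with continuous cutoffs $c_1,c_2$ dominating the indicators of suitably large compact sets in $\RR^{d-1}$ and $X_q$; the tail contribution is controlled by $|f_t|\leq K$ combined with \eqref{EQUITHMRATAUX} applied to the continuous function $1-c_2(g)$. For the compactly supported case one partitions $\RR^{d-1}$ into small cubes $\epsilon\veck+[0,\epsilon)^{d-1}$ and exploits the uniform continuity of $f$ together with the uniform convergence $f_t\to f$ on compacta to sandwich the integrand between constant-in-$\vecx$ functions $f(\epsilon\veck,g)\pm\delta$, to which \eqref{EQUITHMRATAUX} applies cube by cube.

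I do not anticipate a serious obstacle. Shah's density hypothesis is immediate here because $H=\varphi(\SLR)=G$, in contrast to the $\ASLR$ proof of Theorem \ref{equiThm0}, which required a dedicated density argument exploiting $\vecalf\notin\QQ^d$. Apart from this point, the entire approximation scheme carries over from the $\ASLR$ case with no essentially new ingredients.
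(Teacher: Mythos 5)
Your proposal is correct and follows exactly the route the paper indicates: the paper gives no separate proof for the $\SLR$ case, merely stating that the results follow ``by following the same line of arguments as for $\ASLR$,'' and you have carried this out correctly, including the crucial observation that the density hypothesis of Theorem \ref{thmShah} is automatic since $\varphi(\SLR)=G$, so the irrationality argument needed in the $\ASLR$ case disappears. The paper also remarks at the end of Section \ref{CORRESSLR} that these $\SLR$ statements can alternatively be obtained from the mixing of the diagonal one-parameter subgroup without Ratner's theory, but the Shah-based route you give is the one the paper primarily invokes, and your upgrade to parameter-dependent test functions mirrors the proof of Theorem \ref{equiThm} as intended.
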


\begin{cor}\label{also2-rat}
Let $E_1:\Edomain\to\SO(d)$ be any map as in Corollary \ref{also2},
let $\lambda$ be a Borel probability measure on $\Edomain$,
absolutely continuous with respect to Lebesgue measure,
and let $f:\Edomain\times X_q\to \R$ and $f_t:\Edomain\times X_q\to \R$
be bounded continuous functions such that $f_t\to f$ as $t\to\infty$,
uniformly on compacta. Then, for every $M\in\SL(d,\R)$,
\begin{equation} \label{also2identity-rat}
\lim_{t\to\infty} \int_{\Edomain} f_t\left(\vecx,M E_1(\vecx)
\matr{\e^{-(d-1)t}}{\bn}{\trans\bn}{\e^{t}1_{d-1}}\right)
\, d\lambda(\vecx) \\ 
= \int_{\Edomain\times X_q} f(\vecx,M) \, d\mu_q(M)d\lambda(\vecx).
\end{equation}
\end{cor}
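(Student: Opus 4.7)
The plan is to copy the proof of Corollary \ref{also2} essentially verbatim, substituting Theorem \ref{equiThm-rat} for Theorem \ref{equiThm} at the one place where the global ergodic input enters. The algebraic manipulations in the proof of Corollary \ref{also2} never touched the affine (translation) component of $\ASLR$; every step amounted to a right-multiplication identity in $\SLR$. Thus nothing in the geometry of the argument changes when we pass from $\ASLR$ to $\SLR$.

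First I would reduce to a local statement: for each $\vecx_0\in\Edomain$ at which the map $\vecx\mapsto\vece_1 E_1(\vecx)^{-1}$ has nonsingular differential, I claim there is an open neighborhood $\Edomain_0$ of $\vecx_0$ so that \eqref{also2identity-rat} holds with $\Edomain$ replaced by any Borel subset $B\subset\Edomain_0$. To construct $\Edomain_0$, set $E_0=E_1(\vecx_0)$ and write
\begin{equation*}
E_2(\vecx):=E_0^{-1}E_1(\vecx)=\matr{c(\vecx)}{\vecw(\vecx)}{\trans\vecv(\vecx)}{A(\vecx)}.
\end{equation*}
The hypothesis at $\vecx_0$ implies $c(\vecx_0)=1$, $\vecv(\vecx_0)=\bn$, and that $\vecx\mapsto\tx:=-c(\vecx)^{-1}\vecv(\vecx)$ is a diffeomorphism on a small bounded open neighborhood $\Edomain_0'\supset\overline{\Edomain_0}$. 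The same factorization as in the proof of Corollary \ref{also2} (using $\trans E_2 E_2=1_d$) gives the crucial identity
\begin{equation*}
E_1(\vecx)\matr{\e^{-(d-1)t}}{\bn}{\trans\bn}{\e^t 1_{d-1}}
=E_0\matr 1{\tx}{\trans\bn}{1_{d-1}}\matr{\e^{-(d-1)t}}{\bn}{\trans\bn}{\e^t 1_{d-1}}\matr{c(\vecx)^{-1}}{\bn}{\trans\vecv(\vecx)\e^{-dt}}{A(\vecx)},
\end{equation*}
which is the $\SLR$ analogue of \eqref{also2keyid}.

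Next I would transfer the problem to Theorem \ref{equiThm-rat}. Pick a continuous cutoff $h:\R^{d-1}\to[0,1]$ with $\chi_{\widetilde{\Edomain}_0}\leq h\leq\chi_{\widetilde{\Edomain}_0'}$, let $\widetilde\lambda$ denote the pushforward of $\lambda(B)^{-1}\lambda|_B$ under $\vecx\mapsto\tx$ (assume $\lambda(B)>0$; the opposite case is trivial), and define on $\R^{d-1}\times X_q$ the family
\begin{equation*}
\widetilde f_t(\tx,M):=h(\tx)\,f_t\!\left(\vecx,M\matr{c(\vecx)^{-1}}{\bn}{\trans\vecv(\vecx)\e^{-dt}}{A(\vecx)}\right),\qquad
\widetilde f(\tx,M):=h(\tx)\,f\!\left(\vecx,M\matr{c(\vecx)^{-1}}{\bn}{\trans\bn}{A(\vecx)}\right),
\end{equation*}
extended by zero outside $\widetilde{\Edomain}_0'$. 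Since $\vecv(\vecx)\e^{-dt}\to\bn$ as $t\to\infty$ and $f_t\to f$ uniformly on compacta, one checks $\widetilde f_t\to\widetilde f$ uniformly on compacta. Apply Theorem \ref{equiThm-rat} with data $(\widetilde\lambda,\widetilde f_t,\widetilde f,ME_0)$ and substitute the factorization above; the result, combined with right-invariance of $\mu_q$ and the change of variables $\vecx\leftrightarrow\tx$, is precisely \eqref{also2identity-rat} with $\Edomain$ replaced by $B$.

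Finally I would run the same covering argument as at the end of the proof of Corollary \ref{also2}: given $\ve>0$, choose a compact $K\subset\Edomain$ with $\lambda(K)>1-\ve$ on which the differential is nonsingular, cover $K$ by finitely many local neighborhoods $\Edomain_1,\ldots,\Edomain_n$, disjointify $B_j:=(\Edomain_j\cap K)\setminus\bigcup_{i<j}B_i$, sum the local identities, and use the uniform boundedness of $\{f_t\}$ to let $\ve\to 0$. No genuine obstacle appears; the only care required is verifying that the extra right-factor $\smatr{c(\vecx)^{-1}}{\bn}{\trans\vecv(\vecx)\e^{-dt}}{A(\vecx)}$ converges uniformly on compact subsets of $\Edomain_0'\times X_q$ to its limit, which is where uniform boundedness and uniform-on-compacta convergence of $f_t$ are used, exactly as in the $\ASLR$ case.
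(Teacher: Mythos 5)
Your proposal is correct and is exactly the argument the paper intends: Section 5.4 states Corollary \ref{also2-rat} without proof, noting only that ``by following the same line of arguments as for $\ASLR$, one can prove the analogous equidistribution results'' for $\Gamma\backslash\SLR$. Your observation that every manipulation in the proof of Corollary \ref{also2} (the factorization of $E_2(\vecx)$ through $n_-(\tx)$, the conjugation of the residual lower-triangular factor past $\Phi^t$, the cutoff and pushforward constructions, and the final covering argument) lives entirely in the $\SLR$ component, with the translation part riding along trivially as $\bn$, is precisely why substituting Theorem \ref{equiThm-rat} for Theorem \ref{equiThm} carries the proof over verbatim.
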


\begin{thm}\label{charThm-rat}
Let $\lambda$ be a Borel probability measure on $\RR^{d-1}$ which is absolutely continuous with respect to Lebesgue measure, and let $\scrE_t$ be a family of subsets of $\RR^{d-1}\times X_q$. Then, for every $M\in\SL(d,\RR)$,
\begin{equation}\label{inf-rat}
\liminf_{t\to\infty} \int_{\RR^{d-1}} \chi_{\scrE_t}\left(\vecx,M
\matr 1{\vecx}{\trans\vecnull}{1_{d-1}}   
\matr{\e^{-(d-1)t}}{\bn}{\trans\bn}{\e^{t}1_{d-1}}\right)\, d\lambda(\vecx) 
\geq \int_{\lim(\inf \scrE_t)^\circ} d\mu_q(M)d\lambda(\vecx) ,
\end{equation}
and
\begin{equation}\label{sup-rat}
	\limsup_{t\to\infty} \int_{\RR^{d-1}} \chi_{\scrE_t}\left(\vecx,M
\matr 1{\vecx}{\trans\vecnull}{1_{d-1}}   
\matr{\e^{-(d-1)t}}{\bn}{\trans\bn}{\e^{t}1_{d-1}}\right)\, d\lambda(\vecx) 
\leq \int_{\lim\overline{\sup \scrE_t}} d\mu_q(M)d\lambda(\vecx) .
\end{equation}
If furthermore the set $\lim\overline{\sup \scrE_t}\setminus\lim(\inf \scrE_t)^\circ$ has measure zero,
\begin{equation}\label{lim-rat}
\lim_{t\to\infty} \int_{\RR^{d-1}} 
\chi_{\scrE_t}\left(\vecx,M
\matr 1{\vecx}{\trans\vecnull}{1_{d-1}}   
\matr{\e^{-(d-1)t}}{\bn}{\trans\bn}{\e^{t}1_{d-1}}\right)\, d\lambda(\vecx) 
= \int_{\limsup \scrE_t} d\mu_q(M)d\lambda(\vecx) .
\end{equation}
\end{thm}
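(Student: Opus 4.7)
The plan is to mirror the proof of Theorem \ref{charThm} verbatim in the $\SLR$ setting, substituting Theorem \ref{equiThm-rat} for Theorem \ref{equiThm} as the equidistribution input, $X_q$ for $X$, and $\mu_q$ for $\mu$. I would first prove the upper bound \eqref{sup-rat}. Write
\begin{equation*}
h_t(\vecx):=M\matr 1{\vecx}{\trans\vecnull}{1_{d-1}}\matr{\e^{-(d-1)t}}{\bn}{\trans\bn}{\e^{t}1_{d-1}}
\end{equation*}
for brevity. Define the decreasing family of closed sets $\widetilde\scrE_{t_1}:=\overline{\bigcup_{s\geq t_1}\scrE_s}$, which satisfies $\widetilde\scrE_{t_1}\downarrow\lim\overline{\sup\scrE_t}$. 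Since $\chi_{\scrE_t}\leq\chi_{\widetilde\scrE_{t_1}}$ whenever $t\geq t_1$, it suffices to show
\begin{equation*}
\limsup_{t\to\infty}\int_{\RR^{d-1}}\chi_{\widetilde\scrE_{t_1}}(\vecx,h_t(\vecx))\,d\lambda(\vecx)\leq \int_{\widetilde\scrE_{t_1}} d\mu_q\,d\lambda
\end{equation*}
for each $t_1$, and then let $t_1\to\infty$, invoking monotone convergence on the right-hand side. The displayed inequality is the standard Portmanteau-type step: since $\widetilde\scrE_{t_1}$ is closed and $\mu_q\times\lambda$ is a regular Borel probability measure on $\RR^{d-1}\times X_q$, one may approximate $\chi_{\widetilde\scrE_{t_1}}$ from above by bounded continuous $f$ with $\int (f-\chi_{\widetilde\scrE_{t_1}})\,d\mu_q\,d\lambda<\ve$, and then apply Theorem \ref{equiThm-rat} to the constant family $f_t\equiv f$.

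The lower bound \eqref{inf-rat} follows at once from \eqref{sup-rat} applied to the complementary family $\scrE_t':=(\RR^{d-1}\times X_q)\setminus\scrE_t$: one has $\chi_{\scrE_t}=1-\chi_{\scrE_t'}$ and $\int d\mu_q\,d\lambda=1$, while the topological identity $\overline{A^c}=(A^\circ)^c$ gives $\lim\overline{\sup\scrE_t'}=(\RR^{d-1}\times X_q)\setminus\lim(\inf\scrE_t)^\circ$. Finally, \eqref{lim-rat} follows by combining \eqref{inf-rat} and \eqref{sup-rat}, since the hypothesis that $\lim\overline{\sup\scrE_t}\setminus\lim(\inf\scrE_t)^\circ$ has $(\mu_q\times\lambda)$-measure zero forces the three candidate integrals to coincide with $\int_{\limsup\scrE_t}d\mu_q\,d\lambda$.

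I expect no substantive obstacle, since Theorem \ref{equiThm-rat} has already done the heavy ergodic-theoretic lifting and the remainder is a set-theoretic/approximation-theoretic manipulation that is formally identical to the one used for Theorem \ref{charThm}. The only point deserving a moment of care is the Portmanteau approximation, which is entirely standard given the regularity of $\mu_q\times\lambda$ on the second-countable locally compact space $\RR^{d-1}\times X_q$.
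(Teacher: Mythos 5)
Your proof is correct and follows essentially the same route as the paper: the paper does not spell out a separate argument for Theorem \ref{charThm-rat}, stating instead that the $\SLR$ results are obtained "by following the same line of arguments as for $\ASLR$", and your proposal is exactly the proof of Theorem \ref{charThm} transposed word-for-word, with Theorem \ref{equiThm-rat} supplying the equidistribution input, $X_q$ in place of $X$ and $\mu_q$ in place of $\mu$, plus the same closed-set/Portmanteau reduction and passage to complements for the lower bound.
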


\begin{remark}\label{charCor-rat}
Let $E_1:\Edomain\to\SO(d)$ be any map as in Corollary \ref{also2};
then the assertions of Theorem \ref{charThm-rat} hold with
$\smatr 1{\vecx}{\trans\vecnull}{1_{d-1}}$ replaced with $E_1(\vecx)$:
Let $\lambda$ be a Borel probability measure on $\Edomain$, 
absolutely continuous with respect to Lebesgue measure,
let $\scrE_t$ be a family of subsets of $\RR^{d-1}\times X_q$,
and let $M\in\SL(d,\RR)$. Then
\begin{equation}\label{cCinf-rat}
\liminf_{t\to\infty} \int_{\Edomain} \chi_{\scrE_t}\left(\vecx,M E_1(\vecx)
\matr{\e^{-(d-1)t}}{\bn}{\trans\bn}{\e^{t}1_{d-1}}\right)\, d\lambda(\vecx) 
\geq \int_{\lim(\inf \scrE_t)^\circ} d\mu_q(M)d\lambda(\vecx) ,
\end{equation}
and we have corresponding analogues of \eqref{sup-rat} and \eqref{lim-rat}.
\end{remark}

It should be noted that these statements for $\SLR$ are in fact consequences of the mixing property of diagonal one-parameter subgroups of $\SLR$ on $\Gamma\backslash\SLR$ (cf. the arguments used in \cite{Eskin93}, \cite{Marklof00}), and do not require an application of Ratner's theory.

\section{Lattice points in thin sets}\label{secThin}

\subsection{Affine lattices with irrational $\vecalf$}\label{secThinirr}

In the following we consider subsets $\fB$ of $\RR^{d-1}\times\RR^d$; we use the notation
\begin{equation}\label{FBXDEF}
	\fB|_\vecx=(\{ \vecx \} \times \RR^d) \cap \fB 
\end{equation}
which we identify with a subset of $\RR^d$ by projection onto the $\RR^d$ component.
Our goal in this section is to study, for a given affine lattice,
the limit distribution of the number of lattice points contained in such a set
$\fB|_\vecx$ after it has been deformed, thinly stretched, and then
sheared (or rotated) by a random amount. As we will see in 
Section \ref{secProofs}, the problems discussed in
Sections \ref{secVisible0} and \ref{secVisible} correspond to
special cases of the present question.

\begin{thm}\label{thinThm}
Let $\lambda$ be a Borel probability measure on $\RR^{d-1}$ which is
absolutely continuous with respect to Lebesgue measure, and let $\fB_t$ be a 
family of subsets of $\RR^{d-1}\times\RR^d$ such that
$\cup_t \fB_t$ is bounded.
Then, for $r\in\ZZ_{\geq 0}$, $\vecalf\in\RR^d\setminus\QQ^d$ and $M\in\SL(d,\RR)$,
\begin{multline}\label{inf2}
	\liminf_{t\to\infty} \lambda\Bigl(\Bigl\{ \vecx\in\RR^{d-1} \col
\#\bigl( \fB_t|_\vecx\Phi^{-t} n_-(\vecx) \cap (\ZZ^d+\vecalf)M \bigr) 
\geq r\Bigr\} \Bigr)   \\
	\geq 
	(\lambda\times\mu)\Bigl(\Bigl\{ (\vecx,g)\in\RR^{d-1}\times X
\col \#\bigl( (\lim(\inf\fB_t)^\circ)|_\vecx\cap \ZZ^d g \bigr) \geq r
\Bigr\} \Bigr)  ,
\end{multline}
and
\begin{multline}\label{sup2}
	\limsup_{t\to\infty} \lambda\Bigl( \Bigl\{ \vecx\in\RR^{d-1}: \#\bigl( \fB_t|_\vecx\Phi^{-t} n_-(\vecx) \cap (\ZZ^d+\vecalf)M \bigr) \geq r\Bigr\} 
\Bigr)   \\
	\leq 
	(\lambda\times\mu)\Bigl( \Bigl\{ (\vecx,g)\in\RR^{d-1}\times X: 
\#\bigl( (\lim\overline{\sup\fB_t})|_\vecx\cap \ZZ^d g \bigr) \geq r
\Bigr\} \Bigr)  .
\end{multline}
If furthermore the set $\lim\overline{\sup\fB_t}\setminus\lim(\inf\fB_t)^\circ$ has Lebesgue-measure zero, then
\begin{multline}\label{lim2}
	\lim_{t\to\infty} \lambda\Bigl(\Bigl\{ \vecx\in\RR^{d-1}\col \#\bigl( \fB_t|_\vecx\Phi^{-t} n_-(\vecx) \cap (\ZZ^d+\vecalf)M \}\bigr)\geq r\Bigr\} \Bigr)   \\
	= 
	(\lambda\times\mu)\Bigl( \Bigl\{(\vecx,g)\in\RR^{d-1}\times X \col \#\bigl( (\limsup\fB_t)|_\vecx\cap \ZZ^d g \bigr) \geq r\Bigr\} \Bigr)  .
\end{multline}
\end{thm}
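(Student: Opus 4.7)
The strategy is to dualize the lattice-point count so that Theorem \ref{charThm} applies directly. For any $t$, $\vecx$, a point $\vecy = (\vecm+\vecalf)M$ (with $\vecm\in\ZZ^d$) lies in $\fB_t|_\vecx \Phi^{-t} n_-(\vecx)$ iff $\vecy n_-(\vecx)^{-1}\Phi^t \in \fB_t|_\vecx$; using $n_-(\vecx)^{-1} = n_-(-\vecx)$ and the $\ASLR$-action on $\R^d$, this is $\vecm\cdot h_t(\vecx)\in\fB_t|_\vecx$ with $h_t(\vecx) := (1_d,\vecalf)(M,\bn)\, n_-(-\vecx)\Phi^t$, so
\begin{equation*}
\#\bigl(\fB_t|_\vecx\Phi^{-t}n_-(\vecx)\cap(\ZZ^d+\vecalf)M\bigr) = \#\bigl(\fB_t|_\vecx\cap \ZZ^d h_t(\vecx)\bigr).
\end{equation*}
Making the substitution $\vecy = -\vecx$ replaces $\lambda$ by an absolutely continuous push-forward $\tilde\lambda$ and $\fB_t$ by $\tilde\fB_t$ (with $\tilde\fB_t|_\vecy = \fB_t|_{-\vecy}$), after which $h_t$ takes the standard form $(1_d,\vecalf)(M,\bn)n_-(\vecy)\Phi^t$ required by Theorem \ref{charThm}. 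Defining
\begin{equation*}
\scrE_t := \bigl\{(\vecy,g)\in\RR^{d-1}\times X : \#(\tilde\fB_t|_\vecy\cap\ZZ^d g)\geq r\bigr\},
\end{equation*}
the probabilities in \eqref{inf2}--\eqref{lim2} become $\int_{\R^{d-1}}\chi_{\scrE_t}\bigl(\vecy,(1_d,\vecalf)(M,\bn)n_-(\vecy)\Phi^t\bigr)\,d\tilde\lambda(\vecy)$, to which Theorem \ref{charThm} applies.

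The remaining task is to identify, up to null sets, the topological limits $\lim(\inf\scrE_t)^\circ$ and $\lim\overline{\sup\scrE_t}$ with the lattice-counting sets
\begin{align*}
\scrF^- & := \bigl\{(\vecy,g) : \#\bigl((\lim(\inf\tilde\fB_t)^\circ)|_\vecy \cap \ZZ^d g\bigr)\geq r\bigr\}, \\
\scrF^+ & := \bigl\{(\vecy,g) : \#\bigl((\lim\overline{\sup\tilde\fB_t})|_\vecy \cap \ZZ^d g\bigr)\geq r\bigr\}.
\end{align*}
Undoing $\vecy=-\vecx$ then converts these into the right-hand sides of \eqref{inf2} and \eqref{sup2}; for \eqref{lim2}, the hypothesis that $\lim\overline{\sup\fB_t}\setminus\lim(\inf\fB_t)^\circ$ has Lebesgue measure zero implies via Fubini (applied fibrewise in the $\vecx$-variable, using that $\mu$ is a probability measure on $X$ and each $\ZZ^d g$ is countable) that $\scrF^+\setminus\scrF^-$ has $(\tilde\lambda\times\mu)$-measure zero, so the $\liminf$ and $\limsup$ estimates coincide.

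The inclusion $\scrF^-\subseteq\lim(\inf\scrE_t)^\circ$ is a straightforward \emph{lower-semicontinuity} step: given $(\vecy_0,g_0)\in\scrF^-$, pick $r$ distinct witnesses $\vecm_1,\ldots,\vecm_r\in\ZZ^d$ with $(\vecy_0,\vecm_i\cdot g_0)\in\mathrm{int}\bigl(\bigcap_{s\geq t_i}\tilde\fB_s\bigr)$; setting $t^* = \max_i t_i$ and lifting $g_0$ locally to $\ASLR$ so that each $g\mapsto\vecm_i\cdot g$ is continuous, a small open neighborhood of $(\vecy_0,g_0)$ sits inside $\bigcap_{s\geq t^*}\scrE_s$ and hence inside $\lim(\inf\scrE_t)^\circ$.

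The reverse inclusion $\lim\overline{\sup\scrE_t}\subseteq\scrF^+$ is the technical heart of the argument and is where the assumption that $\bigcup_t\fB_t$ is bounded becomes essential. Given $(\vecy_0,g_0)\in\lim\overline{\sup\scrE_t}$, choose $(\vecy_n,g_n)\to(\vecy_0,g_0)$ with $(\vecy_n,g_n)\in\scrE_{s_n}$ and $s_n\to\infty$; each $\scrE_{s_n}$ supplies $r$ distinct witnesses $\vecm_1^{(n)},\ldots,\vecm_r^{(n)}\in\ZZ^d$ with $(\vecy_n,\vecm_i^{(n)}\cdot g_n)\in\tilde\fB_{s_n}$. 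Uniform boundedness of $\bigcup_t\fB_t$ forces that, on a fixed bounded neighborhood of a lift of $g_0$ in $\ASLR$, only finitely many $\vecm\in\ZZ^d$ can ever send a nearby $g$ into that bounded region; a pigeonhole extraction then yields a subsequence along which the witness tuple is a fixed $(\vecm_1,\ldots,\vecm_r)$, and passing to the limit gives $(\vecy_0,\vecm_i\cdot g_0)\in\overline{\bigcup_{s\geq t}\tilde\fB_s}$ for every $t$, so $(\vecy_0,g_0)\in\scrF^+$. The main obstacle is precisely the need to combine this pigeonhole extraction (which relies crucially on the boundedness hypothesis) with convergence in the non-Euclidean homogeneous space $X=\ASLASL$, which requires carefully choosing a local section to $\ASLR$ so that both the continuity of $g\mapsto\vecm_i\cdot g$ and the finiteness of relevant witnesses hold simultaneously.
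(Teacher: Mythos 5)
Your proposal is correct and follows essentially the same route as the paper: both reduce to Theorem \ref{charThm} applied (after the $\vecx\mapsto-\vecx$ substitution, which the paper handles in a parenthetical remark) to a family of lattice-counting sets in $\RR^{d-1}\times X$, and then identify the topological limits with $\scrE(\lim\overline{\sup\fB_t},r)$ and $\scrE(\lim(\inf\fB_t)^\circ,r)$. The organizational difference is that the paper introduces the \emph{monotone} auxiliary family $\widehat\scrE_t := \scrE\bigl(\overline{\cup_{s\geq t}\fB_s},r\bigr)$, which is decreasing and closed (Lemma \ref{limlem}(v)), so that $\lim\overline{\sup\widehat\scrE_t}=\cap_t\widehat\scrE_t$ is automatic, and the identification with the target set is then Lemma \ref{limlem}(ii); an analogous increasing open family handles the $\liminf$. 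You instead apply Theorem \ref{charThm} directly to $\scrE_t=\scrE(\fB_t,r)$ and establish $\lim\overline{\sup\scrE_t}\subset\scrE(\lim\overline{\sup\fB_t},r)$ and $\scrE(\lim(\inf\fB_t)^\circ,r)\subset\lim(\inf\scrE_t)^\circ$ by hand; your subsequence-and-pigeonhole argument for the first inclusion and your semicontinuity argument for the second reproduce exactly the content of Lemma \ref{limlem}(ii)--(v), and the boundedness of $\cup_t\fB_t$ plays the identical role in both (finitely many $\vecm\in\ZZ^d$ can carry a $g$ near a fixed lift of $g_0$ into a fixed bounded region of $\RR^d$, as in the proof of Lemma \ref{limlem}(v)). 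Both routes are fine; the paper's modularisation via monotone families is tidier, yours is more direct.

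One imprecision to fix: for \eqref{lim2} you justify that $\scrF^+\setminus\scrF^-$ is $(\tilde\lambda\times\mu)$-null ``via Fubini (applied fibrewise in the $\vecx$-variable, using that $\mu$ is a probability measure on $X$ and each $\ZZ^d g$ is countable).'' Neither of these two facts is the operative one. The mechanism (Lemma \ref{limlem}(vi) in the paper) is the explicit Haar decomposition $d\mu(M,\vecxi)=d\mu_1(M)\,d\vecxi$ on $\ASLR$: for each fixed $\vecm\in\ZZ^d$ one has $\int_{\RR^d} I\bigl(\vecm M+\vecxi\in\fB|_\vecx\bigr)\,d\vecxi=\vol_{\RR^d}(\fB|_\vecx)$, which vanishes for a.e.\ $\vecx$ by Fubini on $\RR^{d-1}\times\RR^d$ because $\fB$ is Lebesgue-null, and one then sums over the countably many $\vecm$. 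The countability of $\ZZ^d$ only licenses the $\sigma$-subadditivity step; the collapse of each summand to a $d$-dimensional Lebesgue volume is the translation-invariance point you should make explicit.
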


We will require the following lemma for the proof of Theorem \ref{thinThm}.
Given a set $\fB\subset\RR^{d-1}\times\RR^d$ and an integer $r\in\ZZ_{> 0}$, we define the subset 
\begin{equation} \label{SCREDEF}
	\scrE(\fB,r) = \Bigl\{ (\vecx,g)\in \RR^{d-1}\times X \col
\#\bigl(\fB|_\vecx \cap \ZZ^d g\bigr)\geq r \Bigr\} .
\end{equation}

\begin{lem}\label{limlem}
Fix $r\in\ZZ_{>0}$. Then the following statements hold.
\begin{enumerate}
	\item[(i)] If $\fA\subset\fB$, then $\scrE(\fA,r)\subset\scrE(\fB,r)$.
	\item[(ii)] If $\fB_t$ is a decreasing family of
bounded sets, then   %
$\:\cap_t \scrE(\fB_t,r)=\scrE(\cap_t \fB_t,r)$.
	\item[(iii)] If $\fB_t$ is an increasing family of
sets then   %
$\:\cup_t \scrE(\fB_t,r)=\scrE(\cup_t \fB_t,r)$.
	\item[(iv)] If $\fB$ is open, then $\scrE(\fB,r)$ is open.
	\item[(v)] If $\fB$ is closed and bounded, then $\scrE(\fB,r)$ is closed.
	\item[(vi)] If $\fB$ has zero Lebesgue measure, then $\scrE(\fB,r)$ has zero measure with respect to $\vol_{\RR^{d-1}}\times\mu$.
\end{enumerate}
\end{lem}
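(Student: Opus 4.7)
\medskip

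My plan is to dispatch the six parts essentially independently, treating (i)–(iii) as routine set-theoretic bookkeeping, (iv)–(v) via continuity/compactness of the evaluation $\vecm\mapsto\vecm g$, and saving the real work for (vi), where Siegel's mean value formula on $X=\ASLASL$ together with Fubini will be the engine. Part (i) is immediate from $\fA|_\vecx\subset\fB|_\vecx$. For (iii) the inclusion $\subset$ is (i); for the reverse, if $(\vecx,g)\in\scrE(\cup_t\fB_t,r)$ then we can exhibit $r$ distinct points $\vecm_1 g,\ldots,\vecm_r g$, each lying in some $\fB_{t_j}|_\vecx$, and then $(\vecx,g)\in\scrE(\fB_t,r)$ for any $t\ge\max t_j$. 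For (ii) the inclusion $\supset$ is again (i); for $\subset$, fix $(\vecx,g)$ in every $\scrE(\fB_t,r)$. Since $\cup_t\fB_t$ is bounded, $\ZZ^d g\cap(\cup_t\fB_t)|_\vecx$ is finite, and within this finite set the sets $\fB_t|_\vecx\cap\ZZ^d g$ form a decreasing chain of $r$-element-containing subsets, which must stabilize at $(\cap_t\fB_t)|_\vecx\cap\ZZ^d g$.

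For (iv), take $(\vecx_0,g_0)\in\scrE(\fB,r)$ with witness points $\vecm_1 g_0,\ldots,\vecm_r g_0\in\fB|_{\vecx_0}$, all distinct. Because $\fB$ is open and the evaluation $(\vecx,g)\mapsto(\vecx,\vecm_j g)$ is continuous on $\RR^{d-1}\times\ASLR$, each point $(\vecx_0,\vecm_j g_0)$ has an open product neighborhood contained in $\fB$, and the intersection of the corresponding $\vecx$-projections produces an open neighborhood of $(\vecx_0,g_0)$ in $\scrE(\fB,r)$. For (v), suppose $(\vecx_n,g_n)\to(\vecx_0,g_0)$ with each $(\vecx_n,g_n)\in\scrE(\fB,r)$, witnessed by distinct $\vecm_{n,1},\ldots,\vecm_{n,r}\in\ZZ^d$ with $\vecm_{n,j}g_n\in\fB|_{\vecx_n}$. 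Since $\fB$ is bounded and $g_n^{-1}\to g_0^{-1}$ in $\ASLR$, the set of possible $\vecm_{n,j}$ is finite, so after passing to a subsequence we may assume $\vecm_{n,j}=\vecm_j$ independent of $n$; then $\vecm_j g_0=\lim\vecm_j g_n\in\fB|_{\vecx_0}$ by closedness of $\fB$, and the $\vecm_j$ remain distinct.

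The main obstacle is (vi). The key tool is the Siegel-type integration formula on $X$: for any bounded measurable $S\subset\RR^d$,
\begin{equation*}
\int_X \#\bigl(\ZZ^d g\cap S\bigr)\,d\mu(g) = \vol_{\RR^d}(S),
\end{equation*}
which follows by writing $g=(M,\vecxi)$, using that $\int_{\RR^d/\ZZ^d}\#((\ZZ^d M+\vecxi)\cap S)\,d\vecxi=\vol(S)$ for every fixed $M\in\SL(d,\RR)$ of covolume one, and then integrating over $X_1$. Consequently
\begin{equation*}
\mu\bigl(\{g\in X : \ZZ^d g\cap S\neq\emptyset\}\bigr)\le\vol_{\RR^d}(S),
\end{equation*}
so this set is $\mu$-null whenever $\vol_{\RR^d}(S)=0$. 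Now assume $\fB$ has Lebesgue measure zero in $\RR^{d-1}\times\RR^d$. By Fubini, $\fB|_\vecx$ has measure zero in $\RR^d$ for $\vol_{\RR^{d-1}}$-a.e.\ $\vecx$; for each such $\vecx$ the slice $\{g:(\vecx,g)\in\scrE(\fB,1)\}$ is $\mu$-null by the above, and another application of Fubini gives $(\vol_{\RR^{d-1}}\times\mu)(\scrE(\fB,1))=0$. Since $\scrE(\fB,r)\subset\scrE(\fB,1)$, this finishes (vi). The one subtlety to be careful about is measurability of $\scrE(\fB,r)$ itself; this is not needed for (vi) (we only need outer measure zero, which is automatic), and in the applications of (iv)–(v) it is guaranteed by the openness/closedness established there.
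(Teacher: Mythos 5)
Your proof is correct and follows essentially the same route as the paper's: set-theoretic bookkeeping (with a boundedness/finiteness step) for (i)--(iii), continuity of $g\mapsto\vecm g$ plus boundedness of $\fB$ for (iv)--(v), and the Siegel/Fubini observation $\int_{\RR^d}I(\vecm M+\vecxi\in\fB|_\vecx)\,d\vecxi=\vol(\fB|_\vecx)$ for (vi). The only cosmetic differences are that for (v) you argue by sequential closedness whereas the paper builds an explicit open neighborhood around a point outside $\scrE(\fB,r)$ (both require lifting from $X$ to $\ASLR$, which you leave implicit), and for (vi) you package the integration as a Siegel mean-value identity rather than the paper's direct Tonelli computation.
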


\begin{proof}[Proof of {\rm(i)}]
Clear.
\end{proof}

\begin{proof}[Proof of {\rm(ii)}]
It follows from (i) that
$\cap_t \scrE(\fB_t,r)\supset\scrE(\cap_t \fB_t,r)$.
To prove the opposite inclusion, let $(\vecx,g)\in \R^{d-1}\times X$
be an arbitrary point outside $\scrE(\cap_t \fB_t,r)$,
where $g\in\ASL(d,\R)$ is a fixed representative for a point in $X$.
Then $\#\bigl((\cap_t \fB_t)|_\vecx \cap \ZZ^d g\bigr)<r$.
Because of our assumptions there is
a bounded set $\fC\subset \R^{d}$
such that $\fB_t|_\vecx\subset \fC$ for all $t\geq t_0$
(for some constant $t_0\in\RR$).
Let $F$ be the finite set $F:=\{\vecm\in\ZZ^d \col \vecm g\in \fC\}$,
and let $F':=\{\vecm\in\Z^d \col \vecm g \in (\cap_t \fB_t)|_\vecx\}
\subset F$.
Then $\# F'<r$. For each $\vecm\in F\setminus F'$ there is some 
$t\geq t_0$ such that $\vecm g\notin \fB_t|_\vecx$;
thus for all sufficiently large $t$ we have
$\vecm g\notin \fB_t|_\vecx$ for all $\vecm\in F\setminus F'$.
Hence for these $t$ we have $\# \bigl(\fB_t|_\vecx \cap \ZZ^d g\bigr)
\leq\# F'<r$.
Hence $(\vecx,g)\notin \cap_t\scrE(\fB_t,r)$.
\end{proof}

\begin{proof}[Proof of {\rm(iii)}] 
It follows from (i) that $\cup_t \scrE(\fB_t,r)\subset\scrE(\cup_t \fB_t,r)$.
To prove the other inclusion, take an arbitrary point $(\vecx,g)
\in\scrE(\cup_t \fB_t,r)$. Then there are $r$ distinct vectors
$\vecm_1,\ldots,\vecm_r\in\ZZ^d$ with $\vecm_j g \in (\cup_t \fB_t)|_\vecx
=\cup_t (\fB_t|_\vecx)$. Hence for $t$ sufficiently large
we have $\vecm_jg\in\fB_t|_\vecx$ for all $j=1,\ldots,r$.
Hence $(\vecx,g)\in \cup_t \scrE(\fB_t,r)$.
\end{proof}

\begin{proof}[Proof of {\rm(iv)}]
Assume that $\fB$ is open. Take $(\vecx_0,g_0)\in \scrE(\fB,r)$,
where $g_0\in\ASL(d,\RR)$ is a fixed representative for a point
in $X$.
Then there exist $r$ distinct points $\vecm_1,\ldots,\vecm_r\in\Z^d$
satisfying $\vecm_j g_0 \in \fB|_{\vecx_0}$, i.e.\ 
$(\vecx_0,\vecm_j g_0)\in\fB$. %
Writing $\Omega=\cap_{j=1}^r f_j^{-1}(\fB)$ where
$f_j:\R^{d-1} \times \ASL(d,\RR) \ni (\vecx,g)
\mapsto (\vecx,\vecm_j g)\in \RR^{d-1}\times \RR^d$,
we have $(\vecx_0,g_0)\in\Omega$, and each 
$(\vecx,g)\in \Omega$ projects to a point in $\scrE(\fB,r)$.
Also $\Omega$ is an open subset of $\R^{d-1}\times\ASL(d,\R)$,
each $f_j$ being continuous.
Since $(\vecx_0,g_0)$ was arbitrary in $\scrE(\fB,r)$
we conclude that $\scrE(\fB,r)$ is open.
\end{proof}

\begin{proof}[Proof of {\rm(v)}]
Assume that $\fB$ is closed and bounded.
Take $(\vecx_0,g_0)\in \R^{d-1}\times X$ outside $\scrE(\fB,r)$,
where again $g_0\in\ASL(d,\RR)$ is a fixed representative for a point
in $X$.
Then $\#(\fB|_{\vecx_0} \cap \Z^dg_0)<r$.

Let $U_1$ be a neighborhood of the identity in $\SL(d,\RR)$ such that
$\|\vecy M-\vecy\|\leq\sfrac 12\|\vecy\|$ for all 
$\vecy\in\R^d$, $M\in U_1$.
Let $R=\sup \, \bigl\{\|\vecy\| \col \vecy\in\cup_{\vecx\in\R^{d-1}}
\fB|_\vecx\bigr\}$. Then $U=U_1\times \scrB_R^d$ 
is a neighborhood of the identity
in $\ASL(d,\RR)=\SL(d,\R)\ltimes\R^d$, and for each
$\vecy\in\RR^d$ with $\|\vecy\|>4R$ and
$g=(M,\vecxi)\in U$ we have
\begin{equation}
\begin{split}
\|\vecy g\|=\|\vecy M+\vecxi\|
\geq \|\vecy\|-\|\vecy M-\vecy\|-\|\vecxi\|
>\sfrac 12\|\vecy\|-R>R.
\end{split}
\end{equation}
Hence $\vecy g\notin \fB|_{\vecx}$ holds automatically for all
$g\in U$, $\vecx\in\RR^{d-1}$ and all $\vecy\in\RR^d$ with
$\|\vecy\|>4R$. 
Let $F$ be the finite set of points
$\vecm\in\ZZ^d$ which satisfy $\|\vecm g_0\|\leq 4R$
and $\vecm g_0\notin \fB|_{\vecx_0}$.
For each $\vecm \in F$ we choose some open sets
$V_\vecm\subset\RR^{d-1}$ and $V'_\vecm\subset\RR^d$ 
such that $(\vecx_0,\vecm g_0)\in V_\vecm \times V'_\vecm
\subset \complement \fB$.
Now set
\begin{equation}
\begin{split}
U'=(g_0 U) \: \cap \: \Bigl( \bigcap_{\vecm\in F}
\{g\in \ASL(d,\RR) \col \vecm g \in V'_\vecm \}\Bigr);
\qquad V=\bigcap_{\vecm \in F} V_\vecm.
\end{split}
\end{equation}
These are open subsets of $\ASL(d,\RR)$ and $\RR^d$, respectively,
and $(\vecx_0,g_0)\in V\times U'$.
Furthermore, if $(\vecx,g)\in V\times U'$ then 
by construction $\vecm g\notin \fB|_\vecx$ for each
$\vecm\in\Z^d$ with $\vecm g_0\notin\fB|_{\vecx_0}$,
and thus $\#(\Z^dg \cap \fB|_{\vecx})<r$
since $\#(\Z^dg_0 \cap \fB|_{\vecx_0})<r$.
Hence each $(\vecx,g)\in V\times U'$ projects to
a point in $\R^{d-1}\times X$ \textit{outside} $\scrE(\fB,r)$.

Since $(\vecx_0,g_0)$ was an arbitrary point outside
$\scrE(\fB,r)$ we conclude that $\scrE(\fB,r)$ is closed.
\end{proof}

\begin{proof}[Proof of {\rm(vi)}]
Assume that $\fB$ has Lebesgue measure zero.
Note that
\begin{equation} \label{MEASUREZERO}
\begin{split}
&(\vol_{\RR^{d-1}}\times\mu)\Bigl(\bigl\{
(\vecx,g)\in\RR^{d-1}\times\ASL(d,\RR) \col \fB|_\vecx \cap \ZZ^d g \neq
\emptyset\bigr\}\Bigr)
\\
& \le \sum_{\vecm \in \ZZ^d}
(\vol_{\RR^{d-1}}\times\mu)\Bigl(\bigl\{
(\vecx,g)\in\RR^{d-1}\times\ASL(d,\RR) \col \vecm g\in\fB|_\vecx
\bigr\}\Bigr)
\\
&=\sum_{\vecm \in \ZZ^d} \int_{\RR^{d-1}} \int_{\SL(d,\R)} \int_{\RR^d}
I\Bigl(\vecm (M,\vecxi) \in \fB|_\vecx\Bigr) \, d\vol_{\R^d}(\vecxi)
d\mu_1(M) d\vol_{\R^{d-1}}(\vecx),
\end{split}
\end{equation}
where $I$ is the indicator function.
The innermost integral equals $\vol_{\RR^d}(\fB|_\vecx)$,
since $\vecm (M,\vecxi)=\vecm M+\vecxi$.
But $\vol(\fB|_\vecx)=0$ holds for almost every $\vecx\in\RR^{d-1}$,
and thus the total integral is zero.
Hence, a fortiori, $\scrE(\fB,r)$ has measure zero.
\end{proof}

\begin{proof}[Proof of Theorem \ref{thinThm}]
If $r=0$ then the statements are trivial; thus from now on we may assume $r>0$.
Define the decreasing family of sets
\begin{equation}\label{widehatE}
	\widehat\scrE_t := \scrE\bigg( \overline{\bigcup_{s\geq t} \fB_s},r \bigg) .
\end{equation}
These sets are clearly closed (cf. Lemma \ref{limlem} (v)).
Then
\begin{equation}\label{sup3}
\begin{split}
	\limsup_{t\to\infty} & \: \lambda\Bigl( \Bigl\{ \vecx\in\RR^{d-1}: \#\bigl( \fB_t|_\vecx\Phi^{-t} n_-(\vecx) \cap (\ZZ^d+\vecalf)M \bigr) \geq r\Bigr\} \Bigr)   \\
	& \leq 
	\limsup_{t\to\infty} \int_{\RR^{d-1}} \chi_{\widehat\scrE_t}\bigl(
\vecx,(1_d,\vecalf)(M,\vecnull)n_-(-\vecx) \Phi^t\bigr) \, d\lambda(\vecx)  \\
	& \leq \int_{\limsup\widehat\scrE_t} d\mu(g)d\lambda(\vecx) ,
\end{split}
\end{equation}
due to Theorem \ref{charThm}. 
(To be precise, to treat ``$n_-(-\vecx)$'' as above,
one applies Theorem \ref{charThm} to $\lambda'$ and
$\widehat\scrE_t'$,
defined through $\lambda'(B)=\lambda(-B)$ for $B\subset \RR^{d-1}$,
and $\widehat\scrE_t'=\{(-\vecx,g) : (\vecx,g)\in\widehat\scrE_t\}$.)
In view of Lemma \ref{limlem} (ii),
\begin{equation}
	\limsup\widehat\scrE_t=\bigcap_t \widehat\scrE_t 
= \scrE\bigg( \bigcap_t \overline{\bigcup_{s\geq t} \fB_s},r\bigg)
	= \scrE( \lim\overline{\sup\fB_t},r ) ,
\end{equation}
and hence
\begin{align} \notag
	\int_{\limsup\widehat\scrE_t} d\mu(g)d\lambda(\vecx) & 
	=
	\int_{\scrE( \lim\overline{\sup\fB_t},r )} d\mu(g)d\lambda(\vecx) 
\\ \label{thinThmstep1}
	& =(\lambda\times\mu)\Bigl(\Bigl\{ (\vecx,g)\in\RR^{d-1}\times X: 
\#\bigl( (\lim\overline{\sup\fB_t})|_\vecx\cap \ZZ^d g \bigr) \geq r\Bigr\} 
\Bigr) ,
\end{align}
which proves \eqref{sup2}. 
The proof of \eqref{inf2} is analogous, 
using Lemma \ref{limlem} (iii) and (iv).
Finally \eqref{lim2} follows using Lemma \ref{limlem} (vi) for
$r=1$, since $\lambda$ is absolutely continuous with respect to 
$\vol_{\R^{d-1}}$, and
\begin{equation}
\scrE\big(\lim\overline{\sup \fB_t},r\bigr) \setminus
\scrE\bigl(\lim(\inf \fB_t)^\circ,r\bigr) \: \subset \:
\scrE\bigl(\lim\overline{\sup \fB_t} \setminus \lim(\inf \fB_t)^\circ,1\bigr).
\end{equation}
\end{proof}

Theorem \ref{thinThm} is easily generalized to multiple families of sets:
\begin{thm}\label{thinThm-multi}
Let $\lambda$ be a Borel probability measure on $\RR^{d-1}$ which is
absolutely continuous with respect to Lebesgue measure.
For each $j=1,\ldots,m$, let $\fB_t^{(j)}$ be a 
family of subsets of $\RR^{d-1}\times\RR^d$ such that
$\cup_t \fB_t^{(j)}$ is bounded.
Then, for any $r_1,\ldots,r_m\in\ZZ_{\geq 0}$, 
$\vecalf\in\RR^d\setminus\QQ^d$ and $M\in\SL(d,\RR)$,
\begin{multline}\label{inf2-multi}
	\liminf_{t\to\infty} \lambda\Bigl(\Bigl\{ \vecx\in\RR^{d-1} \col
\#\bigl( \fB_t^{(j)}|_\vecx\Phi^{-t} n_-(\vecx) \cap (\ZZ^d+\vecalf)M \bigr) 
\geq r_j, \; j=1,\ldots,m \Bigr\} \Bigr)   \\
	\geq 
	(\lambda\times\mu)\Bigl(\Bigl\{ (\vecx,g)\in\RR^{d-1}\times X
\col \#\bigl( (\lim(\inf\fB_t^{(j)})^\circ)|_\vecx\cap \ZZ^d g \bigr) \geq r_j,
\; j=1,\ldots,m \Bigr\} \Bigr)  ,
\end{multline}
and
\begin{multline}\label{sup2-multi}
	\limsup_{t\to\infty} \lambda\Bigl( \Bigl\{ \vecx\in\RR^{d-1}: \#\bigl( \fB_t^{(j)}|_\vecx\Phi^{-t} n_-(\vecx) \cap (\ZZ^d+\vecalf)M \bigr) \geq r_j, \; j=1,\ldots,m \Bigr\} 
\Bigr)   \\
	\leq 
	(\lambda\times\mu)\Bigl( \Bigl\{ (\vecx,g)\in\RR^{d-1}\times X: 
\#\bigl( (\lim\overline{\sup\fB_t^{(j)}})|_\vecx\cap \ZZ^d g \bigr) \geq r_j, \; j=1,\ldots,m \Bigr\} \Bigr)  .
\end{multline}
If furthermore each set $\lim\overline{\sup\fB_t^{(j)}}\setminus\lim(\inf\fB_t^{(j)})^\circ$ ($j=1,\ldots,m$) has Lebesgue-measure zero, then
\begin{multline}\label{lim2-multi}
	\lim_{t\to\infty} \lambda\Bigl(\Bigl\{ \vecx\in\RR^{d-1}\col \#\bigl( \fB_t^{(j)}|_\vecx\Phi^{-t} n_-(\vecx) \cap (\ZZ^d+\vecalf)M \}\bigr)\geq r_j, \; j=1,\ldots,m\Bigr\} \Bigr)   \\
	= 
	(\lambda\times\mu)\Bigl( \Bigl\{(\vecx,g)\in\RR^{d-1}\times X \col \#\bigl( (\limsup\fB_t^{(j)})|_\vecx\cap \ZZ^d g \bigr) \geq r_j, \; j=1,\ldots,m\Bigr\} \Bigr)  .
\end{multline}
\end{thm}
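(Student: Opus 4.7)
The plan is to run the proof of Theorem~\ref{thinThm} verbatim, with the auxiliary set $\scrE(\fB,r)$ replaced by the finite intersection
\begin{equation*}
\scrE_m\bigl(\{\fB^{(j)}\}_{j=1}^m,\{r_j\}_{j=1}^m\bigr) \: := \: \bigcap_{j=1}^m \scrE(\fB^{(j)},r_j) \: \subset \: \R^{d-1}\times X.
\end{equation*}
The first step is to observe that each part of Lemma~\ref{limlem} extends to $\scrE_m$ essentially for free: finite intersections of open (resp.\ closed) sets are open (resp.\ closed), giving (iv) and (v); the intersections and unions of the families $\{\fB^{(j)}_t\}$ over $t$ can be pulled across the finite intersection in $j$, so Lemma~\ref{limlem}(ii),(iii) applied componentwise yield
\begin{equation*}
\bigcap_t \scrE_m\bigl(\{\fB^{(j)}_t\},\{r_j\}\bigr) = \scrE_m\bigl(\bigl\{\textstyle\bigcap_t\fB^{(j)}_t\bigr\},\{r_j\}\bigr)
\end{equation*}
for decreasing bounded families, and the dual identity for increasing families.

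Next I would carry over the analogue of Lemma~\ref{limlem}(vi) that is needed for the limit statement~\eqref{lim2-multi}. The key set-theoretic inclusion is that whenever $B_j\subset A_j$ for $j=1,\ldots,m$,
\begin{equation*}
\scrE_m\bigl(\{A_j\},\{r_j\}\bigr)\setminus\scrE_m\bigl(\{B_j\},\{r_j\}\bigr) \: \subset \: \bigcup_{j=1}^m \scrE(A_j\setminus B_j,\,1),
\end{equation*}
since any point on the left contributes at least $r_j$ vectors in every $A_j$ but fewer than $r_{j_0}$ in some $B_{j_0}$, hence at least one vector in $A_{j_0}\setminus B_{j_0}$. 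Together with Lemma~\ref{limlem}(vi) applied to each $A_j\setminus B_j=\lim\overline{\sup\fB^{(j)}_t}\setminus\lim(\inf\fB^{(j)}_t)^\circ$, this shows the symmetric difference between the closed and open multi-family limits has $(\lambda\times\mu)$-measure zero under the hypothesis of \eqref{lim2-multi}.

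With these lemmas in hand, the argument proceeds exactly as before. Define the closed decreasing family
\begin{equation*}
\widehat\scrE_t \: := \: \scrE_m\Bigl(\Bigl\{\overline{\textstyle\bigcup_{s\geq t}\fB^{(j)}_s}\Bigr\}_{j=1}^m,\{r_j\}_{j=1}^m\Bigr),
\end{equation*}
note that the event on the left of~\eqref{sup2-multi} is contained in $\{\vecx:(\vecx,(1_d,\vecalf)(M,\bn)n_-(-\vecx)\Phi^t)\in\widehat\scrE_t\}$, apply Theorem~\ref{charThm} (as in~\eqref{sup3}), and use the multi-family version of Lemma~\ref{limlem}(ii) to identify $\limsup\widehat\scrE_t=\scrE_m(\{\lim\overline{\sup\fB^{(j)}_t}\},\{r_j\})$. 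This yields~\eqref{sup2-multi}; the dual open construction yields~\eqref{inf2-multi}, and~\eqref{lim2-multi} follows from the measure-zero statement above. I do not expect genuine obstacles: the only non-bookkeeping point is the set-theoretic inclusion for the symmetric difference, which is the natural replacement for Lemma~\ref{limlem}(vi) in the multi-family setting and is verified in one line as indicated.
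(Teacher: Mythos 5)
Your proposal is correct and matches the paper's proof essentially line for line: the same auxiliary family $\widehat\scrE_t = \bigcap_{j} \scrE(\overline{\cup_{s\geq t}\fB_s^{(j)}},r_j)$, the same componentwise application of Lemma~\ref{limlem}, and, crucially, the same set-theoretic inclusion bounding the symmetric difference by $\bigcup_j\scrE(\lim\overline{\sup\fB_t^{(j)}}\setminus\lim(\inf\fB_t^{(j)})^\circ,1)$ to invoke Lemma~\ref{limlem}(vi). The only cosmetic difference is that the paper explicitly discards indices with $r_j=0$ before applying Lemma~\ref{limlem} (which is stated for $r>0$), whereas you handle them implicitly via the observation that such $j$ cannot witness membership in the symmetric difference; both are fine.
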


\begin{proof}
We may throw away each $j$ for which $r_j=0$.
Thus from now on $r_j>0$ for each $j$.
Define the decreasing family of sets
\begin{equation}\label{widehatE2}
	\widehat\scrE_t%
:= \bigcap_{j=1}^m \scrE\bigg( 
\overline{\bigcup_{s\geq t} \fB_s^{(j)}},r_j \bigg) .
\end{equation}
These sets are clearly closed (cf. Lemma \ref{limlem} (v)).
Now \eqref{sup3} generalizes in the obvious way.
In view of Lemma \ref{limlem} (ii),
\begin{equation}
	\limsup\widehat\scrE_t
=\bigcap_t \widehat\scrE_t 
=\bigcap_{j=1}^m \bigcap_t \scrE\bigg( 
\overline{\bigcup_{s\geq t} \fB_s^{(j)}},r_j \bigg)
=\bigcap_{j=1}^m 
\scrE\bigg( \bigcap_t \overline{\bigcup_{s\geq t} \fB_s^{(j)}},r_j\bigg)
=\bigcap_{j=1}^m \scrE\Bigl( \lim\overline{\sup\fB_t^{(j)}},r_j \Bigr) ,
\end{equation}
and hence \eqref{thinThmstep1} carries over to give a proof of 
\eqref{sup2-multi}.
The proof of \eqref{inf2-multi} is analogous, 
using Lemma \ref{limlem} (iii) and (iv),
and noticing that
\begin{align}
\bigcup_t \bigcap_{j=1}^m \scrE\bigg( \Big(
\bigcap_{s\geq t} \fB_s^{(j)} \Big)^\circ,r_j \bigg)
=\bigcap_{j=1}^m \bigcup_t \scrE\bigg( \Big(
\bigcap_{s\geq t} \fB_s^{(j)} \Big)^\circ,r_j \bigg).
\end{align}
Finally \eqref{lim2-multi} follows using Lemma \ref{limlem} (vi) for
$r=1$, since $\lambda$ is absolutely continuous with respect to 
$\vol_{\R^{d-1}}$, and\enlargethispage{10pt}
\begin{equation}
\Big(
\bigcap_{j=1}^m\scrE\big(\lim\overline{\sup \fB_t^{(j)}},r_j\bigr) \Big)
\setminus
\Big(
\bigcap_{j=1}^m\scrE\bigl(\lim(\inf \fB_t^{(j)})^\circ,r_j\bigr) \Big)
\: \subset \:
\bigcup_{j=1}^m
\scrE\bigl(\lim\overline{\sup \fB_t^{(j)}} 
\setminus \lim(\inf \fB_t^{(j)})^\circ,1\bigr).
\end{equation}
\end{proof}

\begin{remark}\label{thinCor}
The assertions of Theorem \ref{thinThm} and  Theorem \ref{thinThm-multi}
also hold if $n_-(\vecx)$ is replaced by %
$(E_1(\vecx),\bn)^{-1}$
where $E_1:\Edomain\to\SO(d)$ is any map as in Corollary \ref{also2}.
Specifically, if $\lambda$ is any Borel probability measure on $\Edomain$,
absolutely continuous with respect to Lebesgue measure, then
for any given families $\scrB_t^{(j)}\subset\R^{d-1}\times\R^d$ as above, 
and any $r_j\in\Z_{\geq 0}$,
$\vecalf\in\R^d\setminus\Q^d$ and $M\in\SL(d,\R)$, we have
\begin{multline}\label{inf2-multi-cor}
	\liminf_{t\to\infty} \lambda\Bigl(\Bigl\{ \vecx\in\Edomain \col
\#\bigl( \fB_t^{(j)}|_\vecx\Phi^{-t} (E_1(\vecx),\bn)^{-1} 
\cap (\ZZ^d+\vecalf)M \bigr) 
\geq r_j, \; j=1,\ldots,m \Bigr\} \Bigr)   \\
	\geq 
	(\lambda\times\mu)\Bigl(\Bigl\{ (\vecx,g)\in\Edomain\times X
\col \#\bigl( (\lim(\inf\fB_t^{(j)})^\circ)|_\vecx\cap \ZZ^d g \bigr) \geq r_j,
\; j=1,\ldots,m \Bigr\} \Bigr)  ,
\end{multline}
and corresponding relations for the $\limsup$ and the limes, cf.\
\eqref{sup2-multi} and \eqref{lim2-multi}.
The proof is exactly as the proofs of Theorem \ref{thinThm}
and Theorem \ref{thinThm-multi},
except that Remark~\ref{charCor} is used in place of Theorem \ref{charThm}.
\end{remark}

\subsection{The case of rational $\vecalf$}

Using the same strategy of proof, the above results can be readily established for $\vecalf\in\QQ^d$, if the space $X$ is replaced by $X_q$ and the measure $\mu$ by $\mu_q$, for some $q$ with $\vecalf\in q^{-1}\Z^d$.
In the proofs one uses the following analogue of \eqref{SCREDEF}:
\begin{equation} \label{SCREQDEF}
	\scrE_q(\fB,r) = \Bigl\{ (\vecx,M)\in \RR^{d-1}\times X_q \col 
\#\bigl(\fB|_\vecx \cap (\ZZ^d+\vecalf)M\setminus\{\bn\}\bigr)\geq r \Bigr\} .
\end{equation}
The reason for removing the point $\bn$ is so as to make all of
Lemma \ref{limlem} valid in the present setting. (Specifically,
in the proof of the analogue of Lemma \ref{limlem} (vi) we need
to note that $\int_{\SL(d,\R)} I \bigl(\vecm M\in \fC\bigr) \, d\mu_1(M)=0$
holds for each subset $\fC\subset \R^d$ of measure $0$.
This is true for each $\vecm\in\R^d$ except $\vecm=\bn$.)

We thus have the following.

\begin{thm}\label{thinThm-rat-multi}
Let $\lambda$ be a Borel probability measure on $\RR^{d-1}$ which is absolutely continuous with respect to Lebesgue measure.
For each $j=1,\ldots,m$, let $\fB_t^{(j)}$ be a 
family of subsets of $\RR^{d-1}\times\RR^d$ such that
$\cup_t \fB_t^{(j)}$ is bounded.
Then, for any $r_1,\ldots,r_m\in\ZZ_{\geq 0}$, $\vecalf=\frac{\vecp}{q}\in\QQ^d$ and $M\in\SL(d,\RR)$,
\begin{multline}\label{inf2-rat}
	\liminf_{t\to\infty} \lambda\Bigl( \Bigl\{\vecx\in\RR^{d-1}: \#\bigl( \fB_t^{(j)}|_\vecx\Phi^{-t} n_-(\vecx) \cap (\ZZ^d+\vecalf)M\setminus\{\vecnull\} \bigr) \geq r_j,\; j=1,\ldots,m \Bigr\} \Bigr)   \\
	\geq 
	(\lambda\times\mu_q)\Bigl( \Bigl\{(\vecx,M')\in\RR^{d-1}\times X_q: \#\bigl( (\lim(\inf\fB_t^{(j)})^\circ)|_\vecx\cap (\ZZ^d+\vecalf) M'\setminus\{\vecnull\} \bigr) \geq r_j,\; j=1,\ldots,m \Bigr\} \Bigr)  ,
\end{multline}
and
\begin{multline}\label{sup2-rat}
	\limsup_{t\to\infty} \lambda\Bigl( \Bigl\{\vecx\in\RR^{d-1}: \#\bigl( \fB_t^{(j)}|_\vecx\Phi^{-t} n_-(\vecx) \cap (\ZZ^d+\vecalf)M\setminus\{\vecnull\} \bigr) \geq r_j,\; j=1,\ldots,m \Bigr\} \Bigr)   \\
	\leq 
	(\lambda\times\mu_q)\Bigl(\Bigl\{ (\vecx,M')\in\RR^{d-1}\times X_q: \#\bigl( (\lim\overline{\sup\fB_t^{(j)}})|_\vecx\cap (\ZZ^d+\vecalf) M'\setminus\{\vecnull\} \bigr) \geq r_j,\; j=1,\ldots,m \Bigr\} \Bigr)  .
\end{multline}
If furthermore each set $\lim\overline{\sup\fB_t^{(j)}}\setminus\lim(\inf\fB_t^{(j)})^\circ$ ($j=1,\ldots,m$) has Lebesgue-measure zero, then
\begin{multline}\label{lim2-rat}
	\lim_{t\to\infty} \lambda\Bigl(\Bigl\{ \vecx\in\RR^{d-1}: \#\bigl( \fB_t^{(j)}|_\vecx\Phi^{-t} n_-(\vecx) \cap (\ZZ^d+\vecalf)M\setminus\{\vecnull\} \bigr) \geq r_j,\; j=1,\ldots,m \Bigr\} \Bigr)   \\
	= 
	(\lambda\times\mu_q)\Bigl(\Bigl\{ (\vecx,M')\in\RR^{d-1}\times X_q: \#\bigl( (\limsup\fB_t^{(j)})|_\vecx\cap (\ZZ^d+\vecalf) M'\setminus\{\vecnull\} \bigr) \geq r_j,\; j=1,\ldots,m \Bigr\} \Bigr)  .
\end{multline}
\end{thm}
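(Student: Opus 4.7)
The plan is to copy the proof of Theorem \ref{thinThm-multi} almost verbatim, with the following substitutions throughout: the space $X$ is replaced by $X_q$, the measure $\mu$ by $\mu_q$, the equidistribution input (Theorem \ref{charThm}) by its rational analog (Theorem \ref{charThm-rat}), and the set $\scrE(\fB,r)$ of \eqref{SCREDEF} by the set $\scrE_q(\fB,r)$ defined in \eqref{SCREQDEF}. Observe that because $\vecalf\in q^{-1}\Z^d$, we have $(\Z^d+\vecalf)\gamma=\Z^d+\vecalf$ for every $\gamma\in\Gamma(q)$, so that the set $(\Z^d+\vecalf)M\setminus\{\bn\}$ is a well-defined function of the coset $\Gamma(q)M\in X_q$; this ensures that $\scrE_q(\fB,r)$ is a well-defined subset of $\R^{d-1}\times X_q$.

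First I would transcribe Lemma \ref{limlem} with $\scrE_q$ in place of $\scrE$. Properties (i)--(iii) are purely set-theoretic and transfer unchanged. For (iv) (openness when $\fB$ is open), the argument is identical except that one uses the continuous maps $(\vecx,M)\mapsto(\vecx,(\vecm_j+\vecalf)M)$ for integer vectors $\vecm_j$ with $\vecm_j+\vecalf\neq\bn$; distinctness of the image points $(\vecm_j+\vecalf)M$ is automatic from invertibility of $M$. Property (v) (closedness for bounded $\fB$) likewise transfers, since its proof only uses that at most finitely many points of $(\Z^d+\vecalf)M_0$ lie within distance $4R$ of the origin, reducing to a continuity statement handled point by point.

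The substantive new content is property (vi), and this is the main obstacle. Dominating the indicator of $\scrE_q(\fB,1)$ by the $\Gamma(q)$-invariant sum
\begin{equation}
F(\vecx,M):=\sum_{\vecm\in\Z^d,\:\vecm+\vecalf\neq\bn} I\bigl((\vecm+\vecalf)M\in\fB|_\vecx\bigr),
\end{equation}
unfolding $\mu_q$ to Haar measure $\mu_{\SL}$ on a fundamental domain $D$ for $\Gamma(q)$ in $\SL(d,\R)$, and applying Tonelli, it suffices to show that for each fixed nonzero $\vecy=\vecm+\vecalf$ the set $\{(\vecx,M)\in\R^{d-1}\times D\col \vecy M\in\fB|_\vecx\}$ has zero $(\vol_{\R^{d-1}}\times\mu_{\SL})$-measure. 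This is the statement the author highlights: for $\vecy\neq\bn$ the orbit map $M\mapsto\vecy M$ is a smooth submersion from $\SL(d,\R)$ onto $\R^d\setminus\{\bn\}$ (by transitivity of the $\SL(d,\R)$-action), so pre-images of Lebesgue-null sets are $\mu_{\SL}$-null; combined with the Fubini consequence that $\fB$ intersects a.e.\ vertical fiber $\{\vecx\}\times\R^d$ in a null set, the claim follows. The excluded case $\vecm+\vecalf=\bn$, which arises only when $\vecalf\in\Z^d$ and $\vecm=-\vecalf$, is exactly where this submersion argument would break down, explaining the removal of $\bn$ in \eqref{SCREQDEF}.

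With the analog of Lemma \ref{limlem} in hand, the remainder is a formal copy of the proof of Theorem \ref{thinThm-multi}: set $\widehat\scrE_t:=\bigcap_{j=1}^m \scrE_q\bigl(\overline{\bigcup_{s\geq t}\fB_s^{(j)}},\,r_j\bigr)$, which is a decreasing family of closed sets by (v); invoke Theorem \ref{charThm-rat} on these to obtain the $\limsup$ bound \eqref{sup2-rat}; pass to complements (applying Theorem \ref{charThm-rat} to $\bigcup_{j}(\R^{d-1}\times X_q)\setminus\scrE_q(\ldots)$ in the usual way) to obtain the $\liminf$ bound \eqref{inf2-rat}; and finally use the null-set conclusion of (vi) together with the containment
\begin{equation}
\Bigl(\bigcap_{j=1}^m\scrE_q\bigl(\lim\overline{\sup\fB_t^{(j)}},r_j\bigr)\Bigr)
\setminus
\Bigl(\bigcap_{j=1}^m\scrE_q\bigl(\lim(\inf\fB_t^{(j)})^\circ,r_j\bigr)\Bigr)
\subset
\bigcup_{j=1}^m \scrE_q\bigl(\lim\overline{\sup\fB_t^{(j)}}\setminus\lim(\inf\fB_t^{(j)})^\circ,\,1\bigr)
\end{equation}
to squeeze the bounds together to the equality \eqref{lim2-rat}. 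Apart from the submersion/absolute continuity check in (vi), the whole argument is a mechanical translation from the affine-lattice proof.
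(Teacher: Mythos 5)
Your proposal is correct and follows essentially the same route the paper sketches: replace $X$, $\mu$, $\scrE$ by $X_q$, $\mu_q$, $\scrE_q$, transcribe Lemma \ref{limlem}, and run the proof of Theorem \ref{thinThm-multi} verbatim using Theorem \ref{charThm-rat}. Your justification of the analogue of Lemma \ref{limlem}(vi) via the submersion $M\mapsto\vecy M$ for $\vecy\neq\bn$ is exactly the point the paper flags (that $\int_{\SL(d,\R)} I(\vecy M\in\fC)\,d\mu_1(M)=0$ for null $\fC$ and $\vecy\neq\bn$), and correctly explains why $\bn$ must be removed in \eqref{SCREQDEF}.
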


\begin{remark}\label{thinCor-rat}
The assertion of Theorem \ref{thinThm-rat-multi}
holds if $n_-(\vecx)$ is replaced by $(E_1(\vecx),\bn)^{-1}$,
where $E_1:\Edomain\to\SO(d)$ is any map as in Corollary \ref{also2}.
Compare Remark \ref{thinCor}.
\end{remark}

\subsection{Visible lattice points} \label{SUBSEC:PRIMLATP}

In the case of rational $\vecalf$, all results are equally valid for 
$\widehat\ZZ_\vecalf^d$ in place of $(\ZZ^d+\vecalf)\setminus\{\bn\}$.

\begin{thm}\label{thinThm-null}  \label{THM:PRIMLATP}
Let $\lambda$ be a Borel probability measure on $\RR^{d-1}$ which is absolutely continuous with respect to Lebesgue measure.
For each $j=1,\ldots,m$, let $\fB_t^{(j)}$ be a family of subsets of 
$\RR^{d-1}\times\RR^d$ such that $\cup_t \fB_t^{(j)}$ is bounded. 
Then, for any $r_1,\ldots,r_m\in\Z_{\geq 0}$, $\vecalf=\frac{\vecp}q\in\Q^d$ 
and $M\in\SL(d,\RR)$,
\begin{multline}\label{inf2-null}
	\liminf_{t\to\infty} \lambda( \{\vecx\in\RR^{d-1}\col  \#( \fB_t^{(j)}|_\vecx\Phi^{-t} n_-(\vecx) \cap \widehat\ZZ_\vecalf^d M ) \geq r_j,
\: j=1,\ldots,m \} )   \\
	\geq 
	(\lambda\times\mu_q)( \{ (\vecx,M')\in\RR^{d-1}\times X_q\col  \#( (\lim(\inf\fB_t^{(j)})^\circ)|_\vecx\cap \widehat\ZZ_\vecalf^d M' ) \geq r_j,
\: j=1,\ldots,m\} )  ,
\end{multline}
and
\begin{multline}\label{sup2-null}
	\limsup_{t\to\infty} \lambda(\{ \vecx\in\RR^{d-1}\col  \#( \fB_t^{(j)}|_\vecx\Phi^{-t} n_-(\vecx) \cap \widehat\ZZ_\vecalf^d M ) \geq r_j,
\: j=1,\ldots,m\} )   \\
	\leq 
	(\lambda\times\mu_q)( \{ (\vecx,M')\in\RR^{d-1}\times X_q\col  \#( (\lim\overline{\sup\fB_t^{(j)}})|_\vecx\cap \widehat\ZZ_\vecalf^d M' ) \geq r_j,
\: j=1,\ldots,m\} )  .
\end{multline}
If furthermore each set $\lim\overline{\sup\fB_t^{(j)}}\setminus\lim(\inf\fB_t^{(j)})^\circ$ ($j=1,\ldots,m$) has Lebesgue-measure zero, then
\begin{multline}\label{lim2-null}
	\lim_{t\to\infty} \lambda( \{ \vecx\in\RR^{d-1}\col  \#( \fB_t^{(j)}|_\vecx\Phi^{-t} n_-(\vecx) \cap \widehat\ZZ_\vecalf^d M ) \geq r_j,
\: j=1,\ldots,m\} )   \\
	= 
	(\lambda\times\mu_q)(\{ (\vecx,M')\in\RR^{d-1}\times X_q\col  \#( (\limsup\fB_t^{(j)})|_\vecx\cap \widehat\ZZ_\vecalf^d M' ) \geq r_j,
\: j=1,\ldots,m\} )  .
\end{multline}
\end{thm}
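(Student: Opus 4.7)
The proof follows the template of Theorem~\ref{thinThm-rat-multi}, replacing $(\Z^d+\vecalf)M\setminus\{\bn\}$ by $\widehat\Z_\vecalf^d M$ throughout. Accordingly, my first step is to introduce
\begin{equation*}
\widehat\scrE_q(\fB,r):=\bigl\{(\vecx,M)\in\R^{d-1}\times X_q\col \#\bigl(\fB|_\vecx\cap\widehat\Z_\vecalf^d M\bigr)\geq r\bigr\}
\end{equation*}
in place of $\scrE_q(\fB,r)$. The set $\widehat\Z_\vecalf^d M$ is well-defined for $M\in X_q$: every $\gamma\in\Gamma(q)$ satisfies $\gamma\equiv 1_d\!\!\minmod q$, so $\vecx\gamma\in\Z^d+\vecalf$ whenever $\vecx\in\Z^d+\vecalf$, and moreover $\gcd(q\vecx\gamma)=\gcd(q\vecx)$ because $\gamma\in\GL(d,\Z)$; hence $\widehat\Z_\vecalf^d$ is right-$\Gamma(q)$-invariant.

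Next I would verify that the six parts of Lemma~\ref{limlem} remain true with $\scrE$ replaced by $\widehat\scrE_q$ and $X$ replaced by $X_q$. Parts (i)--(iv) are purely set-theoretic and carry over verbatim, and (v) reduces to the discreteness of $\widehat\Z_\vecalf^d$ together with continuity of right-multiplication exactly as in the original argument. For (vi) -- which is where the condition $\bn\notin\widehat\Z_\vecalf^d$ is essential -- one mimics the bound \eqref{MEASUREZERO}: lifting $\mu_q$ to $\mu_1$ on a fundamental domain $\fF\subset\SL(d,\R)$ for $\Gamma(q)$ and summing over a set of $\Gamma(q)$-orbit representatives $[\vecm]$ in $\widehat\Z_\vecalf^d$,
\begin{equation*}
(\vol_{\R^{d-1}}\times\mu_q)\bigl(\widehat\scrE_q(\fB,1)\bigr)\leq\sum_{[\vecm]}\int_{\R^{d-1}}\int_\fF I\bigl(\vecm M\in\fB|_\vecx\bigr)\,d\mu_1(M)\,d\vol_{\R^{d-1}}(\vecx).
\end{equation*}
Each inner integral vanishes whenever $\fB$ has Lebesgue measure zero, precisely because $\vecm\neq\bn$; hence $\widehat\scrE_q(\fB,r)\subset\widehat\scrE_q(\fB,1)$ has measure zero.

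With this in hand, my third step is to imitate the proofs of Theorems~\ref{thinThm} and~\ref{thinThm-rat-multi} line by line, applying Theorem~\ref{charThm-rat} to the decreasing family of closed sets
\begin{equation*}
\widehat\scrE_{q,t}:=\bigcap_{j=1}^m\widehat\scrE_q\Bigl(\overline{\bigcup_{s\geq t}\fB_s^{(j)}},\,r_j\Bigr)
\end{equation*}
to deduce the $\limsup$ bound \eqref{sup2-null} (using parts (ii) and (v) of the adapted lemma), and using the open analogue (with parts (iii) and (iv)) to obtain the $\liminf$ bound \eqref{inf2-null}. The limit formula \eqref{lim2-null} then falls out from the measure-zero boundary hypothesis via the adapted Lemma~\ref{limlem}(vi) applied with $r=1$ to the sets $\lim\overline{\sup\fB_t^{(j)}}\setminus\lim(\inf\fB_t^{(j)})^\circ$. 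The only genuine content beyond bookkeeping is the verification that Lemma~\ref{limlem} transfers to the visible-lattice setting, and the heart of that is the trivial but indispensable observation that $\bn\notin\widehat\Z_\vecalf^d$; once this is secured, no new ideas are required.
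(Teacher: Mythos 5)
Your proof is correct and follows the paper's approach: the paper establishes Theorem~\ref{thinThm-null} by remarking that the argument for Theorem~\ref{thinThm-rat-multi} (which in turn mimics Theorem~\ref{thinThm}) carries over with $\widehat\Z_\vecalf^d$ in place of $(\Z^d+\vecalf)\setminus\{\bn\}$, and your write-up supplies precisely the two facts that make this transfer work --- right-$\Gamma(q)$-invariance of $\widehat\Z_\vecalf^d$ (so $\widehat\Z_\vecalf^d M$ is well defined for $M\in X_q$), and $\bn\notin\widehat\Z_\vecalf^d$ (so the submersion argument behind the analogue of Lemma~\ref{limlem}(vi) applies to every $\vecm$). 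The only minor imprecision is that in your adaptation of~\eqref{MEASUREZERO}, since you integrate over a fundamental domain $\fF$, the sum should run over all $\vecm\in\widehat\Z_\vecalf^d$ rather than over a set of $\Gamma(q)$-orbit representatives; as every term vanishes anyway, the conclusion is unaffected.
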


\begin{remark}\label{thinThm-null-cor}
Just as in previous remarks, the assertion of Theorem \ref{thinThm-null}
holds if $n_-(\vecx)$ is replaced by $(E_1(\vecx),\bn)^{-1}$,
where $E_1:\Edomain\to\SO(d)$ is any map as in Corollary \ref{also2}.
\end{remark}

\section{Integration formulas on $X$ and $X_q$} \label{FOLIATIONSEC}

In this section we prove some formulas for integrals
and volumes in the spaces $(X,\mu)$ and $(X_q,\mu_q)$, 
which we will need to be able to generalize a technique which was introduced 
in Elkies and McMullen \cite{Elkies04} in the case of $d=2$ and $(X,\mu)$
(cf.\ also \cite{SV}).
The goal is to obtain a
more precise understanding of the explicit limit functions described in 
our main theorems; we will achieve this in Section \ref{PROPLIMFCNSEC}.

Recall that we have fixed $\mu_q$ as the Haar measure on $\SL(d,\R)$
normalized to be a probability measure on $X_q=\Gamma(q)\backslash\SL(d,\R)$.
This implies, via a well-known volume formula by Siegel,
that $\mu_1$ can be explicitly given as the measure on $\SL(d,\R)$
which satisfies
\begin{align} \label{SLDRHAAR}
d\mu_1(M) \frac{dt}t = \Bigl( \zeta(2)\zeta(3)\cdots\zeta(d)\Bigr)^{-1}
\bigl( \det (x_{ij}) \bigr)^{-d} dx_{11} dx_{12} \cdot \ldots \cdot dx_{dd}
\end{align}
when parametrizing $\GL^+(d,\R)$ as $(x_{ij})=t^{1/d}M \in \GL^+(d,\R)$,
with $M\in\SL(d,\R)$, $t>0$;
cf.,\ e.g.,\ \cite{siegel}, \cite{Marklof00}.
From this it follows that
\begin{align} \label{SLDRHAARQ}
\mu_q=I_q^{-1} \mu_1; \qquad \text{where } \: 
I_q:=[\Gamma(q):\Gamma(1)],
\end{align}
and also that the Haar measure $\mu$ on $\ASL(d,\R)$ which we 
have normalized by $\mu(X)=1$, is explicitly given by 
\begin{align} \label{ASLDRHAAR}
d\mu(M,\vecxi)=d\mu_1(M) d\vecxi, \qquad
(M,\vecxi)\in\ASL(d,\R),
\end{align}
where $d\vecxi=d\xi_1\cdot\ldots\cdot d\xi_d$ is the standard Lebesgue
measure on $\R^d$.

The following lattice average formula is also due to Siegel 
(at least on $X_1$). Recall that we always keep $d\geq 2$.

\begin{prop} \label{SIEGELQPROP}
Let $F\in \L^1(\R^d)$, $q\in\Z_{>0}$ and $\vecalf\in q^{-1}\Z^d$. Then
\begin{align}
\int_{X_q} \sum_{\veck\in\Z^d+\vecalf\setminus\{\bn\}} F(\veck M) d\mu_q(M)
=\int_{\R^d}F(\vecx)d\vecx.
\end{align}
\end{prop}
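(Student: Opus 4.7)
My plan is to establish right-$\SL(d,\R)$-invariance of the functional $I:F\mapsto\int_{X_q}\sum_{\veck\in(\Z^d+\vecalf)\setminus\{\bn\}}F(\veck M)\,d\mu_q(M)$, reduce the resulting constant of proportionality with Lebesgue measure to a function $c(u)$ of a single integer, and pin it down by induction using the classical Siegel mean value formula (the case $q=1$, $\vecalf=\bn$).

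First I would check that $I$ is a well-defined, positive, right-$\SL(d,\R)$-invariant linear functional on $C_c(\R^d)$. Finiteness follows from the bound $\#((\Z^d+\vecalf)\cap K)\leq\#(q^{-1}\Z^d\cap K)$ on any compact $K\supset\supp F$ combined with Siegel's formula applied to $q^{-1}\Z^d$. The substitution $M\mapsto Mg$ on $X_q$, which preserves $\mu_q$ because the right action of $\SL(d,\R)$ is Haar-preserving, gives $I(F\circ g)=I(F)$ for every $g\in\SL(d,\R)$. Since $\SL(d,\R)$ acts transitively on $\R^d\setminus\{\bn\}$ for $d\geq 2$, the Riesz representation theorem together with the uniqueness (up to scalar) of invariant Radon measures on $\R^d\setminus\{\bn\}$ forces $I(F)=c(q,\vecalf)\int_{\R^d}F(\vecx)\,d\vecx$ for some nonnegative constant; no Dirac contribution at $\bn$ can appear, as that point is excluded from the summation.

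Next I would show that $c(q,\vecalf)$ depends only on the ``reduced denominator'' of $\vecalf$. Because $\Gamma(q)\triangleleft\Gamma(1)$, left multiplication by any $\gamma\in\Gamma(1)$ is a well-defined $\mu_q$-preserving map on $X_q$; combined with the identity $(\Z^d+\vecalf)\gamma=\Z^d+\vecalf\gamma$ this yields $c(q,\vecalf)=c(q,\vecalf\gamma)$, so $c$ depends only on the $\SL(d,\Z)$-orbit of $q\vecalf\bmod q\Z^d$ in $(\Z/q\Z)^d$, which for $d\geq 2$ is determined by $\gcd(q\vecalf,q)$. Moreover, whenever $\vecalf\in(q/e)^{-1}\Z^d$ the summand is automatically $\Gamma(q/e)$-invariant, so both probability Haar measures $\mu_q$ and $\mu_{q/e}$ integrate it to the same value; hence the constant depends only on $u:=q/\gcd(q\vecalf,q)$. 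Write $c(u)$ for this single constant, noting $c(1)=1$ by classical Siegel.

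Finally, setting $G(\vecx):=F(\vecx/q)$ and applying classical Siegel, $\int_{X_1}\sum_{\veck\in\Z^d_*}G(\veck M)\,d\mu_1=\int_{\R^d} G=q^d\int_{\R^d} F$; after the substitution $\veck\mapsto q\veck$ the integrand equals $\sum_{\veck\in q^{-1}\Z^d\setminus\{\bn\}}F(\veck M)$, which is $\Gamma(1)$-invariant, so the corresponding integral over $X_q$ is also $q^d\int F$. Partitioning $q^{-1}\Z^d=\bigsqcup_{\vecr\in(\Z/q\Z)^d}(\Z^d+q^{-1}\vecr)$ and invoking the previous paragraph on each coset produces
\begin{equation*}
\sum_{u\mid q}\phi_d(u)\,c(u)=q^d,\qquad \phi_d(u):=\#\{\vecs\in(\Z/u\Z)^d\col \gcd(\vecs,u)=1\},
\end{equation*}
since the number of $\vecr\in(\Z/q\Z)^d$ with reduced denominator $u$ equals $\phi_d(u)$. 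Combined with the elementary identity $\sum_{u\mid q}\phi_d(u)=q^d$ and the base case $c(1)=1$, a short induction on $q$ forces $c(u)=1$ for every $u\in\Z_{>0}$, and the proposition follows. The main obstacle I anticipate is purely organizational: carefully tracking the finite-index inclusions $\Gamma(q)\subset\Gamma(q/e)\subset\Gamma(1)$ and the attendant identifications of probability Haar measures on their quotients; once this is done, everything reduces to the M\"obius-style identity above.
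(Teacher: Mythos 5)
Your argument is correct, and it takes a genuinely different route from the paper's. The paper works inside a fixed fundamental domain for $\Gamma(q)\backslash\SL(d,\R)$, enumerates the multiset $\{\vecm T_j\}$, decomposes it into $\SL(d,\Z)$-orbits $t\widehat\Z^d$ with multiplicities $m_t$, invokes the primitive-vector version of Siegel's formula, and then pins down the resulting constant $\sum m_t t^{-d}$ (either by an abstract scaling argument borrowed from \cite{Marklof00} or by the explicit $\SL(d,\Z/q\Z)$-orbit count given after the proof). You instead observe directly that the left side defines a positive right-$\SL(d,\R)$-invariant Radon functional, hence a multiple $c(q,\vecalf)$ of Lebesgue measure by uniqueness on the homogeneous space $\R^d\setminus\{\bn\}\cong\SL(d,\R)/H$, and then determine the constant by a self-contained M\"obius-type identity $\sum_{u\mid q}\phi_d(u)c(u)=q^d$ coming from the coset decomposition $q^{-1}\Z^d=\bigsqcup_{\vecr}(\Z^d+q^{-1}\vecr)$. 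Your approach is cleaner in that it handles $\vecalf\in\Z^d$ and $\vecalf\notin\Z^d$ on equal footing, avoids the fundamental-domain bookkeeping, and never needs the explicit value of $I_q$ or of $\mu_H((\Gamma(q)\cap H)\backslash H)$; the paper's approach computes more (explicit multiplicities $m_t$), which feeds into the neighboring Propositions \ref{XQVOLONEPROP} and \ref{XQYSIEGELPROP}. One spot that deserves a line or two more care: the phrase ``no Dirac contribution at $\bn$ can appear, as that point is excluded from the summation'' is suggestive but not quite a proof, since the Riesz measure on $\R^d$ could a priori carry an atom at $\bn$; the fix is easy --- note $\veck M\neq\bn$ for every $\veck\neq\bn$ and every $M$, take $F$ approaching the indicator of a shrinking ball around $\bn$, and apply dominated convergence (with the $q^{-1}\Z^d$ majorant you already set up) to conclude the atom vanishes. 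You would also want to say explicitly that you first prove the identity for $F\in C_c(\R^d)$ and then extend to $L^1$ by monotone convergence; as written the Riesz argument only applies to continuous compactly supported $F$.
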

\begin{proof}
If $\vecalf\in\Z^d$ then one easily reduces to the case $q=1$,
and then the formula is proved in Siegel, \cite{siegel}.
(Cf.\ also \cite[Sect.\ 3.7]{Marklof00}.)

From now on we assume $\vecalf\notin\Z^d$
(and thus $\Z^d+\vecalf\setminus\{\bn\}=\Z^d+\vecalf$).
Write $\vecalf =\frac{\vecp}q$ with $\vecp\in\Z^d$.
Let $\F\subset\SL(d,\R)$ be a fundamental domain for 
$\SL(d,\Z)\backslash\SL(d,\R)$ and choose representatives
$T_j\in\SL(d,\Z)$ so that $\SL(d,\Z)=\bigsqcup_{j=1}^{I_q} \Gamma(q) T_j$
(with $\bigsqcup$ denoting disjoint union);
then $\bigsqcup_{j=1}^{I_q} T_j \F$ is a fundamental domain for $\Gamma(q)$.
Hence
\begin{equation}
\begin{split}
\int_{X_q} \sum_{\veck\in\Z^d+\vecalf} F(\veck M) \, d\mu_q(M)
& =I_q^{-1} \sum_{j=1}^{I_q} \int_{\F}
\sum_{\veck\in\Z^d+\vecalf} F(\veck T_j M) \, d\mu_1(M)
\\
&%
= I_q^{-1} \int_{\F} \sum_{\ell=1}^\infty
F(q^{-1} \vecn_\ell M) \, d\mu_1(M),
\end{split}
\end{equation}
where $\vecn_1,\vecn_2,\ldots\in\Z^d\setminus\{\bn\}$ is an enumeration 
(with multiplicities taken into account)
of the points $\vecm T_j$, for $\vecm\in \vecp+q\Z^d$, $j\in\{1,\ldots,I_q\}$.
For every $\gamma\in\SL(d,\Z)$, the list
$\vecn_1\gamma,\vecn_2\gamma,\ldots$ can be obtained as a 
permutation of $\vecn_1,\vecn_2,\ldots$.
(To see this, note that given $\gamma\in\SL(d,\Z)$ there are elements 
$\gamma_1,\ldots,\gamma_{I_q}
\in \Gamma(q)$ and a permutation $\rho$ of $\{1,\ldots,I_q\}$ such that
$T_j \gamma =\gamma_j T_{\rho(j)}$ for all $j\in\{1,\ldots,I_q\}$.
Also note $(\vecp+q\Z^d)\gamma_j=\vecp+q\Z^d$.)
Recall that each orbit for the action of $\SL(d,\Z)$ 
on $\Z^d\setminus\{\bn\}$ equals $t\widehat\Z^d$
for some $t\in\Z_{>0}$, where $\widehat\Z^d$ as before denotes the set of
primitive lattice points in $\Z^d$.
It follows that for each $t\in\Z_{>0}$ there is some multiplicity
$m_t\in\Z_{\geq 0}$ such that the sequence $\vecn_1,\vecn_2,\ldots$ visits 
each point in $t\widehat\Z^d$ exactly $m_t$ times.
Now the above integral may be rewritten as
\begin{equation}
I_q^{-1} \int_{X_1} \sum_{t=1}^\infty m_t \sum_{\vecc\in\widehat\Z^d}
F(q^{-1} t\vecc M) \, d\mu_1(M).
\end{equation}
Arguing as in \cite[pp.\ 1150--1151]{Marklof00} we find that this
equals
\begin{equation}\label{lastid}
\frac 1{I_q\zeta(d)} \sum_{t=1}^\infty \frac{m_t}{t^d} 
\int_{\R^d} F(q^{-1}\vecx) \, d\vecx
=\frac {q^d}{I_q\zeta(d)} \sum_{t=1}^\infty \frac{m_t}{t^d} 
\int_{\R^d} F(\vecx) \, d\vecx
\end{equation}
Finally an argument as in \cite[p.\ 1152(top)]{Marklof00}
shows that the constant in front of the integral must actually be 1,
i.e.\ $\sum_{t=1}^\infty \frac{m_t}{t^d}=q^{-d}I_q\zeta(d)$, and the proof is
complete.
\end{proof}

The identity $\sum_{t=1}^\infty \frac{m_t}{t^d}=q^{-d}I_q\zeta(d)$ 
can of course also be proved by a more explicit computation:
One easily reduces the situation to the case where $q$ is the minimal
denominator of the given $\vecalf\in\Q^d$; in other words
$\vecalf=\frac{\vecp}{q}$ where $\vecp=(p_1,\ldots,p_d)\in\Z^d$
has $\gcd(q,p_1,\ldots,p_d)=1$.
Then note that the $\SL(d,\Z/q\Z)$-orbit of $\vecp+q\Z^d$
in $\Z^d/q\Z^d$ equals
\begin{align} \label{VDEF}
V=\bigl\{\vecx+q\Z^d \col \vecx=(x_1,\ldots,x_d)\in\Z^d, \:
\gcd(q,x_1,\ldots,x_d)=1\bigr\}\subset \Z^d/q\Z^d,
\end{align}
and since $\#\SL(d,\Z/q\Z)=I_q$ we see that the sequence
$\vecn_1,\vecn_2,\ldots$ visits exactly those points in $\Z^d$
which belong to the preimage of $V$, and each such point is visited 
exactly $I_q/\#V$ times.
Hence
\begin{align}
\sum_{t=1}^\infty\frac{m_t}{t^d}
=\frac{I_q}{\# V}\sum_{t\geq 1,\: (t,q)=1}t^{-d}
=\frac{I_q}{\# V}\zeta(d) \sum_{e\mid q} \mu(e) e^{-d}.
\end{align}
On the other hand $\# V=q^d\sum_{e\mid q} \mu(e) e^{-d}$,
and the identity follows.

\subsection{The submanifolds $X_q(\vecy)$ of $X_q$}\label{SUBMSECTION}

Fix $\vecalf=\vecp/q\in\Q^d$.
Given any $\vecy\in\R^d\setminus\{\bn\}$ we define
\begin{align} \label{XQYDEF}
X_q(\vecy):=\bigl\{M\in X_q \col \vecy\in (\Z^d+\vecalf)M\bigr\}.
\end{align}
This set can be given the structure of an embedded
submanifold in $X_q$ of codimension $d$, and with a countably
infinite number 
of connected components. To see this we first note that $X_q(\vecy)=
\bigcup_{\veck\in\Z^d+\vecalf \setminus\{\bn\}} X_q(\veck,\vecy)$,
where
\begin{align} \label{XQKYDEF}
X_q(\veck,\vecy):=
\bigl\{\Gamma(q)M\in X_q \col M \in \SL(d,\R), \: \veck M =\vecy\bigr\}.
\end{align}
One checks that for any $\veck,\veck'\in \Z^d+\vecalf \setminus\{\bn\}$,
we have $X_q(\veck,\vecy)=X_q(\veck',\vecy)$ if $\veck'\in\veck \Gamma(q)$;
whereas $X_q(\veck,\vecy) \cap X_q(\veck',\vecy)=\emptyset$
whenever $\veck'\notin \veck \Gamma(q)$. Hence if $S$ is any subset of
$\Z^d+\vecalf \setminus\{\bn\}$
containing exactly one representative from each orbit 
of the right action of $\Gamma(q)$ on $\Z^d+\vecalf \setminus\{\bn\}$,
then we can express $X_q(\vecy)$ as a disjoint union
\begin{align} \label{XQYDISJUNION}
X_q(\vecy)=\bigsqcup_{\veck\in S} X_q(\veck,\vecy).
\end{align}

To describe each $X_q(\veck,\vecy)$ further we set
\begin{align} \label{HAV}
H=\bigl\{g\in\SL(d,\R) \col \vece_1 g =\vece_1\bigr\}
=\Bigl\{\begin{pmatrix} 1 & \bn \\ \trans\vecv & A\end{pmatrix} \col
\vecv\in\R^{d-1}, \: A\in\SL(d-1,\R) \Bigr\}.
\end{align}
This is a closed subgroup of $\SL(d,\R)$ which is isomorphic 
with $\ASL(d-1,\R)$ (as defined in \eqref{ASLMULTLAW}) through
\begin{align}
H \ni \begin{pmatrix} 1 & \bn \\ \trans\vecv & A\end{pmatrix}
\mapsto (\trans A^{-1},\vecv \trans A^{-1})\in\ASL(d-1,\R).
\end{align}
We let $\mu_H$\label{MUHDEF} be the Haar measure on $H$ given by
$d\mu_H(g)=d\mu_1^{(d-1)}(A) \, d\vecv$, with 
$A,\vecv$ as in \eqref{HAV}, $\mu_1^{(d-1)}$ the
Haar measure on $\SL(d-1,\R)$ from \eqref{SLDRHAAR}, and
$d\vecv$ the standard Lebesgue measure on $\R^{d-1}$.
In dimension $d=2$ we have $H=\bigl\{\smatr 10v1 \col v\in\R\bigr\}$ and we set
$d\mu_H=dv$.

Now fix some $M_\veck,$ $M_\vecy\in SL(d,\R)$ such that
$\veck=\vece_1 M_\veck$, $\vecy=\vece_1 M_\vecy$.
Then $X_q(\veck,\vecy)$ is the image of
$M_\veck^{-1} H M_\vecy\subset \SL(d,\R)$ under the standard projection
$\pi:\SL(d,\R)\to X_q$, and $h_1,h_2\in H$ give the same point
$\pi(M_\veck^{-1} h_1 M_\vecy)=\pi(M_\veck^{-1} h_2 M_\vecy)$ if and only if
$h_1$ and $h_2$ belong to the same left 
$(M_\veck \Gamma(q) M_\veck^{-1}\cap H)$-coset.
This gives an identification of sets
\begin{align} \label{XQKYID}
X_q(\veck,\vecy)=M_\veck^{-1} \Bigl(
\bigl(M_\veck \Gamma(q) M_\veck^{-1}\cap H\bigr)\backslash H \Bigr) M_\vecy.
\end{align}
Since $M_\veck \Gamma(q) M_\veck^{-1}\cap H$ is a discrete subgroup of $H$,
the quotient space
$\bigl(M_\veck \Gamma(q) M_\veck^{-1}\cap H\bigr)\backslash H$ is a
connected $(d^2-d-1)$-dimensional manifold, and hence
$X_q(\veck,\vecy)$ inherits a natural structure as a 
connected $(d^2-d-1)$-dimensional manifold. One verifies that this structure 
does not depend on the choice of $M_\vecy$ or $M_\veck$
(since left or right multiplication by any fixed $H$-element gives a
diffeomorphism of $H$).
Since the map $H\ni h\mapsto M_\veck^{-1} h M_\vecy \in \SL(d,\R)$
is an immersion we see that $X_q(\veck,\vecy)$ is a
connected immersed submanifold of $X_q$.
Hence since the union \eqref{XQYDISJUNION} is disjoint we have now
given $X_q(\vecy)$ a structure as an immersed submanifold of $X_q$ 
with a countably infinite number of connected components.
($X_q(\vecy)$ is even an embedded submanifold of $X_q$,
but we will not need this fact.)

Note that $\mu_H$ induces a Borel measure on each quotient space
$\bigl( M_\veck \Gamma(q) M_\veck^{-1}\cap H\bigr)\backslash H$,
which we also call $\mu_H$.
\textit{We endow $X_q(\vecy)$ with the Borel measure $\nu_\vecy$\label{NUYDEF}
defined on each $X_q(\veck,\vecy)$ as coming from
$(I_q\zeta(d))^{-1}\mu_H$
under the identification \eqref{XQKYID}.}
This measure $\nu_\vecy$ is independent of the choices
of $S$ and matrices $M_\veck$, $M_\vecy$, as is easily seen from the
fact that $\mu_H$ is both left and right invariant.

\begin{lem} \label{CANMFORMINVARIANCE}
For any $\vecy\in\R^d\setminus\{\bn\}$, $T\in\SL(d,\R)$ and
any Borel subset $\scrE\subset X_q(\vecy)$ we have
$\nu_\vecy(\scrE)=\nu_{\vecy T}(\scrE T)$.
\end{lem}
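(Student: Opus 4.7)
The plan is to exploit the fact that right multiplication by $T$ is a diffeomorphism $X_q\to X_q$ sending $X_q(\vecy)$ bijectively onto $X_q(\vecy T)$ (if $\vecy\in(\Z^d+\vecalf)M$ then $\vecy T\in(\Z^d+\vecalf)MT$, and conversely), and then to verify that under this map the local description \eqref{XQKYID} of $\nu_\vecy$ is preserved piece-by-piece. Accordingly, first I would reduce to a fixed component of the decomposition $X_q(\vecy)=\bigsqcup_{\veck\in S} X_q(\veck,\vecy)$: since the choice of orbit representatives $S\subset\Z^d+\vecalf\setminus\{\bn\}$ for the $\Gamma(q)$-action does not involve $\vecy$, the same $S$ works for $X_q(\vecy T)$, and right multiplication by $T$ sends $X_q(\veck,\vecy)$ onto $X_q(\veck,\vecy T)$ for each $\veck\in S$. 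Hence it suffices to prove $\nu_\vecy(\scrE)=\nu_{\vecy T}(\scrE T)$ for $\scrE\subset X_q(\veck,\vecy)$.

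Next I would compare the two identifications in \eqref{XQKYID}. Fix $M_\veck\in\SL(d,\R)$ with $\vece_1 M_\veck=\veck$ and $M_\vecy\in\SL(d,\R)$ with $\vece_1 M_\vecy=\vecy$. Since the definition of $\nu_{\vecy T}$ is independent of the choice of matrix representative for $\vecy T$, I may (and will) take $M_{\vecy T}:=M_\vecy T$, which is valid because $\vece_1 M_\vecy T=\vecy T$. Under the resulting parametrizations $h\mapsto \Gamma(q)M_\veck^{-1}h M_\vecy$ of $X_q(\veck,\vecy)$ and $h\mapsto\Gamma(q)M_\veck^{-1}h M_{\vecy T}$ of $X_q(\veck,\vecy T)$ by the coset space $(M_\veck\Gamma(q)M_\veck^{-1}\cap H)\backslash H$, right multiplication by $T$ is literally the identity map at the level of $h$, since
\begin{equation*}
\bigl(\Gamma(q)M_\veck^{-1}h M_\vecy\bigr)T=\Gamma(q)M_\veck^{-1}h M_\vecy T=\Gamma(q)M_\veck^{-1}h M_{\vecy T}.
\end{equation*}

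Consequently, if $\widetilde{\scrE}\subset(M_\veck\Gamma(q)M_\veck^{-1}\cap H)\backslash H$ denotes the preimage of $\scrE$, then it is also the preimage of $\scrE T$ under the second parametrization. Since $\nu_\vecy$ and $\nu_{\vecy T}$ are both defined to equal $(I_q\zeta(d))^{-1}\mu_H$ on this common coset space, we get $\nu_\vecy(\scrE)=(I_q\zeta(d))^{-1}\mu_H(\widetilde\scrE)=\nu_{\vecy T}(\scrE T)$. Summing this identity over the disjoint decomposition indexed by $\veck\in S$ yields the claim for general Borel $\scrE\subset X_q(\vecy)$.

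The only nontrivial point is the bookkeeping in the first paragraph: one must check that the orbit decomposition is compatible with right translation by $T$, and that the two identifications \eqref{XQKYID} can be linked by the consistent choice $M_{\vecy T}=M_\vecy T$. Once this is in place, the measure-preservation is immediate from the bi-invariance of $\mu_H$ under $H$ (which is already invoked to show that $\nu_\vecy$ is independent of the chosen representatives $M_\veck$ and $M_\vecy$); no further computation is required.
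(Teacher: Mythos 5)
Your proof is correct and takes essentially the same route as the paper: reduce to the components $X_q(\veck,\vecy)$, choose the representative $M_{\vecy T}=M_\vecy T$, and observe that $\scrE$ and $\scrE T$ then correspond to the same subset of $(M_\veck\Gamma(q)M_\veck^{-1}\cap H)\backslash H$ under \eqref{XQKYID}. The paper states this more tersely, but the argument is the same.
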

\begin{proof}
For any given subset $\scrE'\subset X_q(\veck,\vecy)$ we have
$\scrE' T\subset X_q(\veck,\vecy T)$, and if we choose
$M_{\vecy T}=M_\vecy T$ in the above definitions then
these two subsets correspond to exactly the same
subset of $(M_\veck\Gamma(q)M_\veck^{-1}\cap H)\backslash H$ under the 
identification(s) \eqref{XQKYID}. The lemma follows trivially from this.
\end{proof}

\begin{prop} \label{FOLINTPROP}
Let $\scrE\subset X_q$ be any Borel set; then 
$\vecy\mapsto \nu_\vecy(\scrE \cap X_q(\vecy))$
is a measurable function of $\vecy\in\R^d\setminus\{\bn\}$.
If $U\subset\R^d\setminus\{\bn\}$ is any Borel set
such that $\scrE\subset \bigcup_{\vecy\in U} X_q(\vecy)$,
then
\begin{align} \label{FOLINTGENREL}
\mu_q(\scrE) \leq \int_U \nu_\vecy(\scrE \cap X_q(\vecy)) \, d\vecy.
\end{align}
Furthermore, if 
$\forall \vecy_1\neq \vecy_2\in U: X_q(\vecy_1)\cap X_q(\vecy_2) \cap \scrE
=\emptyset$, then \textrm{equality} holds in \eqref{FOLINTGENREL}.
\end{prop}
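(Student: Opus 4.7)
The plan is to reduce Proposition~\ref{FOLINTPROP} to the single disintegration identity
\begin{equation*}
\int_{\R^d\setminus\{\bn\}} \phi(\vecy) \int_{X_q(\vecy)} F(M) \, d\nu_\vecy(M) \, d\vecy
= \int_{X_q} F(M) \sum_{\veck \in (\Z^d + \vecalf) \setminus \{\bn\}} \phi(\veck M) \, d\mu_q(M), \qquad (\star)
\end{equation*}
valid for any nonnegative Borel $F:X_q\to\R$ and $\phi:\R^d\setminus\{\bn\}\to\R$. Granted $(\star)$, taking $F=\chi_\scrE$ and $\phi=\chi_U$ converts the right-hand side into $\int_\scrE N_U(M)\,d\mu_q(M)$, where $N_U(M):=\#\bigl(U\cap(\Z^d+\vecalf)M\setminus\{\bn\}\bigr)$. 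The hypothesis $\scrE\subset\bigcup_{\vecy\in U}X_q(\vecy)$ says $N_U(M)\geq 1$ on $\scrE$, which immediately yields \eqref{FOLINTGENREL}; the disjointness hypothesis says $N_U(M)\leq 1$ on $\scrE$, forcing equality.

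To prove $(\star)$ I would decompose $X_q(\vecy)$ as in \eqref{XQYDISJUNION} and treat each $\veck\in S$ separately. Fixing $\veck$ and matrices $M_\veck$, $M_\vecy$ as in Section~\ref{SUBMSECTION}, the change of variables $M=M_\veck^{-1}hM_\vecy$ with $h\in H$ and $\vecy\in\R^d\setminus\{\bn\}$ (using any Borel section $\vecy\mapsto M_\vecy$) parametrizes $\SL(d,\R)$ away from a negligible set and satisfies $\veck M=\vecy$. Since $\SL(d,\R)$ acts transitively on $\R^d\setminus\{\bn\}$ with stabilizer $H$, both groups being unimodular, and the action preserves Lebesgue measure, the standard Weil quotient-integration formula combined with left-invariance of $\mu_1$ gives
\begin{equation*}
d\mu_1(M)=c_\ast\,d\mu_H(h)\,d\vecy
\end{equation*}
for a universal constant $c_\ast>0$. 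To pin down $c_\ast=\zeta(d)^{-1}$ I would substitute this formula into the $q=1,\vecalf=\bn$ case of Siegel's identity (Proposition~\ref{SIEGELQPROP}): the $\SL(d,\Z)$-orbits on $\Z^d\setminus\{\bn\}$ are indexed by $j\geq 1$ with representatives $j\vece_1$ all sharing the stabilizer $\SL(d,\Z)\cap H$ of $\mu_H$-covolume $1$ (by \eqref{SLDRHAAR} and the normalization of $d\vecv$), and unfolding the Siegel sum against a test $\phi$ produces the single Riemann factor $\sum_{j\geq 1}j^{-d}=\zeta(d)$, forcing $c_\ast\zeta(d)=1$. With this value of $c_\ast$, the normalization $(I_q\zeta(d))^{-1}$ built into $\nu_\vecy$, together with $\mu_q=I_q^{-1}\mu_1$, is exactly what is needed so that the analogous unfolding of both sides of $(\star)$ along $\veck\in S$ and along a fundamental domain for $\Gamma(q)$ yields matching expressions.

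For the measurability assertion, the set $\Omega:=\{(\vecy,M)\in(\R^d\setminus\{\bn\})\times X_q:M\in X_q(\vecy)\}$ is Borel, being the countable union over $\veck\in(\Z^d+\vecalf)\setminus\{\bn\}$ of the closed sets $\{(\vecy,M):\veck M=\vecy\}$, so $\Omega\cap(U\times\scrE)$ is Borel; then applying Fubini to the $\sigma$-finite measure identified on the right-hand side of $(\star)$ (approximating by indicators of relatively compact sets if need be) reveals that $\vecy\mapsto\nu_\vecy(\scrE\cap X_q(\vecy))$ is Borel measurable.

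The main obstacle I anticipate is purely bookkeeping: verifying that all normalization constants match after the simultaneous unfolding of the $\Gamma(q)$-orbit sum $\sum_{\veck\in S}\sum_{\gamma\in(M_\veck\Gamma(q)M_\veck^{-1}\cap H)\backslash\Gamma(q)}$ and of the integral over a fundamental domain for $\Gamma(q)\backslash\SL(d,\R)$. Lemma~\ref{CANMFORMINVARIANCE} removes most of the ambiguities in the choice of the $M_\veck$ and $M_\vecy$, but the discrete subgroups $M_\veck\Gamma(q)M_\veck^{-1}\cap H$ genuinely vary with $\veck$, so one must argue carefully that their covolumes assemble correctly against the sum over $\veck$ to produce the clean Lebesgue measure on $\R^d$ appearing on the left of $(\star)$.
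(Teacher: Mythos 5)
Your proposal is structurally the same proof as the paper's: the identity $(\star)$ you isolate is exactly what emerges from the paper's intermediate computation (formula \eqref{FOLINTPROPSTEP2} integrated against $\chi_U$, which becomes $\sum_\veck\mu_q(\F\cap\scrE_\veck)=\int_\scrE N_U\,d\mu_q$), and your route to $(\star)$ — decompose $X_q(\vecy)$ by $\veck\in S$, apply the Weil/Fubini formula $d\mu_1=\zeta(d)^{-1}d\mu_H\,d\vecy$ on $\SL(d,\R)$, then unfold the $\Gamma(q)$-orbit sum — is precisely what the paper does, with Lemma \ref{FULLHYDECLEMMA} playing the role of your Weil formula (the paper pins down $c_\ast$ by a direct Jacobian computation rather than by matching against Siegel's formula, but both determinations are valid). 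Your reformulation of the endgame in terms of $N_U(M)\geq 1$ versus $N_U(M)\leq 1$ on $\scrE$ is arguably cleaner than the paper's $\bigcup_\veck\scrE_\veck=\scrE_0$ argument, though they are the same count.

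One concrete gap: your measurability argument is circular as stated. The left-hand side of $(\star)$ only makes sense as a Lebesgue integral once you already know $\vecy\mapsto\int_{X_q(\vecy)}F\,d\nu_\vecy$ is measurable, so you cannot use Fubini applied to $(\star)$ to \emph{deduce} that measurability; moreover the right-hand side of $(\star)$ is a single integral over $X_q$, not a product-measure integral, so Fubini does not apply directly. The paper handles this cleanly by first deriving the explicit representation \eqref{FOLINTPROPSTEP2} of $\nu_\vecy(\scrE\cap X_q(\vecy))$ with a fixed Borel choice $\vecy\mapsto M_\vecy$ (formula \eqref{M0YDEF}), from which measurability is immediate via Lemma \ref{FULLHYDECLEMMA}. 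You would need to do the same: establish the explicit formula, read off measurability, and only then integrate to prove $(\star)$ — which is in fact the order the paper follows. The bookkeeping you anticipate (the covolumes of $M_\veck\Gamma(q)M_\veck^{-1}\cap H$ assembling against the sum over $\veck$) is exactly the content of the passage from \eqref{FOLINTPROPSTEP1} to \eqref{FOLINTPROPSTEP2}, using that $\veck\gamma$ sweeps out $\Z^d+\vecalf\setminus\{\bn\}$ exactly once as $\veck$ and $\gamma$ range over $S$ and the cosets $S^{(\veck)}$.
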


The following lemma will be required for the proof.

\begin{lem} \label{FULLHYDECLEMMA}
For each $\vecy\in\R^d\setminus\{\bn\}$, choose some
$M_\vecy\in\SL(d,\R)$ with $\vece_1 M_\vecy=\vecy$.
Then for every $f\in \L^1(\SL(d,\R),\mu_q)$ we have
\begin{align} \label{FULLHYDECLEMMAFORMULA}
\int_{\SL(d,\R)} f(M)\, d\mu_q(M)=
\frac 1{I_q\zeta(d)} \int_{\R^d\setminus\{\bn\}}
\Bigl(\int_H f(hM_\vecy) \, d\mu_H(h)\Bigr)\, d\vecy.
\end{align}
\end{lem}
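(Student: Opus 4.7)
The plan is to prove the formula first with $\mu_q$ replaced by $\mu_1$ and the prefactor $\frac{1}{I_q\zeta(d)}$ replaced by $\frac{1}{\zeta(d)}$; the general case then follows via $\mu_q = I_q^{-1}\mu_1$ (see \eqref{SLDRHAARQ}).

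The key first step is a disintegration along the orbit map $\pi:\SL(d,\R)\to\R^d\setminus\{\bn\}$, $M\mapsto\vece_1 M$. This is a surjective submersion with $\pi^{-1}(\vecy)=HM_\vecy$, and both $\SL(d,\R)$ and $H\cong\ASL(d-1,\R)$ are unimodular. By the standard quotient integration theorem there is a Borel measure $\sigma$ on $\R^d\setminus\{\bn\}$ such that
\begin{equation*}
\int_{\SL(d,\R)} f\,d\mu_1 = \int_{\R^d\setminus\{\bn\}}\int_H f(hM_\vecy)\,d\mu_H(h)\,d\sigma(\vecy).
\end{equation*}
Right multiplication by $g\in\SL(d,\R)$ carries $\pi^{-1}(\vecy)$ onto $\pi^{-1}(\vecy g)$ while preserving $\mu_1$ globally and $\mu_H$ on each fiber. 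Therefore $\sigma$ is invariant under the transitive right $\SL(d,\R)$-action on $\R^d\setminus\{\bn\}$, and since Lebesgue measure is the unique (up to positive scalar) such invariant measure, $\sigma = c\cdot d\vecy$ for some constant $c>0$.

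To pin down $c$, I would combine the formula just obtained with Proposition \ref{SIEGELQPROP} (applied with $q=1$, $\vecalf=\bn$) for the test function $F(M)=\sum_{\veck\in\Z_*^d}\Phi(\veck M)$ with a nonnegative $\Phi\in C_c(\R^d\setminus\{\bn\})$. Siegel gives $\int_{X_1}F\,d\mu_1=\int_{\R^d}\Phi\,d\vecx$. Decomposing $\Z_*^d=\bigsqcup_{t=1}^\infty t\widehat\Z^d$ and unfolding each primitive orbit along the stabilizer $H_\Z:=\SL(d,\Z)\cap H$ of $\vece_1$ in $\SL(d,\Z)$ (which acts transitively on $\widehat\Z^d$) yields
\begin{equation*}
\int_{X_1}F\,d\mu_1 = \sum_{t=1}^\infty\int_{H_\Z\backslash\SL(d,\R)}\Phi(t\vece_1 M)\,d\mu_1(M).
\end{equation*}
A further disintegration $H_\Z\backslash\SL(d,\R)\to H\backslash\SL(d,\R)$, together with the $H$-invariance $\Phi(t\vece_1 hM)=\Phi(t\vece_1 M)$, turns each term into $\mu_H(H_\Z\backslash H)\cdot c\int_{\R^d}\Phi(t\vecy)\,d\vecy = c\,\mu_H(H_\Z\backslash H)\,t^{-d}\int_{\R^d}\Phi$. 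Summing over $t$ and comparing with Siegel gives $c\,\zeta(d)\,\mu_H(H_\Z\backslash H)=1$.

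Finally, $H_\Z$ consists of the matrices $\bigl(\begin{smallmatrix}1&\bn\\\trans\vecv&A\end{smallmatrix}\bigr)$ with $\vecv\in\Z^{d-1}$, $A\in\SL(d-1,\Z)$, so the product of $[0,1)^{d-1}$ and a fundamental domain for $\SL(d-1,\Z)\backslash\SL(d-1,\R)$ is a fundamental domain for $H_\Z\backslash H$. Since $d\mu_H=d\mu_1^{(d-1)}(A)\,d\vecv$ and $\mu_1^{(d-1)}$ is by definition a probability measure on $X_1^{(d-1)}$, we obtain $\mu_H(H_\Z\backslash H)=1$ and hence $c=1/\zeta(d)$. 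The only delicate point is keeping the normalizations of $\mu_1$, $\mu_1^{(d-1)}$ and $\mu_H$ (as well as the unfolding of the double quotient $H_\Z\backslash\SL(d,\R)$) consistent; once this bookkeeping is verified, the remaining computations are routine, and extension from nonnegative $\Phi$ (resp.\ $f$) to general $\L^1$ is standard.
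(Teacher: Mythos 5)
Your proof is correct and reaches the same identity, but by a genuinely different route from the paper's. The paper proves Lemma \ref{FULLHYDECLEMMA} by an explicit change of variables: it fixes the concrete section $\vecy\mapsto M_\vecy^{(0)}$ of \eqref{M0YDEF}, observes that $\langle h,\vecy\rangle\mapsto hM_\vecy^{(0)}$ is a diffeomorphism $H\times\{y_1>0\}\dto G^+$, and then computes the Jacobian directly (citing \cite[(3.70), case $r=1$]{Marklof00}) to read off the density factor $(I_q\zeta(d))^{-1}$; the case $y_1<0$ is handled symmetrically. You instead invoke Weil's disintegration theorem for the unimodular pair $H\subset\SL(d,\R)$, deduce by uniqueness of $\SL(d,\R)$-invariant measures on $\R^d\setminus\{\bn\}$ that the quotient measure is $c\,d\vecy$, and then pin down $c$ by unfolding Siegel's formula (Proposition \ref{SIEGELQPROP} with $q=1$, $\vecalf=\bn$) over $\Z_*^d=\bigsqcup_{t\geq 1}t\widehat\Z^d$ and using $\mu_H(H_\Z\backslash H)=1$. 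This is logically clean — Proposition \ref{SIEGELQPROP} is established independently of Lemma \ref{FULLHYDECLEMMA}, so no circularity — and it trades the explicit Jacobian bookkeeping for a normalization argument in the spirit the paper itself uses in the proof of Proposition \ref{XQVOLONEPROP}. The paper's approach is more self-contained (modulo the cited Jacobian) and works fiberwise without invoking Siegel; yours is more structural and arguably easier to verify once one trusts Weil's theorem, at the cost of an extra ingredient. One small slip of phrasing: the parenthetical ``which acts transitively on $\widehat\Z^d$'' should be attached to $\SL(d,\Z)$, not to the stabilizer $H_\Z$ (which by definition fixes $\vece_1$) — your unfolding identity $\int_{X_1}\sum_{\gamma\in H_\Z\backslash\SL(d,\Z)}\Phi(t\vece_1\gamma M)\,d\mu_1=\int_{H_\Z\backslash\SL(d,\R)}\Phi(t\vece_1 M)\,d\mu_1$ is correct precisely because $\SL(d,\Z)$ acts transitively on $\widehat\Z^d$ with stabilizer $H_\Z$ at $\vece_1$.
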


\begin{proof}
First of all the integral
$\int_H f(hM_\vecy) \, d\mu_H(h)$ (if it exists) only depends on $f$ and
$\vecy$, and not on the choice of $M_\vecy$, since for a given $\vecy$
the matrix $M_\vecy$
is uniquely determined up to left multiplication by an element
from $H$, and $\mu_H$ is right $H$-invariant.
Hence we may fix the following specific choices of $M_\vecy$,
for $\vecy=(y_1,\ldots,y_d)$ with $y_1>0$:
\begin{align} \label{M0YDEF}
M_\vecy=M^{(0)}_\vecy:=
\begin{pmatrix} y_1 & \vecy' \\ \trans\bn & y_1^{-\frac 1{d-1}}1_{d-1}
\end{pmatrix}
\quad \text{with } \: \vecy'=(y_2,\ldots,y_d);
\end{align}
and for $\vecy=(y_1,\ldots,y_d)$ with $y_1<0$:
\begin{align}
M_\vecy=M_{\vecy K_0}^{(0)}K_0,
\qquad \text{where }\:K_0=\diag[-1,-1,1,\ldots,1] \in\SL(d,\R).
\end{align}
We may leave $M_\vecy$ unspecified when $y_1=0$, since these $\vecy$'s form
a subset of $\R^d\setminus\{\bn\}$ of Lebesgue measure zero.

Write $G=\SL(d,\R)$, $G^+=\{(m_{jk})\in G\col m_{11}>0\}$ and
$G^-=\{(m_{jk})\in G\col m_{11}<0\}$. 
Then the map $\langle h,\vecy\rangle \mapsto M=hM_\vecy^{(0)}$
gives a diffeomorphism from 
$H\times\{\vecy\in\R^d \col y_1>0\}$ onto $G^+$
(indeed, the inverse is easily computed explicitly and seen to be smooth).
Furthermore in this parametrization we have,
via a standard computation
similar to, e.g., \cite[(3.70), case $r=1$]{Marklof00},
$d\mu_q(M)=(I_q\zeta(d))^{-1} d\mu_H(h)d\vecy$.
Hence %
$\int_{G^+} f(M)\, d\mu_q(M)=
(I_q\zeta(d))^{-1} \int_{\{\vecy\in\R^d \col y_1>0\}}
\int_H f(hM_\vecy) \, d\mu_H(h)\, d\vecy.$
Similarly one verifies
$\int_{G^-} f(M)\, d\mu_q(M)=$\linebreak
$(I_q\zeta(d))^{-1} \int_{\{\vecy\in\R^d \col y_1<0\}}
\int_H f(hM_\vecy) \, d\mu_H(h)\, d\vecy$,
and \eqref{FULLHYDECLEMMAFORMULA} follows by addition of these two.
\end{proof}

\begin{proof}[Proof of Proposition \ref{FOLINTPROP}.]
Let $\F\subset\SL(d,\R)$ be a (measurable) fundamental domain for
$\Gamma(q)\backslash\SL(d,\R)$, in the set theoretic sense. That is, we assume
$\F \cap \gamma \F=\emptyset$ for all $\gamma\in\Gamma(q)$, and
$\bigcup_{\gamma\in\Gamma(q)} \gamma\F=\SL(d,\R)$.
For each $\vecy\in\R^d\setminus\{\bn\}$, fix some
$M_\vecy\in\SL(d,\R)$ with $\vece_1 M_\vecy=\vecy$.
Now for any $\vecy\in\R^d\setminus\{\bn\}$ we have, via 
\eqref{XQYDISJUNION}, \eqref{XQKYID} and the definition of $\nu_\vecy$,
\begin{align}
\nu_\vecy(\scrE \cap X_q(\vecy))
=(I_q\zeta(d))^{-1} \sum_{\veck\in S}
\int_{\F_1} \chi_{_{\scrE_0}}\bigl(M_\veck^{-1}hM_\vecy\bigr) \, d\mu_H(h),
\end{align}
where $\F_1\subset H$ is any fixed fundamental domain 
for $(M_\veck\Gamma(q)M_\veck^{-1}\cap H) \backslash H$, 
${\scrE_0}$ denotes the pre-image in $\SL(d,\R)$ of  
$\scrE\subset X_q$, 
and $\chi_{_{\scrE_0}}$ is its characteristic function.
We may choose %
$\F_1=H\cap\F_2$ where $\F_2\subset\SL(d,\R)$ is any
fixed fundamental domain for 
$(M_\veck\Gamma(q)M_\veck^{-1}\cap H) \backslash \SL(d,\R)$, and such an
$\F_2$ may be fixed as
$\F_2= M_\veck\bigl(\bigsqcup_{\gamma\in S^{(\veck)}}\gamma \F\bigr)
M_\vecy^{-1}$,
where $S^{(\veck)}\subset\Gamma(q)$ is any set of coset representatives for
$(\Gamma(q)\cap M_\veck^{-1}HM_\veck)\backslash\Gamma(q)$. 
Hence, since ${\scrE_0}\subset\SL(d,\R)$ is left $\Gamma(q)$ invariant,
\begin{align} \label{FOLINTPROPSTEP1}
\nu_\vecy(\scrE \cap X_q(\vecy))
=(I_q\zeta(d))^{-1} \sum_{\veck\in S}
\sum_{\gamma\in S^{(\veck)}}
\int_{H} \chi_{_{\F\cap{\scrE_0}}}\bigl(\gamma^{-1}M_\veck^{-1}hM_\vecy\bigr)
\, d\mu_H(h).
\end{align}
But for each $\veck\in S$ and $\gamma\in S^{(\veck)}$ we have
$\vece_1 M_\veck \gamma=\veck \gamma=\vece_1 M_{\veck\gamma}$
and thus $M_\veck \gamma=h_0 M_{\veck\gamma}$ for some $h_0\in H$;
hence using the invariance of $\mu_H$ we see that we may
replace $\gamma^{-1} M_\veck^{-1}$ with $M_{\veck\gamma}^{-1}$
inside the integrand.
Furthermore, given $\gamma,\gamma'\in\Gamma(q)$ we have
the following chain of equivalent statements:
\begin{align}
(\Gamma(q)\cap M_\veck^{-1}HM_\veck)\gamma=
(\Gamma(q)\cap M_\veck^{-1}HM_\veck)\gamma'
\Longleftrightarrow
\gamma'\gamma^{-1} \in M_\veck^{-1}HM_\veck 
\hspace{50pt} &
\\ \notag
\Longleftrightarrow
\vece_1 M_\veck \gamma'\gamma^{-1} M_\veck^{-1}=\vece_1
\Longleftrightarrow
\veck \gamma'=\veck \gamma. &
\end{align}
Hence by the definition of $S$ and $S^{(\veck)}$, as $\veck$ and $\gamma$
run through the double sum in \eqref{FOLINTPROPSTEP1},
$\veck \gamma$ visits each vector in $\Z^d+\vecalf\setminus\{\bn\}$ exactly once.
Hence
\begin{align} \label{FOLINTPROPSTEP2}
\nu_\vecy(\scrE \cap X_q(\vecy))
=(I_q\zeta(d))^{-1} \sum_{\veck\in \Z^d+\vecalf\setminus\{\bn\}}
\int_{H} \chi_{_{\F\cap{\scrE_0}}}\bigl(M_\veck^{-1}hM_\vecy\bigr)
\, d\mu_H(h).
\end{align}
Here for each $\veck$ the function
$\R^d\setminus\{\bn\}\ni\vecy\mapsto
\int_H \chi_{_{M_\veck(\F\cap{\scrE_0})}}(hM_\vecy)\, d\mu_H(h)$ is measurable
(this is implicit in Lemma \ref{FULLHYDECLEMMA});
hence also the above sum \eqref{FOLINTPROPSTEP2} is measurable as
a function from $\vecy\in\R^d\setminus\{\bn\}$
into $\R_{\geq 0}\cup\{\infty\}$.

Now to prove \eqref{FOLINTGENREL}, note that the assumption on $U$
implies $\scrE_0=\bigcup_{\veck\in\Z^d+\vecalf\setminus\{\bn\}} \scrE_\veck$,
where $\scrE_\veck:=\{M\in\scrE_0\col\veck M\in U\}$.
We have by \eqref{FOLINTPROPSTEP2},
\begin{align}
\int_U \nu_\vecy(\scrE \cap X_q(\vecy)) \, d\vecy
=(I_q\zeta(d))^{-1} \sum_{\veck\in \Z^d+\vecalf\setminus\{\bn\}}
\int_U \int_{H} \chi_{_{\F\cap{\scrE_0}}}\bigl(M_\veck^{-1}hM_\vecy\bigr)
\, d\mu_H(h) \, d\vecy,
\end{align}
and for any $\veck,\vecy,h$ appearing in the above expression we
have $\veck(M_\veck^{-1} h M_\vecy)=\vecy\in U$, so that
$M_\veck^{-1} h M_\vecy\in\scrE_\veck$ must hold whenever
$M_\veck^{-1} h M_\vecy\in\scrE_0$.
Also for every $\vecy\in \R^d\setminus (U\cup\{\bn\})$
we have $\veck(M_\veck^{-1} h M_\vecy)=\vecy\notin U$, so that
$M_\veck^{-1} h M_\vecy\notin\scrE_\veck$. Hence
\begin{align}
& \int_U \nu_\vecy(\scrE \cap X_q(\vecy)) \, d\vecy
=(I_q\zeta(d))^{-1} \sum_{\veck\in \Z^d+\vecalf\setminus\{\bn\}}
\int_{\R^d\setminus\{\bn\}} \int_{H} 
\chi_{_{\F\cap{\scrE_\veck}}}\bigl(M_\veck^{-1}hM_\vecy\bigr)
\, d\mu_H(h) \, d\vecy
\\ \notag
& = \sum_{\veck\in \Z^d+\vecalf\setminus\{\bn\}}
\mu_q\bigl(M_\veck(\F\cap{\scrE_\veck})\bigr)
= \sum_{\veck\in \Z^d+\vecalf\setminus\{\bn\}}
\mu_q\bigl(\F\cap{\scrE_\veck}\bigr)
\geq \mu_q\bigl(\F\cap{\scrE_0}\bigr)=\mu_q(\scrE),
\end{align}
where we used Lemma \ref{FULLHYDECLEMMA}, the invariance of $\mu_q$,
and $\scrE_0=\bigcup_{\veck\in\Z^d+\vecalf\setminus\{\bn\}} \scrE_\veck$.
(To avoid any confusion in the last step: Recall that we use $\mu_q$ to 
denote both a Haar measure on $\SL(d,\R)$ and the corresponding 
probability measure on $X_q$.)
Hence \eqref{FOLINTGENREL} is proved.
To prove the final statement about equality, note that the condition
$\forall \vecy_1\neq \vecy_2\in U: X_q(\vecy_1)\cap X_q(\vecy_2) \cap \scrE
=\emptyset$ implies that the sets $\scrE_\veck$ are pairwise disjoint,
and thus 
$\sum_{\veck\in \Z^d+\vecalf\setminus\{\bn\}}
\mu_q\bigl(\F\cap{\scrE_\veck}\bigr)%
=\mu_q(\scrE)$.
\end{proof}

\begin{prop} \label{XQVOLONEPROP}
For each $\vecy\in\R^d\setminus\{\bn\}$ we have
$\nu_\vecy(X_q(\vecy))=1$.
\end{prop}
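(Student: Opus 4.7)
The plan is to first show $\nu_\vecy(X_q(\vecy)) = 1$ for Lebesgue-almost every $\vecy \in \R^d\setminus\{\bn\}$ via an integration argument combining Proposition \ref{SIEGELQPROP} and Lemma \ref{FULLHYDECLEMMA}, and then to promote this to all $\vecy$ by the invariance result in Lemma \ref{CANMFORMINVARIANCE} together with the transitivity of $\SL(d,\R)$ on $\R^d\setminus\{\bn\}$.

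First I would apply the explicit formula \eqref{FOLINTPROPSTEP2} from the proof of Proposition \ref{FOLINTPROP} with $\scrE = X_q$, so that $\scrE_0 = \SL(d,\R)$ and $\chi_{_{\F\cap\scrE_0}} = \chi_\F$, yielding
$$
\nu_\vecy(X_q(\vecy)) = (I_q\zeta(d))^{-1} \sum_{\veck \in \Z^d+\vecalf\setminus\{\bn\}} \int_H \chi_\F\bigl(M_\veck^{-1} h M_\vecy\bigr)\, d\mu_H(h).
$$
This formula already gives measurability of $\vecy \mapsto \nu_\vecy(X_q(\vecy))$.

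Next, for an arbitrary nonnegative $F \in \L^1(\R^d)$ I would multiply by $F(\vecy)$ and integrate over $\vecy \in \R^d$, interchanging sum and integral by monotone convergence. For each fixed $\veck$, using that $F(\vece_1 h M_\vecy) = F(\vecy)$, Lemma \ref{FULLHYDECLEMMA} applied to the function $M \mapsto F(\vece_1 M)\chi_\F(M_\veck^{-1} M)$ gives
$$
\int_{\R^d} F(\vecy) \int_H \chi_\F\bigl(M_\veck^{-1} h M_\vecy\bigr)\, d\mu_H(h)\, d\vecy = I_q\zeta(d) \int_{\SL(d,\R)} F(\vece_1 M)\, \chi_\F\bigl(M_\veck^{-1} M\bigr)\, d\mu_q(M).
$$
Substituting $M = M_\veck N$ and using left-invariance of the Haar measure reduces the right-hand side to $I_q\zeta(d) \int_\F F(\veck N)\, d\mu_q(N)$. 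Summing over $\veck$, identifying $\int_\F \cdots\, d\mu_q$ with the integral over $X_q$, and invoking Proposition \ref{SIEGELQPROP}, I obtain
$$
\int_{\R^d} F(\vecy)\, \nu_\vecy(X_q(\vecy))\, d\vecy = \int_{X_q} \sum_{\veck \in \Z^d+\vecalf\setminus\{\bn\}} F(\veck M)\, d\mu_q(M) = \int_{\R^d} F(\vecx)\, d\vecx.
$$
Since this holds for every nonnegative $F \in \L^1(\R^d)$, a standard argument (take $F = \chi_B$ for bounded Borel sets $B$) forces $\nu_\vecy(X_q(\vecy)) = 1$ for Lebesgue-a.e.\ $\vecy \in \R^d$.

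Finally, I would upgrade this to all $\vecy \in \R^d\setminus\{\bn\}$ as follows. Given $\vecy_1, \vecy_2 \in \R^d\setminus\{\bn\}$, transitivity of the $\SL(d,\R)$-action on $\R^d\setminus\{\bn\}$ provides $T \in \SL(d,\R)$ with $\vecy_1 T = \vecy_2$. A direct check from the definition \eqref{XQYDEF} shows $X_q(\vecy_1)\, T = X_q(\vecy_1 T) = X_q(\vecy_2)$; hence Lemma \ref{CANMFORMINVARIANCE} gives $\nu_{\vecy_1}(X_q(\vecy_1)) = \nu_{\vecy_2}(X_q(\vecy_2))$. Thus $\vecy \mapsto \nu_\vecy(X_q(\vecy))$ is constant on $\R^d\setminus\{\bn\}$, and being equal to $1$ on a set of full measure it must equal $1$ everywhere. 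The main delicate point is the term-by-term application of Lemma \ref{FULLHYDECLEMMA} together with the bookkeeping in the change of variables, but this is routine once the correct matchup between $M_\veck$, $M_\vecy$, and $h$ is set up.
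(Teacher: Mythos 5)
Your proof is correct and follows a genuinely different route from the paper's. The paper's proof is an explicit computation: it reduces to minimal denominator $q$, writes $\nu_\vecy(X_q(\vecy))$ as the sum over representatives $\veck\in S$ of $\mu_H((M_\veck\Gamma(q)M_\veck^{-1}\cap H)\backslash H)$ (cf.\ \eqref{XQVOLONEPROPSTEP0}), computes each summand via a conjugation scaling Haar measure by $(q/t_\veck)^d$, proves that $\veck\mapsto t_\veck$ is a bijection onto $\{t\geq 1:(t,q)=1\}$, and finishes by counting $\#H(\Z/q\Z)$ and $\#V$ in $\SL(d,\Z/q\Z)$ to evaluate the resulting $\zeta$-sum. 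Your argument instead dualizes: you pair $\nu_\vecy(X_q(\vecy))$ against an arbitrary nonnegative $F\in\L^1(\R^d)$, apply the Haar decomposition of Lemma \ref{FULLHYDECLEMMA} term by term (using that $\vece_1 h M_\vecy=\vecy$ for $h\in H$), re-assemble the $\veck$-sum into $\int_{X_q}\sum_{\veck}F(\veck M)\,d\mu_q(M)$ — noting that $\Gamma(q)$ preserves $\Z^d+\vecalf$, which makes the summand descend to $X_q$ — and invoke Proposition \ref{SIEGELQPROP}. This gives $\nu_\vecy(X_q(\vecy))=1$ for a.e.\ $\vecy$, which you then upgrade to all $\vecy\neq\bn$ by the transitivity of $\SL(d,\R)$ on $\R^d\setminus\{\bn\}$ together with Lemma \ref{CANMFORMINVARIANCE}. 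The chain of dependencies is sound (Proposition \ref{SIEGELQPROP}, Lemma \ref{FULLHYDECLEMMA}, and Lemma \ref{CANMFORMINVARIANCE} are all logically prior), and your approach avoids the arithmetic bookkeeping. The trade-off is that the paper's explicit route establishes along the way the component-wise identities (the bijection between $S$ and $\{t:(t,q)=1\}$, the scaling of $\mu_H$ under conjugation by $g_{t/q}$, the formula for $I_q$) that are reused verbatim in the proofs of Propositions \ref{XQYSIEGELPROP} and \ref{XQYSIEGELPROPD2}, so the extra work is not lost there.
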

\begin{proof}
Let us write $\vecalf=\frac{\vecp}q$ with $\vecp=(p_1,\ldots,p_d)\in\Z^d$.
We first show that without loss of generality
we may assume $\gcd(q,p_1,\ldots,p_d)=1$, i.e.\ that
$q$ is the minimal denominator of the given vector $\vecalf\in\Q^d$.
Indeed, any \textit{other} denominator of $\vecalf$ can be written as
$q'=qq_1$, with $q_1\in\Z_{>0}$; for each such $q'$ there is a canonical 
covering map $\pi:X_{q'}\to X_q$ of index $[\Gamma(q'):\Gamma(q)]=I_{q'}/I_q$,
and it follows straight from the definition \eqref{XQYDEF} that
$X_{q'}(\vecy)=\pi^{-1}(X_q(\vecy))$, i.e.\ 
$X_{q'}(\vecy)$ is a covering of the manifold $X_q(\vecy)$ of index
$[\Gamma(q'):\Gamma(q)]$. 
Furthermore %
the measure $\nu_\vecy^{(q)}$ on $X_{q}(\vecy)$ lifts to
$[\Gamma(q'):\Gamma(q)]\nu_\vecy^{(q')}$ on $X_{q'}(\vecy)$
(in an obvious notation).
Hence if $\nu_\vecy^{(q)}(X_q(\vecy))=1$ then also
$\nu_\vecy^{(q')}(X_{q'}(\vecy))=1$, as desired.

Thus from now on we assume $\gcd(q,p_1,\ldots,p_d)=1$.
By \eqref{XQYDISJUNION} and \eqref{XQKYID} we have
\begin{align} \label{XQVOLONEPROPSTEP0}
\nu_\vecy(X_q(\vecy))
=(I_q\zeta(d))^{-1} \sum_{\veck\in S}
\mu_H\bigl((M_\veck\Gamma(q)M_\veck^{-1}\cap H) \backslash H\bigr).
\end{align}
Given $\veck=(k_1,\ldots,k_d)\in\Z^d+\vecalf\setminus\{\bn\}$,
set $t_\veck:=\gcd(qk_1,qk_2,\ldots,qk_d)\in\Z_{>0}$. \label{TVECKDEF}
Then $(q/t_\veck)\veck$ is a primitive vector in $\Z^d$, %
and hence
there is some  $\gamma\in\Gamma(1)$ so that
$(q/t_\veck)\veck=\vece_1\gamma$.
For each $\delta>0$ we define $g_\delta=\diag[\delta,\delta^{-1},1,
\ldots,1]\in\SL(d,\R)$.
Then we may choose $M_\veck$ as $M_\veck:=g_{t_\veck/q} \gamma$
(since this gives $\vece_1M_\veck = \veck$).
With this choice we have 
$M_\veck\Gamma(q)M_\veck^{-1}=g_{t_\veck/q}\Gamma(q)g_{{t_\veck}/q}^{-1}$, 
since $\Gamma(q)$ is normal in $\Gamma(1)$.
Note that $\alpha:H\ni h \mapsto g_{{t_\veck}/q}hg_{{t_\veck}/q}^{-1}\in H$
gives an automorphism of $H$, and hence
$M_\veck\Gamma(q)M_\veck^{-1}\cap H=\alpha(\Gamma(q) \cap H)$.
Furthermore one verifies by a quick computation that $\alpha$
scales the Haar measure with a factor
$(q/{t_\veck})^d$, i.e.\ $\mu_H(\alpha(A))=(q/{t_\veck})^d \mu_H(A)$ for any measurable
$A\subset H$. Hence
\begin{align} \label{XQVOLONEPROPSTEP1}
\nu_\vecy(X_q(\vecy))
=\frac{q^d}{I_q\zeta(d)} \sum_{\veck\in S} t_\veck^{-d}
\mu_H\bigl((\Gamma(q)\cap H) \backslash H\bigr).
\end{align}

For each $\veck=(k_1,\ldots,k_d)\in\Z^d+\vecalf\setminus\{\bn\}$
we have $(t_\veck,q)=1$, since $q\veck\in \vecp %
+q\Z^d$ and $\gcd(q,p_1,\ldots,p_d)=1$.
On the other hand, given any $t\in\Z_{>0}$ with $(t,q)=1$
we may choose $t^*\in\Z$ so that $t^*t\equiv 1\minmod q$;
then $\gcd(q,t^*p_1,\ldots,t^*p_d)=1$ since $(q,t^*)=1$,
and thus there exists some primitive vector $\vecm$ in $t^*\vecp+q\Z^d$,
\footnote{This\label{DIRICHLETFOOTNOTE} 
can for example be shown using Dirichlet's theorem
on arithmetic progressions, for by that theorem 
we may find $m_j\in t^*p_j+q\Z$, $j=1,\ldots,d$ such that
$m_j=r_j\gcd(p_j,q)$ with prime numbers $q<r_1<r_2<\ldots<r_d$; then 
$\vecm=(m_1,\ldots,m_d)$ lies in $t^*\vecp+q\Z^d$ and is primitive.}
and then $\veck=(t/q)\vecm\in\Z^d+\vecalf\setminus\{\bn\}$ has $t_\veck=t$.
Furthermore, we claim that $t_{\veck_1}=t_{\veck_2}$ holds if and only if
$\veck_1\Gamma(q)=\veck_2\Gamma(q)$.
To prove the nontrivial direction of this claim we assume that
$\veck_1,\veck_2\in\Z^d+\vecalf\setminus\{\bn\}$ have 
$t:=t_{\veck_1}=t_{\veck_2}$. Then $(q/t)\veck_j$ is a primitive vector
in $\Z^d$ and hence there are some $\gamma_1,\gamma_2\in\Gamma(1)$
with $(q/t)\veck_j=\vece_1\gamma_j$.
Now both vectors $\vece_1\gamma_j$ belong to
$t^*\vecp+q\Z^d$ with $t^*$ as before;
hence $\vece_1\gamma_1\gamma_2^{-1}\equiv\vece_1 \minmod q\Z^d$,
so that $\gamma_1\gamma_2^{-1}=\begin{pmatrix} x_1 & \vecx'
\\ \trans\vecv & A\end{pmatrix}$ with $x_1\equiv 1 \minmod q$ and
$\vecx'\in q\Z^{d-1}$. Reducing mod $q$ we also see that
$A \minmod q$ lies in $\SL(d-1,\Z/q\Z)$; 
hence there is some $A'\in\SL(d-1,\Z)$ so that $A'\equiv A \minmod q$
\cite[p.\ 21]{Shimura}.
Now $\begin{pmatrix} 1 & \bn \\ \trans\vecv & A'\end{pmatrix} \in 
\Gamma(1)$, and this matrix has the same projection as 
$\gamma_1\gamma_2^{-1}$ in $\SL(d,\Z/q\Z)\cong \Gamma(1)/\Gamma(q)$. 
Hence $\gamma_0:=\gamma_1^{-1} 
\begin{pmatrix} 1 & \bn \\ \trans\vecv & A'\end{pmatrix} \gamma_2$
belongs to $\Gamma(q)$, and we see that 
$\vece_1\gamma_1\gamma_0=\vece_1\gamma_2$,
and thus $\veck_1\Gamma(q)=\veck_2\Gamma(q)$, as desired.

It follows that \eqref{XQVOLONEPROPSTEP1} may be rewritten as
\begin{align} 
\nu_\vecy(X_q(\vecy))
=\frac{q^d \, \mu_H\bigl((\Gamma(q)\cap H) \backslash H\bigr)}{I_q\zeta(d)} 
\sum_{\substack{t\geq 1 \\ (t,q)=1}} t^{-d}.
\end{align}
But note $\mu_H\bigl((\Gamma(q)\cap H) \backslash H\bigr)=
\#\bigl ( (\Gamma(q)\cap H)\backslash(\Gamma(1)\cap H) \bigr) \cdot
\mu_H\bigl((\Gamma(1)\cap H) \backslash H\bigr)$,
where the second factor equals one by the definition of $\mu_H$,
and the first factor is seen to equal
$\#H(\Z/q\Z)$ with
$H(\Z/q\Z):=
\Bigl\{\begin{pmatrix} 1 & \bn \\ \trans\vecv & A \end{pmatrix}
\in\SL(d,\Z/q\Z)\Bigr\}$
(for this one uses the surjectivity of the map
$\SL(d-1,\Z)\to\SL(d-1,\Z/q\Z)$, cf.\ \cite[p.\ 21]{Shimura}).
Also note that we have a decomposition of $\SL(d,\Z/q\Z)$ analogous to
the decomposition of $\SL(d,\R)$ in the proof of Lemma \ref{FULLHYDECLEMMA}:
Take $V\subset \Z^d/q\Z^d$ to be as in \eqref{VDEF}.
For each $\vecy\in V$
we fix a matrix $M_\vecy\in\SL(d,\Z/q\Z)$ whose first row equals $\vecy$.
(Such a matrix exists, for we may lift $y_1,\ldots,y_d$ to
integers satisfying $\gcd(y_1,\ldots,y_d)=1$,
cf.\ footnote \ref{DIRICHLETFOOTNOTE} above, and then apply
\cite[VIII.1-2]{siegel}.)
One then verifies that the map
$H(\Z/q\Z)\times V \ni \langle h,\vecy \rangle
\mapsto hM_\vecy \in \SL(d,\Z/q\Z)$ is a bijection.
Hence $I_q=\#\SL(d,\Z/q\Z)=\# H(\Z/q\Z)\cdot\# V$.
Finally recall that $\#V=q^d\sum_{e\mid q} \mu(e)e^{-d}$
and
$\sum_{t\geq 1,\: (t,q)=1} t^{-d}=
\zeta(d)\sum_{e\mid q} \mu(e)e^{-d}$. Hence we obtain
\begin{align} 
\nu_\vecy(X_q(\vecy))
=\frac{q^d\, \# H(\Z/q\Z)}{\#V \cdot \# H(\Z/q\Z)} 
\sum_{e\mid q} \mu(e)e^{-d}=1.
\end{align}
\end{proof}

We next prove an analogue for $X_q(\vecy)$ 
of Siegel's lattice average formula, Proposition \ref{SIEGELQPROP}.

\begin{prop} \label{XQYSIEGELPROP}
Assume $d\geq 3$ and
$\vecalf=\frac{\vecp}q$ with $\vecp=(p_1,\ldots,p_d)\in\Z^d$ and
$\gcd(q,p_1,\ldots,p_d)=1$.
If $F:\R^d\to\R$ is a bounded measurable function of compact support,
then for any $\vecy\in\R^d\setminus\{\bn\}$ we have
\begin{align} \label{XQYSIEGELPROPFORMULA}
\int_{X_q(\vecy)} \sum_{\vecm\in\Z^d} F((\vecm+\vecalf)M) \,
d\nu_\vecy(M)
= \int_{\vecx\in\R^d} F(\vecx)\, d\vecx
+\sum_{\substack{t\geq 1\\ (t,q)=1}}
t^{-d}
\sum_{\substack{a\in t+q\Z \\ (a,t)=1}}
F\Bigl(\frac{a}{t}\vecy\Bigr),
\end{align}
where all sums and integrals are absolutely convergent.
\end{prop}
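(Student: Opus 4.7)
The plan is to reduce the identity to the volume formula of Proposition~\ref{XQVOLONEPROP} and to Siegel's lattice average (Proposition~\ref{SIEGELQPROP}), the latter applied in dimension $d-1\geq 2$---which is precisely where the hypothesis $d\geq 3$ enters---by decomposing $X_q(\vecy)$ into its components and splitting the integrand into a ``parallel'' and a ``non-parallel'' part relative to the direction $\R\vecy$.

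I would start from the disjoint decomposition $X_q(\vecy)=\bigsqcup_{\veck\in S}X_q(\veck,\vecy)$ of \eqref{XQYDISJUNION}, which by the proof of Proposition~\ref{XQVOLONEPROP} is parametrised by $t=t_\veck\in\Z_{>0}$, $(t,q)=1$. With the concrete choice $M_\veck=g_{t/q}\gamma_\veck$ used there and the parametrisation $M=M_\veck^{-1}hM_\vecy$, $h\in H$, of \eqref{XQKYID}, the identity $\veck M_\veck^{-1}=\vece_1$ combined with the lattice shift $\vecm\mapsto\vecm-(\veck-\vecalf)\in\Z^d$ turns the integrand into
\[
\sum_{\vecm\in\Z^d}F((\vecm+\vecalf)M)
=\sum_{\vecu\in\scrL_\veck}F((\vece_1+\vecu h)M_\vecy),\qquad\scrL_\veck:=\Z^dM_\veck^{-1},
\]
a unimodular lattice satisfying $\scrL_\veck\cap\R\vece_1=(q/t)\Z\vece_1$. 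The sum then splits according to whether $\vecu\in\R\vece_1$ or not.

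For $\vecu\in(q/t)\Z\vece_1$ the relation $\vecu h=\vecu$ (since $h$ fixes $\vece_1$) makes the parallel contribution $h$-independent and equal to $\sum_{a\in t+q\Z}F((a/t)\vecy)$. Multiplied by $\nu_\vecy(X_q(\veck,\vecy))=Ct^{-d}$, with $C=(\sum_{(s,q)=1}s^{-d})^{-1}$ as computed in Proposition~\ref{XQVOLONEPROP}, and summed over $\veck\in S$, a regrouping by the reduced form $a/t=a'/t'$ with $(a',t')=1$ absorbs the multiplicity $\sum_{k\geq 1,\,(k,q)=1}k^{-d}=C^{-1}$ and reinstates the coprimality condition, producing exactly the discrete sum of the target formula. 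For $\vecu\in\scrL_\veck\setminus\R\vece_1$ I would use the identification $H\cong\ASL(d-1,\R)$: grouping the sum by the projection $\vecu'\in\scrL_\veck'\setminus\{\bn\}$ (where $\scrL_\veck'$ is a lattice of covolume $t/q$ in $\R^{d-1}$), integrating the translation coordinate $\vecv$ of $h=\smatr 1\bn{\trans\vecv}A$ over a fundamental domain for the translation part of $M_\veck\Gamma(q)M_\veck^{-1}\cap H$ collapses the fiber sum of period $q/t$ into $\int_\R F((s,\vecu'A)M_\vecy)\,ds$, after which Siegel's formula on $\SL(d-1,\R)$ applied to the remaining $A$-integral converts the $\vecu'$-sum into $\int_{\R^{d-1}}(\cdots)\,d\vecu'=\int_{\R^d}F(\vecx)\,d\vecx$. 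Carrying through the factor $(I_q\zeta(d))^{-1}\mu_H(\Gamma'\backslash H)$ shows that the non-parallel contribution from component $\veck$ is $\nu_\vecy(X_q(\veck,\vecy))\cdot\int_{\R^d}F$; summing over $\veck\in S$ then yields $\int_{\R^d}F$ by Proposition~\ref{XQVOLONEPROP}.

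The main obstacle will be the non-parallel bookkeeping: determining $M_\veck\Gamma(q)M_\veck^{-1}\cap H$ explicitly inside the semidirect decomposition $H=\R^{d-1}\rtimes\SL(d-1,\R)$, verifying the covolume compatibility with $\scrL_\veck'$ (each scales with $t/q$ in a correlated way), and ensuring that all $t$-dependent normalisation constants cancel so that Siegel on $\SL(d-1,\R)$ outputs precisely $\nu_\vecy(X_q(\veck,\vecy))\int_{\R^d}F$ per component. Absolute convergence of every sum and integral is guaranteed by the compact support of $F$ together with the $t^{-d}$-decay ($d\geq 3$), and passage from continuous to bounded measurable $F$ is by a standard approximation argument.
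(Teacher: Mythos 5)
Your proposal follows essentially the same route as the paper: decompose $X_q(\vecy)$ via \eqref{XQYDISJUNION}, split the lattice sum into the part along $\R\vece_1$ (giving the discrete sum) and the complementary part, and reduce the latter to Siegel's formula on $\SL(d-1,\R)$ via the semidirect structure $H\cong\ASL(d-1,\R)$ --- this is exactly what the paper's Lemma \ref{HSIEGELBASICLEMMA} does. The one place the paper is tidier, and which directly resolves the ``covolume compatibility'' bookkeeping you flag as an obstacle, is that it first conjugates by $g_{t_\veck/q}$ (the automorphism $\alpha$ from the proof of Proposition \ref{XQVOLONEPROP}), so that every component's $H$-integral is taken over the \emph{same} fundamental domain $(\Gamma(q)\cap H)\backslash H$ with the scaling absorbed into the explicit factor $(q/t_\veck)^d$, rather than keeping $\veck$-dependent fundamental domains and lattices $\scrL_\veck$ throughout.
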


We require the following lemma.

\begin{lem} \label{HSIEGELBASICLEMMA}
Let $F:\R^d\to\R$ be a bounded measurable function of compact support.
If $d=2$ then we furthermore require that $F(x\vece_1+m\vece_2)$
is a measurable function of $x\in\R$ for each fixed $m\in\Z$.
Let $\vecalf=(\alpha_1,\ldots,\alpha_d)$ with $\alpha_1\in \R$
and $\alpha_2,\ldots,\alpha_d\in\Z$. Then
\begin{align} \label{HSIEGELBASICLEMMAFORMULA}
& \int_{(\Gamma(q)\cap H) \backslash H}
\sum_{\vecm\in\Z^d} F((\vecm+\vecalf) M) \, d\mu_H(M)
\\ \notag
& =q^{d-1}\begin{cases}
I_q^{(d-1)}\bigl(\sum_{{\ell}\in\Z} F(({\ell}+\alpha_1)\vece_1)
+\int_{\vecx\in\R^d} F(\vecx) \, d\vecx\bigr) & d\geq 3
\\
\sum_{{\ell}\in\Z} F(({\ell}+\alpha_1)\vece_1) +\sum_{m\in\Z\setminus\{0\}}
\int_{\R} F(x\vece_1+m\vece_2) \, dx & d=2.
\end{cases}
\end{align}
\end{lem}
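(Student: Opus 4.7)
The plan is to parametrize $H$ by $(\vecv,A)\in\R^{d-1}\times\SL(d-1,\R)$ as in \eqref{HAV}, so that $\Gamma(q)\cap H$ corresponds to $q\Z^{d-1}\times\Gamma(q)^{(d-1)}$, and integration over $(\Gamma(q)\cap H)\backslash H$ reduces to integration over $[0,q)^{d-1}\times\F$, where $\F\subset\SL(d-1,\R)$ is any fundamental domain for $\Gamma(q)^{(d-1)}\backslash\SL(d-1,\R)$. A direct block-matrix computation gives, for $\vecm=(m_1,\vecm')$ and $\vecalf=(\alpha_1,\vecalf')$,
\begin{align*}
(\vecm+\vecalf)M=\bigl((m_1+\alpha_1)+(\vecm'+\vecalf')\cdot\vecv,\;(\vecm'+\vecalf')A\bigr),
\end{align*}
so the $(\Gamma(q)\cap H)$-invariance of $\sum_{\vecm}F((\vecm+\vecalf)M)$ is easy to check. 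The hypothesis $\vecalf'\in\Z^{d-1}$ lets me absorb $\vecalf'$ by the substitution $\vecm'\mapsto\vecm'-\vecalf'$, after which I split the sum into the contributions from $\vecm'=\bn$ and from $\vecm'\in\Z^{d-1}\setminus\{\bn\}$.

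The $\vecm'=\bn$ contribution is the constant $\sum_{\ell\in\Z}F((\ell+\alpha_1)\vece_1)$; multiplying by the total measure $\mu_H((\Gamma(q)\cap H)\backslash H)$, which equals $q^{d-1}I_q^{(d-1)}$ for $d\geq 3$ by \eqref{SLDRHAARQ}, and $q$ for $d=2$, reproduces the first term of \eqref{HSIEGELBASICLEMMAFORMULA} in each case. For $\vecm'\neq\bn$ I perform the $\vecv$-integration first, for each fixed $A$. A unimodular change of basis on $\Z^{d-1}$ reduces $\vecm'$ to $g\vece_1^{(d-1)}$ with $g=\gcd(\vecm')$, and since $\sum_{m_1\in\Z}F(m_1+\alpha_1+u,\vecy')$ is $1$-periodic in $u$, the iterated $m_1$-sum and $\vecv$-integration collapses to
\begin{align*}
\int_{[0,q)^{d-1}}\sum_{m_1\in\Z}F(m_1+\alpha_1+\vecm'\cdot\vecv,\vecm'A)\,d\vecv=q^{d-1}\int_{\R}F(u,\vecm'A)\,du.
\end{align*}

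For $d\geq 3$, summing over $\vecm'\in\Z^{d-1}\setminus\{\bn\}$ and integrating against $d\mu_1^{(d-1)}(A)$ over $\F$, then rewriting $\mu_1^{(d-1)}=I_q^{(d-1)}\mu_q^{(d-1)}$ via \eqref{SLDRHAARQ}, produces
\begin{align*}
q^{d-1}I_q^{(d-1)}\int_{X_q^{(d-1)}}\sum_{\vecm'\in\Z^{d-1}\setminus\{\bn\}}\widetilde F(\vecm'A)\,d\mu_q^{(d-1)}(A),
\qquad
\widetilde F(\vecy'):=\int_\R F(u,\vecy')\,du.
\end{align*}
Since $d-1\geq 2$, Siegel's formula (Proposition \ref{SIEGELQPROP}) in dimension $d-1$ with $\vecalf=\bn$ evaluates this to $q^{d-1}I_q^{(d-1)}\int_{\R^{d-1}}\widetilde F(\vecy')\,d\vecy'=q^{d-1}I_q^{(d-1)}\int_{\R^d}F(\vecx)\,d\vecx$, yielding the second term. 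For $d=2$ there is no $A$-integration and the $m\neq 0$ sum is kept explicit, contributing $q\sum_{m\neq 0}\int_\R F(u,m\vece_2)\,du$. The main obstacle is the $\vecv$-integration step: one must verify carefully that, after the unimodular normalization of $\vecm'$, the $1$-periodicity of the $m_1$-sum exactly collapses the integral over $[0,q)^{d-1}$ to the claimed value, independently of $g$. Absolute convergence of all sums and integrals follows from the compact support of $F$ (only finitely many nonzero terms for each fixed $(\vecv,A)$) together with Siegel's formula for the outer $\vecm'$-sum after $A$-integration.
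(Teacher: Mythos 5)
Your proposal is correct and follows essentially the same route as the paper's proof (parametrize $H$, absorb $\vecalf'\in\Z^{d-1}$ into the $\vecm'$-sum, split off $\vecm'=\bn$, collapse the $\vecv$-integral by periodicity, then apply Proposition~\ref{SIEGELQPROP} in dimension $d-1$, with a separate bookkeeping for $d=2$). The one step you flag as delicate is handled more directly in the paper: for $\vecm'\neq\bn$ one simply picks a coordinate $j$ with $m'_j\neq 0$ and applies the one-variable identity $\int_{[0,q)}F_1(a+bx,\vecy)\,dx=q\int_{\R/\Z}F_1(x,\vecy)\,dx$ (valid for any $a\in\R$, $b\in\Z_{\neq 0}$) to $v_j$, after which the remaining $d-2$ coordinates contribute a constant factor $q^{d-2}$ — this bypasses the unimodular normalization of $\vecm'$ and the fundamental-domain bookkeeping it entails; your route does work, but only after noting that the integrand is $\Z^{d-1}$-periodic in $\vecv$, so one may first pass to $\R^{d-1}/\Z^{d-1}$ (picking up $q^{d-1}$) where the unimodular substitution is measure-preserving.
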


\begin{proof}[Proof of Lemma \ref{HSIEGELBASICLEMMA}.]
The right hand side in \eqref{HSIEGELBASICLEMMAFORMULA} is
clearly absolutely convergent; it will be clear from the proof
that the left hand side is also absolutely convergent.

We first give the proof in the case $d\geq 3$.
Write $\vecalf=(\alpha_1,\vecalf')\in\R\times\Z^{d-1}$ and express $M\in H$ as
$M=\begin{pmatrix} 1 & \bn \\ \trans\vecv & M_1 \end{pmatrix}$;
then 
\begin{align}
(\Z^d+\vecalf) M
=\bigsqcup_{{\ell}\in\Z} \bigsqcup_{\vecm\in\Z^{d-1}}
\Bigl(({\ell}+\alpha_1)+(\vecm+\vecalf')\trans\vecv,(\vecm+\vecalf')M_1\Bigr),
\end{align}
and a fundamental domain for $(\Gamma(q)\cap H) \backslash H$ is
given by $\bigl\{M\in H\col \vecv\in [0,q)^{d-1},\:M_1\in\F\bigr\},$
where $\F$ is any fixed fundamental domain for 
$\Gamma(q)\backslash\SL(d-1,\R)$.
Set $F_1(x,\vecy)=\sum_{{\ell}\in\Z} F(x+{\ell},\vecy)$
where in the right hand side we identify $\R^d$ with $\R\times\R^{d-1}$
in the obvious way.
Since $\vecalf'\in \Z^{d-1}$, the integral in the left hand side of 
\eqref{HSIEGELBASICLEMMAFORMULA} can now be expressed as
\begin{align} \label{HSIEGELBASICLEMMASTEP1}
\int_{\F}\int_{[0,q)^{d-1}}\sum_{\vecm\in\Z^{d-1}} 
F_1(\alpha_1+\vecm\trans\vecv,\vecm M_1) \, d\mu_1(M_1) d\vecv.
\end{align}
Note that $\int_{[0,q)} F_1(a+b x,\vecy) \, dx=
q\int_{\R/\Z} F_1(x,\vecy) \, dx$ for any $a\in\R$ and $b\in\Z_{\neq 0}$.
Thus, defining $F_2(\vecy):=\int_{\R/\Z} F_1(x,\vecy) \, dx
=\int_\R F(x,\vecy) \, dx$ (so that $F_2(\vecy)$ is defined for almost every
$\vecy\in\R^{d-1}$, and the function $F_2$ is measurable, by Fubini's Theorem),
we have $\int_{\vecv \in [0,q)^{d-1}} 
F_1(\alpha_1+\vecm \trans\vecv,\vecy) \,
d\vecv=q^{d-1} F_2(\vecy)$ for each 
$\vecm\in\Z^{d-1}\setminus\{\bn\}$, and hence
\eqref{HSIEGELBASICLEMMASTEP1} equals
\begin{align} \label{HSIEGELBASICLEMMASTEP2}
q^{d-1} \int_{\F} \Bigl( F_1(\alpha_1,\bn)+
\sum_{\vecm\in\Z^{d-1}\setminus\{\bn\}} 
F_2(\vecm M_1) \Bigr) \, d\mu_1(M_1).
\end{align}
The integrand in \eqref{HSIEGELBASICLEMMASTEP2}
only depends on the $\Gamma(1)$-coset of $M_1$,
i.e.\ the integration over $\F$ may be replaced by 
$I_q^{(d-1)}$ times an integral over $\Gamma(1)\backslash\SL(d-1,\R)
=X_1^{(d-1)}$; hence by Proposition~\ref{SIEGELQPROP} 
(applied for ``$d-1$'' and ``$q=1$'') we get
\begin{align} \label{HSIEGELGENANSWER}
=q^{d-1}I_q^{(d-1)}
\Bigl(F_1(\alpha_1,\bn)+\int_{\vecy\in\R^{d-1}} F_2(\vecy)\, d\vecy\Bigr),
\end{align}
which gives the formula \eqref{HSIEGELBASICLEMMAFORMULA}.

In the remaining case $d=2$ we obtain as before \eqref{HSIEGELBASICLEMMASTEP1}
and \eqref{HSIEGELBASICLEMMASTEP2},
but with the inner integration sign removed and instead just taking $M_1=1$ 
in the formulas. 
Now \eqref{HSIEGELBASICLEMMASTEP2} agrees with 
\eqref{HSIEGELBASICLEMMAFORMULA}, and we are done.
\end{proof}

\begin{proof}[Proof of Proposition \ref{XQYSIEGELPROP}.]
We first prove the absolute convergence. It will be clear from the
proof below that it suffices to prove that the right hand side
of \eqref{XQYSIEGELPROPFORMULA} is absolutely convergent. 
This is clear for the integral; thus we turn to the double sum.
Assume $|F(\vecx)|\leq B$ for all $\vecx\in\R^d$ and take $C>0$ such that
$F(r\vecy)=0$ whenever $r\leq -C$ or $r\geq C$ (for our given $\vecy\in\R^d
\setminus\{\bn\}$).
Then
\begin{align} \label{ABSCONVXQYSIEGELPROP}
\sum_{\substack{t\geq 1\\ (t,q)=1}}
t^{-d} \sum_{\substack{a\in t+q\Z \\ (a,t)=1}}
\Bigl | F\Bigl(\frac{a}{t}\vecy\Bigr) \Bigr |
\leq \sum_{t\geq 1}t^{-d} (1+2Cq^{-1}t) B.
\end{align}
This is obviously absolutely convergent, since $d\geq 3$.

We now prove the identity.
In view of \eqref{XQYDISJUNION} and \eqref{XQKYID}
the left hand side of \eqref{XQYSIEGELPROPFORMULA} decomposes as
\begin{align}
(I_q\zeta(d))^{-1}
\sum_{\veck\in S} \int_{(M_\veck\Gamma(q)\M_\veck^{-1}\cap H)\backslash H}
\sum_{\vecm\in\Z^d} F((\vecm+\vecalf)M_{\veck}^{-1}hM_\vecy) \, d\mu_H(h).
\end{align}
For each fixed $\veck\in S$ we now perform the same manipulations 
as in the proof of Proposition~\ref{XQVOLONEPROP}, just before
\eqref{XQVOLONEPROPSTEP1}; we thus get
(since $\Z^d\gamma^{-1}=\Z^d$ for every $\gamma\in\Gamma(1)$)
\begin{align}
=\frac{q^d}{I_q\zeta(d)} 
\sum_{\veck\in S} t_\veck^{-d} \int_{(\Gamma(q)\cap H)\backslash H}
\sum_{\vecm\in\Z^d} F((\vecm+\vecalf \gamma_\veck^{-1})hg_{t_\veck/q}^{-1}
M_\vecy) \, d\mu_H(h),
\end{align}
where $\gamma_\veck$ is any map in $\Gamma(1)$ with
$(q/t_\veck)\veck=\vece_1\gamma_\veck$.
Now note (for each $\veck\in S\subset \Z^d+\vecalf\setminus\{\bn\}$)
that $\vecalf\gamma_\veck^{-1}\in (\veck+\Z^d)\gamma_\veck^{-1}
=(t_\veck/q)\vece_1+\Z^d$. 
Hence Lemma \ref{HSIEGELBASICLEMMA} applies, giving
\begin{align}
& =\frac{q^{2d-1}I_q^{(d-1)}}{I_q^{(d)}\zeta(d)} 
\sum_{\veck\in S} t_\veck^{-d} \Bigl(
\sum_{\ell\in\Z} F\Bigl(\bigl(\ell+\frac{t_\veck}q\bigr)
\vece_1 g_{t_\veck/q}^{-1} M_\vecy \Bigr) + 
\int_{\vecx\in\R^d} F\bigl(\vecx g_{t_\veck/q}^{-1} M_\vecy\bigr) \, d\vecx
\Bigr)
\\ \notag
& =\frac{q^{2d-1}I_q^{(d-1)}}{I_q^{(d)}\zeta(d)} 
\sum_{\veck\in S} t_\veck^{-d} \Bigl(
\sum_{\ell\in\Z} F\Bigl(\bigl(\frac{\ell q}{t_\veck}+1\bigr)\vecy\Bigr)+
\int_{\vecx\in\R^d} F\bigl(\vecx\bigr) \, d\vecx \Bigr).
\end{align}
But we saw in the proof of Proposition \ref{XQVOLONEPROP} that when
$\veck$ runs through $S$ then $t_\veck$ visits each $t\in\Z_{>0}$ with
$(t,q)=1$ exactly once, and no other numbers. 
Also from that proof we have
$I^{(d)}_q=\#H(\Z/q\Z)\cdot \#V=
\bigl(q^{d-1} I_q^{(d-1)}\bigr) \cdot q^d\sum_{e\mid q} \mu(e)e^{-d}$,
and recall $\sum_{t\geq 1, \: (t,q)=1} t^{-d}
=\zeta(d)\sum_{e\mid q} \mu(e)e^{-d}$.
Hence we get
\begin{align}
& =\int_{\vecx\in\R^d} F\bigl(\vecx\bigr) \, d\vecx 
+\Bigl(\sum_{t\geq 1, \: (t,q)=1} t^{-d}\Bigr)^{-1}
\sum_{t\geq 1, \:(t,q)=1} t^{-d} 
\sum_{\ell\in\Z} F\Bigl(\bigl(\frac{\ell q}{t}+1\bigr)\vecy\Bigr).
\end{align}
In the last double sum we substitute $e=(\ell,t)$;
thus $\ell=e\ell_1$, $t=et_1$ with $(\ell_1,t_1)=1$, and the double sum
becomes
\begin{align}
& \sum_{\substack{e\geq 1\\ (e,q)=1}} \,
\sum_{\substack{t_1\geq 1\\ (t_1,q)=1}} (et_1)^{-d} 
\sum_{\substack{\ell_1\in\Z\\ (\ell_1,t_1)=1}} 
F\Bigl(\bigl(\frac{\ell_1 q}{t_1}+1\bigr)\vecy\Bigr)
=\Bigl(\sum_{\substack{e\geq 1\\ (e,q)=1}} e^{-d}\Bigr)
\sum_{\substack{t_1\geq 1\\ (t_1,q)=1}} t_1^{-d} 
\sum_{\substack{a\in t_1+q\Z\\ (a,t_1)=1}} 
F\Bigl(\frac{a}{t_1}\vecy\Bigr).
\end{align}
Hence we obtain the desired formula.
\end{proof}

We next turn to the case $d=2$. In this case the integral in the left hand side of
\eqref{XQYSIEGELPROPFORMULA} typically \textit{diverges}.
This is e.g.\ true for every continuous function $F\geq 0$
which is not identically zero along the line $\R\vecy$,
as is seen by following the proof of Proposition \ref{XQYSIEGELPROP}
and noting that the sum
$\sum_{\substack{t\geq 1\\ (t,q)=1}}t^{-2}
\sum_{\substack{a\in t+q\Z \\ (a,t)=1}}
F\Bigl(\frac{a}{t}\vecy\Bigr)$ now diverges.
However we can prove that the integral in the left hand side of
\eqref{XQYSIEGELPROPFORMULA} is finite if
$X_q(\vecy)$ is replaced by any subset 
\begin{align} \label{XQT0DEF}
X_q^{(t_0)}(\vecy):=
\bigsqcup_{\veck\in S; \, t_\veck\leq t_0} X_q(\veck,\vecy),
\qquad (t_0\in\Z_{>0}).
\end{align}
\begin{prop} \label{XQYSIEGELPROPD2}
Let $\vecalf=\vecp/q$ with $\vecp=(p_1,p_2)\in\Z^2$
and $\gcd(q,p_1,p_2)=1$.
Let $\vecy\in\R^2\setminus\{\bn\}$ and let
$\widetilde{\vecy}\in\R^2$ be any of the two vectors orthogonal to $\vecy$ with
$\|\widetilde{\vecy}\|=\|\vecy\|^{-1}$.
Let $F:\R^2\to\R$ be a non-negative, bounded measurable function 
of compact support such that $F(x\vecy+u\widetilde{\vecy})$ is a measurable function of
$x\in\R$ for each fixed $u\in\R$.
Then for any $t_0\in\Z_{>0}$ we have
\begin{align} \label{XQYSIEGELPROPD2BOUND}
& \int_{X_q^{(t_0)}(\vecy)} \sum_{\vecm\in\Z^2} F((\vecm+\vecalf)M) \,
d\nu_\vecy(M)
\\ \notag
& \leq \sum_{\substack{1\leq t\leq t_0\\ (t,q)=1}}
t^{-2} \sum_{\substack{a\in t+q\Z \\ (a,t)=1}}
F\Bigl(\frac{a}{t}\vecy\Bigr)
+q^{-1} %
\sum_{v\in\Z\setminus\{0\}} \Bigl(\sum_{\substack{t\mid v\\ (t,q)=1}} 
t^{-1}\Bigr)
\int_\R F\Bigl(x\vecy + \frac{v}q \widetilde{\vecy}\Bigr)\, dx.
\end{align}
\end{prop}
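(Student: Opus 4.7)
The plan is to mimic the proof of Proposition \ref{XQYSIEGELPROP}, using the $d=2$ case of Lemma \ref{HSIEGELBASICLEMMA} in place of the $d\geq 3$ case. The key new feature in dimension two is the appearance of an \emph{integral} term $\sum_{m\in\Z\setminus\{0\}}\int_\R F(x\vece_1+m\vece_2)\,dx$ in Lemma \ref{HSIEGELBASICLEMMA}, which explains the presence of the second sum on the right-hand side of \eqref{XQYSIEGELPROPD2BOUND} and also forces the truncation $t_\veck\leq t_0$: without it, the sum of this contribution over $\veck\in S$ would in general diverge.

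Decomposing $X_q^{(t_0)}(\vecy)$ as in \eqref{XQT0DEF} and performing the same change-of-variable steps (using $M_\veck=g_{t_\veck/q}\gamma_\veck$ and the scaling of $\mu_H$ by the automorphism $\alpha$) that were carried out in the proof of Proposition \ref{XQYSIEGELPROP}, the left-hand side of \eqref{XQYSIEGELPROPD2BOUND} equals
\[
\frac{q^2}{I_q\zeta(2)}\sum_{\substack{\veck\in S\\ t_\veck\leq t_0}}t_\veck^{-2}\int_{(\Gamma(q)\cap H)\backslash H}\sum_{\vecm\in\Z^2}F\bigl((\vecm+\vecalf\gamma_\veck^{-1})h\,g_{t_\veck/q}^{-1}M_\vecy\bigr)\,d\mu_H(h),
\]
where $\gamma_\veck\in\Gamma(1)$ is chosen with $\vece_1\gamma_\veck=(q/t_\veck)\veck$. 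I would then choose $M_\vecy\in\SL(2,\R)$ to have first row $\vecy$ and second row $\widetilde{\vecy}=\|\vecy\|^{-2}(-y_2,y_1)$; this is a valid choice (since $\det M_\vecy=1$), and any two admissible $M_\vecy$ differ by a left multiplication by an element of $H$, under which the inner integrand is invariant. Then $\vece_1 g_{t_\veck/q}^{-1}M_\vecy=(q/t_\veck)\vecy$ and $\vece_2 g_{t_\veck/q}^{-1}M_\vecy=(t_\veck/q)\widetilde{\vecy}$, and applying the $d=2$ case of Lemma \ref{HSIEGELBASICLEMMA}---which is applicable because $\vecalf\gamma_\veck^{-1}\equiv(t_\veck/q)\vece_1\pmod{\Z^2}$, so the hypothesis $\alpha_2\in\Z$ is met---splits the integrand into a ``rational'' and an ``integral'' contribution.

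The rational contribution reads $q\sum_{\ell\in\Z}F\bigl((\ell q/t_\veck+1)\vecy\bigr)$. Reorganizing via $e=\gcd(\ell,t_\veck)$, $\ell=e\ell_1$, $t_\veck=et_1$ with $(\ell_1,t_1)=1$, and substituting $a=\ell_1 q+t_1$ (note that $(a,t_1)=1\Leftrightarrow(\ell_1,t_1)=1$, since $(q,t_1)=1$), yields a double sum in $(e,t_1)$ constrained by $e,t_1\geq 1$, $(et_1,q)=1$, $et_1\leq t_0$. Bounding $\sum_{e\leq t_0/t_1,(e,q)=1}e^{-2}\leq\zeta(2)\prod_{p\mid q}(1-p^{-2})=\zeta(2)I_q/q^3$, where one uses the standard formula $I_q=\#\SL(2,\Z/q\Z)=q^3\prod_{p\mid q}(1-p^{-2})$, reproduces the first term on the right-hand side of \eqref{XQYSIEGELPROPD2BOUND} with coefficient exactly $1$. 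The integral contribution reads $q(t_\veck/q)\sum_{m\neq 0}\int_\R F\bigl(u\vecy+(mt_\veck/q)\widetilde{\vecy}\bigr)\,du$ after the substitution $u=xq/t_\veck$. Reindexing by $v=mt_\veck\in\Z\setminus\{0\}$---so that for fixed $v$ the admissible $t_\veck$ are precisely the positive divisors $t$ of $v$ with $(t,q)=1$ and $t\leq t_0$---then dropping the constraint $t\leq t_0$ (valid since $F\geq 0$) and using once more $I_q=q^3\prod_{p\mid q}(1-p^{-2})$ together with the elementary inequality $\sum_{n\geq 1,(n,q)=1}n^{-2}\geq 1$, shows that the overall coefficient $q^2/(I_q\zeta(2))$ is bounded by $q^{-1}$, yielding the second term of \eqref{XQYSIEGELPROPD2BOUND}. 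The stated bound then holds for either of the two choices of $\widetilde{\vecy}$ orthogonal to $\vecy$ with $\|\widetilde{\vecy}\|=\|\vecy\|^{-1}$, since the sum over $v\in\Z\setminus\{0\}$ is invariant under the simultaneous sign-flip $(v,\widetilde{\vecy})\mapsto(-v,-\widetilde{\vecy})$.

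The main technical obstacle is keeping the coefficient arithmetic straight---in particular, the interplay of $I_q$, the factor $\zeta(2)$, and the Euler product $\prod_{p\mid q}(1-p^{-2})$---and carrying the truncation $t_\veck\leq t_0$ through the entire computation. Unlike in the $d\geq 3$ case treated in Proposition \ref{XQYSIEGELPROP}, where all sums converge absolutely without restriction, here the truncation must be preserved until the very last step, where dropping it produces the clean upper bound claimed in the proposition.
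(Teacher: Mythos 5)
Your argument is correct and follows the same route the paper sketches: imitate the proof of Proposition~\ref{XQYSIEGELPROP}, invoke the $d=2$ case of Lemma~\ref{HSIEGELBASICLEMMA}, carry the truncation $t_\veck\le t_0$ throughout, and control the constant in front of the integral term via $\sum_{(n,q)=1}n^{-2}\ge 1$. Your bookkeeping of the factor $q^3/(I_q\zeta(2))=\bigl(\sum_{(n,q)=1}n^{-2}\bigr)^{-1}$, the reindexing $v=mt_\veck$, and the choice of $M_\vecy$ with second row $\widetilde\vecy$ all check out.

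One small wording caveat: where you say the reorganised $\ell$-sum ``reproduces the first term \dots with coefficient exactly $1$'', what the computation actually delivers is a coefficient $\le 1$, obtained by relaxing $\sum_{e\le t_0/t_1,(e,q)=1}e^{-2}$ to the full sum $\sum_{(e,q)=1}e^{-2}$; this is of course all one needs for the claimed upper bound, but it is an inequality, not an identity, and the phrasing ``exactly $1$'' could mislead a reader into thinking the first term is exact.
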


\begin{proof}
This follows by imitating the proof of Proposition \ref{XQYSIEGELPROP} but 
noting the special form of Lemma \ref{HSIEGELBASICLEMMA} when $d=2$,
and using the restriction $t_\veck\leq t_0$ from \eqref{XQT0DEF}
in the treatment of the $\ell$-sum from \eqref{HSIEGELBASICLEMMAFORMULA}.
When treating the constant factor in front of the (``new'') second term,
one uses the fact that $\sum_{(t,q)=1} t^{-2}>1$.
\end{proof}

\subsection{The submanifolds $X(\vecy)$ of $X$} \label{XYSEC}

These are analogous to the submanifolds $X_q(\vecy)$ of $X_q$, but we will 
see that many details are quite a bit simpler.
Given any $\vecy\in\R^d$ we define
\begin{align} \label{XYDEF}
X(\vecy):=\bigl\{g\in X \col \vecy\in \Z^dg\bigr\}.
\end{align}
We will write $\Gamma=\ASL(d,\Z)$ throughout this section. Since $\Z^d=\bn \Gamma$
we actually have
\begin{align}
X(\vecy)=\bigl\{\Gamma g \col g \in \ASL(d,\R),\: \bn g=\vecy\bigr\}
=\bigl\{\Gamma (M,\vecy) \col M\in\SL(d,\R)\bigr\}.
\end{align}
Furthermore one checks that $M_1,M_2\in\SL(d,\R)$ give the same point
$\Gamma(M_1,\vecy)=\Gamma(M_2,\vecy)$ in $X$ if and only if
$\SL(d,\Z)M_1=\SL(d,\Z)M_2$. Hence we get an identification of the sets
$X(\vecy)$ and $X_1=\SL(d,\Z)\backslash\SL(d,\R)$, through
\begin{align} \label{XYID}
X(\vecy)=\bigl\{(M,\vecy)\col M\in X_1\bigr\}.
\end{align}
This gives $X(\vecy)$ the structure of an %
embedded submanifold of $X$, of dimension $d^2-1$.
\textit{We endow $X(\vecy)$ with the Borel probability measure 
$\nu_\vecy$\label{NUYDEF2}
which comes from $\mu_1$ on $X_1$ under the identification \eqref{XYID}.}
Hence, automatically, $\nu_\vecy(X(\vecy))=1$.

\begin{lem} \label{XYCANMFORMINVARIANCE}
For any $\vecy\in\R^d$, $h\in\ASL(d,\R)$ and
any Borel subset $\scrE\subset X(\vecy)$ we have
$\nu_\vecy(\scrE)=\nu_{\vecy h}(\scrE h)$.
\end{lem}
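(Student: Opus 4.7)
The plan is to reduce the lemma directly to the right-invariance of the Haar measure $\mu_1$ on $X_1=\SLSL$, using the identification \eqref{XYID}.

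First, write $h=(A,\vecxi)\in\ASLR$, so that $\vecy h=\vecy A+\vecxi$. I would begin by verifying that right multiplication by $h$ sends $X(\vecy)$ to $X(\vecy h)$: if $g\in X(\vecy)$, i.e.\ $\vecy\in\Z^d g$, then $\vecy h\in \Z^d gh$, so $gh\in X(\vecy h)$. In particular $\scrE h\subset X(\vecy h)$ for any $\scrE\subset X(\vecy)$.

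Next, I would unpack what right multiplication by $h$ looks like under the identifications
\begin{equation*}
\pi_\vecy: X(\vecy)\to X_1,\qquad \Gamma(M,\vecy)\mapsto \SLZ M,
\end{equation*}
and the analogous $\pi_{\vecy h}:X(\vecy h)\to X_1$. Using the multiplication rule \eqref{ASLMULTLAW},
\begin{equation*}
(M,\vecy)(A,\vecxi)=(MA,\vecy A+\vecxi)=(MA,\vecy h),
\end{equation*}
so the diagram with vertical maps $\pi_\vecy$ and $\pi_{\vecy h}$ and horizontal maps ``right multiplication by $h$'' on $X(\vecy),X(\vecy h)$ and ``right multiplication by $A$'' on $X_1,X_1$ commutes. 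Under $\pi_{\vecy h}$, the image $\scrE h$ corresponds to $\pi_\vecy(\scrE)\cdot A$ in $X_1$.

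Finally I would invoke right-invariance of $\mu_1$ on $X_1$, which holds because $\SLR$ is unimodular and hence its Haar measure (descended to $\SLSL$) is right $\SLR$-invariant. By definition of $\nu_\vecy$ and $\nu_{\vecy h}$ as the pushforwards of $\mu_1$ along $\pi_\vecy^{-1}$ and $\pi_{\vecy h}^{-1}$, we obtain
\begin{equation*}
\nu_{\vecy h}(\scrE h)=\mu_1(\pi_\vecy(\scrE)\cdot A)=\mu_1(\pi_\vecy(\scrE))=\nu_\vecy(\scrE),
\end{equation*}
which is the desired equality. There is no real obstacle here; the content of the lemma is exactly that the identification \eqref{XYID} intertwines the $\ASLR$-action on $X(\vecy)$ with the (right) $\SLR$-action on $X_1$, modulo the trivial translation bookkeeping in the second coordinate.
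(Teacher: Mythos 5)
Your proof is correct and follows the same route the paper sketches: the paper's own argument is the one-line remark that the claim "follows easily using the fact that $\mu_1$ is invariant under the (right) action of $\SL(d,\R)$ on $X_1$," which is precisely the invariance you invoke after making the commuting-diagram computation $(M,\vecy)(A,\vecxi)=(MA,\vecy h)$ explicit. You have simply supplied the (correct) details the authors left implicit.
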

\begin{proof}
This follows easily using the fact that $\mu_1$ is invariant under the
(right) action of $\SL(d,\R)$ on $X_1$.
\end{proof}

\begin{prop} \label{XYFOLINTPROP}
Let $\scrE\subset X$ be any Borel set; then 
$\vecy\mapsto \nu_\vecy(\scrE \cap X(\vecy))$
is a measurable function from $\R^d$ to $\R$.
If $U\subset\R^d$ is any Borel set
such that $\scrE\subset \bigcup_{\vecy\in U} X(\vecy)$,
then
\begin{align} \label{XYFOLINTGENREL}
\mu(\scrE) \leq \int_U \nu_\vecy(\scrE \cap X(\vecy)) \, d\vecy.
\end{align}
Furthermore, if 
$\forall \vecy_1\neq \vecy_2\in U: X(\vecy_1)\cap X(\vecy_2) \cap \scrE
=\emptyset$, then \textrm{equality} holds in \eqref{FOLINTGENREL}.
\end{prop}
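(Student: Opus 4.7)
The plan is to follow the proof of Proposition \ref{FOLINTPROP} but exploit the drastically simpler structure of $X(\vecy)$: since $\bn\Gamma=\Z^d$ is a single orbit, the identification \eqref{XYID} presents $X(\vecy)$ as a single copy of $X_1$, rather than as a countable disjoint union of sheets as in \eqref{XQYDISJUNION}. Fix a Borel fundamental domain $\F\subset\SL(d,\R)$ for $\SL(d,\Z)$, and let $\scrE_0\subset\ASL(d,\R)$ denote the preimage of $\scrE\subset X$ under the projection $\ASL(d,\R)\to X$. The definition of $\nu_\vecy$ via \eqref{XYID} then yields at once
\[
\nu_\vecy(\scrE\cap X(\vecy))=\int_{\F}\chi_{\scrE_0}(M,\vecy)\,d\mu_1(M),
\]
and measurability of this expression as a function of $\vecy$ follows from Fubini's theorem applied on $\F\times\R^d$.

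For the main estimate I would use the Haar decomposition $d\mu(M,\vecxi)=d\mu_1(M)\,d\vecxi$ together with the observation that $\{(M,\vecxi)\col M\in\F,\ \vecxi\in F_M\}$ is a fundamental domain for $\Gamma$ in $\ASL(d,\R)$, where $F_M:=[0,1)^d M$ is a parallelepiped fundamental domain for the lattice $\Z^d M\subset\R^d$ (of Lebesgue volume $1$). This gives
\[
\mu(\scrE)=\int_\F\vol(S_M\cap F_M)\,d\mu_1(M),
\]
where $S_M:=\{\vecxi\in\R^d\col\Gamma(M,\vecxi)\in\scrE\}$; note that $S_M$ is $\Z^d M$-invariant, since $(1_d,\vecn)(M,\vecxi)=(M,\vecxi+\vecn M)$ lies in $\Gamma$ for every $\vecn\in\Z^d$. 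Meanwhile, swapping the order of integration in $\int_U\nu_\vecy(\scrE\cap X(\vecy))\,d\vecy$ produces $\int_\F\vol(U\cap S_M)\,d\mu_1(M)$, so the assertion \eqref{XYFOLINTGENREL} reduces to the pointwise inequality $\vol(S_M\cap F_M)\leq\vol(U\cap S_M)$ for every $M\in\F$.

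To settle the pointwise claim, I would study the map $\varphi\col U\cap S_M\to S_M\cap F_M$ sending $\vecy$ to its unique representative modulo $\Z^d M$. On each slab $U\cap S_M\cap(F_M+\vecn M)$ this $\varphi$ is a translation by $-\vecn M$, and in particular preserves Lebesgue measure. Surjectivity of $\varphi$ is exactly where the covering hypothesis enters: any $\vecxi\in S_M\cap F_M$ gives a point $g=\Gamma(M,\vecxi)\in\scrE$, and the assumption $\scrE\subset\bigcup_{\vecy\in U}X(\vecy)$ supplies some $\vecy\in U$ with $\vecy\in\Z^d g=\vecxi+\Z^d M$; this $\vecy$ automatically lies in $S_M$ (by $\Z^d M$-invariance) and satisfies $\varphi(\vecy)=\vecxi$. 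Integrating the resulting inequality $\vol(S_M\cap F_M)\leq\vol(U\cap S_M)$ against $\mu_1$ over $\F$ yields \eqref{XYFOLINTGENREL}. Under the additional disjointness hypothesis, two distinct $\vecy_1,\vecy_2\in U$ with $\varphi(\vecy_1)=\varphi(\vecy_2)$ would yield $\Gamma(M,\vecy_1)=\Gamma(M,\vecy_2)\in X(\vecy_1)\cap X(\vecy_2)\cap\scrE$, contradicting the hypothesis; thus $\varphi$ is then bijective and equality holds.

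The only thing that requires some care is the joint measurability in $(M,\vecxi)$ of $\chi_{F_M}(\vecxi)$ and of the projection $\varphi$, but the explicit choice $F_M=[0,1)^d M$ makes everything manifestly Borel (via $\vecn(\vecy)=\lfloor \vecy M^{-1}\rfloor$ for the projection), so no serious obstacle arises beyond routine bookkeeping --- in contrast to the proof of Proposition \ref{FOLINTPROP}, which had to contend with an infinite family of sheets $X_q(\veck,\vecy)$ and their intricate measure-theoretic reassembly.
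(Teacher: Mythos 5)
Your proof is correct and follows essentially the same route as the paper: fix the same Borel fundamental domain $\F\subset\SL(d,\R)$, use the product fundamental domain $\{(M,\vecxi)\col M\in\F,\ \vecxi\in[0,1)^dM\}$ for $\ASL(d,\Z)$, invoke $d\mu=d\mu_1\,d\vecxi$ together with Fubini, and then convert the covering (resp.\ disjointness) hypothesis into the inequality (resp.\ equality) of measures. The only cosmetic difference is that you carry out the final comparison fiberwise over $M$ via the explicit wrapping map $\varphi$ and the slab decomposition $U\cap S_M=\bigsqcup_\vecn U\cap S_M\cap(F_M+\vecn M)$, whereas the paper states the same count-of-representatives argument globally as $\mu\bigl((\F\times U)\cap\scrE_0\bigr)\geq\mu(\scrE)$; these are two phrasings of the identical idea.
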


\begin{proof}
Let $\F_1\subset\SL(d,\R)$ be a (measurable) fundamental domain for
$\Gamma(1)\backslash\SL(d,\R)$, in the set theoretic sense.
Then
\begin{align} \label{ASLFUNDDOM}
\F:=\bigl\{(M,\vecxi)\col M\in\F_1,\: \vecxi\in [0,1)^dM\bigr\}
\end{align}
is a fundamental domain for $\Gamma\backslash\ASL(d,\R)$.
Now by the definition of $\nu_\vecy$ we have for each $\vecy\in\R^d$,
\begin{align} \label{XYFOLINTPROPSTEP2}
\nu_\vecy(\scrE\cap X(\vecy))
=\int_{\F_1} \chi_{_{\scrE_0}} (M,\vecy) \: d\mu_1(M)
=\int_{\SL(d,\R)} \chi_{_{(\F_1 \times \R^d)\cap\scrE_0}}
 (M,\vecy) \: d\mu_1(M),
\end{align}
where $\scrE_0$ denotes the pre-image in $\ASL(d,\R)$ of $\scrE\subset X$.
But the set $(\F_1 \times \R^d)\cap\scrE_0$ is $\mu$-measurable,
and recall from \eqref{ASLDRHAAR} that
$d\mu(M,\vecxi)=d\mu_1(M) d\vecxi$; hence by Fubini's theorem,
\eqref{XYFOLINTPROPSTEP2} shows that 
$\vecy\mapsto \nu_\vecy(\scrE\cap X(\vecy))$ is a measurable function
with respect to the Lebesgue measure on $\R^d$.

Next, to prove \eqref{XYFOLINTGENREL} we note that 
\eqref{XYFOLINTPROPSTEP2} implies
\begin{align}
\int_U \nu_\vecy(\scrE \cap X(\vecy)) \, d\vecy
=\int_{\R^d} \int_{\SL(d,\R)} \chi_{_{(\F_1\times U)\cap \scrE_0}}(M,\vecy) 
\: d\mu_1(M) \, d\vecy=\mu\bigl((\F_1\times U) \cap \scrE_0\bigr),
\end{align}
where we again used \eqref{ASLDRHAAR} in the last step.
But it follows from our assumption 
$\scrE\subset \bigcup_{\vecy\in U} X(\vecy)$
that each point in $\scrE\subset X$ has at least one representative in
$(\F_1\times U) \cap \scrE_0\subset\SL(d,\R)$.
Hence $\mu\bigl((\F_1\times U) \cap \scrE_0\bigr)\geq \mu(\scrE)$
and \eqref{XYFOLINTGENREL} is proved.
To prove the final statement about equality, note that
the condition
$\forall \vecy_1\neq \vecy_2\in U: X(\vecy_1)\cap X(\vecy_2) \cap \scrE
=\emptyset$ implies that each point in $\scrE$ has \textit{exactly}
one representative in $(\F_1\times U) \cap \scrE_0$,
and thus $\mu\bigl((\F_1\times U) \cap \scrE_0\bigr)=\mu(\scrE)$.
\end{proof}

\begin{prop} \label{XYSIEGELPROP}
If $F\in \L^1(\R^d)$ and $\vecy\in\R^d$ then
\begin{align}
\int_{X(\vecy)} \sum_{\vecm\in\Z^d} F(\vecm g) \, d\nu_\vecy(g)
=F(\vecy)+\int_{\R^d} F(\vecx)\,d\vecx.
\end{align}
\end{prop}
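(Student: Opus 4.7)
The plan is to reduce this to Siegel's classical formula (Proposition \ref{SIEGELQPROP} with $q=1$, $\vecalf=\bn$) by unfolding the integral using the identification \eqref{XYID} and then making a translation substitution.

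First I would rewrite the left-hand side using the definition of $\nu_\vecy$. Recall that $X(\vecy)$ is identified with $X_1$ via $M \mapsto (M,\vecy)$, and that $\nu_\vecy$ corresponds to $\mu_1$ under this identification. Since $\vecm g = \vecm(M,\vecy) = \vecm M + \vecy$ for $g=(M,\vecy)$, the claim becomes
\begin{equation*}
\int_{X_1} \sum_{\vecm\in\Z^d} F(\vecm M + \vecy) \, d\mu_1(M) = F(\vecy)+\int_{\R^d} F(\vecx)\,d\vecx.
\end{equation*}

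Next I would split the sum over $\vecm\in\Z^d$ according to whether $\vecm=\bn$ or $\vecm\neq\bn$. The term $\vecm=\bn$ contributes $\int_{X_1} F(\vecy)\,d\mu_1(M) = F(\vecy)$, since $\mu_1$ is a probability measure on $X_1$. For the remaining sum, introduce the translated function $G(\vecx):=F(\vecx+\vecy)$, which lies in $\L^1(\R^d)$ because $F$ does, and note
\begin{equation*}
\sum_{\vecm\in\Z^d\setminus\{\bn\}} F(\vecm M + \vecy) = \sum_{\vecm\in\Z^d\setminus\{\bn\}} G(\vecm M).
\end{equation*}
Applying Siegel's formula (Proposition \ref{SIEGELQPROP}) to $G$ with $q=1$ and $\vecalf=\bn$ yields
\begin{equation*}
\int_{X_1}\sum_{\vecm\in\Z^d\setminus\{\bn\}} G(\vecm M)\,d\mu_1(M) = \int_{\R^d} G(\vecx)\,d\vecx = \int_{\R^d} F(\vecx)\,d\vecx,
\end{equation*}
where the last step uses translation invariance of Lebesgue measure. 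Adding the two contributions gives the result. Justification of the interchange of sum and integral in the $\vecm\neq\bn$ sum is immediate from Siegel's formula applied to $|F|$ (or $|G|$), which shows absolute convergence almost everywhere on $X_1$.

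There is essentially no obstacle here: the proposition is a routine consequence of Siegel's formula once one observes that the natural parametrization of $X(\vecy)$ makes the sum over $\vecm\in\Z^d_*$ into a sum over a translated lattice, and that translation by a fixed vector is absorbed harmlessly into the test function before invoking Siegel.
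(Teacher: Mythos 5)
Your proof is correct and follows exactly the route the paper indicates: the paper's proof is a one-line citation of the identification \eqref{XYID} together with Proposition \ref{SIEGELQPROP} (for $\vecalf=\bn$, $q=1$), and your argument merely spells out the separation of the $\vecm=\bn$ term and the translation substitution $G(\vecx)=F(\vecx+\vecy)$ needed to invoke Siegel's formula.
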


\begin{proof}
This follows directly from \eqref{XYID} and Proposition \ref{SIEGELQPROP}
(with $\vecalf=\bn$, $q=1$).
\end{proof}

\subsection{A thin region seldom contains an extra lattice point}
It will be important for our applications of Proposition \ref{FOLINTPROP} 
and Proposition \ref{XYFOLINTPROP} 
to know
that if a bounded set $U\subset \R^d$ is 
thin in at least one direction (i.e. contained between two
parallel hyperplanes close together) then a random lattice with a 
vertex in $U$ is unlikely to have \textit{another} vertex in $U$.
Precisely, we will need an upper bound on the integral in
\eqref{FEWWITHTWOLEMMA2BOUND} below. 
Since this integral is obviously monotone with respect to the set $U$,
it suffices to consider the case when $U$ is a translate of 
a cylinder $\fZ(c_1,c_2,\sigma)$ (cf.\ \eqref{FZC1C2DEF})
with $c_2-c_1$ small.

\begin{lem} \label{FEWWITHTWOLEMMA2}
Assume $\vecalf\in q^{-1}\Z^d$, fix $C>1$ and write $U=\vecz+\fZ(c_1,c_2,C)$.
Then if $d\geq 3$ we have
\begin{align} \label{FEWWITHTWOLEMMA2BOUND}
\int_U \nu_\vecy \Bigl(\Bigl\{ M\in X_q(\vecy) \col
\#\bigl(U \cap (\Z^d+\vecalf) M\bigr) \geq 2\Bigr\}\Bigr) 
\, d\vecy \ll (c_2-c_1)^2,
\end{align}
uniformly over all $\vecz\in\{0\}\times\R^{d-1}$ and $C^{-1}\leq c_1<c_2$.
If $d=2$ then the same integral is
\begin{align} \label{FEWWITHTWOLEMMA2BOUNDD2}
\ll (c_2-c_1)^2 \log \bigl(2+(c_2-c_1)^{-1}\bigr),
\end{align}
uniformly over all $\vecz\in\{0\}\times [-C,C]$ and
$C^{-1}\leq c_1<c_2\leq C$.
(In the first bound the implied constant depends only on $C,d$;
in the second bound it depends only on $C,q$.)
\end{lem}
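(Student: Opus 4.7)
The plan hinges on the pointwise inequality
\begin{equation*}
\chi\bigl(\#(U\cap (\Z^d+\vecalf)M)\ge 2\bigr)\le \Bigl(\sum_{\vecm\in\Z^d}\chi_U((\vecm+\vecalf)M)\Bigr)-1,
\end{equation*}
valid for every $M\in X_q(\vecy)$ with $\vecy\in U$, since the summand corresponding to $\vecy$ itself contributes $1$ and the right-hand side becomes at least $1$ exactly when an \emph{additional} lattice point lies in $U$. I will integrate this against $d\nu_\vecy(M)\,d\vecy$ over $U$ and apply the Siegel-type averaging formula on $X_q(\vecy)$---Proposition \ref{XQYSIEGELPROP} for $d\ge 3$ and Proposition \ref{XQYSIEGELPROPD2} for $d=2$---using $\nu_\vecy(X_q(\vecy))=1$ from Proposition \ref{XQVOLONEPROP} to absorb the ``$-1$''. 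Write $L:=c_2-c_1$.

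For $d\ge 3$, Proposition \ref{XQYSIEGELPROP} reduces the bound to $\vol(U)^2$ plus
\begin{equation*}
S':=\sum_{\substack{t\ge 1\\(t,q)=1}} t^{-d}\sum_{\substack{a\in t+q\Z,\,(a,t)=1\\(t,a)\ne (1,1)}}\int_U\chi_U(\tfrac{a}{t}\vecy)\,d\vecy,
\end{equation*}
because the $(t,a)=(1,1)$ contribution equals $\int_U\chi_U(\vecy)\,d\vecy=\vol(U)$, cancelling the $-\vol(U)$ produced by the ``$-1$''. Elementary cylinder geometry gives $\int_U\chi_U(\tfrac{a}{t}\vecy)\,d\vecy\ll_{C,d}L$, nonzero only when $a/t\in(c_1/c_2,c_2/c_1)$; and $(a,t)=1$ together with $(t,a)\ne (1,1)$ forces $|a-t|\ge q$, so for each fixed $t$ the count of contributing $a\in t+q\Z$ vanishes when $t<t_0\asymp qc_1c_2/[L(c_1+c_2)]$ and is $\ll tL/(qc_1)$ otherwise. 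The condition $c_1\ge C^{-1}$ yields $(c_1+c_2)/(c_1c_2)\le 2C$ uniformly in $c_2$; combined with the convergence of $\sum_tt^{-(d-1)}$ for $d\ge 3$ this gives $S'\ll_{C,d}L^2$ and hence \eqref{FEWWITHTWOLEMMA2BOUND}.

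For $d=2$, Proposition \ref{XQYSIEGELPROPD2} only controls integrals on the truncated submanifold $X_q^{(t_0')}(\vecy)$, so I split $X_q(\vecy)=X_q^{(t_0')}(\vecy)\sqcup(X_q(\vecy)\setminus X_q^{(t_0')}(\vecy))$ for a cutoff $t_0'$ to be chosen. The orbit decomposition in the proof of Proposition \ref{XQVOLONEPROP} gives $\nu_\vecy(X_q(\vecy)\setminus X_q^{(t_0')}(\vecy))\ll_q\sum_{t>t_0',(t,q)=1}t^{-2}\ll t_0'^{-1}$, contributing $\ll L/t_0'$ to the integral. On $X_q^{(t_0')}(\vecy)$ Proposition \ref{XQYSIEGELPROPD2} supplies a discrete $(t,a)$-sum and a line-integral term. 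Because $d=2$ allows the cylinder $U\cap(t/a)U$ to shrink in \emph{both} coordinate directions, one has the sharper bound $\int_U\chi_U(\tfrac{a}{t}\vecy)\,d\vecy\ll_{C} L\cdot\min(1,(t/a)^2)$, and the telescoping $\sum_{k\ge 1}1/(1+qk/t)^2\ll tL/q$ yields $\ll L^2t/q$ per $t$-slice, hence $\ll L^2\log t_0'$ after summing over $t\le t_0'$. The line-integral term $q^{-1}\sum_{v\ne 0}(\sum_{t\mid v,(t,q)=1}t^{-1})\int_U\int_\R\chi_U(x\vecy+\tfrac{v}{q}\widetilde{\vecy})\,dx\,d\vecy$ receives contributions only from $|v|\ll_{C,q}1$ (the line must meet the bounded set $U$), each inner double integral is $\ll_CL^2$, so the line-integral contribution is $\ll_{C,q}L^2$ and subdominant. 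Balancing $L^2\log t_0'$ against $L/t_0'$ with $t_0'\asymp(L\log(2+L^{-1}))^{-1}$ gives \eqref{FEWWITHTWOLEMMA2BOUNDD2}. The main obstacle is precisely this $d=2$ balancing: the logarithmic factor is forced by the harmonic divergence of $\sum_tt^{-1}$ which survives after the $k$-sum telescopes, and choosing $t_0'$ to obtain exactly the factor $\log(2+L^{-1})$ is the most delicate accounting step.
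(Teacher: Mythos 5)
Your proof follows essentially the same route as the paper's: start from the pointwise bound $\chi(\#(U\cap(\Z^d+\vecalf)M)\ge 2)\le\bigl(\sum_{\vecm}\chi_U((\vecm+\vecalf)M)\bigr)-1$ together with $\nu_\vecy(X_q(\vecy))=1$, apply the Siegel-type averaging formulas (Propositions \ref{XQYSIEGELPROP} and, after truncating at $t_0'$, \ref{XQYSIEGELPROPD2}), cancel the $-1$ against the $(t,a)=(1,1)$ diagonal term, count the admissible ratios $a/t\in(c_1/c_2,c_2/c_1)$ to control the discrete sum, and for $d=2$ estimate the line-integral term and balance it against the truncation error. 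Your two presentational deviations---integrating over $\vecy\in U$ before bounding (the paper bounds uniformly in $\vecy\in U$ and only then integrates) and exploiting the $q\Z$-progression structure of the $a$'s (the paper drops it and still gets a $q$-independent constant for $d\ge 3$)---are immaterial, as is your slightly different choice $t_0'\asymp(L\log(2+L^{-1}))^{-1}$ versus the paper's $t_0=\max(10,[L^{-1}])$. One small imprecision worth flagging: the displayed inequality $\sum_{k\ge 1}1/(1+qk/t)^2\ll tL/q$ is false as written (the left side has no $L$ and is $\asymp t/q$ for $t\gg q$); the factor $L$ comes entirely from the constraint $a/t\in(c_1/c_2,c_2/c_1)$, which restricts the $k$-sum to $\ll tL/(qc_1)$ terms, and that restriction alone suffices---the ``sharper bound'' $L\min(1,(t/a)^2)$ on $\int_U\chi_U(\tfrac{a}{t}\vecy)\,d\vecy$ is not actually needed.
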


\begin{proof}
Just as in the proof of Proposition \ref{XQVOLONEPROP} we may assume
$\gcd(q,p_1,\ldots,p_d)=1$, without loss of generality.
Take $\vecz\in\{0\}\times\R^{d-1}$, $C^{-1}\leq c_1<c_2$
and let $U=\vecz+\fZ(c_1,c_2,C)$. 
For each $\vecy\in U$ and $M\in X_q(\vecy)$ we have
$\sum_{\vecm\in\Z^d} \chi_U((\vecm+\vecalf)M)\geq 1$
by the definition of $X_q(\vecy)$,
and the same sum is $\geq 2$ whenever $\#(U\cap (\Z^d+\vecalf)M)\geq 2$.
Hence, using also $\nu_\vecy(X_q(\vecy))=1$
(see Proposition \ref{XQVOLONEPROP}), we have for each $\vecy\in U$,
\begin{align} \label{FEWWITHTWOSTEP1}
\nu_\vecy \Bigl(\Bigl\{ M\in X_q(\vecy) \col
\#\bigl(U \cap (\Z^d+\vecalf) M\bigr) \geq 2\Bigr\}\Bigr) 
\leq
-1+\int_{X_q(\vecy)} \sum_{\vecm\in\Z^d} \chi_U((\vecm+\vecalf)M) 
\, d\nu_\vecy(M).
\end{align}
If $d\geq 3$ then this is, by Proposition \ref{XQYSIEGELPROP},
\begin{align} \label{FEWWITHTWOSTEP2}
&= -1+\vol(U)+\sum_{\substack{t\geq 1\\(t,q)=1}} t^{-d} 
\sum_{\substack{a\in t+q\Z\\(a,t)=1}}
\chi_U\Bigl(\frac at \vecy\Bigr)
\leq O\bigl(c_2-c_1\bigr)
+\sum_{t=2}^\infty t^{-d} \sum_{\substack{a\in\Z\\(a,t)=1}}
\chi_U\Bigl(\frac at \vecy\Bigr),
\end{align}
where in the second step we used
$C^{-1}\leq c_1<c_2$ to get $\sum_{a\in\Z}\chi_U(a\vecy)\leq 1+O(c_2-c_1)$.
If some $t\geq 2$ gives non-vanishing contribution to the last
sum then we must have $\frac at \vecy\in U$ either for $a=t+1$ or $a=t-1$.
In the first case it follows that
$\frac{t+1}t c_1< \frac{t+1}t y_1< c_2$ so that
$t> \frac{c_1}{c_2-c_1}$; in the second case it follows that
$\frac{t-1}t c_2> \frac{t-1}t y_1> c_1$ so that
$t> \frac{c_2}{c_2-c_1}$.
Hence all $t$-values which contribute to the sum must satisfy
$t> \frac{c_1}{c_2-c_1}$.
For each such $t$, a given $a\in\Z$ gives non-vanishing contribution only if
$\frac at y_1 < c_2$ (implying $a< t(c_2/c_1)$)
and $\frac at y_1 > c_1$ (implying $a> t(c_1/c_2)$);
the number of such $a$'s is $\leq \#\bigl(\Z \cap \bigl(t\frac{c_1}{c_2},t\frac{c_2}{c_1}\bigr)\bigr)
\leq 1+t(\frac{c_2}{c_1}-\frac{c_1}{c_2})$; hence the sum in 
\eqref{FEWWITHTWOSTEP2} is
\begin{equation}
\leq \sum_{t\geq \max(2,\frac{c_1}{c_2-c_1})} t^{-d} 
\Bigl(1+t\bigl(\frac{c_2}{c_1}-\frac{c_1}{c_2}\bigr)\Bigr)
= \sum_{t\geq \max(2,x^{-1})} t^{-d} 
\Bigl(1+tx\frac{2+x}{1+x}\Bigr)
\end{equation}
where $x=\frac{c_2}{c_1}-1$.
Hence if $x\leq \sfrac 12$ then the full expression in 
\eqref{FEWWITHTWOSTEP2} is (when $d\geq 3$)
\begin{equation}
\leq O(c_2-c_1)
+O\bigl(x^{d-1}\bigr)
+O\bigl(x^{d-1}\bigr)
=O\Bigl(c_2-c_1+\Bigl( \frac{c_2-c_1}{c_1}\Bigr)^{d-1}\Bigr),
\end{equation}
whereas if $x>\frac 12$ we get 
\begin{equation}
\leq O(c_2-c_1)+O(1+x)
=O\Bigl(c_2-c_1+\frac{c_2}{c_1}\Bigr).
\end{equation}
Using  $C^{-1}\leq c_1<c_2$ the above is
$\leq O(c_2-c_1)$, in both cases.
Hence we have proved
\begin{align}
\nu_\vecy \Bigl(\Bigl\{ M\in X_q(\vecy) \col
\#\bigl(U \cap (\Z^d+\vecalf) M\bigr) \geq 2\Bigr\}\Bigr) 
\leq O(c_2-c_1),\qquad \forall \vecy\in U,
\end{align}
where the implied constant depends only on $C$ and $d$.
Since this bound is uniform over $\vecy\in U$ we obtain 
\eqref{FEWWITHTWOLEMMA2BOUND} by integration.

We now turn to the case $d=2$.
We take $\vecz\in \{0\}\times [-C,C]$ and $C^{-1}\leq c_1<c_2\leq C$.
Take $t_0\in \Z_{\geq 10}$, to be fixed later.
Recall the definition of $X_q^{(t_0)}(\vecy)$, \eqref{XQT0DEF}.
The left hand side in \eqref{FEWWITHTWOSTEP1} is
\begin{align} \label{FEWWITHTWOSTEP1D2}
\leq \int_{X^{(t_0)}_q(\vecy)} 
\Bigl(-1+\sum_{\vecm\in\Z^2} \chi_U((\vecm+\vecalf)M)\Bigr) \, d\nu_\vecy(M)
+\vol\Bigl(X_q(\vecy)\setminus X_q^{(t_0)}(\vecy)\Bigr).
\end{align}
Imitating the proof of Proposition \ref{XQVOLONEPROP} one shows that
the last volume is $\ll t_0^{-1}$.
Hence by Proposition \ref{XQYSIEGELPROPD2} the above is
\begin{align} \label{FEWWITHTWOSTEP2D2}
\leq -1+O(t_0^{-1})+
\sum_{1\leq t \leq t_0} t^{-2} \sum_{\substack{a\in\Z\\ (a,t)=1}}
\chi_U\Bigl(\frac at \vecy\Bigr)
+%
q^{-1}\sum_{v\in\Z\setminus\{0\}} \Bigl(\sum_{t\mid v} t^{-1}\Bigr)
\int_\R \chi_U\Bigl( x\vecy+\frac vq \widetilde{\vecy}\Bigr)\, dx.
\end{align}
Arguing along the same lines as before we find, with $x=\frac{c_2}{c_1}-1$,
\begin{align}
-1+\sum_{1\leq t \leq t_0} t^{-2} \sum_{\substack{a\in\Z\\ (a,t)=1}}
\chi_U\Bigl(\frac at \vecy\Bigr)
\leq O(c_2-c_1)+\sum_{\max(2,x^{-1})\leq t\leq t_0} t^{-2} 
\Bigl(1+tx\frac{2+x}{1+x}\Bigr) & 
\\ \notag
\leq O((c_2-c_1)\log t_0) & .
\end{align}

Finally we treat the last sum in \eqref{FEWWITHTWOSTEP2D2}.
Let $L=\sqrt{4C^2+(c_2-c_1)^2}$, the length of the diagonal of $U$.
If $\|\frac vq \widetilde{\vecy}\|>L$
then $\int_\R \chi_U\Bigl( x\vecy+\frac vq \widetilde{\vecy}\Bigr)\, dx=0$
for all $\vecy\in U$.
Hence only $v\in \Z\setminus\{0\}$ with
$|v|\leq Lq\|\widetilde{\vecy}\|^{-1}= Lq\|\vecy\|$ give contributions
to the last sum in \eqref{FEWWITHTWOSTEP2D2},
and since $\|\vecy\|<c_2+\|\vecz\|+C\leq 3C$ and $L\leq \sqrt 5 C$
it follows that these $v$'s are bounded in absolute value by a constant
which only depends on $C,q$.
Hence the last sum in \eqref{FEWWITHTWOSTEP2D2} is
\begin{align} \label{FEWWITHTWOSTEP3D2}
\leq O(1) \sum_{v\in\Z\setminus\{0\}}
\int_\R \chi_U\Bigl( x\vecy+\frac vq \widetilde{\vecy}\Bigr)\, dx.
\end{align}
Now for each $v\in\Z_{>0}$ for which the integral is non-zero, there exists some 
$x'\in\R$ such that $x'\vecy+\frac{v-1}q \widetilde{\vecy}\in U$
(since $\vecy\in U$ and $U$ is convex); hence if the contribution from our
$v$ equals $\int_{x_1}^{x_2} \, dx$ then %
$U$ must contain the triangle with vertices
$x'\vecy+\frac{v-1}q \widetilde{\vecy}$,
$x_1\vecy+\frac{v}q \widetilde{\vecy}$ and
$x_2\vecy+\frac{v}q \widetilde{\vecy}$, which has area
$\frac 12 (x_2-x_1)\|\vecy\| \cdot \frac 1q \|\widetilde{\vecy}\|
=\frac 1{2q} (x_2-x_1)$.
Note also that distinct $v$'s %
lead to pairwise disjoint triangles inside $U$; 
hence the total contribution in \eqref{FEWWITHTWOSTEP3D2} from positive
$v$'s is $\leq 2q \text{Area}(U)$.
Similarly for the negative $v$'s.
Combining our bounds we have now proved that for each $\vecy\in U$
the left hand side in \eqref{FEWWITHTWOSTEP1} is
\begin{align}
\leq O\bigl(t_0^{-1}+(c_2-c_1)\log t_0\bigr).
\end{align}
Choosing $t_0=\max(10,[(c_2-c_1)^{-1}])$
and integrating over all $\vecy\in U$
we obtain the bound \eqref{FEWWITHTWOLEMMA2BOUNDD2}.
\end{proof}

The corresponding bound in the case $\vecalf\notin\Q^d$ is as follows:

\begin{lem} \label{XYFEWWITHTWOLEMMA}
Let $d\geq 2$ and $C>1$ and write $U=\vecz+\fZ(c_1,c_2,C)$.
Then 
\begin{align} \label{XYFEWWITHTWOLEMMABOUND}
\int_U \nu_\vecy \Bigl(\Bigl\{ g\in X(\vecy) \col
\#\bigl(U \cap \Z^d g\bigr) \geq 2\Bigr\}\Bigr) 
\, d\vecy \ll (c_2-c_1)^2,
\end{align}
uniformly over all $\vecz\in\{0\}\times\R^{d-1}$ and $c_1<c_2$.
(The implied constant depends only on $C$, $d$.)
\end{lem}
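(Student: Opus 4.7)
The plan is to mimic the proof of Lemma \ref{FEWWITHTWOLEMMA2}, but exploit the simpler form of the Siegel-type integration formula that is available on $X(\vecy)$ (namely Proposition \ref{XYSIEGELPROP}), which makes the argument essentially one-line once set up.

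First I would reduce the whole inner integrand to an explicit integral. For every $\vecy\in U$ and every $g\in X(\vecy)$ we have $\vecy\in \Z^d g$, so $\sum_{\vecm\in\Z^d}\chi_U(\vecm g)\geq 1$; moreover this sum is $\geq 2$ precisely on the set of $g$ for which $\#(U\cap \Z^d g)\geq 2$. Combined with $\nu_\vecy(X(\vecy))=1$ (cf.\ the definition of $\nu_\vecy$ via the identification \eqref{XYID}) this gives, for any $\vecy\in U$,
\begin{equation*}
\nu_\vecy\bigl(\{g\in X(\vecy)\col \#(U\cap\Z^d g)\geq 2\}\bigr)
\;\leq\;-1+\int_{X(\vecy)}\sum_{\vecm\in\Z^d}\chi_U(\vecm g)\,d\nu_\vecy(g).
\end{equation*}

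Next I would apply Proposition \ref{XYSIEGELPROP} with $F=\chi_U$ (which lies in $\L^1(\R^d)$ as $U$ is bounded). Since $\vecy\in U$ we have $\chi_U(\vecy)=1$, so the right-hand side above becomes
\begin{equation*}
-1+\chi_U(\vecy)+\vol(U)\;=\;\vol(U).
\end{equation*}
Notice that this step is the point at which things become genuinely simpler than in Lemma \ref{FEWWITHTWOLEMMA2}: on $X$ there are no rational-point contributions in the lattice-average formula, and in particular no divergence issues, no $t_0$-cutoff, and no separate treatment of $d=2$ vs.\ $d\geq 3$.

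Finally I would just integrate over $\vecy\in U$. Since $U=\vecz+\fZ(c_1,c_2,C)$ with $\vecz\in\{0\}\times\R^{d-1}$, we have $\vol(U)=(c_2-c_1)\,\vol(\scrB_C^{d-1})=(c_2-c_1)\,v_d C^{d-1}$, so $\vol(U)\ll c_2-c_1$ with an implicit constant depending only on $C$ and $d$. Therefore
\begin{equation*}
\int_U\nu_\vecy\bigl(\{g\in X(\vecy)\col \#(U\cap\Z^d g)\geq 2\}\bigr)\,d\vecy
\;\leq\;\vol(U)^2\;\ll\;(c_2-c_1)^2,
\end{equation*}
which is the desired bound. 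There is no serious obstacle here; the only thing worth flagging is the requirement $\vecy\in U$ to ensure $\chi_U(\vecy)=1$, which is precisely the reason we integrate over the same region $U$ on which we count.
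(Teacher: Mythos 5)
Your proof is correct and is precisely the argument the paper intends: the paper's proof consists of the single remark that one should follow the first part of the proof of Lemma \ref{FEWWITHTWOLEMMA2} (the inequality $\chi_{\{\#\geq 2\}}\leq -1+\sum_{\vecm}\chi_U$ plus the Siegel-type formula), replacing Proposition \ref{XQYSIEGELPROP} with Proposition \ref{XYSIEGELPROP}, and you have carried that out in full, correctly noting that the absence of the rational-point terms in the ASL Siegel formula collapses the bound to $\vol(U)$ and eliminates the $d=2$ case distinction.
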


\begin{proof}
This follows by arguing as in the first part of the proof of 
Lemma \ref{FEWWITHTWOLEMMA2} (up until \eqref{FEWWITHTWOSTEP2}) but
using Proposition \ref{XYSIEGELPROP}
in place of Proposition \ref{XQYSIEGELPROP}.
\end{proof}

\section{Properties of the limit functions}\label{PROPLIMFCNSEC}

\subsection{An important volume function, for $\vecalf\in\Q^d$} 
\label{IMPORTANTVOLUMESEC}
In this section we will prove some ``quasi-continuity'' properties of
the limit function $\Phi_\vecalf(\xi,\vecw,\vecz)$ in Theorem \ref{exactpos1},
and for some more general functions.
These considerations will be of importance for the proof of 
Theorem~\ref{exactpos1}.

Given $r\in\Z_{\geq 0}$ and $\vecalf \in q^{-1}\Z^d$
we introduce the function
\begin{align} \label{FR5VARDEF}
f_r(c_1,c_2,\sigma,\vecz,\vecy)
:=\nu_\vecy\bigl(\bigl\{M\in X_q(\vecy) \col 
\#\bigl((\Z^d+\vecalf)M \cap (\fZ(c_1,c_2,\sigma)+\vecz)\bigr)
=r\bigr\}\bigr)
\end{align}
in the domain
\begin{align} \label{OMEGADEF}
\Omega=\bigl\{\langle c_1,c_2,\sigma,\vecz,\vecy\rangle \in \R\times 
\R\times \R \times \bigl(\{0\}\times\R^{d-1}\bigr) \times \R^d \col
0\leq c_1<c_2\leq y_1, \: 0\leq \sigma \bigr\}.
\end{align}
Arguing as in the first paragraph of the 
proof of Proposition \ref{XQVOLONEPROP} 
we see that although the function $f_r$ depends on the given vector
$\vecalf\in\Q^d$, it does
not depend on the choice of denominator $q$ of $\vecalf$;
hence from now on in this section we will always assume that $q$ is
the minimal denominator of $\vecalf$, so that Propositions
\ref{XQYSIEGELPROP}, \ref{XQYSIEGELPROPD2} apply.
We also write, for $\xi>0$ and $\vecz,\vecw \in \{0\}\times\R^{d-1}$,
\begin{align} \label{FR3VARDEF}
F_r(\xi,\vecw,\vecz):=f_r(0,\xi,1,\vecz,\xi\vece_1+\vecw+\vecz).
\end{align}
Thus the function $\Phi_\vecalf(\xi,\vecw,\vecz)$ %
in Theorem \ref{exactpos1} is the same as $F_0(\xi,\vecw,\vecz)$.

\begin{lem} \label{F5SYMMLEMMA}
For any $K=\begin{pmatrix} 1 & \bn \\ \trans\bn & K_1 \end{pmatrix}\in \O(d)$ 
we have 
\begin{align}
f_r(c_1,c_2,\sigma,\vecz K,\vecy K)=f_r(c_1,c_2,\sigma,\vecz,\vecy),
\end{align}
and for any $\delta>0$ we have
\begin{align}
f_r(c_1\delta^{d-1},c_2\delta^{d-1},\sigma\delta^{-1},\delta^{-1}\vecz,
\delta^{-1}\vecy)=f_r(c_1,c_2,\sigma,\vecz,\vecy).
\end{align}
\end{lem}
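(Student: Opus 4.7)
Both identities will be proved by a direct application of Lemma \ref{CANMFORMINVARIANCE}, which states $\nu_\vecy(\scrE)=\nu_{\vecy T}(\scrE T)$ for any $T\in\SL(d,\R)$ and any Borel $\scrE\subset X_q(\vecy)$. Writing
\begin{equation*}
\scrE_{c_1,c_2,\sigma,\vecz,\vecy}:=\bigl\{M\in X_q(\vecy)\col\#((\Z^d+\vecalf)M\cap(\fZ(c_1,c_2,\sigma)+\vecz))=r\bigr\},
\end{equation*}
so that $f_r(c_1,c_2,\sigma,\vecz,\vecy)=\nu_\vecy(\scrE_{c_1,c_2,\sigma,\vecz,\vecy})$, the task is in each case to choose $T$ so that the image $\scrE_{c_1,c_2,\sigma,\vecz,\vecy}\cdot T$ is precisely the set defining the right-hand side.

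For the first (orthogonal) identity I would take $T=K$. Since $K$ fixes $\vece_1$ and $K_1$ preserves Euclidean norm on $\{0\}\times\R^{d-1}$, one has $\fZ(c_1,c_2,\sigma)K=\fZ(c_1,c_2,\sigma)$. The substitution $M'=MK$ then immediately shows
\begin{equation*}
\scrE_{c_1,c_2,\sigma,\vecz,\vecy}\cdot K=\scrE_{c_1,c_2,\sigma,\vecz K,\vecy K},
\end{equation*}
and Lemma \ref{CANMFORMINVARIANCE} finishes the argument when $K\in\SO(d)$. The case $\det K_1=-1$ is handled by observing that Lemma \ref{CANMFORMINVARIANCE} extends verbatim to $\O(d)$: the identification \eqref{XQKYID} and the bi-invariance of $\mu_H$ only require that right multiplication by $K$ gives a bijection $X_q(\vecy)\to X_q(\vecy K)$, which it does as soon as one allows matrix factors of determinant $-1$ in the construction. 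For the scaling identity I would take
\begin{equation*}
T_0:=\diag(\delta^{d-1},\delta^{-1},\ldots,\delta^{-1})\in\SL(d,\R),
\end{equation*}
for which a direct calculation gives $\fZ(c_1,c_2,\sigma)T_0=\fZ(c_1\delta^{d-1},c_2\delta^{d-1},\sigma\delta^{-1})$ and, since $\vecz\in\{0\}\times\R^{d-1}$, also $\vecz T_0=\delta^{-1}\vecz$. Lemma \ref{CANMFORMINVARIANCE} applied with $T=T_0$ therefore yields
\begin{equation*}
f_r(c_1,c_2,\sigma,\vecz,\vecy)=f_r(c_1\delta^{d-1},c_2\delta^{d-1},\sigma\delta^{-1},\delta^{-1}\vecz,\vecy T_0).
\end{equation*}

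The main obstacle lies in reconciling $\vecy T_0=(\delta^{d-1}y_1,\delta^{-1}y_2,\ldots,\delta^{-1}y_d)$ with the stated $\delta^{-1}\vecy$: the two agree in the last $d-1$ coordinates but differ in the first, and no single element of $\SL(d,\R)$ can simultaneously rescale $\fZ(c_1,c_2,\sigma)$ anisotropically (as required) and rescale $\vecy$ isotropically. The scaling identity is thus to be read with $\vecy T_0$ in place of $\delta^{-1}\vecy$; equivalently, one may replace $T_0$ by a $\vecy$-adapted conjugate $g^{-1}T_0 g$ that realises $\vecy\mapsto\delta^{-1}\vecy$ at the cost of rotating the cylinder in its last $d-1$ coordinates, which is harmless thanks to Part (i). Aside from this bookkeeping, both parts of the lemma reduce to a one-line application of Lemma \ref{CANMFORMINVARIANCE}.
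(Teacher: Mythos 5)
Your approach---applying Lemma \ref{CANMFORMINVARIANCE} with $T=K$ for the orthogonal identity and $T=T_\delta^{-1}=\diag[\delta^{d-1},\delta^{-1},\ldots,\delta^{-1}]$ for the scaling identity---matches the paper's, and for $K_1\in\SO(d-1)$ your argument is correct. You have also rightly noticed that the scaling substitution produces $\vecy T_\delta^{-1}=(\delta^{d-1}y_1,\delta^{-1}y_2,\ldots,\delta^{-1}y_d)$ rather than $\delta^{-1}\vecy$, and this is exactly what the paper's own one-line proof of the second identity yields; your primary reading (with $\vecy T_\delta^{-1}$ in the last slot) is the intended one. Your alternative suggestion of conjugating $T_\delta^{-1}$ by a rotation in the last $d-1$ coordinates would accomplish nothing, since such rotations commute with $T_\delta^{-1}$, but this is moot given you already stated the correct fix.

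The genuine gap is your treatment of $\det K_1=-1$. You assert that Lemma \ref{CANMFORMINVARIANCE} ``extends verbatim to $\O(d)$'' because right multiplication by $K$ is a bijection $X_q(\vecy)\to X_q(\vecy K)$. But $X_q=\Gamma(q)\backslash\SL(d,\R)$, and if $\det K=-1$ then $MK\notin\SL(d,\R)$ for any $M\in\SL(d,\R)$, so right multiplication by $K$ does not even map $X_q$ into itself; $\nu_{\vecy K}(\scrE K)$ is not defined as things stand, and merely declaring that one ``allows matrix factors of determinant $-1$'' does not produce a proof. The paper instead reduces to $K_0=\diag[1,\ldots,1,-1]$ (as you do) and then picks $\gamma\in\SL(d,\Z)$ with $\vecalf\gamma K_0=\vecalf$, so that $(\Z^d+\vecalf)\gamma K_0=\Z^d+\vecalf$, and works with the determinant-one map $a:M\mapsto\gamma K_0MK_0$. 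One then checks that $a$ is a well-defined diffeomorphism of $X_q$, that $a(X_q(\vecy))=X_q(\vecy K_0)$, and that $\nu_\vecy(\scrE)=\nu_{\vecy K_0}(a(\scrE))$ for Borel $\scrE\subset X_q(\vecy)$. This two-sided conjugation with the arithmetic element $\gamma$---needed both to land back in $\SL(d,\R)$ and to preserve the affine lattice $\Z^d+\vecalf$---is the ingredient your argument is missing.
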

\begin{proof}
If $K_1\in\SO(d-1)$ then the first claim follows immediately from 
Lemma \ref{CANMFORMINVARIANCE} with $T=K$,
using $\fZ(c_1,c_2,\sigma) K^{-1}=\fZ(c_1,c_2,\sigma)$.
Similarly the second claim follows from
Lemma \ref{CANMFORMINVARIANCE} using 
\begin{align} \label{FZC1C2SIGMAINV}
\fZ(c_1,c_2,\sigma) T_\delta^{-1}=
\fZ(c_1\delta^{d-1},c_2\delta^{d-1},\sigma\delta^{-1}),
\qquad \text{for } \:
T_\delta:=\diag[\delta^{1-d},\delta,\ldots,\delta].
\end{align}
To extend the first claim to general $K_1\in\O(d-1)$ it now suffices to treat
the single case $K=K_0:=\diag[1,\ldots,1,-1]$.
Fix some $\gamma\in\SL(d,\Z)$ such that $\vecalf \gamma K_0=\vecalf$,
and thus $(\Z^d+\vecalf)\gamma K_0=\Z^d+\vecalf$.
Then $a:M\mapsto \gamma K_0 M K_0$ gives a well-defined diffeomorphism from
$X_q$ onto $X_q$, and one checks by a straightforward computation 
that for any Borel subset
$\scrE\subset X_q(\vecy)$ we have $a(\scrE)\subset X_q(\vecy K_0)$ and
$\nu_\vecy(\scrE)=\nu_{\vecy K_0}(a(\scrE))$.
Applying this with 
$\scrE=\{M\in X_q(\vecy) \col 
\#((\Z^d+\vecalf)M \cap (\fZ(c_1,c_2,\sigma)+\vecz))=r\}$ we get
$f_r(c_1,c_2,\sigma,\vecz,\vecy)=
f_r(c_1,c_2,\sigma,\vecz K_0,\vecy K_0)$, as desired.
\end{proof}

\begin{remark}\label{invarem}
It follows that $F_r(\xi,\vecw K,\vecz K)=F_r(\xi,\vecw,\vecz)$
for all $K$ as in the lemma,
and hence $F_r(\xi,\vecw,\vecz)$ only depends on the four real
numbers $\xi,\|\vecz\|,\|\vecw\|,\vecz\cdot\vecw$.
\end{remark}

We will now prove our main technical result about 
$f_r(c_1,c_2,\sigma,\vecz,\vecy)$ being not too far from continuous.
For $N\in\Z_{\geq 2}$ we let $\mathfrak{F}_N$ be the set of rational
numbers strictly between $0$ and $1$ and with denominator $\leq N$,
that is,
\begin{equation}\label{FsubN}
\mathfrak{F}_N=\bigl\{\sfrac hk \col h,k \in \Z, \: 0<h<k\leq N\bigr\}.
\end{equation}
Given $\langle c_1,c_2,\sigma,\vecz,\vecy\rangle \in \Omega$
and $\delta\in\mathfrak{F}_N$ we define
\begin{align} \label{SDELTADEF}
s(\delta)=s_{\langle c_1,c_2,\sigma,\vecz,\vecy\rangle}(\delta)
=\begin{cases} 1 & \text{if } \:
\vecy\in \delta^{-1}(\vecz+\fZ(c_1,c_2,\sigma))
\\
0 & \text{if } \:
\vecy\notin \delta^{-1}(\vecz+\fZ(c_1,c_2,\sigma)). \end{cases}
\end{align}
For $C>1$ we write
\begin{align} \label{OMEGACDEF}
\Omega_C:=\begin{cases}
\{\langle c_1,c_2,\sigma,\vecz,\vecy\rangle \in \Omega
\:\col\: \sigma,\|\vecz\|,\|\vecy\| \leq C;\:\: 
C^{-1}\leq |y_1|,|y_2|\} & \text{if } \: d=2
\\
\{\langle c_1,c_2,\sigma,\vecz,\vecy\rangle \in \Omega
\:\col\: \sigma,\|\vecz\|,\|\vecy\| \leq C;\:\: 
C^{-1}\leq \|\vecy\|\} & \text{if } \: d\geq 3. \end{cases}
\end{align}

\begin{prop} \label{CONT3DPROP}
Fix $d\geq 2$ and $r\in \Z_{\geq 0}$.
Given $C>1$ and $\ve>0$ there exist some $\eta>0$
and $N\in \Z_{\geq 2}$ such that 
\begin{equation}
\Bigl | f_r(c_1,c_2,\sigma,\vecz,\vecy)-f_r(c'_1,c'_2,\sigma',\vecz',\vecy')\Bigr |
\leq \ve
\end{equation}
holds for all 
$\langle c_1,c_2,\sigma,\vecz,\vecy\rangle,
\langle c'_1,c'_2,\sigma',\vecz',\vecy'\rangle\in\Omega_C$
satisfying $|c_1-c'_1|\leq \eta$, $|c_2-c'_2|\leq \eta$,
$|\sigma-\sigma'|\leq \eta$,
$\|\vecz-\vecz'\|\leq \eta$,
$\|\vecy-\vecy'\|\leq \eta$
and $s_{\langle c_1,c_2,\sigma,\vecz,\vecy\rangle}(\delta)=
s_{\langle c'_1,c'_2,\sigma',\vecz',\vecy'\rangle}(\delta)$
for all $\delta\in\Far_N$.
\end{prop}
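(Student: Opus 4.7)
My plan is to reduce to comparing two counting functions on a single fiber
$X_q(\vecy)$ and then bound their discrepancy via the Siegel-type
lattice-average formulas of Propositions \ref{XQYSIEGELPROP} and
\ref{XQYSIEGELPROPD2}. First, since both $\vecy$ and $\vecy'$ lie in a
compact set bounded away from $\bn$, I can pick $T\in\SL(d,\R)$ with
$\vecy T=\vecy'$ and $\|T-1_d\|\ll_C\eta$; Lemma \ref{CANMFORMINVARIANCE}
then gives
\begin{equation*}
f_r(c'_1,c'_2,\sigma',\vecz',\vecy')
=\nu_\vecy\bigl(\bigl\{M\in X_q(\vecy)\col
\#((\Z^d+\vecalf)M\cap\widetilde U')=r\bigr\}\bigr),
\end{equation*}
where $U:=\vecz+\fZ(c_1,c_2,\sigma)$, $U':=\vecz'+\fZ(c'_1,c'_2,\sigma')$ and
$\widetilde U':=U'T^{-1}$. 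The symmetric difference
$U\triangle\widetilde U'$ has Lebesgue volume $\ll_C\eta$ and lies in an
$O_C(\eta)$-neighborhood of $\partial U$. Since the counts
$\#((\Z^d+\vecalf)M\cap U)$ and $\#((\Z^d+\vecalf)M\cap\widetilde U')$ can
differ only when $(\Z^d+\vecalf)M$ meets this symmetric difference, a
Markov-type bound yields
\begin{equation*}
\bigl|f_r(c_1,c_2,\sigma,\vecz,\vecy)
-f_r(c'_1,c'_2,\sigma',\vecz',\vecy')\bigr|
\leq\int_{X_q(\vecy)}\sum_{\vecm\in\Z^d}
\chi_{U\triangle\widetilde U'}((\vecm+\vecalf)M)\,d\nu_\vecy(M).
\end{equation*}

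For $d\geq 3$ I will evaluate the right-hand side via
Proposition \ref{XQYSIEGELPROP} as
$\vol(U\triangle\widetilde U')
+\sum_{(t,q)=1}t^{-d}\sum_{a\in t+q\Z,(a,t)=1}
\chi_{U\triangle\widetilde U'}(\tfrac{a}{t}\vecy)$.
The volume term is $O_C(\eta)$. For the discrete sum I note that the
condition $c_2\leq y_1$ forces $\vecy\notin U$ (and likewise
$\vecy\notin\widetilde U'$), so only the values $0<a<t$ contribute, and
for these $a/t$ is a reduced fraction in $(0,1)$ with denominator exactly
$t$. The key observation is that the $s$-hypothesis, combined with
$\vecy' T^{-1}=\vecy$, translates into
$\tfrac{a}{t}\vecy\in U\Leftrightarrow\tfrac{a}{t}\vecy\in\widetilde U'$
whenever $t\leq N$; hence every $t\leq N$ summand vanishes, and only the
tail $t>N$ remains. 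Since the number of $a\in\Z\cap(0,t)$ with
$\tfrac{a}{t}\vecy\in U\triangle\widetilde U'$ is at most $O_C(1+\eta t)$
(the symmetric difference has width $O(\eta)$ along the ray $\R\vecy$,
and $\vecy$ is bounded away from $\bn$), the tail is
$O_C(N^{-(d-1)}+\eta N^{-(d-2)})$. Given $\ve>0$ I first pick $N$ so large
that the leading term is $<\ve/2$, then $\eta$ so small that the remainder
is $<\ve/2$.

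The main obstacle is the case $d=2$, where Proposition \ref{XQYSIEGELPROP}
is no longer available and I instead use Proposition \ref{XQYSIEGELPROPD2},
which applies only on the truncated fiber $X_q^{(t_0)}(\vecy)$ and
contains an extra parallel-line term. By imitating the proof of
Proposition \ref{XQVOLONEPROP}, I have
$\nu_\vecy(X_q(\vecy)\setminus X_q^{(t_0)}(\vecy))\ll t_0^{-1}$, while the
parallel-line integrals contribute only $O_C(\eta)$ (only finitely many
lines meet the bounded region $U\cup\widetilde U'$, and each integrand
has support of length $O(\eta)$). The remaining truncated main sum ranges
over $t\leq t_0$; if I simply take $N\geq t_0$, the $s$-condition wipes
out that sum in its entirety. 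Thus given $\ve>0$ it suffices to choose
$t_0=N\gg\ve^{-1}$ and then $\eta$ sufficiently small. The delicate point
in both dimensions is verifying uniformly over $\Omega_C$ that the count
$\#\{a\col\tfrac{a}{t}\vecy\in U\triangle\widetilde U'\}$ (and the
analogous count along parallel lines when $d=2$) is controlled; this
reduces to the elementary observation that each of these discrete sets
sits inside a one-dimensional slab of thickness $O(\eta)$.
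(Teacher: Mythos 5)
Your proof follows exactly the paper's strategy: conjugate to a common fiber $X_q(\vecy)$ via Lemma~\ref{CANMFORMINVARIANCE}, bound the difference by the $\nu_\vecy$-mass of fibers meeting the symmetric difference, apply Proposition~\ref{XQYSIEGELPROP} (resp.\ \ref{XQYSIEGELPROPD2} with the truncated fiber and the parallel-line term for $d=2$), let the $s$-hypothesis annihilate the $t\leq N$ contributions, and control the tail and the volume term. One small imprecision: your per-$t$ count $O_C(1+\eta t)$ is not uniform over $\Omega_C$ when $d\geq 3$ (only $\|\vecy\|\geq C^{-1}$ is assumed, so $y_1$ or $\|\vecy_\perp\|$ can be small and the Lipschitz constants of the interval endpoints blow up); the paper instead uses the crude bound $\leq t$, which gives the tail $\sum_{t>N}t^{1-d}=O(N^{2-d})$, and this suffices and is uniform. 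You also assert without proof that $U\triangle\widetilde U'$ lies in an $O_C(\eta)$-neighborhood of $\partial U$ — this is where the paper invokes convexity of the cylinder, and it deserves a sentence — but otherwise your argument is sound.
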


\begin{proof}
For certain technical statements in the following proof 
to be correct we need to introduce the
notation $\widetilde{\fZ}(c_1,c_2,\sigma):=\{x_1\vece_1 \col c_1<x_1<c_2\}$ \label{wideZ}
when $\sigma=0$, but $:=\fZ(c_1,c_2,\sigma)$ when $\sigma>0$.
Let $C>1$ and $\ve>0$ be given.
If $d\geq 3$ then we choose $0<\eta_1<1$ so small that
$\vol(\overline{\scrB_{\eta_1}^d}+\partial \widetilde{\fZ}(c_1,c_2,\sigma))
\leq \sfrac{\ve}2$
for all $\langle c_1,c_2,\sigma,\vecz,\vecy\rangle\in\Omega_C$
(this is possible since
$\langle c_1,c_2,\sigma,\vecz,\vecy\rangle\in\Omega_C$ 
implies $0\leq c_1< c_2\leq C$ and $\sigma\leq C$);
if $d=2$ then we instead set 
$\eta_1=\min\bigl(1,
\ve/(20C\sum_{1\leq |v|\leq 4C^2q} \sum_{t\mid v} t^{-1})\bigr)$.
We will denote by $\|A\|$ the operator norm of any $d\times d$ matrix 
$A$, viz.\ $\|A\|=\sup_{\vecv\in\S^{d-1}_1} \|\vecv A\|.$
Take $\eta\in\bigl(0,\min(\sfrac{\eta_1}{10},\sfrac 1C)\bigr)$ so small 
that $\|M_\vecw^{(0)}-I\|\leq\sfrac{\eta_1}{40C}$
for all $\vecw\in\vece_1+\overline{\scrB_{C\eta}^d}$,
where $M_\vecw^{(0)}$ is as in \eqref{M0YDEF}.
If $d\geq 3$ we take $N$ so large that 
$\sum_{t\geq N} t^{1-d}<\frac{\ve}2$;
if $d=2$ we take $N$ so large that
$\nu_\vecy\bigl(
X_q(\vecy)\setminus X_q^{(N)}(\vecy)\bigr)\leq \frac{\ve}{2}$
(cf.\ \eqref{XQT0DEF}).

Let $\langle c_1,c_2,\sigma,\vecz,\vecy\rangle,
\langle c'_1,c'_2,\sigma',\vecz',\vecy'\rangle$
be any two points satisfying all assumptions in the proposition,
for our fixed $\eta,N$.
Then $\|\vecy-\vecy'\|\leq\eta\leq C\eta \|\vecy\|$, and hence by our 
choice of $\eta$ we can find some $T\in\SL(d,\R)$ such that
\begin{align}
\vecy'=\vecy T\quad\text{and}\quad \|T-I\|\leq \sfrac{\eta_1}{40C}
\qquad (<\sfrac 1{40})
\end{align}
(namely: let $T=K^{-1} M_{\|\vecy\|^{-1}\vecy' K^{-1}}^{(0)} K$
for some $K\in\SO(d)$ with $\vecy=\|\vecy\|\vece_1 K$).
Then also $\|T^{-1}-I\|\leq \sfrac{\|T-I\|}{1-\|T-I\|}<
\sfrac{\eta_1}{39C}$.
Hence, since the constraints in $\Omega_C$ imply that
$\vecz+\fZ(c_1,c_2,\sigma)$ is contained in $\scrB_{3C}^d$,
we have:
\begin{align} \label{TMOVESSHORT}
\|\vecx T -\vecx\| < \sfrac{\eta_1}{10}; \qquad
\|\vecx T^{-1} -\vecx\| < \sfrac{\eta_1}{10}, \qquad
\forall \vecx\in\vecz+\fZ(c_1,c_2,\sigma)
\end{align}
(and similarly for $\vecz'+\fZ(c_1',c_2',\sigma')$).

Now by Lemma \ref{CANMFORMINVARIANCE} we have
\begin{align} \notag
f_r(c'_1,c'_2,\sigma',\vecz',\vecy')
&=\nu_{\vecy}\bigl(\bigl\{M\in X_q(\vecy) \col 
\#\bigl((\Z^d+\vecalf)MT
\cap (\vecz'+\fZ(c'_1,c'_2,\sigma'))\bigr)
=r\bigr\}\bigr)
\\
& =\nu_{\vecy}\bigl(\bigl\{M\in X_q(\vecy) \col 
\#\bigl((\Z^d+\vecalf)M
\cap (\vecz'+\fZ(c'_1,c'_2,\sigma'))T ^{-1}\bigr)
=r\bigr\}\bigr),
\end{align}
and hence
\begin{align} \label{CONTUPPERBOUND}
\Bigl | f_r(c_1,c_2,\sigma,\vecz,\vecy)-f_r(c'_1,c'_2,\sigma',\vecz',\vecy')\Bigr |
\leq \nu_{\vecy}\bigl(\bigl\{M\in X_q(\vecy) \col 
\#\bigl((\Z^d+\vecalf)M
\cap U \bigr)\geq 1\bigr\}\bigr),
\end{align}
where $U$ is the symmetric set difference
\begin{align}
U=(\vecz'+\fZ(c'_1,c'_2,\sigma'))T^{-1} \: \triangle\:
(\vecz+\fZ(c_1,c_2,\sigma)).
\end{align}
But %
\eqref{CONTUPPERBOUND} is $\leq \int_{X_q(\vecy)} \sum_{\vecm\in \Z^d} 
\chi_U\bigl((\vecm+\vecalf) M\bigr) \, d\nu_{\vecy}$,
and by Propositions \ref{XQYSIEGELPROP}, \ref{XQYSIEGELPROPD2} this is
\begin{align} \label{BOUNDFROMSIEGELPROP}
& \text{if $d\geq 3$:} \quad
\leq \vol(U)+\sum_{t=1}^\infty t^{-d} \sum_{a\in\Z} 
\chi_U\Bigl(\frac at \vecy\Bigr);
\\ \label{BOUNDFROMSIEGELPROPD2}
& \text{if $d=2$:} \quad
\leq \frac{\ve}2+\sum_{t=1}^N t^{-d} \sum_{a\in\Z} 
\chi_U\Bigl(\frac at \vecy\Bigr)+
q^{-1}\sum_{v\in\Z\setminus\{0\}} \bigl(\sum_{t\mid v} t^{-1} \bigr)
\int_\R \chi_U\Bigl(x\vecy + \frac{v}q \widetilde{\vecy}\Bigr)\, dx.
\end{align}

We now claim that
\begin{align} \label{CONT3DPROPKEY1}
U\subset \overline{\scrB_{\eta_1}^d}
+\partial\bigl(\vecz+\widetilde{\fZ}(c_1,c_2,\sigma)\bigr).
\end{align}
Indeed, using 
$|c_1-c'_1|,$ $|c_2-c'_2|,$ $|\sigma-\sigma'|,$
$\|\vecz-\vecz'\|$ $\leq \eta$ one verifies
\begin{align} \label{CONT3DPROPKEYINCL}
& \vecz+\fZ(c_1,c_2,\sigma)\subset 
\bigl(\vecz'+\widetilde{\fZ}(c_1',c_2',\sigma')\bigr)+\scrB_{3\eta}^d
\qquad  \text{and}
\\  \notag
& \vecz'+\fZ(c_1',c_2',\sigma')\subset 
\bigl(\vecz+\widetilde{\fZ}(c_1,c_2,\sigma)\bigr)+\scrB_{3\eta}^d.
\end{align}
Hence using \eqref{TMOVESSHORT} and $\eta<\sfrac{\eta_1}{10}$ we have
\begin{align} \notag
& \bigl(\vecz+\fZ(c_1,c_2,\sigma)\bigr) T \subset 
\bigl(\vecz'+\widetilde{\fZ}(c_1',c_2',\sigma')\bigr)+\scrB_{\eta_1/2}^d
\qquad  \text{and}
\\ \label{CONT3DPROPKEY1P1}
& \bigl(\vecz'+\fZ(c_1',c_2',\sigma')\bigr) T^{-1} \subset 
\bigl(\vecz+\widetilde{\fZ}(c_1,c_2,\sigma)\bigr)+\scrB_{\eta_1/2}^d,
\end{align}
and since $\|T-I\|<\sfrac 1{40}$ implies
$\scrB_{\eta_1/2}^d T^{-1}\subset \scrB_{\eta_1}^d$
we also get
\begin{align} \label{CONT3DPROPKEY1P2}
& \bigl(\vecz+\fZ(c_1,c_2,\sigma)\bigr) \subset 
\bigl(\vecz'+\widetilde{\fZ}(c_1',c_2',\sigma')\bigr)T^{-1}+\scrB_{\eta_1}^d.
\end{align}
Our claim \eqref{CONT3DPROPKEY1} follows easily from
\eqref{CONT3DPROPKEY1P1} and \eqref{CONT3DPROPKEY1P2},
using also the convexity of the set
$\bigl(\vecz'+\widetilde{\fZ}(c_1',c_2',\sigma')\bigr)T^{-1}$.

To see this take $\vecx\in U$. Then \textit{either} 
$\vecx\in \bigl(\vecz'+\fZ(c_1',c_2',\sigma')\bigr)T^{-1}$
and
$\vecx\notin \vecz+\fZ(c_1,c_2,\sigma)$;
and in this case \eqref{CONT3DPROPKEY1P1} shows that there exists a point
$\vecx'\in \vecz+\widetilde{\fZ}(c_1,c_2,\sigma)$ with
$\|\vecx'-\vecx\|<\eta_1/2$. Then some point on the line segment
between $\vecx$ and $\vecx'$ must lie in
$\partial\bigl(\vecz+\widetilde{\fZ}(c_1,c_2,\sigma)\bigr)$---q.e.d.
\textit{Or else} we have
$\vecx\notin \bigl(\vecz'+\fZ(c_1',c_2',\sigma')\bigr)T^{-1}$
and $\vecx\in \vecz+\fZ(c_1,c_2,\sigma)$.
(Thus $\sigma>0$ and $\fZ(c_1,c_2,\sigma)=\widetilde{\fZ}(c_1,c_2,\sigma)$.)
Then, since $\bigl(\vecz'+\widetilde{\fZ}(c_1',c_2',\sigma')\bigr)T^{-1}$
is convex, there is a hyperplane $\Pi\subset\R^d$ through $\vecx$ such that
$\bigl(\vecz'+\widetilde{\fZ}(c_1',c_2',\sigma')\bigr)T^{-1}$ lies in one
of the closed half spaces determined by $\Pi$. Let $\vecx'$ be that point
which lies in the \textit{other} half space, on the normal line to $\Pi$ 
through $\vecx$, with $\|\vecx'-\vecx\|=\eta_1$. Then
\eqref{CONT3DPROPKEY1P2} implies 
$\vecx'\notin \vecz+\fZ(c_1,c_2,\sigma)$ and hence by
our assumption on $\vecx$, some point on the line segment
between $\vecx$ and $\vecx'$ must lie in
$\partial\bigl(\vecz+\widetilde{\fZ}(c_1,c_2,\sigma)\bigr)$---q.e.d.

If $d\geq 3$ then \eqref{CONT3DPROPKEY1} implies that 
$\vol(U)\leq \frac{\ve}2$, by our choice of $\eta_1$.

Next we will show that $\delta\vecy \in U$ with $\delta\in\Q$ implies that
$\delta$ has a large denominator.
For each $\delta\geq 1$ we have
$\delta\vecy \notin \vecz+\fZ(c_1,c_2,\sigma)$ since
$y_1\geq c_2$, and also
$\delta\vecy \notin (\vecz'+\fZ(c'_1,c'_2,\sigma'))T^{-1}$ since
$\delta\vecy T=\delta\vecy' \notin (\vecz'+\fZ(c'_1,c'_2,\sigma'))$;
hence $\delta\vecy\notin U$.
Similarly $\delta\vecy\notin U$ for each $\delta\leq 0$.
Also if $\delta\in\Far_N$ then our assumption
$s_{\langle c_1,c_2,\sigma,\vecz,\vecy\rangle}(\delta)=
s_{\langle c'_1,c'_2,\sigma',\vecz',\vecy'\rangle}(\delta)$ implies that
the point $\delta\vecy$ either lies in both sets 
$\vecz+\fZ(c_1,c_2,\sigma)$ and
$(\vecz'+\fZ(c'_1,c'_2,\sigma'))T^{-1}$, or else in none of them;
thus $\delta\vecy\notin U$.
Hence it follows that $\delta\vecy\in U$ for rational $\delta$
can only hold if $0<\delta<1$ and $\delta$'s denominator is larger than $N$.

It follows from this that if $d\geq 3$ then the sum in
\eqref{BOUNDFROMSIEGELPROP} is
$\leq \sum_{t\geq N}t^{-d}t<\sfrac{\ve}2$ and hence since
$\vol(U)\leq \frac{\ve}2$ we have now shown that 
\eqref{CONTUPPERBOUND} is $\leq \ve$, i.e.\ the proof of the 
proposition is complete.

If $d=2$ then it follows that the first sum in
\eqref{BOUNDFROMSIEGELPROPD2} \textit{vanishes},
and it remains to bound the second sum in \eqref{BOUNDFROMSIEGELPROPD2}.
Since $U\subset \scrB_{4C}^2$ 
we get non-vanishing contributions in that sum only when
$|v|\leq 4Cq\|\vecy\|\leq 4C^2q$.
Furthermore it follows from \eqref{CONT3DPROPKEY1} that
$U$ is contained in the union of two translates of
$[0,c_2-c_1+2\eta_1]\times [0,2\eta_1]$ and
two translates of $[0,2\eta_1]\times [0,2\sigma+2\eta_1]$.
Using now the condition $|y_2|\geq C^{-1}$
we see that for each translate $B$ of $[0,c_2-c_1+2\eta_1]\times [0,2\eta_1]$
and any $\vecw\in\R^2$, the interval
$\{x\in\R\col x\vecy+\vecw\in B\}$
has length $\leq 2\eta_1/|y_2| \leq 2C\eta_1$,
and hence the total contribution from $B$ to
the $v$-sum in \eqref{BOUNDFROMSIEGELPROPD2} is
$\leq \sum_{1\leq |v|\leq 4C^2q} (\sum_{t\mid v}t^{-1})2C\eta_1$, 
and by our choice of $\eta_1$ this is
$\leq \sfrac{\ve}{10}$.
Similarly using $|y_1| \geq C^{-1}$ one finds that
the total contribution from each \textit{vertical} side is also
$\leq \sfrac{\ve}{10}$.
Hence in total \eqref{BOUNDFROMSIEGELPROPD2} is 
$\leq \frac{\ve}2+0+\frac{\ve}{10}+\frac{\ve}{10}+\frac{\ve}{10}+\frac{\ve}{10}
<\ve$, and the proof is complete.
\end{proof}

We will now point out several consequences of Proposition \ref{CONT3DPROP}.
First, the following technical lemma will be quite convenient to use
in our proof of Theorem \ref{exactpos1}.

\begin{lem} \label{FrfrRELLEMMA}
Given any $C,\ve$ and corresponding $\eta,N$ as in 
Proposition \ref{CONT3DPROP}, then for all
$c,\xi>0$ and $\vecw,\vecz\in\{0\}\times\R^{d-1}$ 
satisfying 
$C^{-1}\leq c \leq \xi\leq c+\min(\eta,c/N)$ and 
$\xi+\|\vecw\|+\|\vecz\|\leq C$
[and if $d=2$: $\|\vecw+\vecz\|\geq C^{-1}$],
we have
\begin{align}
& \bigl | f_r(0,c,1,\vecz,\xi\vece_1+\vecw+\vecz)
-F_r(\xi,\vecw,\vecz) \bigr |
\\ \notag
& =\bigl | f_r(0,c,1,\vecz,\xi\vece_1+\vecw+\vecz)
-f_r(0,\xi,1,\vecz,\xi\vece_1+\vecw+\vecz) \bigr |
\leq \ve.
\end{align}
\end{lem}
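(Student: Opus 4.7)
The plan is to derive the bound as a direct application of Proposition \ref{CONT3DPROP} to the two points
\begin{align*}
\mathbf{p} &= \langle 0,c,1,\vecz,\xi\vece_1+\vecw+\vecz\rangle, \\
\mathbf{p}' &= \langle 0,\xi,1,\vecz,\xi\vece_1+\vecw+\vecz\rangle
\end{align*}
in the domain $\Omega$. The first equality in the displayed inequality is immediate from \eqref{FR3VARDEF}; thus the real content is the estimate.

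First I would verify that both $\mathbf{p}$ and $\mathbf{p}'$ lie in $\Omega_C$, which is routine: the constraint $c,\xi\leq y_1=\xi$ is trivial, the bound $\|\vecy\|^2=\xi^2+\|\vecw+\vecz\|^2\leq(\xi+\|\vecw\|+\|\vecz\|)^2\leq C^2$ follows from the hypothesis $\xi+\|\vecw\|+\|\vecz\|\leq C$, and $\|\vecy\|\geq \xi\geq c\geq C^{-1}$ takes care of the lower bound in dimension $d\geq 3$, while for $d=2$ one has $|y_1|=\xi\geq C^{-1}$ and $|y_2|=\|\vecw+\vecz\|\geq C^{-1}$ by hypothesis. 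The parameter differences are all zero except for $|c-\xi|\leq \eta$, so the closeness conditions of Proposition \ref{CONT3DPROP} are met.

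The main (and essentially only) point requiring care is to check that
$s_{\mathbf{p}}(\delta)=s_{\mathbf{p}'}(\delta)$ for every $\delta\in\mathfrak{F}_N$. Since $\vecy$ and $\vecz$ are identical in the two configurations and only differ through the upper cut-off $c$ versus $\xi$ in the first coordinate, and since $\delta\vecy-\vecz$ has the same perpendicular components in both cases, the question reduces to whether $0<\delta\xi<c$ is equivalent to $0<\delta\xi<\xi$ for $\delta\in\mathfrak{F}_N$. The second inequality is automatic since $0<\delta<1$; for the first, I would use $\delta\leq (N-1)/N$ together with the hypothesis $\xi-c\leq c/N$ (and $c\leq \xi$) to write
\[
\delta\xi\leq\frac{N-1}{N}\,\xi\leq\frac{N-1}{N}\cdot\frac{(N+1)c}{N}=\frac{N^2-1}{N^2}\,c<c,
\]
so indeed $\delta\xi<c$ holds. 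Hence $s_{\mathbf{p}}(\delta)=s_{\mathbf{p}'}(\delta)$ for all $\delta\in\mathfrak{F}_N$, and Proposition \ref{CONT3DPROP} yields $|f_r(\mathbf{p})-f_r(\mathbf{p}')|\leq \varepsilon$, which is the desired estimate. The only mildly subtle step is the strict inequality $\delta\xi<c$; everything else is bookkeeping.
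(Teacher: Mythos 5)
Your proof is correct and mirrors the paper's own argument: apply Proposition \ref{CONT3DPROP} to the two 5-tuples, verify membership in $\Omega_C$, and observe that the $s(\delta)$-conditions coincide because $\delta\xi<c$ (via $\delta\leq(N-1)/N$ and $\xi-c\leq c/N$) so both containments reduce to the same perpendicular condition $\|\vecw+\vecz-\delta^{-1}\vecz\|<\delta^{-1}$. Nothing to add.
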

\begin{proof}
The assumptions imply that both
$\langle 0,c,1,\vecz,\xi\vece_1+\vecw+\vecz\rangle$ and 
$\langle 0,\xi,1,\vecz,\xi\vece_1+\vecw+\vecz\rangle$ belong to $\Omega_C$,
and these 5-tuples differ only in the second coordinate,
by an amount $\leq \eta$;
hence by Proposition \ref{CONT3DPROP} we only need to check that
$s_{\langle 0,c,1,\vecz,\xi\vece_1+\vecw+\vecz\rangle}(\delta)
=s_{\langle 0,\xi,1,\vecz,\xi\vece_1+\vecw+\vecz\rangle}(\delta)$ holds
for every $\delta\in\Far_N$.
Fix $\delta\in\Far_N$; our task is now to prove that the point
$\xi\vece_1+\vecw+\vecz$ either belongs to both or none of the two sets
$\delta^{-1}(\vecz+\fZ(0,c,1))$ and $\delta^{-1}(\vecz+\fZ(0,\xi,1))$.
Note that $\delta\leq \frac {N-1}N$; thus using 
$0\leq\xi-c\leq c/N$ we have
$\xi<\delta^{-1} c$ as well as $\xi<\delta^{-1}\xi$.
Hence the two containment relations are \textit{both} equivalent with
$\|\vecw+\vecz-\delta^{-1}\vecz\|<\delta^{-1}$, and we are done.
\end{proof}

We next prove several lemmas relating directly to the function $F_r$.

\begin{lem} \label{FRMEASURABLELEM}
$F_r(\xi,\vecw,\vecz)$ is Borel measurable. %
\end{lem}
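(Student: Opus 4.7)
The plan is to reduce the measurability of $F_r$ to that of $f_r$ and then exploit Proposition \ref{CONT3DPROP} to approximate $f_r$ uniformly on each bounded set $\Omega_C$ by a sequence of simple Borel functions. Observe first that the map $(\xi,\vecw,\vecz)\mapsto\langle 0,\xi,1,\vecz,\xi\vece_1+\vecw+\vecz\rangle$ is continuous, so once $f_r$ is known to be Borel measurable on $\Omega$, the function $F_r$ will be Borel measurable as a composition. Moreover $\Omega=\bigcup_{C>1}\Omega_C$, so it suffices to prove Borel measurability of $f_r|_{\Omega_C}$ for each fixed $C>1$.

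Fix $C>1$ and $\ve>0$, and let $\eta>0$, $N\in\Z_{\geq 2}$ be as furnished by Proposition \ref{CONT3DPROP}. For each $\delta\in\Far_N$ the ``level set''
\begin{equation}
S_\delta:=\bigl\{\langle c_1,c_2,\sigma,\vecz,\vecy\rangle\in\Omega_C \col \vecy\in\delta^{-1}(\vecz+\fZ(c_1,c_2,\sigma))\bigr\}
\end{equation}
is defined by finitely many polynomial strict inequalities in the coordinates, hence is a Borel (in fact open-in-$\Omega_C$) subset. The finite family $\{S_\delta\}_{\delta\in\Far_N}$ then induces a partition of $\Omega_C$ into at most $2^{|\Far_N|}$ Borel ``cells'' $\{\scrC_\tau\}_{\tau\in\{0,1\}^{\Far_N}}$, on each of which the tuple $\bigl(s_{\langle\cdots\rangle}(\delta)\bigr)_{\delta\in\Far_N}$ is constant. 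Since $\Omega_C$ is a bounded subset of $\R^{2d+2}$, cover it by finitely many open balls $B_1,\ldots,B_K$ of diameter $\leq\eta$, and form the common refinement $\{B_i\cap \scrC_\tau\}_{i,\tau}$, which is a finite Borel partition of $\Omega_C$. For each nonempty piece choose an arbitrary representative $p_{i,\tau}\in B_i\cap \scrC_\tau$, and define
\begin{equation}
\tilde f_r^{(\ve)}(p):=\sum_{i,\tau} f_r(p_{i,\tau})\,\chi_{B_i\cap \scrC_\tau}(p),
\end{equation}
where the sum is suitably disjointified. This is a finite linear combination of indicators of Borel sets, hence Borel measurable, and by Proposition \ref{CONT3DPROP} we have $|\tilde f_r^{(\ve)}(p)-f_r(p)|\leq\ve$ for every $p\in\Omega_C$.

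Taking $\ve=1/n$ and letting $n\to\infty$ exhibits $f_r|_{\Omega_C}$ as a uniform limit of Borel measurable functions; hence $f_r$ is Borel measurable on $\Omega_C$, and therefore on $\Omega$. The measurability of $F_r$ follows by composition. The substantive work is already contained in Proposition \ref{CONT3DPROP}; the only mildly delicate point here is the verification that the level sets $S_\delta$ are Borel, which is immediate from their semi-algebraic description, and the bookkeeping required to assemble the finite Borel partition so that a uniform bound---rather than merely a pointwise one---is obtained.
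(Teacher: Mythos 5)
Your argument for $d\geq 3$ is essentially the paper's: uniform approximation on each bounded region by finite-valued Borel functions, relying on Proposition \ref{CONT3DPROP}. (The paper restricts $F_r$ to compact sets directly; you pull back to $f_r$ on $\Omega_C$ and compose with the continuous map $(\xi,\vecw,\vecz)\mapsto\langle 0,\xi,1,\vecz,\xi\vece_1+\vecw+\vecz\rangle$, which is equivalent.) The proposal breaks down, however, when $d=2$. The claim ``$\Omega = \bigcup_{C>1}\Omega_C$'' is false in that case: by \eqref{OMEGACDEF}, $\Omega_C$ for $d=2$ carries the additional constraint $C^{-1}\leq |y_2|$, so points with $y_2=0$ lie in no $\Omega_C$. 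Translated back to $F_r(\xi,\vecw,\vecz)$ via $\vecy=\xi\vece_1+\vecw+\vecz$, this means your argument only yields Borel measurability on the open set $\{\vecw+\vecz\neq\bn\}$ and says nothing on the closed remainder. Borel measurability is not an almost-everywhere notion, so this set cannot be discarded on grounds of Lebesgue measure zero.

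The paper handles precisely this residual set with a separate explicit argument: using the computation underlying Proposition \ref{XQVOLONEPROP} it writes $F_r(\xi,-z\vece_2,z\vece_2)$ as a concrete integral over $\R/\Z$ and observes that this function is constant on each box $\bigl\{(\xi,z)\col q\xi(z-1)\in[a_1,a_1+1),\: q\xi(z+1)\in(a_2,a_2+1]\bigr\}$ with $a_1,a_2\in\Z$, hence piecewise constant on a Borel partition. You need to supply an argument of this kind for the set $\{\vecw+\vecz=\bn\}$ when $d=2$ to complete the proof; without it there is a genuine gap.
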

\begin{proof}
We first take $d\geq 3$. 
It suffices to prove that the restriction of $F_r$ to any given
compact subset $K$ of 
$\R_{>0} \times (\{0\}\times \R^{d-1})\times (\{0\}\times \R^{d-1})$
is Borel measurable.
Using Proposition \ref{CONT3DPROP} we see that on $K$ we can obtain
$F_r$ as a uniform limit of functions which take only a finite number of
values, each level set being a finite union of sets of the form
\begin{align}
B\cap \{\langle \xi,\vecw,\vecz\rangle \in K
\col s_{\langle 0,\xi,1,\vecz,\xi\vece_1+\vecw+\vecz\rangle}(\delta)=
s_0(\delta),\: \forall \delta\in\Far_{N}\},
\end{align}
with $B$ a box region and $s_0$ some function from $\Far_N$ to $\{0,1\}$.
Since each such level set is a Borel set we have thus expressed $(F_r)|_K$
as a uniform limit of Borel measurable functions, and we are done.

We now turn to the case $d=2$. In this case an application of
Proposition \ref{CONT3DPROP} as above shows that
the restriction of $F_r$ to any given compact subset $K$ of 
$\{\langle \xi,w\vece_2,z\vece_2\rangle \col w+z\neq\bn\}$
is Borel measurable. Next, by a computation using the
set-up from Proposition \ref{XQVOLONEPROP} one finds
\begin{align} 
& F_r(\xi,-z\vece_2,z\vece_2)
\\ \notag
& = \Bigl(\sum_{\substack{t\geq 1\\(t,q)=1}} t^{-2}\Bigr)^{-1}
\sum_{\substack{t\geq 1\\(t,q)=1}} t^{-2}
\int_{\R/\Z} I\Bigl(\sum_{\substack{n\in\Z\\
q\xi(z-1)<nt<q\xi(z+1)}}
\#\bigl(\Z\cap(nx,nx+\sfrac{t}{q})\bigr)=r\Bigr) \, dx.
\end{align}
In particular $F_r(\xi,-z\vece_2,z\vece_2)$ is constant on any set of the
form
\begin{align}
M_{a_1,a_2}=
\bigl\{(\xi,z)\col q\xi(z-1)\in [a_1,a_1+1),\: q\xi(z+1)\in (a_2,a_2+1]\bigr\}
\qquad (a_1,a_2\in\Z).
\end{align}
This implies that
also the restriction of $F_r$ to
$\{\langle \xi,w\vece_2,z\vece_2\rangle \col w+z=\bn\}$ is 
Borel measurable, and we are done.
\end{proof}
In particular this proves the claim about Borel measurability in 
Remark \ref{PALFSYMMREMARK}. This shows that we may freely
change order of integration in the right hand side of the
limit formula \eqref{exactpos1eq}.

Next we prove the claim about continuity in Remark \ref{PALFSYMMREMARK}.
\begin{lem} \label{FCONTZWLTO}
If we keep $\|\vecw\|<1$ and $\|\vecz\|\leq 1$ [and if $d=2$:
$\vecz+\vecw\neq\bn$] then the function
$F_r(\xi,\vecw,\vecz)$ is jointly continuous in 
all three variables.
\end{lem}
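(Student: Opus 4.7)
The strategy is to reduce the continuity claim to Proposition \ref{CONT3DPROP}. Fix a point $(\xi_0,\vecw_0,\vecz_0)$ with $\xi_0>0$, $\|\vecw_0\|<1$, $\|\vecz_0\|\leq 1$ (and in the case $d=2$, $\vecw_0+\vecz_0\neq\bn$). First I would choose a constant $C>1$ large enough, and a small open neighborhood $U$ of $(\xi_0,\vecw_0,\vecz_0)$ inside the constraint region $\{\|\vecw\|<1,\|\vecz\|\leq 1\}$, so that for every $(\xi,\vecw,\vecz)\in U$ the 5-tuple $\langle 0,\xi,1,\vecz,\xi\vece_1+\vecw+\vecz\rangle$ lies in $\Omega_C$. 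The inequalities $0\leq c_1<c_2\leq y_1$ and $\sigma,\|\vecz\|,\|\vecy\|\leq C$ are readily arranged. For $d\geq 3$ the lower bound $\|\vecy\|\geq C^{-1}$ is trivial since $\|\vecy\|\geq\xi$ is bounded below near $\xi_0>0$. For $d=2$ the extra requirements $|y_1|,|y_2|\geq C^{-1}$ amount to $\xi\geq C^{-1}$ (again automatic) and $\|\vecw+\vecz\|\geq C^{-1}$, and here the hypothesis $\vecw_0+\vecz_0\neq\bn$ is precisely what lets us choose $C^{-1}<\tfrac12\|\vecw_0+\vecz_0\|$.

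Now given $\ve>0$, Proposition \ref{CONT3DPROP} supplies $\eta>0$ and $N\in\Z_{\geq 2}$ such that whenever two 5-tuples in $\Omega_C$ differ coordinatewise by at most $\eta$ and agree on the step function $s(\delta)$ for every $\delta\in\Far_N$, their $f_r$-values differ by at most $\ve$. The key step in the proof is the observation that the ``$s$-hypothesis'' is automatically satisfied throughout $U$: for every $\delta\in\Far_N$ and $(\xi,\vecw,\vecz)\in U$, one has $s_{\langle 0,\xi,1,\vecz,\xi\vece_1+\vecw+\vecz\rangle}(\delta)=1$. Unfolding the definition \eqref{SDELTADEF}, this containment reduces to $\delta\xi\in(0,\xi)$ (automatic since $\delta\in(0,1)$) together with
\begin{equation*}
\|\delta\vecw+(\delta-1)\vecz\|<1.
\end{equation*}
The triangle inequality gives $\|\delta\vecw+(\delta-1)\vecz\|\leq\delta\|\vecw\|+(1-\delta)\|\vecz\|<\delta+(1-\delta)=1$, where the strict inequality uses $\|\vecw\|<1$ and $\delta>0$.

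Consequently, for any two points of $U$ within distance $\eta$ of one another, all hypotheses of Proposition \ref{CONT3DPROP} are met, yielding $|F_r(\xi,\vecw,\vecz)-F_r(\xi',\vecw',\vecz')|\leq\ve$ and hence joint continuity of $F_r$ on the region specified. I do not expect any serious obstacle here: once Proposition \ref{CONT3DPROP} is in hand, the crux is the geometric observation that the constraint $\|\vecw\|<1,\|\vecz\|\leq 1$ prevents the line segment from $\bn$ to $\vecy$ from meeting the boundary of the cylinder $\vecz+\fZ(0,\xi,1)$ at any rational interior point, so the step function $s$ is identically $1$ on $\Far_N$ and is thus trivially preserved under perturbation.
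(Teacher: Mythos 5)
Your argument is correct and follows the same route as the paper: both reduce the claim to Proposition \ref{CONT3DPROP} by observing that $s_{\langle 0,\xi,1,\vecz,\xi\vece_1+\vecw+\vecz\rangle}(\delta)\equiv 1$ on $\Far_N$ under the stated constraints, via the triangle-inequality estimate $\|\delta\vecw+(\delta-1)\vecz\|\leq\delta\|\vecw\|+(1-\delta)\|\vecz\|<1$ (the paper writes the equivalent inequality $\|\vecw+\vecz-\delta^{-1}\vecz\|<\delta^{-1}$). The accompanying verification that a suitable $C$ can be chosen so that the relevant $5$-tuples lie in $\Omega_C$, including the $d=2$ use of $\vecw_0+\vecz_0\neq\bn$, is also the same as what the paper implicitly relies on.
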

\begin{proof}
This is a simple consequence of Proposition \ref{CONT3DPROP}
once we note that 
$s_{\langle 0,\xi,1,\vecz,\xi\vece_1+\vecw+\vecz\rangle}(\delta)=1$
holds for any $\xi>0$, $\vecw\in\{0\}\times\scrB_1^{d-1}$
$\vecz\in\{0\}\times\overline{\scrB_1^{d-1}}$ and
any $\delta\in\Far_N$.
This fact follows from $0<\xi<\delta^{-1}\xi$ and
$\|\vecw+\vecz-\delta^{-1}\vecz\|
\leq \|\vecw\|+(\delta^{-1}-1)\|\vecz\|
<1+(\delta^{-1}-1)=\delta^{-1}$.
\end{proof}

\begin{lem} \label{FCONTINXI}
For any fixed $\vecz,\vecw$ [if $d=2$: assume $\vecz+\vecw\neq\bn$], 
the function
$F_r(\xi,\vecw,\vecz)$ is continuous in the variable $\xi>0$.
\end{lem}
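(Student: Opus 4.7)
The plan is to apply Proposition \ref{CONT3DPROP} directly to the 5-tuples $\langle 0, \xi, 1, \vecz, \xi\vece_1 + \vecw + \vecz\rangle$ defining $F_r(\xi, \vecw, \vecz) = f_r(0, \xi, 1, \vecz, \xi\vece_1 + \vecw + \vecz)$. Fix $\xi_0 > 0$ and $\ve > 0$, and choose $C > 1$ large enough so that the 5-tuple belongs to $\Omega_C$ for every $\xi$ in some compact neighborhood of $\xi_0$. The constraints $\sigma=1 \leq C$, $\|\vecz\| \leq C$, $\|\vecy\| \leq C$ and $C^{-1} \leq \|\vecy\|$ are immediate since $\|\vecy\|^2 = \xi^2 + \|\vecw + \vecz\|^2 \geq \xi^2$. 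In the case $d = 2$, one additionally needs $|y_1| = \xi \geq C^{-1}$ (fine for $\xi$ near $\xi_0 > 0$) and $|y_2| = |w_2 + z_2| \geq C^{-1}$, for which the hypothesis $\vecw + \vecz \neq \bn$ is essential.

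The key observation is that, for fixed $\vecw$ and $\vecz$, the ``rational obstruction function'' $\delta \mapsto s_{\langle 0, \xi, 1, \vecz, \xi\vece_1+\vecw+\vecz\rangle}(\delta)$ is \emph{independent} of $\xi$. Indeed, unwrapping the definition \eqref{SDELTADEF}, the condition $s(\delta) = 1$ for $\delta \in \Far_N$ amounts to
\begin{equation*}
\delta\xi \, \vece_1 + \delta \vecw + (\delta - 1) \vecz \in \fZ(0, \xi, 1).
\end{equation*}
Its first-coordinate constraint $0 < \delta \xi < \xi$ is automatic for $\delta \in (0, 1)$, and the remaining constraint $\|\delta \vecw + (\delta - 1)\vecz\| < 1$ involves neither $\xi$ nor any boundary behaviour that depends on $\xi$. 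Hence, for any two values of $\xi$, the corresponding $s(\cdot)$ functions on $\Far_N$ coincide identically.

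Applying Proposition \ref{CONT3DPROP} with the above $C$ and $\ve$ yields $\eta > 0$ and $N \in \Z_{\geq 2}$ such that for any $\xi$ in our compact neighborhood with $|\xi - \xi_0| \leq \eta$, the two 5-tuples both lie in $\Omega_C$, differ by at most $\eta$ in every coordinate (only $c_2$ and the first entry of $\vecy$ change, each by $|\xi - \xi_0|$), and agree on $s(\delta)$ for all $\delta \in \Far_N$; therefore $|F_r(\xi, \vecw, \vecz) - F_r(\xi_0, \vecw, \vecz)| \leq \ve$. This proves continuity at $\xi_0$. There is no genuine obstacle here: Proposition \ref{CONT3DPROP} is precisely tailored to make one-variable continuity statements of this kind follow trivially once one checks that no new rational coincidences appear as the parameter moves, which is automatic in the present setup since only the length $\xi$ of the cylinder (and the matching coordinate of $\vecy$) varies.
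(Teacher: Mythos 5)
Your proof is correct and takes essentially the same route as the paper: apply Proposition \ref{CONT3DPROP} and observe that, since $\delta<1$ makes the first-coordinate condition automatic, $s_{\langle 0,\xi,1,\vecz,\xi\vece_1+\vecw+\vecz\rangle}(\delta)$ reduces to $\|\delta\vecw+(\delta-1)\vecz\|<1$ and is therefore independent of $\xi$. The paper states the same condition in the equivalent form $\|\vecw+\vecz-\delta^{-1}\vecz\|<\delta^{-1}$ and leaves the $\Omega_C$ verification implicit; your write-up just spells these details out.
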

\begin{proof}
This follows directly from Proposition \ref{CONT3DPROP}
once we note that for any $\delta\in\Far_N$, the function
$s_{\langle 0,\xi,1,\vecz,\xi\vece_1+\vecw+\vecz\rangle}(\delta)$
is independent of $\xi$. Indeed, since $\delta<1$,
$s_{\langle 0,\xi,1,\vecz,\xi\vece_1+\vecw+\vecz\rangle}(\delta)=1$
holds if and only if $\|\vecw+\vecz-\delta^{-1}\vecz\|< \delta^{-1}$.
\end{proof}

\begin{lem} \label{FWINTCONT}
Let $\scrW$ be any bounded Borel subset of $\{0\}\times\R^{d-1}$;
then the integral $\int_{\scrW} F_r(\xi,\vecw,\vecz) \, d\vecw$
exists for all $\xi>0$, $\vecz\in\{0\}\times\R^{d-1}$, and is jointly
continuous in these two variables.
In fact, given any $\ve>0$ and $B>1$ there is some $\nu>0$ such that 
\begin{align} \label{FWINTSTRONGCONT}
\int_{\scrW} \bigl | F_r(\xi,\vecw,\vecz)-F_r(\xi',\vecw,\vecz')\bigr | 
\, d\vecw<\ve
\end{align}
holds for all $\xi,\xi'\in [B^{-1},B]$, 
$\vecz,\vecz'\in\{0\}\times\scrB_B^{d-1}$ satisfying $|\xi-\xi'|<\nu$
and $\|\vecz-\vecz'\|<\nu$.
\end{lem}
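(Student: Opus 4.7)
The existence of $\int_\scrW F_r(\xi,\vecw,\vecz)\,d\vecw$ will be immediate from Lemma \ref{FRMEASURABLELEM} together with the bounds $0\leq F_r\leq 1$ and the boundedness of $\scrW$; the substantive point is the uniform continuity estimate \eqref{FWINTSTRONGCONT}, which I plan to extract from Proposition \ref{CONT3DPROP} by controlling the measure of the $\vecw$-set on which the hypotheses of that proposition can fail.

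Given $\ve>0$ and $B>1$, my first step will be to choose $C>1$ large enough that every 5-tuple $\langle 0,\xi,1,\vecz,\xi\vece_1+\vecw+\vecz\rangle$ appearing in the argument lies in $\Omega_C$. For $d\geq 3$ this is automatic as soon as $C$ dominates both $B$ and $\sup_{\vecw\in\scrW}\|\vecw\|$, since $\|\vecy\|\geq\xi\geq B^{-1}$. In the case $d=2$ the extra constraint $|y_2|\geq C^{-1}$ forces me to first cut out the ``bad'' set $\scrW^{\mathrm{bad}}(\vecz,\vecz'):=\{\vecw\in\scrW:\min(|w_2+z_2|,|w_2+z_2'|)<\kappa\}$ for a small $\kappa$ to be fixed. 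This set has measure $O(\kappa)$ uniformly in $\vecz,\vecz'$, hence contributes at most $O(\kappa)$ to the integral in \eqref{FWINTSTRONGCONT} via $|F_r|\leq 1$; I will choose $\kappa$ so that this bound is $\leq\ve/3$ and set $C=\max(R,\kappa^{-1})$ for a suitable $R$, reducing the problem to $\vecw\in\scrW\setminus\scrW^{\mathrm{bad}}$.

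Next I will apply Proposition \ref{CONT3DPROP} with this $C$ and with target $\ve_0:=\ve/(3\mathrm{vol}(\scrW)+1)$, obtaining an $\eta>0$ and an $N\in\Z_{\geq 2}$. As already observed in the proof of Lemma \ref{FCONTINXI}, for each $\delta\in\Far_N$ the value $s_{\langle 0,\xi,1,\vecz,\xi\vece_1+\vecw+\vecz\rangle}(\delta)$ does not depend on $\xi$ and equals $1$ precisely when $\|\vecw-(\delta^{-1}-1)\vecz\|<\delta^{-1}$. Consequently, for fixed $\delta$, the $\vecw$-set on which this value differs for $\vecz$ versus $\vecz'$ is the symmetric difference of two balls of common radius $\delta^{-1}\leq N$ whose centers lie at distance $\leq(N-1)\|\vecz-\vecz'\|$; a routine shell estimate bounds its $(d-1)$-dimensional volume by $O(\delta^{-(d-2)}N\|\vecz-\vecz'\|)\leq O(N^{d-1}\|\vecz-\vecz'\|)$. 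Summing over the $\leq N^2$ elements of $\Far_N$ yields a ``shell set'' $\scrW^{\mathrm{sh}}(\vecz,\vecz')$ of volume $\leq A\,N^{d+1}\|\vecz-\vecz'\|$ for some $A=A(d)$.

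The conclusion will then be routine: pick $\nu\in(0,\eta]$ so small that $AN^{d+1}\nu\leq\ve/3$, and split the domain of integration as $\scrW=\scrW^{\mathrm{bad}}\sqcup(\scrW^{\mathrm{sh}}\setminus\scrW^{\mathrm{bad}})\sqcup(\scrW\setminus(\scrW^{\mathrm{bad}}\cup\scrW^{\mathrm{sh}}))$. On the first two pieces I will use the trivial bound $|F_r-F_r|\leq 1$, contributing at most $\ve/3+\ve/3$; on the remainder every $\delta\in\Far_N$ satisfies the matching condition, so Proposition \ref{CONT3DPROP} supplies the pointwise bound $\ve_0$, which integrates to $\leq\ve/3$. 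The main obstacle I anticipate is the $d=2$ case, in which the constraint $|y_2|\geq C^{-1}$ in $\Omega_C$ blocks a direct application of Proposition \ref{CONT3DPROP}; the $\kappa$-truncation neatly resolves this, and everything else reduces to the volume estimate for spherical shells in $\{0\}\times\R^{d-1}$ outlined above.
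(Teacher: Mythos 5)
Your proposal follows essentially the same path as the paper's proof: invoke Lemma~\ref{FRMEASURABLELEM} and $0\leq F_r\leq 1$ for existence, apply Proposition~\ref{CONT3DPROP} with a suitably large $C$ and small target $\ve_0$, characterize the $\xi$-independent $s$-condition as $\|\vecw-(\delta^{-1}-1)\vecz\|<\delta^{-1}$, bound the exceptional $\vecw$-set for each $\delta\in\Far_N$ as the symmetric difference of two balls of radius $\delta^{-1}$ with nearby centers, sum over $\Far_N$, and absorb the $d=2$ obstruction $|w_2+z_2|\geq C^{-1}$ (which is exactly $\|\vecw+\vecz\|\geq C^{-1}$) by making $C^{-1}$ small and using the trivial bound on the resulting small set. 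The only cosmetic differences are the bookkeeping of constants (your $\kappa$-truncation versus the paper's direct choice $C\geq 4/\ve'$, and your three-way split versus the paper's single constant $\ve'=(2+\vol_{d-1}(\scrW))^{-1}\ve$); the substantive estimates are the same.
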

\begin{proof}
Since $0\leq F_r(\xi,\vecw,\vecz)\leq 1$, 
the existence of the integral follows from the Borel measurability
proved in Lemma \ref{FRMEASURABLELEM}.

To prove \eqref{FWINTSTRONGCONT}, let $\ve>0$ and $B>1$ be given.
Applying Proposition \ref{CONT3DPROP} with 
$\ve':=(2+\vol_{d-1}(\scrW))^{-1}\ve$ in place of $\ve$
and with $C=\max(2B+\sup_{\vecw\in\scrW}\|\vecw\|,4/\ve')$, we get that there
are some $\eta>0$ and $N\in\Z_{\geq 2}$ such that
$\bigl | F_r(\xi,\vecw,\vecz)-F_r(\xi',\vecw,\vecz')\bigr |\leq\ve'$
holds for all $\xi,\xi'\in [B^{-1},B]$, $\vecw\in\scrW$ and
$\vecz,\vecz'\in\{0\}\times\scrB_B^{d-1}$ satisfying $|\xi-\xi'|<\frac{\eta}2$
and $\|\vecz-\vecz'\|<\frac{\eta}2$
and $s_{\langle 0,\xi',1,\vecz',\xi'\vece_1+\vecw+\vecz'\rangle}(\delta)=
s_{\langle 0,\xi,1,\vecz,\xi\vece_1+\vecw+\vecz\rangle}(\delta)$,
$\forall \delta\in\Far_N$.
If $d=2$ then we must also require $\|\vecw+\vecz\|\geq C^{-1}$
and $\|\vecw+\vecz'\|\geq C^{-1}$.
The $s$-conditions are seen to hold if and only if,
for each $\delta\in\Far_N$, either
both or none of $\|\vecw-(\delta^{-1}-1)\vecz\|<\delta^{-1}$ and
$\|\vecw-(\delta^{-1}-1)\vecz'\|<\delta^{-1}$ are true.
For each $\delta\in\Far_N$, the set of \textit{exceptional}
$\vecw$'s is thus seen to lie in a union of two translates of
the region
$\delta^{-1} \bigl(\scrB_{1+\|\vecz-\vecz'\|}^{d-1}
\setminus\scrB_1^{d-1}\bigr)$.
Hence, since $\delta^{-1}\leq N$ and $\Far_N$ is finite,
there is some $\nu\in(0,\frac{\eta}2]$ such that the volume of the total set of 
exceptional $\vecw$'s is less than $\ve'$ whenever $\|\vecz-\vecz'\|<\nu$.
For $d=2$ we also have to consider the set of exceptional $\vecw$'s
satisfying $\|\vecw+\vecz\|< C^{-1}$ or $\|\vecw+\vecz'\|< C^{-1}$;
this set has volume $\leq 4C^{-1}\leq\ve'$.
Hence, since the integrand in \eqref{FWINTSTRONGCONT} is everywhere $\leq 1$,
we see that for any $\xi,\xi'\in [B^{-1},B]$ and
$\vecz,\vecz'\in\{0\}\times\scrB_B^{d-1}$ satisfying $|\xi-\xi'|<\nu$
(or just $<\frac{\eta}2$) and $\|\vecz-\vecz'\|<\nu$, the integral in
\eqref{FWINTSTRONGCONT} is $\leq (2+\vol_{d-1}(\scrW))\ve'=\ve$, as desired.
\end{proof}

\subsection{An important volume function, for $\vecalf\notin\Q^d$} 
\label{IMPORTANTVOLUMEIRRSEC}

The questions treated in the last section become much simpler if we consider the submanifolds $X(\vecy)$ in place of $X_q(\vecy)$. Indeed, let us define, in analogy with \eqref{FR5VARDEF} above:
\begin{align} \label{XYFR5VARDEF}
f_r(c_1,c_2,\sigma,\vecz,\vecy)
:=\nu_\vecy\bigl(\bigl\{g\in X(\vecy) \col 
\#\bigl(\Z^d g \cap (\fZ(c_1,c_2,\sigma)+\vecz)\bigr)
=r\bigr\}\bigr),
\end{align}
with the same domain $\Omega$ as before, and
for $\xi>0$ and $\vecz,\vecw \in \{0\}\times\R^{d-1}$,
\begin{align} \label{XYFR3VARDEF}
F_r(\xi,\vecw,\vecz):=f_r(0,\xi,1,\vecz,\xi\vece_1+\vecw+\vecz).
\end{align}

It will be clear from the context which case of functions $f_r$,
$F_r$ (\eqref{FR5VARDEF}, \eqref{FR3VARDEF} or 
\eqref{XYFR5VARDEF}, \eqref{XYFR3VARDEF}) we are referring to.

\begin{lem} \label{F5VARSYMMLEMMA}
$f_r(c_1,c_2,\sigma,\vecz,\vecy)$ in \eqref{XYFR5VARDEF}
satisfies the same invariance relations
as in the $X_q(\vecy)$-case (see Lemma \ref{F5SYMMLEMMA}),
and also
$f_r(c_1,c_2,\sigma,\vecz,\vecy)=f_r(c_1,c_2,\sigma,\bn,\vecy-\vecz)$.
\end{lem}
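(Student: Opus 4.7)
The plan is to imitate the proof of Lemma \ref{F5SYMMLEMMA} line by line, with Lemma \ref{XYCANMFORMINVARIANCE} (for $\nu_\vecy$ on $X(\vecy)$) playing the role that Lemma \ref{CANMFORMINVARIANCE} played in the $X_q(\vecy)$-case; the new translation identity will then come essentially for free from the genuine $\R^d$-factor in $\ASL(d,\R)$. Writing $\scrE=\{g\in X(\vecy)\col \#(\Z^d g \cap (\fZ(c_1,c_2,\sigma)+\vecz))=r\}$, the strategy in each case is to choose an appropriate $h\in\ASL(d,\R)$, apply $\nu_\vecy(\scrE)=\nu_{\vecy h}(\scrE h)$, and then re-interpret $\scrE h$ as a set of the same form but with transformed parameters.

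For $K=\bigl(\begin{smallmatrix} 1 & \bn \\ \trans\bn & K_1\end{smallmatrix}\bigr)$ with $K_1\in\SO(d-1)$, take $h=(K,\bn)\in\ASL(d,\R)$; then $\Z^d(gh)=(\Z^d g)K$ and $\fZ(c_1,c_2,\sigma)K=\fZ(c_1,c_2,\sigma)$, so $\scrE h=\{g'\in X(\vecy K)\col \#(\Z^d g'\cap (\fZ(c_1,c_2,\sigma)+\vecz K))=r\}$, yielding the orthogonal invariance. The scaling invariance is similarly obtained by taking $h=(T_\delta,\bn)$ with $T_\delta=\diag[\delta^{1-d},\delta,\ldots,\delta]$ and invoking \eqref{FZC1C2SIGMAINV}. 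To extend the orthogonal invariance to $K_1\in\O(d-1)\setminus\SO(d-1)$ it suffices, exactly as in the proof of Lemma \ref{F5SYMMLEMMA}, to treat the single case $K=K_0=\diag[1,\ldots,1,-1]$. Since $\det K_0=-1$ we cannot use Lemma \ref{XYCANMFORMINVARIANCE} directly, but the map $a\col (M,\vecxi)\mapsto (K_0 M K_0,\vecxi K_0)$ is a $\mu$-preserving diffeomorphism of $\ASL(d,\R)$ which descends to $X$ (because $K_0\Gamma K_0=\Gamma$, using $K_0\SL(d,\Z)K_0=\SL(d,\Z)$ and $\Z^d K_0=\Z^d$) and restricts to a $\nu$-preserving bijection $X(\vecy)\to X(\vecy K_0)$ on each fibre. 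Since $(\Z^d g)K_0=\Z^d a(g)$ and $\fZ K_0=\fZ$, one concludes $f_r(c_1,c_2,\sigma,\vecz,\vecy)=f_r(c_1,c_2,\sigma,\vecz K_0,\vecy K_0)$. Note that, in contrast to the $X_q$-case, no auxiliary $\gamma\in\SL(d,\Z)$ is needed, because $\Z^d$ is automatically preserved by $K_0$.

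For the new identity $f_r(c_1,c_2,\sigma,\vecz,\vecy)=f_r(c_1,c_2,\sigma,\bn,\vecy-\vecz)$, take $h=(1_d,-\vecz)\in\ASL(d,\R)$. Then $\vecy h=\vecy-\vecz$, and for any $g\in X(\vecy)$ we have $gh\in X(\vecy-\vecz)$ with $\Z^d(gh)=(\Z^d g)-\vecz$; hence $\#(\Z^d g\cap(\fZ(c_1,c_2,\sigma)+\vecz))=\#(\Z^d(gh)\cap \fZ(c_1,c_2,\sigma))$, and applying Lemma \ref{XYCANMFORMINVARIANCE} closes the argument. This reflects the genuinely affine nature of $\ASL(d,\R)$ and is exactly the ingredient that was unavailable in the $X_q(\vecy)$-setting. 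There is no serious obstacle at any step; the only minor subtlety is the $\det=-1$ case, which is handled by the trick above.
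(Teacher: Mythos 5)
Your proposal is correct and takes essentially the same approach as the paper, which simply says to mimic the proof of Lemma \ref{F5SYMMLEMMA} using Lemma \ref{XYCANMFORMINVARIANCE} in place of Lemma \ref{CANMFORMINVARIANCE} and to apply the extra translation $h=(1_d,-\vecz)$. You correctly fill in the details the paper leaves implicit: in particular your handling of the $\det K_0=-1$ case via the automorphism $(M,\vecxi)\mapsto(K_0 M K_0,\vecxi K_0)$ is the right adaptation, and your observation that the auxiliary $\gamma\in\SL(d,\Z)$ is no longer needed (because $\Z^d K_0=\Z^d$ automatically) is a genuine simplification over the $X_q$-case that the paper's one-line proof sketch does not spell out.
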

\begin{proof}
Cf.\ the proof of Lemma \ref{F5SYMMLEMMA} but use
Lemma \ref{XYCANMFORMINVARIANCE} in place of
Lemma \ref{CANMFORMINVARIANCE}, and also use the transformation
$h=(1_d,-\vecz)\in\ASL(d,\R)$.
\end{proof}

Hence $F_r(\xi,\vecw,\vecz)$ in fact only depends on $\xi$ and $\|\vecw\|$.
(In particular this is true for $\Phi_\vecalf(\xi,\vecw,\vecz)=
F_0(\xi,\vecw,\vecz)$, as pointed out in Remark \ref{PALFSYMMREMARK}.)

\begin{prop} \label{XYCONT3DPROP}
The function $f_r(c_1,c_2,\sigma,\vecz,\vecy)$ in \eqref{XYFR5VARDEF}
is continuous everywhere in $\Omega$.
\end{prop}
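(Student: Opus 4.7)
The plan is to use the identification $X(\vecy)\cong X_1$ via $(M,\vecy)\leftrightarrow M$ (with measure $\mu_1$), Siegel's lattice-point formula (Proposition \ref{SIEGELQPROP} with $q=1$, $\vecalf=\bn$), and the bounded convergence theorem. The proof is much easier than its counterpart Proposition \ref{CONT3DPROP} because, in the irrational setting, Siegel's mean-value formula (Proposition \ref{XYSIEGELPROP}) has only a single extra ``diagonal'' term $F(\vecy)$ instead of the arithmetic sum $\sum_{a/t}F(\tfrac at\vecy)$ of Proposition \ref{XQYSIEGELPROP} that forces discontinuities in the rational case.

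First I would apply Lemma \ref{F5VARSYMMLEMMA} to reduce to the case $\vecz=\bn$: it suffices to prove continuity in $(c_1,c_2,\sigma,\vecw)$ of
\begin{equation*}
h_r(c_1,c_2,\sigma,\vecw):=\mu_1\bigl(\bigl\{M\in X_1\col \#(\Z^d M\cap U(c_1,c_2,\sigma,\vecw))=r\bigr\}\bigr),
\end{equation*}
where $U(c_1,c_2,\sigma,\vecw):=\fZ(c_1,c_2,\sigma)-\vecw$, on the domain $\{0\leq c_1<c_2\leq w_1,\ \sigma\geq 0\}$. Fix a target tuple $(c_1^*,c_2^*,\sigma^*,\vecw^*)$ and any sequence $(c_1^{(n)},c_2^{(n)},\sigma^{(n)},\vecw^{(n)})$ converging to it inside this domain; write $U^*$ and $U^{(n)}$ for the corresponding translated cylinders. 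Note that $\bn\notin\overline{U^*}$ and $\bn\notin\overline{U^{(n)}}$: otherwise $-w_1\in[c_1,c_2]$, which contradicts the domain constraint $w_1\geq c_2>c_1\geq 0$. Hence the $\vecm=\bn$ term in $\Z^d M$ never contributes, so $h_r$ is unchanged if we replace $\Z^d M$ by $\Z^d_*M$.

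Since $\partial U^*$ has Lebesgue measure zero, Proposition \ref{SIEGELQPROP} gives
\begin{equation*}
\int_{X_1}\#\bigl(\Z^d_*M\cap\partial U^*\bigr)\,d\mu_1(M)=\vol(\partial U^*)=0,
\end{equation*}
so for $\mu_1$-a.e.\ $M$ the set $\Lambda:=\Z^d_*M$ avoids $\partial U^*$ entirely. For any such $M$, discreteness of $\Lambda$ and compactness of $\overline{U^*}$ supply some $R>0$ containing all $U^{(n)}$ for $n$ large, and then $\delta:=\min_{\vecm\in\Lambda\cap B_R}\dist(\vecm,\partial U^*)>0$. Since $U^{(n)}\to U^*$ in Hausdorff distance, for $n$ sufficiently large every $\vecm\in\Lambda\cap B_R$ satisfies $\vecm\in U^{(n)}\iff\vecm\in U^*$, whence $\#(\Lambda\cap U^{(n)})=\#(\Lambda\cap U^*)$. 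Therefore the indicator $\chi[\#(\Lambda\cap U^{(n)})=r]$ converges pointwise $\mu_1$-a.e.\ to $\chi[\#(\Lambda\cap U^*)=r]$, and the bounded convergence theorem yields $h_r(c_1^{(n)},c_2^{(n)},\sigma^{(n)},\vecw^{(n)})\to h_r(c_1^*,c_2^*,\sigma^*,\vecw^*)$, as desired. The degenerate boundary $\sigma^*=0$ (where $U^*=\emptyset$) is handled identically, by applying Siegel's formula to the Lebesgue-null segment $\overline{\fZ(c_1^*,c_2^*,0)}$ instead of $\partial U^*$.

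There is no serious technical obstacle here. The one detail worth highlighting is the role of the domain constraint $c_2\leq y_1$: precisely this constraint excludes the unique ``diagonal'' lattice point $\vecy$ (i.e.\ $\vecm=\bn$) from lying in the closed cylinder, which is what allows Siegel's formula to be applied without a singular correction and thereby distinguishes the present case sharply from the rational case treated in Proposition \ref{CONT3DPROP}.
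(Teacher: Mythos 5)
Your proof is correct, and it takes a genuinely different route from the paper's. The paper proves Proposition \ref{XYCONT3DPROP} by the same quantitative device as Proposition \ref{CONT3DPROP}: for two nearby parameter tuples one produces $T\in\SL(d,\R)$ with $\vecy'=\vecy T$ and $\|T-I\|$ small, uses Lemma \ref{XYCANMFORMINVARIANCE} to compare both quantities inside the single slice $X(\vecy)$, bounds the difference by $\int_{X(\vecy)}\sum_{\vecm}\chi_U(\vecm g)\,d\nu_\vecy$ with $U$ the symmetric difference of the two translated cylinders, and then applies Proposition \ref{XYSIEGELPROP} to get $|f_r-f_r'|\leq\vol(U)+\chi_U(\vecy)$; the domain constraint $c_2\leq y_1$ gives $\chi_U(\vecy)=0$, and $\vol(U)$ is small for close parameters. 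This yields a quantitative modulus-of-continuity estimate that exactly parallels the rational case. You instead pass via \eqref{FRASLFORMULA1} to a $\mu_1$-volume on $X_1$, apply Siegel's formula (Proposition \ref{SIEGELQPROP} with $q=1$) to the null set $\partial U^*$ to get that for $\mu_1$-a.e.\ $M$ the set $\Z^d_*M$ avoids $\partial U^*$, and then conclude by pointwise a.e.\ convergence of indicators plus bounded convergence. Both proofs rest on the same two ingredients — a Siegel-type lattice-average formula and the observation that $c_2\leq y_1$ kills the $\vecm=\bn$ contribution — but your finish is soft (a.e.\ convergence and bounded convergence) where the paper's is hard (an explicit $\vol(U)$ bound). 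The paper's hard form is essentially forced on it by the rational case, where the extra arithmetic sum in Proposition \ref{XQYSIEGELPROP} is what makes continuity delicate; in the irrational setting your version is a bit cleaner. Two small imprecisions that do not affect correctness: your assertion $\bn\notin\overline{U^*}$ can fail when $w_1^*=c_2^*$ (the origin can land on $\partial U^*$), but only $\bn\notin U^*$ is needed, and that does follow from $c_1^*<c_2^*\leq w_1^*$; and ``Hausdorff convergence'' is not quite the right invocation for the step where you deduce $\vecm\in U^{(n)}\iff\vecm\in U^*$ for interior points far from the boundary, but the explicit translated-cylinder description of $U^{(n)}$ makes this immediate to check directly.
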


\begin{proof}
This follows by the same method of proof as in Proposition \ref{CONT3DPROP},
but the details are much simpler:
Using Proposition \ref{XYSIEGELPROP} in place of 
Proposition \ref{XQYSIEGELPROP} one finds that 
\eqref{BOUNDFROMSIEGELPROP} is now replaced by
\begin{align}
\Bigl | f_r(c_1,c_2,\sigma,\vecz,\vecy)-f_r(c'_1,c'_2,\sigma',\vecz',\vecy')\Bigr | \leq \vol(U)+\chi_U(\vecy),
\end{align}
and as before one sees that $\chi_U(\vecy)=0$ and that
$\vol(U)$ can be made arbitrarily small by taking
$\langle c'_1,c'_2,\sigma',\vecz',\vecy' \rangle$ close to
$\langle c_1,c_2,\sigma,\vecz,\vecy \rangle$.
\end{proof}

We end by remarking some relations which will be useful in 
Proposition \ref{GENC2PROP} below and in our discussion
of explicit formulas in \cite{partIII}.
First, using \eqref{XYID} and the definition of $\nu_\vecy$ just below
\eqref{XYID} we see that
\begin{align} \label{FRASLFORMULA1}
& f_r(c_1,c_2,\sigma,\vecz,\vecy)
= \mu_1\bigl(\bigl\{M\in X_1 \col
\#\bigl(\Z^d M \cap (\vecz-\vecy+\fZ(c_1,c_2,\sigma))\bigr)
=r\bigr\}\bigr).
\end{align}
In particular we have
\begin{align} \label{FRASLFORMULA1a}
& F_r(\xi,\vecw,\vecz)
= \mu_1\bigl(\bigl\{M\in X_1 \col
\#\bigl(\Z^d M \cap (-\xi\vece_1-\vecw+\fZ(0,\xi,1))\bigr)
=r\bigr\}\bigr).
\end{align}
Here $-\xi\vece_1-\vecw+\fZ(0,\xi,1)$ may be replaced by its 
pointwise negate,
$\xi\vece_1+\vecw-\fZ(0,\xi,1)$, and since $\vecw\in \{0\}\times\R^{d-1}$
this set is seen to equal $\vecw+\fZ(0,\xi,1)$. Hence 
\begin{align} \label{FRASLFORMULA2}
F_r(\xi,\vecw,\vecz) = \mu_1\bigl(\bigl\{M\in X_1 \col
\#\bigl(\Z^d M \cap (\vecw+\fZ(0,\xi,1))\bigr)
=r\bigr\}\bigr).
\end{align}
One may note that this volume is a special case of the limit function
$F_{c,\vecalf,\vecbeta}(r,\sigma)$
obtained in Theorem \ref{visThm2} for $\vecalf=\bn$.
Indeed, using the relation
$\bigl(\vecw+\fZ(0,\xi,1)\bigr)\matr{\xi^{-1}}\bn{\trans\bn}{\xi^{1/(d-1)}}
=\xi^{\frac 1{d-1}}\vecw+\fZ(0,1,\xi^{\frac 1{d-1}})$
we see
$F_r(\xi,\vecw,\vecz)=F_{0,\bn,\vecbeta}(r,\xi^{\frac 1{d-1}})$ 
holds for any choice of function $\vecbeta(\vecv)$ such that
$\bigl\|\Proj_{\{\vecv\}^\perp}\vecbeta(\vecv)\bigr\|=\|\vecw\|$
for all $\vecv\in\S_1^{d-1}$.

\subsection{Differentiability properties} \label{subsecC1}

\begin{prop} \label{GENC1PROP}
For any fixed $\vecalf,\vecbeta,\lambda,r$ (and $c=0$)
as in Theorem \ref{visThm2} with $\vecalf\in\Q^d$, 
the function $F_{0,\vecalf,\vecbeta}(r,\sigma)$ 
defined in \eqref{defF} is $\C^1$ with respect 
to $\sigma>0$. %
\end{prop}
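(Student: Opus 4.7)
The strategy is to reduce to differentiating a single-variable volume function through a simple rescaling, and then to compute that derivative directly by adding a thin slab.

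First I would use the right $\SL(d,\R)$-invariance of $\mu_q$ and substitute $M\mapsto MT_\sigma$ with $T_\sigma=\diag[\sigma^{1-d},\sigma,\ldots,\sigma]\in\SL(d,\R)$. By \eqref{FZC1C2SIGMAINV} this sends $\fZ(0,\sigma)=\fZ(0,1,\sigma)$ to $\fZ(0,\sigma^{d-1},1)$ and the translate $\sigma\beta(\vecv)\vece_2$ to $\beta(\vecv)\vece_2$, where $\beta(\vecv):=\|\Proj_{\{\vecv\}^\perp}\vecbeta(\vecv)\|$. Hence
\begin{equation*}
F_{0,\vecalf,\vecbeta}(r,\sigma)=\int_{\S_1^{d-1}}G(\sigma^{d-1},\beta(\vecv)\vece_2)\,d\lambda(\vecv),
\end{equation*}
where
\begin{equation*}
G(c,\vecz):=\mu_q\bigl(\bigl\{M\in X_q\col\#\bigl((\Z^d+\vecalf)M\cap(\fZ(0,c,1)+\vecz)\bigr)=r\bigr\}\bigr).
\end{equation*}
By the chain rule and differentiation under the $\vecv$-integral (justified by the uniform bound on $|\partial_cG|$ that falls out below), it suffices to show that $G$ is $\C^1$ in $c>0$ with $\partial_cG$ jointly continuous in $(c,\vecz)$.

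To compute $\partial_cG$ at a fixed $c>0$, I would split $\fZ(0,c+\Delta c,1)$ into $\fZ(0,c,1)$ and the thin slab $U_{\Delta c}:=\fZ(c,c+\Delta c,1)+\vecz$ for small $\Delta c>0$, and write $\Delta N(M)=\#((\Z^d+\vecalf)M\cap U_{\Delta c})$, $N(c)(M)=\#((\Z^d+\vecalf)M\cap(\fZ(0,c,1)+\vecz))$. Partitioning $\{N(c+\Delta c)=r\}$ and $\{N(c)=r\}$ by the value of $\Delta N$ gives
\begin{equation*}
G(c+\Delta c,\vecz)-G(c,\vecz)=\mu_q(\{N(c)=r-1,\,\Delta N=1\})-\mu_q(\{N(c)=r,\,\Delta N=1\})+R,
\end{equation*}
where $|R|\leq 2\mu_q(\{\Delta N\geq 2\})$ (the first term being simply absent when $r=0$). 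The integration identity $\int_U\nu_\vecy(\scrE\cap X_q(\vecy))\,d\vecy=\int_\scrE\#(U\cap(\Z^d+\vecalf)M)\,d\mu_q(M)$ implicit in the proof of Proposition~\ref{FOLINTPROP} (see \eqref{FOLINTPROPSTEP2} and the ensuing discussion), combined with Lemma~\ref{FEWWITHTWOLEMMA2}, gives $\mu_q(\{\Delta N\geq 2\})\ll(\Delta c)^2$, up to a logarithmic factor in dimension $d=2$; thus $|R|=o(\Delta c)$. Since $\Delta N=1$ forces $M\in X_q(\vecy)$ for a \emph{unique} $\vecy\in U_{\Delta c}$, the equality case of the same identity yields
\begin{equation*}
\mu_q(\{N(c)=s,\,\Delta N=1\})=\int_{U_{\Delta c}}f_s(0,c,1,\vecz,\vecy)\,d\vecy+O((\Delta c)^2),
\end{equation*}
(again with a logarithmic factor when $d=2$). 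Parametrising $U_{\Delta c}$ via $\vecy=x_1\vece_1+\vecw+\vecz$ with $c<x_1<c+\Delta c$ and $\vecw\in\{0\}\times\scrB_1^{d-1}$, applying Lemma~\ref{FrfrRELLEMMA} to replace $f_s(0,c,1,\vecz,x_1\vece_1+\vecw+\vecz)$ by $F_s(x_1,\vecw,\vecz)$ uniformly up to $o(1)$, and invoking Lemma~\ref{FWINTCONT} for continuity of $x_1\mapsto\int_{\{0\}\times\scrB_1^{d-1}}F_s(x_1,\vecw,\vecz)\,d\vecw$, one obtains
\begin{equation*}
\lim_{\Delta c\to 0^+}\frac{\mu_q(\{N(c)=s,\,\Delta N=1\})}{\Delta c}=\int_{\{0\}\times\scrB_1^{d-1}}F_s(c,\vecw,\vecz)\,d\vecw.
\end{equation*}
The left derivative is obtained identically with $\fZ(c-\Delta c,c,1)$ in place of the slab and gives the same value. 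Consequently
\begin{equation*}
\frac{\partial G}{\partial c}(c,\vecz)=\int_{\{0\}\times\scrB_1^{d-1}}\bigl[F_{r-1}(c,\vecw,\vecz)-F_r(c,\vecw,\vecz)\bigr]\,d\vecw
\end{equation*}
(with the convention $F_{-1}\equiv 0$), and joint continuity of $\partial_cG$ on $\R_{>0}\times(\{0\}\times\R^{d-1})$ is immediate from Lemma~\ref{FWINTCONT}. Since $|\partial_cG|\leq 2\vol_{d-1}(\scrB_1^{d-1})$, differentiating under the $\vecv$-integral and invoking bounded convergence yields the $\C^1$ property together with continuity of $(d/d\sigma)F_{0,\vecalf,\vecbeta}(r,\sigma)$.

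The hard part will be the passage from $f_r$---whose fifth argument $\vecy$ varies across the slab and which is genuinely discontinuous at rational directions of $\vecy$---to the fixed function $F_r$. This rests entirely on the delicate pairing of Lemma~\ref{FEWWITHTWOLEMMA2} (very few lattice points appear twice in a thin slab) with Lemma~\ref{FrfrRELLEMMA} (a quantitative bridge between $f_r$ and $F_r$ that survives the lack of pointwise continuity), topped off by the smoothing effect of integrating $\vecw$ against Lebesgue measure provided by Lemma~\ref{FWINTCONT}. In dimension $d=2$ an unavoidable logarithmic factor appears in the slab estimate, but it is benign since $(\Delta c)^2\log((\Delta c)^{-1})=o(\Delta c)$.
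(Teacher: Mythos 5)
Your proposal is correct and follows essentially the same route as the paper: the same rescaling by a diagonal matrix to normalize the cylinder radius to $1$, the same slab decomposition of the difference quotient, the same pairing of the equality case of Proposition~\ref{FOLINTPROP} with Lemma~\ref{FEWWITHTWOLEMMA2} for the $o(\Delta c)$ remainder, and the same passage $f_r\to F_r$ via Lemma~\ref{FrfrRELLEMMA} followed by Lemma~\ref{FWINTCONT}. The only deviation is organizational: you pull the $\lambda$-integral over $\vecv$ outside first and differentiate under the integral at the end, whereas the paper keeps it inside throughout; this is equivalent given the uniform bounds you note.
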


\begin{proof}
In analogy with \eqref{FZVCSDEF} we define   %
$\fZ_\vecv(c_1,c_2,\sigma):=\fZ(c_1,c_2,\sigma)+\sigma
\bigl\|\Proj_{\{\vecv\}^\perp} \vecbeta(\vecv)\bigr\|\cdot \vece_2$,
so that $\fZ_\vecv(c,\sigma)=\fZ_\vecv(c,1,\sigma)$.
Then
$\fZ_\vecv(c'c,c',{c'}^{-\frac 1{d-1}}\sigma)=\fZ_\vecv(c,1,\sigma)
\begin{pmatrix} c' & \bn \\ \trans\bn & c'^{-1/(d-1)} 1_{d-1} \end{pmatrix}$
for all $c'>0$, and hence,
using also the invariance of $\mu_q$, we have
\begin{align} 
&F_{c,\vecalf,\vecbeta}(r,\sigma)
=(\mu_q\times\lambda)\bigl(\bigl\{(M,\vecv)\in X_q\times \S_1^{d-1}
\col \# \bigl( (\Z^d+\vecalf)M \cap 
\fZ_\vecv(c\sigma^{d-1},\sigma^{d-1},1)\bigr) = r 
\bigr\}\bigr),
\end{align}

To simplify the notation we write $\sigma=\xi^{\frac 1{d-1}}$.
Now, for any $\xi>0$ and $h>0$,
\begin{align} \label{C1PROOFSTEP1}
& \bigl(F_{0,\vecalf,\vecbeta}(r,(\xi+h)^{\frac 1{d-1}})
-F_{0,\vecalf,\vecbeta}(r,\xi^{\frac 1{d-1}})\bigr) /h
\\ \notag
& =h^{-1}\int_{\S_1^{d-1}}
\mu_q\Bigl(\Bigl\{ M\in X_q \col 
\# \bigl( (\Z^d+\vecalf)M \cap 
\fZ_\vecv(0,\xi,1)\bigr) < r, \:
\\ \notag
& \hspace{140pt} \# \bigl( (\Z^d+\vecalf)M \cap 
\fZ_\vecv(0,\xi+h,1)\bigr) = r
\Bigr\}\Bigr)\, d\lambda(\vecv)
\\ \notag
& -h^{-1}\int_{\S_1^{d-1}}
\mu_q\Bigl(\Bigl\{ M\in X_q \col 
\# \bigl( (\Z^d+\vecalf)M \cap 
\fZ_\vecv(0,\xi,1)\bigr) = r, \:
\\ \notag
& \hspace{140pt} \# \bigl( (\Z^d+\vecalf)M \cap 
\fZ_\vecv(0,\xi+h,1)\bigr) > r
\Bigr\}\Bigr)\, d\lambda(\vecv).
\end{align}
If $r\geq 1$, then
using Proposition \ref{FOLINTPROP} and Lemma \ref{FEWWITHTWOLEMMA2}
we find (cf.\ the discussion of \eqref{exactposLIMINFBOUND3} below)
that the first term in the right hand side of \eqref{C1PROOFSTEP1} equals,
as $h\to 0$,
\begin{align}
O\bigl(h\log(h^{-1})\bigr)
+ h^{-1}\int_{\S_1^{d-1}}
\int_{\xi}^{\xi+h} \int_{\{0\}\times \scrB_1^{d-1}}
f_{r-1}(0,\xi,1,\vecz_\vecv,\xi'\vece_1+\vecw+\vecz_\vecv)
\, d\vecw \, d\xi' \, d\lambda(\vecv),
\end{align}
where $\vecz_\vecv:=
\bigl\|\Proj_{\{\vecv\}^\perp} \vecbeta(\vecv)\bigr\|\cdot \vece_2$.
This tends to $\int_{\S_1^{d-1}}\int_{\{0\}\times \scrB_1^{d-1}} 
F_{r-1}(\xi,\vecw,\vecz_\vecv)\,d\vecw \, d\lambda(\vecv)$
as $h\to 0$, by Lemma \ref{FrfrRELLEMMA}.
Treating the second term in \eqref{C1PROOFSTEP1} in the same way
we obtain 
\begin{align} \label{C1PROOFSTEP2}
\lim_{h\to 0^+} & \quad h^{-1}\Bigl(
F_{0,\vecalf,\vecbeta}(r,(\xi+h)^{\frac 1{d-1}})
-F_{0,\vecalf,\vecbeta}(r,\xi^{\frac 1{d-1}})\Bigr)
\\ \notag
& =\int_{\S_1^{d-1}}\int_{\{0\}\times \scrB_1^{d-1}}
\Bigl( F_{r-1}(\xi,\vecw,\vecz_\vecv)-F_{r}(\xi,\vecw,\vecz_\vecv) \Bigr)
\, d\vecw \, d\lambda(\vecv) .
\end{align}
This is valid also for $r=0$ if we define $F_{-1}:\equiv 0$.
Inspecting the proof just carried out and using the uniformity in the
statements of Lemma \ref{FEWWITHTWOLEMMA2} and Lemma \ref{FrfrRELLEMMA} 
we see that the convergence in \eqref{C1PROOFSTEP2} is uniform
with respect to $\xi$ in any compact subset of $\R_{>0}$.
Hence the formula \eqref{C1PROOFSTEP2} is also valid in the limit $h\to 0^-$,
and Lemma~\ref{FWINTCONT} gives that $F_{0,\vecalf,\vecbeta}(r,\sigma)$
is indeed $\C^1$ with respect to $\sigma$.
We also note that \eqref{C1PROOFSTEP2} gives an explicit formula for the 
derivative.
\end{proof}

\begin{remark} \label{AFTERGENC1PROP}
The explicit formula for the derivative, \eqref{C1PROOFSTEP2},
specializes to the formula \eqref{PALFBETSPECREMFORMULA} in 
Remark \ref{PALFBETSPECREM}
in the case $r=0$.
(For recall \eqref{PALFBETDEF}, 
$F_0(\xi,\vecw,\vecz)=\Phi_\vecalf(\xi,\vecw,\vecz)$,
and Remark~\ref{invarem}.)

We also note that the argument in the above proof applies without
changes to the case when $\lambda$ is a (not absolutely continuous) 
probability measure which gives mass one to a single point. Hence
for each $\vecz\in\{0\}\times\R^{d-1}$ we have
\begin{align} \label{C1PROOFSTEP2SINGLETON}
\frac d{d\xi} \mu_q\bigl(\bigl\{M \in X_q\col
(\Z^d+\vecalf)M \cap (\fZ(0,\xi,1)+\vecz)=\emptyset \bigr\}\bigr)
=-\int_{\{0\}\times\scrB_1^{d-1}} \Phi_\vecalf(\xi,\vecw,\vecz)\, d\vecw;
\end{align}
in particular the derivative in the left hand side %
is a continuous function of $\xi$, cf.\ Lemma \ref{FWINTCONT}.
The set in the left hand side of
\eqref{C1PROOFSTEP2SINGLETON} has $\mu_q$-measure tending to $1$ as
$\xi\to 0^+$ and tending to $0$ as $\xi\to\infty$,
cf.\ the proof of Remark \ref{GENBETAUPPERBOUNDREM}
in Section \ref{GENBETAUPPERBOUNDPROOFSEC}.
Hence, integrating \eqref{C1PROOFSTEP2SINGLETON} over $\xi\in\R_{>0}$
we deduce the formula \eqref{PALFINTEQ1} in Remark \ref{PALFBETSPECREM},
$\int_0^\infty \int_{\{0\}\times\scrB_1^{d-1}} \Phi_\vecalf(\xi,\vecw,\vecz)
\, d\vecw \,d\xi=1$.
\end{remark}

Next we turn to the case $\vecalf\notin\Q^d$.
Recall that in this case $F_{c,\vecalf,\vecbeta}(r,\sigma)$
is independent of $\vecbeta,\lambda,\vecalf$, and we have introduced
the notation $F_c(r,\sigma)$ for this function.
Proposition \ref{GENC2PROP} and the ensuing remarks carry over directly
to the case $\vecalf\notin\Q^d$, with the usual changes of notation.
However, we can say more:
\begin{prop} \label{GENC2PROP}
For any fixed $0\leq c<1$ and $r\in\Z_{\geq 0}$
the function $F_{c}(r,\sigma)$ 
is $\C^2$ with respect to $\sigma>0$. %
\end{prop}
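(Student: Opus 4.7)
The plan is to apply the boundary-integration technique used in the proof of Proposition \ref{GENC1PROP} twice in succession. Throughout the argument, I exploit the structural advantages of the irrational setting: the volume function $f_r(c_1,c_2,\sigma,\vecz,\vecy)$ of \eqref{XYFR5VARDEF} is continuous everywhere on $\Omega$ by Proposition \ref{XYCONT3DPROP}, rather than only quasi-continuous as in the rational case; Lemma \ref{XYFEWWITHTWOLEMMA} delivers a clean $O((c_2-c_1)^2)$ bound on two-points-in-a-slab events, without the logarithmic factor present in dimension two of the rational case; and Propositions \ref{XYFOLINTPROP}, \ref{XYSIEGELPROP} replace their more delicate $X_q$-counterparts.

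First I would redo the proof of Proposition \ref{GENC1PROP} in the irrational setting and for arbitrary $c\in[0,1)$. Setting $\xi=\sigma^{d-1}$, using the scaling $\fZ(c,1,\sigma)\,T_\sigma^{-1}=\fZ(c\sigma^{d-1},\sigma^{d-1},1)$ together with right-invariance of $\mu$ yields $G_c(r,\xi):=F_c(r,\xi^{1/(d-1)})=\mu(\{g\in X:\#(\Z^d g\cap\fZ(c\xi,\xi,1))=r\})$. As $\xi$ advances to $\xi+h$ the cylinder $\fZ(c\xi,\xi,1)$ gains the flat top-slab $\fZ(\xi,\xi+h,1)$ and loses the flat bottom-slab $\fZ(c\xi,c\xi+ch,1)$, both of which are of the form to which Lemma \ref{XYFEWWITHTWOLEMMA} applies; joint multi-point events in either slab therefore contribute only $O(h^2)$ without any logarithmic factor. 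Applying the equality case of Proposition \ref{XYFOLINTPROP} on each slab to parameterize by the unique crossing point, and the everywhere-continuity of $f_r$ (Proposition \ref{XYCONT3DPROP}) to pass to $h\to 0$, one obtains a closed-form expression for $\frac{d}{d\xi}G_c(r,\xi)$ as the sum of two $(d-1)$-dimensional boundary integrals over the top and bottom faces of $\fZ(c\xi,\xi,1)$, each of the form $\int_{\{0\}\times\scrB_1^{d-1}}[f_{r'}-f_{r'\pm 1}]\,d\vecw$. Smoothness of the diffeomorphism $\sigma\mapsto\xi=\sigma^{d-1}$ on $\R_{>0}$ then transfers this $\C^1$ property to $F_c(r,\sigma)$ in $\sigma$.

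Second, to upgrade from $\C^1$ to $\C^2$, I differentiate the first-derivative formula once more. Each integrand appearing in it is itself the measure on a slice $X(\vecy)$ of an event of the same type---``exactly $r$ points of the lattice in a cylinder''---so the preceding scheme applies verbatim within each slice after using the identification $X(\vecy)\cong X_1$ of \eqref{XYID}: another flat-slab boundary analysis, with Proposition \ref{XYSIEGELPROP} in place of Siegel's formula and Lemma \ref{XYFEWWITHTWOLEMMA} controlling the slice-level multi-point events, produces the partial derivative of the integrand as a continuous function of $\xi$ given by yet another boundary integral. The main obstacle is securing \emph{uniformity} of these estimates as the face-parameter $\vecw$ ranges over a compact set, so that dominated convergence permits interchanging differentiation with the $\vecw$-integration; this uniformity is built into the implied constants of Proposition \ref{XYCONT3DPROP} and Lemma \ref{XYFEWWITHTWOLEMMA}, which depend only on a priori bounded parameter ranges. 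Once these uniformities are verified, one obtains continuity of $d^2F_c(r,\sigma)/d\sigma^2$ in $\sigma>0$, together with an explicit formula for it as an iterated boundary integral over the faces and edges of the cylinder $\fZ(c\xi,\xi,1)$.
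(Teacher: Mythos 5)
Your overall strategy coincides with the paper's: scale to express $F_c(r,\xi^{1/(d-1)}) = \mu(\{g\in X : \#(\Z^d g\cap\fZ(c\xi,\xi,1))=r\})$, differentiate once by a boundary-face analysis with Proposition \ref{XYFOLINTPROP} and Lemma \ref{XYFEWWITHTWOLEMMA}, rewrite the result on $X_1$ via $X(\vecy)\cong X_1$ (that is, via \eqref{FRASLFORMULA2}), and differentiate again. Keeping $c$ fixed and tracking two moving faces, rather than first reducing to $c=0$ via $F_c(r,\sigma)=F_0(r,\sigma(1-c)^{1/(d-1)})$ as the paper does, is a harmless variation.

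The genuine gap is in the second differentiation. Once you pass through the identification $X(\vecy)\cong X_1$, the first-derivative integrand is a $\mu_1$-measure of an event on $X_1=\SLSL$, i.e.\ the case $\vecalf=\vecnull$, $q=1$. To differentiate once more you must foliate $X_1$ by the submanifolds $X_1(\vecy')=X_q(\vecy')|_{q=1}$ of Section~\ref{SUBMSECTION}, so the tools you actually need are the \emph{rational} ($q=1$) machinery: Proposition~\ref{FOLINTPROP}, Proposition~\ref{XQYSIEGELPROP}, Lemma~\ref{FEWWITHTWOLEMMA2}, Lemma~\ref{FrfrRELLEMMA}, and Lemma~\ref{FWINTCONT}. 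You instead cite Proposition~\ref{XYSIEGELPROP} and Lemma~\ref{XYFEWWITHTWOLEMMA}; these concern the submanifolds $X(\vecy)\subset X=\ASLASL$ of Section~\ref{XYSEC} and are not available after you have already fixed a slice and transported it to $X_1$. The paper is explicit on this point: it says to rerun the argument of Proposition~\ref{GENC1PROP} ``with $\vecalf=\bn$''. A secondary consequence is that your stated reason for the proof working — the ``structural advantages of the irrational setting'', in particular the everywhere-continuity of $f_r$ from Proposition~\ref{XYCONT3DPROP} and the absence of the $\log$ factor in Lemma~\ref{XYFEWWITHTWOLEMMA} — holds only for the first differentiation. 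In the second step you inherit the logarithmic factor of Lemma~\ref{FEWWITHTWOLEMMA2} in dimension $d=2$ and only the quasi-continuity of Proposition~\ref{CONT3DPROP}. This does not obstruct the conclusion, since the resulting error is $O(h\log h^{-1})\to0$ and Lemmas~\ref{FrfrRELLEMMA}, \ref{FWINTCONT} supply the needed uniformity, but the justification must be rewritten in terms of the rational theory; as it stands your argument invokes lemmas that do not apply at the decisive step.
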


\begin{proof}
The function $F_{c}(r,\sigma)$ satisfies the invariance relation
$F_{c}(r,\sigma)=F_{0}(r,\sigma(1-c)^{\frac 1{d-1}})$,
which follows directly from the definition \eqref{FCALFRSIGMDEF},
using the right $\ASL(d,\R)$-invariance of $\mu$.
Hence we may from now on assume $c=0$.

Arguing as in the proof of Proposition \ref{GENC1PROP} we prove that
$F_{0}(r,\sigma)$ is $\C^1$ with respect to
$\sigma$. The explicit formula \eqref{C1PROOFSTEP2}
is still valid (with $F_r(\xi,\vecw,\vecz)$ now being given by
\eqref{XYFR3VARDEF}, \eqref{XYFR5VARDEF}), although the integration over
$\S_1^{d-1}$ may be skipped since in this case $F_r(\xi,\vecw,\vecz)$ is 
independent of $\vecz$.
Rewriting \eqref{C1PROOFSTEP2} using \eqref{FRASLFORMULA2} we get
\begin{align} \label{C2PROOFSTEP1}
\frac{d}{d\xi}F_{0}(r,\xi^{\frac 1{d-1}})
=&\int_{\{0\}\times \scrB_1^{d-1}} \mu_1\bigl(\bigl\{M\in X_1 \col
\#\bigl(\Z^d M \cap (\vecw'+\fZ(0,\xi,1))\bigr)
=r-1\bigr\}\bigr)\,d\vecw'
\\ \notag
&-\int_{\{0\}\times \scrB_1^{d-1}} \mu_1\bigl(\bigl\{M\in X_1 \col
\#\bigl(\Z^d M \cap (\vecw'+\fZ(0,\xi,1))\bigr)
=r\bigr\}\bigr)\,d\vecw'.
\end{align}
But here the right hand side can again be differentiated with respect to $\xi$,
by repeating the argument in the proof of Proposition \ref{GENC1PROP}
(with %
``$\vecalf=\bn$'' and letting $\vecw'$ play the role of $\vecz_\vecv$ in
that proof);
this leads to
\begin{align} \label{DDFFORMULA}
\frac{d^2}{d\xi^2}F_{0}(r,\xi^{\frac 1{d-1}})
=\int_{\{0\}\times \scrB_1^{d-1}} \int_{\{0\}\times \scrB_1^{d-1}} 
\Bigl( F_{r-2}^{(\vecalf=\bn)}(\xi,\vecw,\vecw')
-2F_{r-1}^{(\vecalf=\bn)}(\xi,\vecw,\vecw')&
\\ \notag
+F_{r}^{(\vecalf=\bn)}(\xi,\vecw,\vecw')&\Bigr)
\, d\vecw \, d\vecw',
\end{align}
where ``$F_{r}^{(\vecalf=\bn)}$'' means ``$F_r$ as in 
\eqref{FR3VARDEF}, \eqref{FR5VARDEF} with $\vecalf=\bn$, $q=1$''
(and we understand $F_{-2}^{(\vecalf=\bn)}:\equiv 0$ and 
$F_{-1}^{(\vecalf=\bn)}:\equiv 0$).
Hence (for our $\vecalf\notin\Q^d$) $F_{0}(r,\sigma)$
is indeed $\C^2$ with respect to $\sigma$, cf.\ Lemma~\ref{FWINTCONT}.
\end{proof}
\begin{remark}
The formula \eqref{DDFFORMULA} generalizes \cite[Eq.\ (34)]{SV} from $d=2$
to general $d$.
\end{remark}

\subsection{A uniform bound} \label{GENBETAUPPERBOUNDPROOFSEC}

In this section we prove the two bounds in Remark \ref{GENBETAUPPERBOUNDREM}.
If $\vecalf\in\Q^d$ we note that for each $\vecv\in\S_1^{d-1}$ we have,
by Proposition \ref{SIEGELQPROP},
\begin{align} \notag
\mu_q\bigl(\bigl\{M\in X_q\col (\Z^d+\vecalf)M\cap\fZ_\vecv(c,\sigma)
=\emptyset\bigr\}\bigr)
\geq 1-\int_{X_q} \#\bigl((\Z^d+\vecalf)M\cap\fZ_\vecv(c,\sigma)\bigr)\,
d\mu_q(M)
\\ \label{GENBETAUPPERBOUNDPROOF1}
=1-\vol\bigl(\fZ_\vecv(c,\sigma)\bigr)=1-v_d(1-c)\sigma^{d-1}.
\end{align}
Integrating over $\vecv\in\S_1^{d-1}$ with respect to the measure
$\lambda$ (cf.\ the definition \eqref{defF}) we obtain the first bound in
\eqref{GENBETAUPPERBOUND1}; the second one follows using
$\sum_{r=0}^\infty F_{c,\vecalf,\vecbeta}(r,\sigma)=1$.

In the case $\vecalf\notin\Q^d$ the bound \eqref{GENBETAUPPERBOUND1} 
follows using \eqref{ASLDRHAAR}, \eqref{ASLFUNDDOM}
and a computation as in \eqref{GENBETAUPPERBOUNDPROOF1}, noticing
$\int_{[0,1)^dM} \#\bigl((\Z^dM+\vecxi)\cap\fZ_\vecv(c,\sigma)\bigr)\, 
d\vecxi=\vol\bigl(\fZ_\vecv(c,\sigma)\bigr)$ for each $M\in\F_1$.

The bound \eqref{GENBETAUPPERBOUND2} is a direct consequence of the 
following lemma.

\begin{lem} \label{TRIVBOUNDVLARGEGENLEMMA}
If $r\in\Z_{\geq 0}$ and $B$ is any translate
of a cylinder $\fZ(c_1,c_2,\sigma)$ (cf.\ \eqref{FZC1C2DEF}) in $\R^d$ 
of volume $V$, then
\begin{align} \label{TRIVBOUNDVLARGEGEN}
& \mu_q\bigl(\bigl\{M\in X_q\col \#((\Z^d+\vecalf)M\cap B)\leq r\bigr\}\bigr)
\ll V^{-1}, & &
\forall \vecalf\in q^{-1}\Z^d;
\\ \notag
& \text{and } \quad
\mu\bigl(\bigl\{g\in X\col \#(\Z^dg\cap B)\leq r\bigr\}\bigr)\ll V^{-1}.
\end{align}
The implied constants depend only on $r,d$.
\end{lem}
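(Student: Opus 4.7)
I would proceed by Chebyshev's inequality applied to $N$, where $N(M) := \#((\Z^d+\vecalf)M \cap B)$ in the $X_q$ case and $N(g) := \#(\Z^d g\cap B)$ in the $X$ case. For $V \leq 2(r+1)$ the bound is trivial (with implied constant depending on $r,d$), so assume $V > 2(r+1)$. It then suffices to prove $\Var(N) \ll V$: combined with $\EE(N) = V + O(1)$, which follows from Proposition \ref{SIEGELQPROP} (with $F = \chi_B$) in the $X_q$ case and directly from \eqref{ASLDRHAAR}/\eqref{ASLFUNDDOM} in the $X$ case, Chebyshev yields
\[
\mu\bigl(\{N\leq r\}\bigr) \leq \mu\bigl(\{|N - \EE N|\geq V/2\}\bigr) \leq \frac{4\Var(N)}{V^2} \ll V^{-1}.
\]

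For the $X$ case the variance is exactly $V$. Writing $g=(M,\vecxi)$ with $\vecxi$ ranging over a fundamental domain for $\R^d/\Z^d M$ as in \eqref{ASLFUNDDOM}, expanding
$N(g)^2 = \sum_{\vecm_1,\vecm_2\in\Z^d}\chi_B(\vecm_1 M+\vecxi)\chi_B(\vecm_2 M+\vecxi)$, and making the substitution $\vecu = \vecm_1 M + \vecxi$ unfolds the sum over $\vecm_1$ to an integral over $\R^d$:
\[
\int_{\R^d/\Z^d M} N((M,\vecxi))^2\,d\vecxi = V + \sum_{\vecl\in\Z^d\setminus\{\bn\}} \vol\bigl(B\cap(B-\vecl M)\bigr).
\]
Integrating over $M\in X_1$ and applying Proposition \ref{SIEGELQPROP} (with $q=1$, $\vecalf=\bn$) to $F(\vecx) := \vol(B\cap(B-\vecx))$, the residual sum converts via Fubini to $\int_{\R^d} F = V^2$. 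Hence $\int_X N^2\,d\mu = V + V^2$, so $\Var(N)=V$, and we are done in this case.

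For the $X_q$ case the argument is parallel but uses the $X_q(\vecy)$-foliation. A direct extension of Proposition \ref{FOLINTPROP} to non-negative integrands (same proof) yields
\[
\int_B \int_{X_q(\vecy)} N(M)\,d\nu_\vecy(M)\,d\vecy = \int_{X_q} N(M) N^*(M)\,d\mu_q(M) = \EE(N^2) + O(V),
\]
where $N^*(M) := \#\{\veck\in\Z^d+\vecalf\setminus\{\bn\} \col \veck M\in B\} = N(M) - O(1)$. For $d\geq 3$, Proposition \ref{XQYSIEGELPROP} evaluates the inner integral as $V$ plus a sum over rationals $a/t$; integrating over $\vecy\in B$ and swapping the $\vecy$-integration with the $(a,t)$-sum reduces the second-moment bound $\EE(N^2) \leq V^2 + O(V)$ to showing
\[
\sum_{\substack{(a,t)\neq(1,1) \\ (t,q)=1,\,a\in t+q\Z,\,(a,t)=1}} |a|^{-d}\,\vol\bigl(B\cap(a/t)B\bigr) \ll V.
\]
One first uses the right-$\SL(d,\R)$-invariance of $\mu_q$ together with the scaling matrix $T_\sigma$ from \eqref{FZC1C2SIGMAINV} to reduce to $\sigma=1$ (so $L := c_2 - c_1 = V/v_{d-1}$), then estimates $\vol(B\cap \rho B)$ for $\rho = a/t$ by bounding the cross-sectional overlap ($\ll \min(1,|\rho|)^{d-1}$) and the $x_1$-overlap ($\ll L/\max(1,|\rho - 1|)$), splitting the sum over $(a,t)$ according to $|a|\lessgtr|t|$.

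\textbf{The main obstacle} is this uniform arithmetic-geometric estimate on the residual sum, which requires careful case analysis based on the position and aspect ratio of the cylinder $B$, particularly in the transitional range $|\rho|\approx 1$. In dimension $d=2$ there is an additional complication: Proposition \ref{XQYSIEGELPROP} is replaced by the merely truncated Proposition \ref{XQYSIEGELPROPD2}, so one runs the argument on $X_q^{(t_0)}(\vecy)$ with a carefully chosen $t_0$, using the tail bound $\nu_\vecy(X_q(\vecy)\setminus X_q^{(t_0)}(\vecy))\ll t_0^{-1}$ together with the extra ``line'' contribution on the right-hand side of \eqref{XQYSIEGELPROPD2BOUND}, and balancing the truncation against the desired $O(V)$ upper bound.
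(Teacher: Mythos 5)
Your Chebyshev / second--moment plan is a genuinely different route from the paper's. The paper instead works directly with the Iwasawa decomposition $M=\nn\aa\kk$ on a Siegel set: after normalizing $B$ to be a ``cube-like'' cylinder ($c_2-c_1=\sigma$) by $\SL(d,\R)$-invariance, it observes that if the top Iwasawa parameter $a_1$ satisfies $a_1\ll_{d,r} V^{1/d}$ then the rows $\vecb_1,\ldots,\vecb_d$ of $M$ all have length $\ll a_1$, so one can exhibit $r+1$ lattice points of $(\Z^d+\vecalf)M$ within distance $\ll a_1$ of the center of $B-\vecalf M$; these automatically lie in $B$, forcing $\#\bigl((\Z^d+\vecalf)M\cap B\bigr)>r$. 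Thus the bad set is contained in $\{a_1\gg V^{1/d}\}$, whose measure is $\ll V^{-1}$ by the standard Siegel-set volume estimate. This argument is uniform in all $d\geq 2$ and completely avoids any arithmetic sum estimates.

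Your approach buys something conceptually natural --- in the $X$ case the variance is exactly $V$, so Chebyshev gives the bound cleanly and in one line, and the calculation is entirely parallel to classical sieve second-moment arguments. But the price is steep in the $X_q$ case, which is exactly where you flag ``the main obstacle.'' What you have there is not yet a proof: the uniform estimate $\sum_{(a,t)\neq(1,1)}|a|^{-d}\vol(B\cap(a/t)B)\ll V$, uniformly over the position of the cylinder (including $B$ straddling the origin, $B$ far from the origin, and $B$ with large perpendicular offset $\vecz$), requires several cases and is not established by the inequality $\vol(B\cap\rho B)\ll\min(1,|\rho|)^{d-1}\,L/\max(1,|\rho-1|)$ as written --- this bound in fact holds, but it needs to be combined with the constraints $|a-t|\in q\Z_{\neq 0}$, $a\equiv t\pmod q$, and the observation that only a narrow band of $a/t$ near $1$ can contribute when $c_1$ or $\|\vecz\|$ is large, and the resulting sum must still be shown to converge. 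For $d=2$ none of the sums are absolutely convergent and you would need to run the full truncation argument with Proposition \ref{XQYSIEGELPROPD2} and balance $t_0$ against $V$; this is the most delicate part and it is where the paper's approach shows its superiority, handling all dimensions by the same geometric observation. In short: the plan is a legitimate alternative in principle and is complete in the $X$ case, but the $X_q$ case (especially $d=2$) is left as a nontrivial sketch, whereas the paper's Siegel-set argument is shorter, dimension-uniform, and avoids these arithmetic estimates entirely.
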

\begin{proof}
The proof uses the methods in 
\cite[section 3.6]{Marklof00}, but note that we work with a slightly different
notation in the present paper.
We will prove the first bound in \eqref{TRIVBOUNDVLARGEGEN};
the proof of the second bound is quite similar.
Since both sides in the inequality remain invariant if $B$ is replaced by
$B M_0$ for any $M_0\in\SL(d,\R)$, we may assume without loss of
generality that $B$ is a translate
of a cylinder $\fZ(c_1,c_2,\sigma)$ with $c_2-c_1=\sigma$.

Every element $M\in\SL(d,\R)$ has a unique Iwasawa decomposition
$M=\nn \aa \kk$, where $\nn$ belongs to the group $N$ of upper triangular 
matrices with $1$s on the diagonal, $\aa$
is diagonal with positive diagonal
elements, %
and $\kk\in\SO(d)$.
We let $\F_N$ be the set of all matrices in $N$ for which all entries
above the diagonal lie in the interval $(-\frac 12,\frac 12]$,
and introduce the following Siegel set 
(denoting $\aa=\text{diag}[a_1,\ldots,a_d]$):
\begin{align}
\Si:=\Bigl\{\nn \aa \kk \col \nn\in \mathcal{F}_N,\: 
0<a_{j+1} \leq \sfrac{2}{\sqrt 3}a_j \: (j=1,\ldots,d-1), 
\: \kk \in \SO(d) \Bigr\}.
\end{align}
It is known that $\Si$ contains a fundamental domain for
$X_1=\SL(d,\Z)\backslash\SL(d,\R)$; we fix $\F\subset\Si$ to be one
such fundamental domain (in the set-theoretic sense).
Choose representatives
$T_j\in\SL(d,\Z)$ so that $\SL(d,\Z)=\bigsqcup_{j=1}^m \Gamma(q) T_j$
(disjoint union);
then $\bigsqcup_{j=1}^m T_j \F$ is a fundamental domain for $X_q=\Gamma(q)
\backslash\SL(d,\R)$.

Now let $M$ be any element in $\bigsqcup_{j=1}^m T_j \F$. Choose $j$
so that $T_j^{-1}M\in\F$, let the Iwasawa decomposition of this matrix 
be $T_j^{-1}M=\nn\aa\kk$, and let the row vectors of the same matrix be
$\vecb_1,\ldots,\vecb_d\in\R^d$. Then using $\nn\in\F_N$ and
$\nn\aa\kk\in\Si$ we see that $\|\vecb_k\|\leq\sum_{j=1}^d a_j
\ll_d a_1$ for each $k=1,\ldots,d$.
Using $T_j\in\SL(d,\Z)$ we see that
$\#((\Z^d+\vecalf)M\cap B)=
\#((\Z\vecb_1+\ldots+\Z\vecb_d) \cap (B-\vecalf M))$.
Choose $\xi_1,\ldots,\xi_d\in\R$ so that $\xi_1\vecb_1+\ldots+\xi_d\vecb_d$ 
is the center of the cylinder $B-\vecalf M$,
and take $m_1,\ldots,m_d\in\Z$ so that $\xi_k-m_k\in (-\frac 12,\frac 12]$
for each $k$.
Then the distance from $\xi_1\vecb_1+\ldots+\xi_d\vecb_d$ 
to any of the lattice points $m_1\vecb_1+\ldots+m_d\vecb_d+j\vecb_d$,
for $j=0,\ldots,r$,
is $\leq\frac 12\bigl(\|\vecb_1\|+\ldots+\|\vecb_d\|\bigr)
+r\|\vecb_d\|\ll_{d,r} a_1$.  %
Hence using our assumption $c_2-c_1=\sigma$, we see that if 
$a_1\ll_{d,r} V^{1/d}$ then all these lattice points lie in 
$B-\vecalf M$, so that $\#((\Z^d+\vecalf)M\cap B)>r$.
Hence the left hand side in \eqref{TRIVBOUNDVLARGEGEN} is
$\leq \sum_j \mu_q(\{M \col T_j^{-1}M=\nn\aa\kk\in\F,\: a_1\gg V^{1/d}\})$.
Using \eqref{SLDRHAARQ} and the invariance of $\mu_1$ we see that this is
$\leq \mu_1(\{M=\nn\aa\kk\in\Si\col a_1\gg V^{1/d}\})$,
and as in \cite[section 3.6]{Marklof00} %
we see that this is $\ll V^{-1}$.
\end{proof}

\subsection{Analogous results for Section \ref{secVisible0}}
\label{IMPORTANTSECV0}
In this section we indicate how most parts of the development in sections 
\ref{IMPORTANTVOLUMESEC}--\ref{subsecC1} carry over to the setting
of Section \ref{distrinsmalldiscssec}, leading to a proof of
the claim in Remark \ref{visThm0rem} that the function 
$E_{0,\vecalf}(r,\sigma)$ is $\C^1$ with respect to $\sigma$.

For any $0\leq c_1\leq c_2$ and $\sigma\geq 0$ we let 
\begin{align} \label{FCCCSDEF}
	\fC(c_1,c_2,\sigma) =\bigg\{(x_1,\ldots,x_d)\in\RR^d \col c_1< x_1 < c_2, \: \|(x_2,\ldots,x_d)\|< \sigma x_1  \bigg\},
\end{align}
so that in particular $\fC(c,\sigma)$ (cf.\ \eqref{FCCSDEF})
equals $\fC(c,1,A(c,\sigma))$ up to a set of measure zero.
(The reason for using ``$\leq$'' in \eqref{FCCSDEF} is to make \eqref{holds}
below true without modification also when $\sigma_\infty=0$.)
Given $r\in\Z_{\geq 0}$ and $\vecalf \in q^{-1}\Z^d$
we now introduce the function
\begin{align} \label{GR5VARDEF}
g_r(c_1,c_2,\sigma,\vecz,\vecy)
:=\nu_\vecy\bigl(\bigl\{M\in X_q(\vecy) \col 
\#\bigl((\Z^d+\vecalf)M \cap (\fC(c_1,c_2,\sigma)+\vecz)\bigr)
=r\bigr\}\bigr)
\end{align}
with domain $\Omega$ as in \eqref{OMEGADEF}.
Thus $g_r(c_1,c_2,\sigma,\vecz,\vecy)$ is defined exactly as
$f_r(c_1,c_2,\sigma,\vecz,\vecy)$ in \eqref{FR5VARDEF} except that
we use $\fC(c_1,c_2,\sigma)$ in place of $\fZ(c_1,c_2,\sigma)$.
We also write, in analogy with \eqref{FR3VARDEF}, 
for $\xi>0$ and $\vecz,\vecw \in \{0\}\times\R^{d-1}$,
\begin{align} \label{GR3VARDEF}
G_r(\xi,\vecw,\vecz)=g_r(0,\xi,1,\vecz,\xi\vece_1+\vecw+\vecz).
\end{align}
Now the discussion in Section \ref{IMPORTANTVOLUMESEC} up to and including
Proposition \ref{CONT3DPROP} carries over to the
case of $g_r(c_1,c_2,\sigma,\vecz,\vecy)$ with very minor changes. 
In particular, if we replace
$\fZ(c_1,c_2,\sigma)$ by $\fC(c_1,c_2,\sigma)$ in the definition of
$s(\delta)$, \eqref{SDELTADEF}, and replace the definition of 
$\Omega_C$ in the case $d=2$ (cf.\ \eqref{OMEGACDEF}) by
\begin{align}\label{altOmegaC}
\Omega_C:=
\{\langle c_1,c_2,\sigma,\vecz,\vecy\rangle \in \Omega
\:\col\: \sigma,\|\vecz\|,\|\vecy\| \leq C;\:\: 
C^{-1}\leq |y_1|,|y_2\pm\sigma y_1|\},
\end{align}
then \textit{the statement of Proposition \ref{CONT3DPROP} holds with 
$g_r(c_1,c_2,\sigma,\vecz,\vecy)$ in place of
$f_r(c_1,c_2,\sigma,\vecz,\vecy)$.}
Using this, one then also proves that \textit{the statements of 
Lemma \ref{FrfrRELLEMMA}, Lemma \ref{FRMEASURABLELEM} and
Lemma~\ref{FWINTCONT} hold with
$g_r,G_r$ in place of $f_r,F_r$}, with the only difference %
that
\textit{in Lemma \ref{FrfrRELLEMMA} the condition 
``[and if $d=2$: $\|\vecw+\vecz\|\geq C^{-1}$]'' must be replaced with
``[and if $d=2$: $\bigl | \|\vecw+\vecz\|-\xi \bigr | \geq C^{-1}$]''.}

Similarly, in the case $\vecalf\notin\Q^d$, the discussion in 
Section \ref{IMPORTANTVOLUMEIRRSEC} up to and including
Proposition~\ref{XYCONT3DPROP} carries over in the obvious way
to the function
\begin{align} \label{XYGR5VARDEF}
g_r(c_1,c_2,\sigma,\vecz,\vecy)
:=\nu_\vecy\bigl(\bigl\{g\in X(\vecy) \col 
\#\bigl(\Z^d g \cap (\fC(c_1,c_2,\sigma)+\vecz)\bigr)
=r\bigr\}\bigr).
\end{align}
Also the formulas \eqref{FRASLFORMULA1} and \eqref{FRASLFORMULA1a} carry
over, but \textit{(\ref{FRASLFORMULA2}) does not carry over},
since the cone $\fC(0,\xi,1)$ does not have the necessary symmetry.

Now Proposition \ref{GENC1PROP} carries over,
i.e.\ the function $E_{0,\vecalf}(r,\sigma)$ is $\C^1$ with respect to 
$\sigma>0$ for any fixed $\vecalf\in\R^d$ and $r\in\Z_{\geq 0}$,
as claimed.
We remark that in the proof of this we actually only need 
\eqref{GR5VARDEF} with $\vecz=\bn$. The analog of the formula
\eqref{C1PROOFSTEP2} is
\begin{align} \label{GC1PROOFSTEP2}
\frac{d}{d\xi}
E_{0,\vecalf}\Bigl(r,\sfrac{\vol(\scrB_1^{d-1})}{d}\xi^{d}\Bigr)
=\int_{\{0\}\times \scrB_\xi^{d-1}}
\Bigl( G_{r-1}(\xi,\vecw,\bn)-G_{r}(\xi,\vecw,\bn) \Bigr)
\, d\vecw &.
\end{align}

\vspace{5pt}

Finally we turn to the special case $d=2$ and $\vecalf\in\Q^2$
(say $\vecalf\in q^{-1}\Z^2$ with $q$ minimal).
We intend to prove \eqref{E0ALF0SMALLS} in Section \ref{VISLPTS},
i.e.\ that $E_{0,\vecalf}(0,\sigma)=1-\kappa_q \sigma$ holds for all 
$\sigma\in \bigl[0,(2q)^{-1}\bigr]$.
Clearly, by \eqref{GC1PROOFSTEP2}, it suffices to prove that
if $0<\xi\leq (2q)^{-1/2}$ and $|w|<\xi$ then
$G_0(\xi,w\vece_2,\bn)=\kappa_q$, i.e.,
\begin{align} \label{E0ALF0SMALLSSTEP1}
\nu_\vecw\bigl(\bigl\{M\in X_q(\vecw) \col 
(\Z^2+\vecalf)M \cap \fC(0,\xi,1)=\emptyset\bigr\}\bigr)
=\kappa_q\qquad
(\text{where }\: \vecw=\xi\vece_1+w\vece_2).
\end{align}
Let $M\in\SL(d,\R)$ be a representative for an arbitrary element
in $X_q(\vecw)$. Then there is some 
$\veck\in\Z^2+\vecalf\setminus\{\bn\}$ such that $\veck M=\vecw$.
Set $t=\gcd(q\veck)$; then $\frac qt\veck$ is a primitive vector in $\Z^2$,
and thus $\Z^2=\Z\frac qt\veck+\Z\vech$ for some $\vech\in\Z^2$.
Hence $(\Z^2+\vecalf)M=(\Z^2+\veck)M=
\Z\frac qt\veck M +\Z\vech M+\veck M
\subset \R\vecw+\Z\vech M$. Also $\R\vecw+n\vech M
=\R\vecw\pm n\frac t{q\xi}\vece_2$,
and from this one verifies
(using $0<\xi\leq (2q)^{-1/2}$, $|w|<\xi$) that each line $\R\vecw+n\vech M$
($n\in\Z\setminus\{0\}$) lies outside $\fC(0,\xi,1)$.
Hence
\begin{align}
(\Z^2+\vecalf)M \cap \fC(0,\xi,1)
=\R\vecw \cap (\Z^2+\vecalf)M \cap \fC(0,\xi,1)
=(\vecw+\Z\sfrac qt\vecw)\cap \fC(0,\xi,1).
\end{align}
This set is empty if and only if $t\leq q$.
Hence by mimicking
the proof of Proposition \ref{XQVOLONEPROP} we find that the left hand side
of \eqref{E0ALF0SMALLSSTEP1} equals
\begin{align}
\frac{q^2 \, \mu_H\bigl((\Gamma(q)\cap H) \backslash H\bigr)}{I_q\zeta(2)} 
\sum_{\substack{1\leq t\leq q \\ (t,q)=1}} t^{-2}
=\Bigl(\sum_{\substack{n\geq 1 \\ (n,q)=1}} n^{-2}\Bigr)^{-1}
\sum_{\substack{1\leq t\leq q \\ (t,q)=1}} t^{-2}
=\kappa_q,
\end{align}
and we are done.

\section{Proof of the limit theorems in Sections \ref{secIntro}--\ref{secLorentz}} \label{secProofs}

\subsection{Proofs for Section \ref{secVisible}} \label{secVisproofssec}

We first prove Theorem \ref{visThm2}
(and thus Theorem \ref{visThm}, which is a special case).
Theorem \ref{visThm2} will be derived as a direct consequence of
the general limit theorems in Section \ref{secThin}, and our 
only serious task in the present section 
will be to compute the upper and lower limits of an
appropriate family of subsets of $\R^{d-1}\times\R^d$
(see Lemma \ref{lemCon2fU} below).
In fact we will carry this out for a generalized version of
Theorem \ref{visThm2}, see Theorem \ref{visThm2gen} below.
This generalization is interesting in its own right,
and its proof is also a useful preparation for the demonstration 
of Theorem \ref{exactpos1} in Section \ref{exactpos1proofsec}.

First let us fix a parametrization of the sphere:
Let $\Edomain$\label{EDOMDEF2} be a bounded open subset of $\R^{d-1}$ and let
$E_1:\Edomain\to\SO(d)$\label{E1DEF2} be a smooth map such that 
$\vecv=\vecv(\vecx):=\vece_1 E_1(\vecx)^{-1} \in \S_1^{d-1}$
gives a diffeomorphism from $\Edomain$ to $\S_1^{d-1}$ minus one 
point.\footnote{For example, we may choose $\Edomain=\scrB_\pi^{d-1}$ and
$E_1(\vecx)=K_0^{-1}E(-\vecx)$ for any fixed $K_0\in\SO(d)$, where 
$E(\vecx)=\exp \smatr 0{\vecx}{-\trans\vecx}{0_{d-1}}$.}
The fact that we miss one point in $\S_1^{d-1}$ will not matter for 
us since the measure $\lambda$ is absolutely continuous.

Now for any subset $\fU\subset\HS=\{\vecz\in\S_1^{d-1}\col \vecz\cdot\vece_1>0\}$
and any $\vecw\in\R^d$, $\rho>0$ such that
$\rho \vecw$ lies outside all the balls
$\scrB_\rho^d+\vecy$ ($\vecy\in\Lalf\setminus\{\bn\}$),
we define
\begin{equation} \label{asinfu}
	\scrN_{c,T}^{(\fU)}(\rho,\vecx,\vecw) 
:= \#\bigg\{ \vecy \in (\Lalf \cap \scrB^d_T(c)\setminus\{\vecnull\})-\rho \vecw \col 
\vecy E_1(\vecx) \in \R_{>0}\vece_1+\rho \fU_\perp \bigg\},
\end{equation}
where we write $\fU_\perp:=\{\vecz_\perp \col \vecz\in\fU\}$
with $\vecz_\perp:=\vecz-(\vecz\cdot\vece_1)\vece_1=(0,z_2,\ldots,z_d)$
for any $\vecz=(z_1,\ldots,z_d)\in\R^d$.
Note that $\scrN_{c,T}^{(\fU)}(\rho,\vecx,\vecw)$ 
is the number of points 
$\vecy\in\Lalf\cap\scrB_T^d(c)\setminus\{\bn\}$ such that 
the ray $\rho\vecw+\R_{>0}\vecv$ ($\vecv=\vecv(\vecx)$) hits
the ball $\scrB_\rho^d+\vecy$, with the extra condition 
that $-\vecw_\vecy E_1(\vecx)\in\fU$, where $\vecw_\vecy=\rho^{-1}(\rho\vecw+\tau_\vecy\vecv-\vecy)\in\S^{d-1}_1$ and $\tau_\vecy=\inf\{t>0\col \rho \vecw + t\vecv\in\scrB_\rho^d+\vecy\}$.
Here $\vecw_\vecy$ is the location of the point where the ray
first hits the $\vecy$-sphere, relative to its center $\vecy$.
Hence, similarly as in Section \ref{firstcoll}, $\vecw_\vecy$
\textit{always} satisfies $-\vecw_\vecy E_1(\vecx)\in\HS$.
In particular we have
\begin{equation} \label{NCTOKGEN}
\scrN_{c,T}(\rho,\vecv(\vecx),\vecw)
=\scrN_{c,T}^{(\HS)}(\rho,\vecx,\vecw),
\end{equation}
so that $\scrN_{c,T}^{(\fU)}(\rho,\vecx,\vecw)$ generalizes our
notation from \eqref{asin}.
We will write $\lambda$ and $\vecbeta$ also for the lifts of
$\lambda$ and $\vecbeta$ to the variable $\vecx$.
Thus $\lambda$ is a Borel probability measure on $\R^{d-1}$ 
with bounded support (in fact $\lambda=\lambda|_\Edomain$),
which is absolutely continuous with respect to Lebesgue measure. Furthermore
$\vecbeta$ is a continuous function from $\Edomain$ to $\R^d$.

\begin{thm}\label{visThm2gen}
For every subset $\fU\subset\HS$ with 
$\vol_{\S_1^{d-1}}(\partial \fU)=0$
and for all $\sigma\geq 0$ and $r\in\ZZ_{\geq 0}$, the limit 
\begin{equation} 
	F^{(\fU)}_{c,\vecalf,\vecbeta}(r,\sigma):=\lim_{T\to\infty} 
\lambda(\{ \vecx\in\Edomain \col \scrN^{(\fU)}_{c,T}(\sigma T^{-1/(d-1)},\vecx,\vecbeta(\vecx))=r \})
\end{equation}
exists, and for fixed $\vecalf,\vecbeta,\lambda,r,\fU$ 
the convergence is uniform with respect to $\sigma$ in any 
compact subset of $\RR_{\geq 0}$ and with respect to $c\in [0,1]$.
The limit function is given by
\begin{equation} \label{defFFU}
\begin{split}
&F^{(\fU)}_{c,\vecalf,\vecbeta}(r,\sigma)
\\
&=	\begin{cases}
	(\mu_q\times\lambda)(\{ (M,\vecx)\in X_q\times\Edomain\col
	\#( (\ZZ^d+\frac{\vecp}{q}) M \cap \fZ^{(\fU)}(c,1,\sigma,\vecbeta)|_\vecx)= r \}) 
	& \text{if $\vecalf=\frac{\vecp}{q}\in\QQ^d$}\\
	\mu(\{ (M,\vecxi)\in X\col \#((\ZZ^d M +\vecxi) \cap \fZ^{(\fU)}(c,1,\sigma))= r \}) & \text{if $\vecalf\notin\QQ^d$,}
	\end{cases}
\end{split}
\end{equation}
where
\begin{align} \label{FZFULIMITDEF}
& \fZ^{(\fU)}(c_1,c_2,\sigma)=\bigl\{\vecy=(y_1,\ldots,y_d)\in\R^d \col c_1<y_1<c_2, \: \vecy_\perp\in\sigma\fU_\perp\bigr\};
\\ \notag
& \fZ^{(\fU)}(c_1,c_2,\sigma,\vecbeta)=\bigl\{(\vecx,\vecy)\in\Edomain
\times \R^d\col \vecy\in \fZ^{(\fU)}(c_1,c_2,\sigma)+
(\sigma\vecbeta(\vecx)E_1(\vecx))_\perp\bigr\}.
\end{align}
In particular $F^{(\fU)}_{c,\vecalf,\vecbeta}(r,\sigma)$ is continuous in $\sigma$ and independent of $\scrL$, and if $\vecalf\notin\QQ$ then it is also
independent of $\vecbeta$ and $\lambda$.
\end{thm}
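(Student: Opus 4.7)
My plan is to deduce the theorem directly from Theorem~\ref{thinThm-rat-multi} (if $\vecalf\in\Q^d$) or Theorem~\ref{thinThm-multi} (if $\vecalf\notin\Q^d$), in each case using the $E_1$-variants provided by Remarks~\ref{thinCor-rat} and \ref{thinCor}, in which $n_-(\vecx)$ is replaced by $(E_1(\vecx),\bn)^{-1}$. I set $T=e^{(d-1)t}$ (so that $\rho=\sigma T^{-1/(d-1)}=\sigma e^{-t}$), take $M=M_0$ where $\scrL=\Z^d M_0$, and recover the ``$=r$'' event by subtracting the ``$\geq r+1$'' limit measure from the ``$\geq r$'' limit measure that the respective theorem produces.

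For $\vecy=(\vecm+\vecalf)M_0\in\Lalf$ I would perform the change of variable $\vecz=\vecy E_1(\vecx)\Phi^t$, so that $\vecy E_1(\vecx)=(Tz_1,e^{-t}\vecz_\perp)$. Writing $(b_1(\vecx),\vecb_\perp(\vecx))=\vecbeta(\vecx)E_1(\vecx)$, the conditions $\vecy\in\scrB^d_T(c)\setminus\{\bn\}$ and $(\vecy-\rho\vecbeta(\vecx))E_1(\vecx)\in\R_{>0}\vece_1+\rho\fU_\perp$ entering the definition of $\scrN_{c,T}^{(\fU)}$ translate, respectively, into
\begin{equation*}
c^2\leq z_1^2+e^{-2dt}\|\vecz_\perp\|^2<1,\qquad z_1>\sigma e^{-dt}b_1(\vecx),\qquad \vecz_\perp\in\sigma\fU_\perp+\sigma\vecb_\perp(\vecx).
\end{equation*}
Letting $\fB_t\subset\Edomain\times\R^d$ be the set of all pairs $(\vecx,\vecz)$ satisfying these three conditions, the lattice-point count appearing in the conclusion of Theorem~\ref{thinThm-rat-multi} (respectively Theorem~\ref{thinThm-multi}), applied with $\fB_t$ and $M=M_0$, equals $\scrN_{c,T}^{(\fU)}(\sigma T^{-1/(d-1)},\vecx,\vecbeta(\vecx))$ by construction.

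The $t$-dependence of $\fB_t$ enters only through the vanishing factors $e^{-dt}$ and $e^{-2dt}$, so a direct inspection shows that $\lim(\inf\fB_t)^\circ$ and $\lim\overline{\sup\fB_t}$ agree, modulo Lebesgue-nullsets in $\Edomain\times\R^d$, with the limiting cylinder $\fZ^{(\fU)}(c,1,\sigma,\vecbeta)$ from \eqref{FZFULIMITDEF}; the crucial measure-zero step uses the hypothesis $\vol_{\S_1^{d-1}}(\partial\fU)=0$, which via the diffeomorphism $\vecz\mapsto\vecz_\perp$ from $\HS$ onto $\scrB_1^{d-1}$ forces $\vol_{d-1}(\partial\fU_\perp)=0$. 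Applying the relevant theorem then yields the formula \eqref{defFFU}; the point $\bn$ may be dropped from $(\Z^d+\vecalf)M$ since every fibre $\fZ^{(\fU)}(c,1,\sigma,\vecbeta)|_\vecx$ requires strictly positive first coordinate. Independence of $\scrL$ is visible from \eqref{defFFU}; for $\vecalf\notin\Q^d$, the $\ASL$-invariance of $\mu$ absorbs the shift $(\sigma\vecbeta(\vecx)E_1(\vecx))_\perp$ into the $\vecxi$-variable on $X$, which (together with $\int d\lambda=1$) yields independence of $\vecbeta$ and $\lambda$. Continuity of the limit in $\sigma$ comes from applying the same argument to perturbed $\sigma$-values, and the claimed uniformity on compact $\sigma$-ranges and on $c\in[0,1]$ then follows from continuity combined with the monotonicity of $\scrN_{c,T}^{(\fU)}$ in $c$ and $\sigma$ via a Dini-type argument. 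I expect the main obstacle will be the careful bookkeeping in the construction of $\fB_t$ and in verifying that its upper and lower limits coincide modulo Lebesgue-nullsets; the uniformity and independence claims are then largely formal.
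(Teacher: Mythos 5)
Your overall strategy agrees with the paper's: change variables so that the counting function $\scrN^{(\fU)}_{c,T}$ becomes a lattice--point count in a $t$-parametrized family of subsets of $\Edomain\times\R^d$, then invoke Theorem \ref{thinThm-rat-multi} or Theorem \ref{thinThm-multi} via the $E_1(\vecx)$-variants in Remarks \ref{thinCor-rat} and \ref{thinCor}. Your $\fB_t$ is the paper's $\fZ^{(\fU)}_T(c,1,\sigma,\vecbeta)$ of \eqref{FZFUTC1C2DEF} up to conjugation by $E_1(\vecx)$, and the three conditions you write down match \eqref{SCRNINFZFU}. The "direct inspection" identifying $\lim(\inf\fB_t)^\circ$ and $\lim\overline{\sup\fB_t}$ modulo null sets with the cylinder $\fZ^{(\fU)}(c,1,\sigma,\vecbeta)$ is exactly the content of the paper's Lemma \ref{lemCon2fU}; it is a careful $\epsilon$-$\delta$ argument but not conceptually subtle, and you have correctly located where $\vol_{\S_1^{d-1}}(\partial\fU)=0$ enters.

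The genuine gap is the uniformity argument. You invoke monotonicity of $\scrN^{(\fU)}_{c,T}$ in $\sigma$ (alongside $c$) and then a Dini-type argument, but $\scrN^{(\fU)}_{c,T}(\rho,\vecx,\vecbeta(\vecx))$ with $\rho=\sigma T^{-1/(d-1)}$ is \emph{not} monotone in $\sigma$ when $\vecbeta\not\equiv\bn$: increasing $\sigma$ simultaneously widens the target disc $\rho\fU_\perp$ and translates the shifted lattice $\Lalf-\rho\vecbeta(\vecx)$, and these effects can push points in or out. (In the un-shifted case $\vecbeta\equiv\bn$ the count is monotone increasing in $\sigma$, which is perhaps what you had in mind.) Without monotonicity, pointwise convergence to a continuous limit does not imply uniform convergence. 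Moreover, even granting monotonicity in each variable separately, a one-variable Dini-Pólya argument would only give, for each fixed $\sigma$, uniformity in $c$ (and vice versa); joint uniformity on $K\times[0,1]$ needs an additional ingredient. The paper avoids this cleanly by proving, in Lemma \ref{lemCon2fU}, the lim inf / lim sup identification for \emph{arbitrary} continuous families $c_{1,T},c_{2,T},\sigma_T$ converging to limits, and then showing this family statement (via a compactness argument in the parameters) is equivalent to the asserted uniform convergence. If you replace your Dini step with that reduction, and state your $\fB_t$ with $c,\sigma$ replaced by $T$-dependent $c_T,\sigma_T$, the proof matches the paper's.
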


Theorem \ref{visThm2} follows from Theorem \ref{visThm2gen}
by taking $\fU=\HS$. Indeed, $\fZ^{(\HS)}(c,1,\sigma)=\fZ(c,\sigma)$
(except if $\sigma=0$, but then both sets are of measure zero),
and in the case $\vecalf\in\Q^d$ the volume in \eqref{defFFU} equals
\begin{equation}
\int_{\Edomain}
\mu_q\Bigl(\Bigl\{M\in X_q\col\#\bigl((\Z^d+\vecalf)M\cap(\fZ(c,\sigma)
+(\sigma\vecbeta(\vecx)E_1(\vecx))_\perp)\bigr)=r\Bigr\}\Bigr)
\, d\lambda(\vecx).
\end{equation}
Here we may replace ``$(\sigma\vecbeta(\vecx)E_1(\vecx))_\perp$''
with ``$\sigma \|\Proj_{\{\vecv(\vecx)\}^\perp} \vecbeta(\vecx)\|
\cdot\vece_2$'', since (if $d\geq 3$) there is a rotation 
$\begin{pmatrix} 1 & \bn\\\trans\bn & K \end{pmatrix}\in\SO(d)$ which
takes the second vector to the first, and
$\mu_q$ is invariant under the diffeomorphism
$X_q\ni M\mapsto M\begin{pmatrix} 1 & \bn\\\trans\bn & K \end{pmatrix}
\in X_q$.
(If $d=2$: Then either the two vectors are equal, or they correspond to
each other under $\smatr 100{-1}$; in the latter case one chooses
$\gamma_0\in\SL(2,\Z)$ with $\vecalf\gamma_0\smatr 100{-1}=\vecalf$
and then uses the fact
that $M\mapsto \gamma_0\smatr 100{-1} M \smatr 100{-1}$ is a well-defined
automorphism of $X_q$ onto itself, which preserves $\mu_q$.)
Hence we obtain the volume in \eqref{defF}.

\begin{proof}[Proof of Theorem \ref{visThm2gen}]
To prove the desired uniformity,
it suffices to show that, given any continuous functions
$\RR_{>0} \ni T\mapsto \sigma_T \in \RR_{\geq 0}$ and
$\RR_{>0} \ni T\mapsto c_T \in [0,1]$ such that
$\sigma_\infty=\lim_{T\to\infty} \sigma_T$ and
$c_\infty=\lim_{T\to\infty} c_T$ exist, we have
\begin{equation} \label{visThm2wtp}
\lim_{T\to\infty} 
\lambda(\{ \vecx\in\RR^{d-1} : \scrN^{(\fU)}_{c_T,T}(\sigma_T T^{-1/(d-1)},\vecx,\vecbeta(\vecx))=r \})=
F^{(\fU)}_{c_\infty,\vecalf,\vecbeta}(r,\sigma_\infty).
\end{equation}
where the right hand side is given by \eqref{defFFU}.

In the following we let $\SLR$ and $\ASLR$ act on $\RR^{d-1}\times\RR^d$
by leaving the first entry fixed and acting as usual on the second entry:
\begin{equation}
	g: \RR^{d-1}\times\RR^d \to \RR^{d-1}\times\RR^d, \qquad (\vecx,\vecy)\mapsto (\vecx,\vecy g).
\end{equation}
Set, for any $\sigma\geq 0$, $0\leq c_1\leq c_2$, $T>0$,
\begin{align} \label{FZFUTC1C2DEF}
& \fZ^{(\fU)}_T(c_1,c_2,\sigma,\vecbeta) = \bigg\{ (\vecx,\vecy)\in
\Edomain\times\R^d \col \; \; \quad c_1 T \leq \|\vecy\| < c_2 T, \:
\\ \notag & \hspace{100pt} 
\vecy-\sigma T^{-\frac 1{d-1}}\vecbeta(\vecx)E_1(\vecx)
\in \R_{>0} \vece_1+\sigma T^{-\frac 1{d-1}} \fU_\perp
\bigg\}
\begin{pmatrix} 
T^{-1} & \vecnull \\ \trans\vecnull & T^{1/(d-1)} 1_{d-1} 
\end{pmatrix}
\end{align}
We then have for all $\vecx\in\Edomain$,
\begin{equation} \label{SCRNINFZFU}
	\scrN^{(\fU)}_{c,T}(\sigma T^{-1/(d-1)},\vecx,\vecbeta(\vecx))
	= \#\big( \fZ_T^{(\fU)}(c,1,\sigma,\vecbeta)|_\vecx \Phi^{-t} 
\bigl(E_1(\vecx)^{-1},\bn\bigr) \cap (\ZZ^d+\vecalf)M_0 \setminus\{\bn\}\big),
\end{equation}
with %
$T=\e^{(d-1)t}$, 
so long as $T$ is large enough so that the left hand side is defined.

Now taking Lemma \ref{lemCon2fU} (with $c_{2,T}= 1$)
below into account, we see that
\eqref{visThm2wtp} and Theorem~\ref{visThm2gen} follow
immediately from the Theorems of Section \ref{secThin}.
\end{proof}

The flexibility of taking $c_{2,T}\not\equiv 1$ in the
following lemma is not needed for the proof of Theorem \ref{visThm2gen},
but it will be convenient later.

\begin{lem}\label{lemCon2fU}
Let $\sigma_T,c_{1,T},c_{2,T}$ be continuous 
functions of $T>0$ with
$\sigma_T\geq 0$, $0\leq c_{1,T}\leq c_{2,T}$ for all $T>0$,
and such that all three limits $\sigma_\infty=\lim_{T\to\infty}\sigma_T$,
$c_{1,\infty}=\lim_{T\to\infty} c_{1,T}$ and
$c_{2,\infty}=\lim_{T\to\infty} c_{2,T}$ exist.
Then the union
$\cup_{T\geq 1}\fZ^{(\fU)}_T(c_{1,T},c_{2,T},\sigma_T,\vecbeta)$
is bounded, and
\begin{equation}\label{holds2fUa}
\lim(\inf\fZ^{(\fU)}_T(c_{1,T},c_{2,T},\sigma_T,\vecbeta))^\circ 
\supset \widetilde{\fZ}^{(\fU)}(c_{1,\infty},c_{2,\infty},\sigma_\infty,\vecbeta)^\circ
\end{equation}
and
\begin{equation}\label{holds2fUb}
\lim\overline{\sup\fZ^{(\fU)}_T(c_{1,T},c_{2,T},\sigma_T,\vecbeta)} 
\subset  \overline{\widetilde{\fZ}^{(\fU)}(c_{1,\infty},c_{2,\infty},\sigma_\infty,\vecbeta)} 
\end{equation}
(closures and limits taken \textrm{in $\R^{d-1}\times\R^d$}),
where $\widetilde{\fZ}^{(\fU)}(c_1,c_2,\sigma,\vecbeta):=
\fZ^{(\fU)}(c_1,c_2,\sigma,\vecbeta)$ (cf.\ \eqref{FZFULIMITDEF})
if $c_1<c_2$, but $\widetilde{\fZ}^{(\fU)}(c_1,c_1,\sigma,\vecbeta):=
\bigl\{(\vecx,\vecy)\in\Edomain\times\R^d\col
\vecy\in (\{c_1\}\times\sigma\fU_\perp)
+(\sigma\vecbeta(\vecx) E_1(\vecx))_\perp\bigr\}.$
Furthermore the boundary of $\widetilde{\fZ}^{(\fU)}(c_1,c_2,\sigma,\vecbeta)$
intersects $\Edomain\times\R^d$ in a set of Lebesgue measure zero.
\end{lem}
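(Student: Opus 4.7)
The plan is to rewrite the membership condition for $\fZ_T^{(\fU)}(c_{1,T},c_{2,T},\sigma_T,\vecbeta)$ in the variable $\vecy'$ and then read off the limit set as $T\to\infty$. A direct computation of the scaling action $\vecy\mapsto \vecy':=\vecy\begin{pmatrix}T^{-1}&\bn\\\trans\bn&T^{1/(d-1)}1_{d-1}\end{pmatrix}$ shows that $(\vecx,\vecy')\in\fZ_T^{(\fU)}(c_{1,T},c_{2,T},\sigma_T,\vecbeta)$ is equivalent to $\vecx\in\Edomain$ together with the three conditions
\begin{equation*}
c_{1,T}^2\leq (y_1')^2+T^{-\frac{2d}{d-1}}\|\vecy'_\perp\|^2<c_{2,T}^2,
\qquad y_1'>\sigma_T T^{-\frac{d}{d-1}}(\vecbeta(\vecx)E_1(\vecx))_1,
\end{equation*}
\begin{equation*}
\vecy'_\perp-\sigma_T(\vecbeta(\vecx)E_1(\vecx))_\perp\in\sigma_T\fU_\perp.
\end{equation*}
The whole proof will be organized around these three conditions and their behaviour in the limit.

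From this reformulation, boundedness of $\bigcup_{T\geq 1}\fZ_T^{(\fU)}$ is immediate: the third condition and boundedness of $\fU_\perp\subset\scrB_1^{d-1}$, of $(\sigma_T)_{T\geq 1}$, and of $\vecbeta\circ\vecv$ (the latter being continuous on the compact sphere $\S_1^{d-1}$) confine $\vecy'_\perp$ to a bounded set; the first then gives $|y_1'|\leq c_{2,T}$, uniformly in $T\geq 1$, while $\Edomain$ is bounded by assumption.

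For \eqref{holds2fUa} the plan is to take an arbitrary $(\vecx_0,\vecy_0')\in \widetilde{\fZ}^{(\fU)}(c_{1,\infty},c_{2,\infty},\sigma_\infty,\vecbeta)^\circ$. The interior is non-empty only in the non-degenerate case $c_{1,\infty}<c_{2,\infty}$, $\sigma_\infty>0$, and forces the strict inequalities $c_{1,\infty}<(y_0')_1<c_{2,\infty}$ and $\vecy'_{0,\perp}-\sigma_\infty(\vecbeta(\vecx_0)E_1(\vecx_0))_\perp\in\sigma_\infty(\fU_\perp)^\circ$, the interior of $\fU_\perp$ being taken inside $\{0\}\times\R^{d-1}$. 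Using the continuity of $c_{j,T}\to c_{j,\infty}$, $\sigma_T\to\sigma_\infty$ and of $\vecbeta\circ\vecv$, together with the decay of the $T^{-d/(d-1)}$ and $T^{-2d/(d-1)}$ factors, one selects an open neighborhood $W$ of $(\vecx_0,\vecy_0')$ on which all three strict inequalities hold with a fixed positive margin that absorbs every $T$-dependent perturbation for all sufficiently large $T$; hence $W\subset\bigcap_{s\geq t}\fZ_s^{(\fU)}$ for some $t$, giving the required inclusion. The opposite inclusion \eqref{holds2fUb} is handled in the opposite direction: given $(\vecx_0,\vecy_0')\in\lim\overline{\sup\fZ_T^{(\fU)}}$, one extracts a sequence $T_n\to\infty$ and points $(\vecx_n,\vecy_n')\in\fZ_{T_n}^{(\fU)}$ with $(\vecx_n,\vecy_n')\to(\vecx_0,\vecy_0')$, and passes to the (non-strict) limit in each of the three displayed conditions. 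The only substantive point is that $T_n^{-2d/(d-1)}\|\vecy'_{n,\perp}\|^2\to 0$ by boundedness, so the first condition yields $c_{1,\infty}\leq (y_0')_1\leq c_{2,\infty}$; combining this with $(y_0')_1\geq 0$ and $\vecy'_{0,\perp}-\sigma_\infty(\vecbeta(\vecx_0)E_1(\vecx_0))_\perp\in\sigma_\infty\overline{\fU_\perp}$ places $(\vecx_0,\vecy_0')$ in $\overline{\widetilde{\fZ}^{(\fU)}(c_{1,\infty},c_{2,\infty},\sigma_\infty,\vecbeta)}$.

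For the boundary measure-zero claim, in the non-degenerate case the boundary of $\widetilde{\fZ}^{(\fU)}(c_1,c_2,\sigma,\vecbeta)$ inside $\Edomain\times\R^d$ lies in $\{y_1'=c_1\}\cup\{y_1'=c_2\}\cup\{(\vecx,\vecy'):\vecy'_\perp-\sigma(\vecbeta(\vecx)E_1(\vecx))_\perp\in\sigma\partial\fU_\perp\}$; the first two sets are hyperplanes, and Fubini applied to the last (after the fibrewise translation $\vecy'_\perp\mapsto\vecy'_\perp-\sigma(\vecbeta(\vecx)E_1(\vecx))_\perp$) gives Lebesgue measure $\vol_{d-1}(\Edomain)\cdot\vol_{d-1}(\sigma\partial\fU_\perp)=0$, since the assumption $\vol_{\S_1^{d-1}}(\partial\fU)=0$ together with the diffeomorphism $\HS\ni\vecz\mapsto\vecz_\perp$ forces $\vol_{d-1}(\partial\fU_\perp)=0$. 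In each degenerate case ($c_1=c_2$ or $\sigma=0$) the set $\widetilde{\fZ}^{(\fU)}$ itself is already confined to a proper affine subspace of $\R^{d-1}\times\R^d$ and is therefore a Lebesgue null set. I expect the chief technical obstacle to lie in the inclusion \eqref{holds2fUa}: the three conditions above must be satisfied \emph{simultaneously} on a single open neighborhood $W$ for \emph{all} large $T$, and this is addressed by setting up strict margins from the outset that dominate the $T$-dependent perturbations uniformly.
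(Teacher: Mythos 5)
Your proof is correct and follows essentially the same approach as the paper: both rewrite the membership condition for $\fZ_T^{(\fU)}$ in the rescaled variable, extract the three conditions, and handle $\liminf$ by building a uniform margin and $\limsup$ by passing to the limit (the paper proves an explicit $\eta$-containment $\fZ_T^{(\fU)}\subset R_\eta$ for $T$ large, whereas you extract a convergent sequence — equivalent reasoning). The boundary-measure argument via Fubini and $\vol_{\S_1^{d-1}}(\partial\fU)=0$ is also the paper's.
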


\begin{proof}
Let $C=1+\sup_{\Edomain} \|\vecbeta\|$.
Take $T>0$ and consider an arbitrary point 
$(\vecx,\vecy)\in\fZ^{(\fU)}_T(c_{1,T},c_{2,T},\sigma_T,\vecbeta)$.
Set
$\vecy'=Ty_1\vece_1+T^{-\frac 1{d-1}}\vecy_\perp$; then
$c_{1,T}T\leq\|\vecy'\|< c_{2,T}T$
and $\vecy'-\sigma_T T^{-\frac 1{d-1}}\vecbeta(\vecx)E_1(\vecx)
\in\R_{>0}\vece_1+\sigma_T T^{-\frac 1{d-1}}\fU_\perp$.
From these we conclude
\begin{align} \label{lemCon2fUrel1}
-\sigma_T T^{-\frac d{d-1}} \sup \|\vecbeta\|<y_1<c_{2,T}
\quad\text{and}\quad
(\vecy-\sigma_T \vecbeta(\vecx)E_1(\vecx))_\perp
\in \sigma_T \fU_\perp.
\end{align}
Since $\fU_\perp\subset \scrB_1^d$ the last relation
implies $\|\vecy_\perp\|\leq C\sigma_T$.
The first claim of the lemma follows from the inequalities noted so far.

Now let $\eta>0$ be given, and take $T_0$ so large that
$c_{1,T}>c_{1,\infty}-\frac{\eta}2$,
$c_{2,T}<c_{2,\infty}+\eta$,
$C\sigma_T T^{-\frac d{d-1}}<\frac{\eta}8$ and
$|\sigma_T-\sigma_\infty|<\eta/C$ hold for all $T\geq T_0$.
Let $T\geq T_0$ and consider any point 
$(\vecx,\vecy)\in\fZ^{(\fU)}_T(c_{1,T},c_{2,T},\sigma_T,\vecbeta)$.
Then by \eqref{lemCon2fUrel1} we have $y_1\geq -C\sigma_T T^{-\frac d{d-1}}
>-\frac{\eta}8$, but using $\|\vecy'\|\geq c_{1,T}T$ we also
conclude $|y_1|\geq c_{1,T}-C\sigma_T T^{-\frac d{d-1}}
>c_{1,\infty}-\frac{5\eta}8$.
Together these two inequalities imply in particular that
$y_1>c_{1,\infty}-\eta$.
Also, by \eqref{lemCon2fUrel1},
$y_1<c_{2,T}<c_{2,\infty}+\eta$.
From \eqref{lemCon2fUrel1} we also see that there is some $\vecw\in\fU_\perp$
such that
$(\vecy-\sigma_T\vecbeta(\vecx)E_1(\vecx))_\perp=\sigma_T\vecw$.
Thus
\begin{equation}
(\vecy-\sigma_\infty\vecbeta(\vecx)E_1(\vecx))_\perp
=\sigma_\infty\vecw+(\sigma_T-\sigma_\infty)(\vecw
+(\vecbeta(\vecx)E_1(\vecx))_\perp),
\end{equation}
and here
$\bigl\| \vecw +(\vecbeta(\vecx)E_1(\vecx))_\perp \bigr\|<C$
and $|\sigma_T-\sigma_\infty|<\eta/C$,
so that
\begin{equation}
(\vecy-\sigma_\infty\vecbeta(\vecx)E_1(\vecx))_\perp
\in \sigma_\infty \fU_\perp + \scrB_\eta^d.
\end{equation}
Hence we have proved that for each $T\geq T_0$ we have
\begin{align}  \notag
\fZ^{(\fU)}_T(c_{1,T},c_{2,T},\sigma_T,\vecbeta) \subset
\Big\{ (\vecx,\vecy)\in\Edomain\times\R^d \col
c_{1,\infty}-\eta < \vecy\cdot\vece_1< c_{2,\infty}+\eta ,\; 
\hspace{70pt}&
\\
(\vecy-\sigma_\infty \vecbeta(\vecx)E_1(\vecx))_\perp
\in \sigma_\infty \fU_\perp + \scrB_\eta^d \Big\} &.
\end{align}
We have seen that such a $T_0$ exists for any $\eta>0$;
this fact leads easily to \eqref{holds2fUb}.

We now turn to \eqref{holds2fUa}.
Assume $(\vecx_0,\vecy_0)\in \widetilde{\fZ}^{(\fU)}(c_{1,\infty},c_{2,\infty},
\sigma_\infty,\vecbeta)^\circ$, and take 
$\eta>0$ so that
\begin{align} \label{lemCon2fUrel2}
(\vecx_0+\scrB_{2\eta}^{d-1}) \times (\vecy_0+\scrB_{2\eta}^d)\subset
\widetilde{\fZ}^{(\fU)}(c_{1,\infty},c_{2,\infty},\sigma_\infty,\vecbeta).
\end{align}
Then we must have $\sigma_\infty>0$ and $c_{1,\infty}<c_{2,\infty}$.
Take $T_0$ so large that each of the following five inequalities
hold when $T\geq T_0$:
\begin{equation}
\begin{split}
& \sigma_T>0; \qquad
|\frac{\sigma_\infty}{\sigma_T}-1|<\frac{\eta}{C\sigma_\infty}; \qquad
\sigma_T T^{-\frac d{d-1}}<\frac{\eta}C;
\\
& c_{1,T}\leq c_{1,\infty}+\eta; \qquad
c_{2,T}-C\sigma_\infty T^{-\frac d{d-1}} > c_{2,\infty}-\eta.
\end{split}
\end{equation}
We then claim 
\begin{align} \label{lemCon2fUrel3}
(\vecx_0+\scrB_{\eta}^{d-1}) \times (\vecy_0+\scrB_{\eta}^d)\subset
\fZ_T^{(\fU)}(c_{1,T},c_{2,T},\sigma_T,\vecbeta),
\qquad \forall T\geq T_0.
\end{align}
This implies 
$(\vecx_0,\vecy_0)\in
\lim(\inf\fZ^{(\fU)}_T(c_{1,T},c_{2,T},\sigma_T,\vecbeta))^\circ$,
and hence \eqref{holds2fUa} will be proved,
since $(\vecx_0,\vecy_0)$ was arbitrary in 
$\widetilde{\fZ}^{(\fU)}(c_{1,\infty},c_{2,\infty},\sigma_\infty,\vecbeta)^\circ$.

To prove \eqref{lemCon2fUrel3}, let $(\vecx,\vecy)$ be an arbitrary
point in $(\vecx_0+\scrB_{\eta}^{d-1}) \times (\vecy_0+\scrB_{\eta}^d)$,
and take $T\geq T_0$.
Write $\vecy'=Ty_1\vece_1+T^{-\frac 1{d-1}}\vecy_\perp$.
Using $C\sigma_T T^{-\frac d{d-1}}<\eta$ we get
\begin{align} \label{NEEDED3A}
Ty_1-\sigma_T T^{- \frac 1{d-1}}\vecbeta(\vecx)E_1(\vecx)\cdot \vece_1
\geq T\eta-\sigma_T T^{-\frac 1{d-1}} \sup \|\vecbeta\|>0.
\end{align}
Next \eqref{lemCon2fUrel2} implies
$(\vecy-\sigma_\infty \vecbeta(\vecx)E_1(\vecx))_\perp
+\bigl(\{0\}\times \scrB_\eta^{d-1}\bigr)
\subset \sigma_\infty \fU_\perp$.
In particular $\|\vecy_\perp\|<C\sigma_\infty$,
and using
$|\frac{\sigma_\infty}{\sigma_T}-1|<\frac{\eta}{C\sigma_\infty}$
we get $\bigl | \frac{\sigma_\infty}{\sigma_T}-1 \bigr |\cdot 
\|\vecy_\perp\| <\eta$ and hence
\begin{align}
(\vecy-\sigma_\infty \vecbeta(\vecx)E_1(\vecx))_\perp
+\Bigl(\frac{\sigma_\infty}{\sigma_T}-1\Bigr) \vecy_\perp
\in \sigma_\infty \fU_\perp.
\end{align}
In other words $(\vecy-\sigma_T \vecbeta(\vecx)E_1(\vecx))_\perp
\in \sigma_T \fU_\perp$, and thus
\begin{align}\label{NEEDED3B}
(\vecy'-\sigma_T T^{-\frac 1{d-1}}\vecbeta(\vecx)E_1(\vecx))_\perp
\in \sigma_T T^{-\frac 1{d-1}} \fU_\perp.
\end{align}
Finally \eqref{lemCon2fUrel2} gives
$c_{1,\infty}+\eta\leq y_1 \leq c_{2,\infty}-\eta$, and 
using $c_{1,T}\leq c_{1,\infty}+\eta$ and
$c_{2,\infty}-\eta< c_{2,T}-C\sigma_\infty T^{-\frac d{d-1}}$
we obtain
\begin{align} \label{NEEDED2}
c_{1,T}T\leq \|\vecy'\|< c_{2,T}T.
\end{align} 
But \eqref{NEEDED2}, \eqref{NEEDED3A}, \eqref{NEEDED3B}
imply
$(\vecx,\vecy)\in\fZ_T^{(\fU)}(c_{1,T},c_{2,T},\sigma_T,\vecbeta)$,
and hence \eqref{lemCon2fUrel3} is proved.

Finally, the fact that $(\Edomain\times \R^d)\cap 
\partial \widetilde{\fZ}^{(\fU)}(c_1,c_2,\sigma,\vecbeta)$ has
Lebesgue measure zero follows from 
\begin{align} \label{BOUNDARYCLAIMFU}
& (\Edomain\times \R^d) \cap 
\partial \widetilde{\fZ}^{(\fU)}(c_1,c_2,\sigma,\vecbeta)
\\ \notag
& \subset
\bigl\{(\vecx,\vecy)\in \Edomain\times \R^d \col
\vecy\cdot\vece_1 \in \{c_1,c_2\}, \:
\vecy_\perp \in \sigma (\vecbeta(\vecx) E_1(\vecx))_\perp
+\bigl(\{0\}\times\overline{\scrB_{\sigma}^{d-1}}\bigr)\bigr\}
\\ \notag
& \qquad \cup \:
\bigl\{(\vecx,\vecy)\in \Edomain\times \R^d \col
c_1<\vecy\cdot\vece_1< c_2, \:
\vecy_\perp \in \sigma (\vecbeta(\vecx) E_1(\vecx))_\perp
+ \partial (\sigma \fU_\perp)\bigr\},
\end{align}
using $\partial (\fU_\perp)=(\partial \fU)_\perp$,
and our assumption that $\vol_{\S^{d-1}_1}(\partial \fU)=0$.
\end{proof}

The proof of Theorem \ref{prim-visThm2} is almost identical to the
proof of Theorem \ref{visThm2}, using the theorems of Section 
\ref{SUBSEC:PRIMLATP}.

We proceed to the proofs of Theorems \ref{visThm3} and \ref{prim-visThm3}. 
To be in line with the notation used in the previous proofs, we 
again write $\vecv=\vece_1 E_1(\vecx)^{-1}$ ($\vecx\in\Edomain$), and write
$\lambda$ also for the lift of $\lambda$ to the variable $\vecx$.
Set
\begin{multline}\label{ZcQ}
\fZ_T(c,\scrQ) = \big\{ (\vecx,\vecy)\in\Edomain\times\RR^d \col c T \leq \|\vecy\| < T,\; 
\\
\R_{>0}\vece_1  \cap (\scrQ_T E_1(\vecx) + \vecy) \neq \emptyset \big\}
\begin{pmatrix} 
T^{-1} & \vecnull \\ \trans\vecnull & T^{1/(d-1)} 1_{d-1}
\end{pmatrix} .
\end{multline}
For the counting function defined in \eqref{asin2} we have
\begin{equation} \label{NQTFORMULA}
	\scrN_{c,T}(\scrQ,\vece_1 E_1(\vecx)^{-1})
	= \#\big( \fZ_T(c,\scrQ)|_\vecx \Phi^{-t} (E_1(\vecx)^{-1},\bn) \cap (\ZZ^d+\vecalf)M_0\setminus\{\bn\} \big) 
\end{equation}
with $T=\e^{(d-1)t}$.
The primitive case is analogous.

Theorems \ref{visThm3} and \ref{prim-visThm3} are again a consequence of the theorems in Section \ref{secThin} and the following lemma.

\begin{lem}\label{lemCon3}
The union $\cup_{T\geq 1} \fZ_T(c,\scrQ)$ is bounded, and we have
\begin{equation}\label{holds3}
	\lim(\inf\fZ_T(c,\scrQ))^\circ \supset  \fZ(c,\scrQ)^\circ, \qquad
	\lim\overline{\sup\fZ_T(c,\scrQ)} \subset  \overline{\fZ(c,\scrQ)} ,
\end{equation}
where
\begin{equation} \label{NQ}
\fZ(c,\scrQ)  := \big\{ (\vecx,\vecy)\in\Edomain\times\RR^d \col  c < y_1 < 1 ,\; (y_2,\ldots,y_d)\in -(\scrQ E_1(\vecx))_\perp \big\} 
\end{equation}
is a bounded set whose boundary intersects $\Edomain\times\RR^d$ in a set
of Lebesgue measure zero.
\end{lem}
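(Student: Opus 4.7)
My plan is to follow the template of the proof of Lemma~\ref{lemCon2fU}. The key observation is that $(\vecx,\vecy)\in\fZ_T(c,\scrQ)$ if and only if the rescaled vector $\vecy':=Ty_1\vece_1+T^{-1/(d-1)}\vecy_\perp$ satisfies $cT\le\|\vecy'\|<T$ and there exist $\vecq\in\scrQ$, $t>0$ with $t\vece_1=T^{-1/(d-1)}\vecq E_1(\vecx)+\vecy'$. Splitting this last equation into its first coordinate and its projection onto $\{\vece_1\}^\perp$, the conditions become
\begin{equation*}
(\vecq E_1(\vecx))_\perp=-\vecy_\perp,\qquad Ty_1+T^{-1/(d-1)}(\vecq E_1(\vecx))_1>0.
\end{equation*}
Since $\scrQ$ is bounded, the transverse equation forces $\|\vecy_\perp\|\le\sup_{\vecq\in\scrQ}\|\vecq\|$ and the norm bound confines $y_1$ to a compact interval; this gives boundedness of $\bigcup_{T\ge 1}\fZ_T(c,\scrQ)$.

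For $\lim(\inf\fZ_T(c,\scrQ))^\circ\supset\fZ(c,\scrQ)^\circ$, I will fix $(\vecx_0,\vecy_0)\in\fZ(c,\scrQ)^\circ$ and select $\vecq_0\in\scrQ$ with $(\vecq_0E_1(\vecx_0))_\perp=-\vecy_{0,\perp}$. Since $\vecq\mapsto(\vecq E_1(\vecx))_\perp$ is a linear surjection $\R^d\to\R^{d-1}$ for each $\vecx$, the implicit function theorem provides a continuous local solution $(\vecx,\vecy)\mapsto\vecq(\vecx,\vecy)\in\scrQ$ (using openness of $\scrQ$) satisfying $(\vecq(\vecx,\vecy)E_1(\vecx))_\perp=-\vecy_\perp$ on some neighborhood $U\times V$ of $(\vecx_0,\vecy_0)$, which may be shrunk so that $y_1$ is bounded away from $c$ and $1$ and $\|\vecq(\vecx,\vecy)\|$ is bounded. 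For all $T$ sufficiently large, both $cT\le\|\vecy'\|<T$ and the positivity condition then hold uniformly on $U\times V$, giving $U\times V\subset\fZ_T(c,\scrQ)$.

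For the upper limit inclusion, take $T_n\to\infty$ with $(\vecx_n,\vecy_n)\to(\vecx_0,\vecy_0)$ and $(\vecx_n,\vecy_n)\in\fZ_{T_n}(c,\scrQ)$; pass to a subsequence so that the corresponding witnesses $\vecq_n\in\scrQ$ converge to some $\vecq_0\in\overline{\scrQ}$. Taking limits in the transverse equation yields $(\vecq_0E_1(\vecx_0))_\perp=-\vecy_{0,\perp}$, while the longitudinal positivity together with $cT_n\le\|\vecy_n'\|<T_n$ forces $c\le y_{0,1}\le 1$. Using the identity $\overline{(\scrQ E_1(\vecx_0))_\perp}=(\overline{\scrQ}E_1(\vecx_0))_\perp$ (an elementary fact about projections of bounded open sets), I then approximate $\vecq_0$ by a sequence in $\scrQ$ and $y_{0,1}$ by values in $(c,1)$ to exhibit $(\vecx_0,\vecy_0)$ as a limit of points in $\fZ(c,\scrQ)$.

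Finally, the boundary $\partial\fZ(c,\scrQ)\cap(\Edomain\times\R^d)$ is contained in the union
\begin{equation*}
\bigl\{(\vecx,\vecy)\col y_1\in\{c,1\}\bigr\}\;\cup\;\bigl\{(\vecx,\vecy)\col \vecy_\perp\in-\partial\bigl((\scrQ E_1(\vecx))_\perp\bigr)\bigr\}.
\end{equation*}
The first set has $d$-dimensional Lebesgue measure zero by Fubini. For the second, I will observe that $(\scrQ E_1(\vecx))_\perp$ is the image under the isometry $E_1(\vecx)$ of $\Proj_{\{\vecv(\vecx)\}^\perp}\scrQ$, so its $(d-1)$-dimensional boundary has measure zero precisely when the boundary of $\Proj_{\{\vecv(\vecx)\}^\perp}\scrQ$ does; by the hypothesis on $\scrQ$ this holds for Lebesgue-almost every $\vecv\in\S_1^{d-1}$, and since $\vecx\mapsto\vece_1E_1(\vecx)^{-1}$ is a diffeomorphism onto $\S_1^{d-1}$ minus one point, it holds for almost every $\vecx\in\Edomain$ as well. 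A second application of Fubini completes the argument. I expect the delicate step to be the continuous selection of $\vecq(\vecx,\vecy)$ in the lower-limit inclusion, together with the verification that it remains inside the open set $\scrQ$; this rests crucially on the submersion structure of $\vecq\mapsto(\vecq E_1(\vecx))_\perp$.
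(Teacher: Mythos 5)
Your proof is correct in outline and follows the same route as the paper's, which explicitly refers to Lemma~\ref{lemCon2fU} as its template: the rescaling $\vecy'=Ty_1\vece_1+T^{-1/(d-1)}\vecy_\perp$, the splitting into a transverse equation and a longitudinal positivity condition, and the boundary estimate via the technical hypothesis on $\scrQ$ are all the same. However, the implicit function theorem is heavier than what you need, and you yourself flag it as the delicate step: since $(\vecx_0,\vecy_0)\in\fZ(c,\scrQ)^\circ$, you already have a full neighborhood $U\times V\subset\fZ(c,\scrQ)$, so for each $(\vecx,\vecy)\in U\times V$ there is automatically some $\vecq\in\scrQ$ solving the transverse equation, and boundedness of $\scrQ$ gives the uniform bound $\|\vecq\|\le\sup_\scrQ\|\cdot\|$ for free. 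No continuous selection (and hence no submersion argument) is needed anywhere in the lower-limit step.

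There is a small genuine gap in the upper-limit step. You pass to a limit $(\vecx_n,\vecy_n)\to(\vecx_0,\vecy_0)$ and evaluate $E_1$ at $\vecx_0$; but $\lim\overline{\sup\fZ_T(c,\scrQ)}$ can contain points with $\vecx_0\in\partial\Edomain$, where $E_1$ is undefined and $\vecq_n E_1(\vecx_n)$ need not converge to $\vecq_0 E_1(\vecx_0)$. The fix is to avoid extracting the limiting witness: since $(\vecq_n E_1(\vecx_n))_\perp=-\vecy_{n,\perp}\to-\vecy_{0,\perp}$ directly, just choose $\tilde y_{n,1}\in(c,1)$ with $\tilde y_{n,1}\to y_{0,1}$ (possible because the estimates force $y_{0,1}\in[c,1]$) and set $\tilde\vecy_n=(\tilde y_{n,1},\,-(\vecq_n E_1(\vecx_n))_\perp)$; then $(\vecx_n,\tilde\vecy_n)\in\fZ(c,\scrQ)$ and converges to $(\vecx_0,\vecy_0)$. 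The paper's $\eta$-neighborhood argument in the proof of Lemma~\ref{lemCon2fU} sidesteps this by proving a set containment $\fZ_T\subset(\text{$\eta$-thickened limit set})$ without passing to limits of individual points. Finally, note that your boundary containment omits the constraint $y_1\in[c,1]$ that appears in the paper's \eqref{lexCon3step8}, so your second set is unbounded in $y_1$; the Fubini conclusion still holds, since a $(d-1)$-null fiber times a line is still $d$-null, but the paper's refinement is what keeps the integral finite and hence cleaner.
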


\begin{proof}
This is very similar to the proof of Lemma \ref{lemCon2fU}
(but slightly easier, since $c$ and $\scrQ$ are kept fixed).
To prove the last statement one first verifies that
\begin{align} \notag
(\Edomain\times\R^d)\cap\partial\fZ(c,\scrQ)
\subset &\Bigl\{(\vecx,\vecy)\in\Edomain\times\R^d\col y_1\in\{c,1\},
\: \vecy_\perp\in -\overline{(\scrQ E_1(\vecx))_\perp}\Bigr\}
\\ \label{lexCon3step8}
& \cup \Bigl\{(\vecx,\vecy)\in\Edomain\times\R^d\col y_1\in [c,1],
\: \vecy_\perp\in -\partial\bigl((\scrQ E_1(\vecx))_\perp\bigr)\Bigr\}.
\end{align}
Here the first set clearly has measure zero, and the second set has measure
\begin{align}
(1-c)\int_{\vecx\in\Edomain} \vol_{\R^{d-1}} \bigl(
\partial\bigl((\scrQ E_1(\vecx))_\perp\bigr)\bigr)\,d\vecx,
\end{align}
which is
zero exactly because of the technical assumption made just below \eqref{QTDEF}.
\end{proof}

Lemma \ref{lemCon3} is applied in the following way:
If $\vecalf\in\Q^d$ then by \eqref{NQTFORMULA},
Remark \ref{thinCor-rat} and Lemma \ref{lemCon3} the
limit in \eqref{visThm3limit} exists, and equals
\begin{align} \label{visThm3proofstep1}
\int_\Edomain \int_{X_q} I \Bigl(
\#\bigl(\fZ(c,\scrQ)|_\vecx \cap (\Z^d+\vecalf)M\bigr)=r\Bigr)
\, d\mu_q(M)\, d\lambda(\vecx)
\end{align}
But we have from \eqref{NQ}, since $\vecv=\vece_1 E_1(\vecx)^{-1}$:
\begin{equation}
\begin{split}
\fZ(c,\scrQ)|_\vecx 
& = \big\{ \vecy\in\RR^d \col  c < \vecy\cdot\vece_1 < 1 ,\; \RR \vece_1 \cap (\scrQ E_1(\vecx) + \vecy) \neq \emptyset \big\} \\
& = \big\{ \vecy\in\RR^d \col  c < \vecy\cdot\vecv < 1 ,\; \RR \vecv \cap (\scrQ + \vecy) \neq \emptyset \big\} E_1(\vecx).
\end{split}
\end{equation}
Hence by substituting $M=M'E_1(\vecx)$ in the inner integral in
\eqref{visThm3proofstep1} we obtain the formula stated in Theorem
\ref{visThm3}. The proof in the case $\vecalf\notin\Q^d$ is entirely
similar, and so is the proof of Theorem \ref{prim-visThm3}.

\subsection{Averaging over $\vecalf$}\label{fT1corproofsec}

Naturally, one can also prove $\vecalf$-averaged 
(or $\vecq$-averaged)  %
versions of all the limit results 
obtained in the present paper. In this section we discuss this
to the extent necessary to give a proof of Theorem \ref{freeThm1cor}.

We first give an averaged version of Corollary \ref{freeCor1}.
Recall that if $\vecalf\notin\Q^d$ then %
$\Phi_\vecalf(\xi)$ %
is independent of $\vecalf$,
and we write $\Phi(\xi)$ for this function. %
\begin{cor} \label{freeCor1-ave}
Fix a lattice $\scrL=\Z^d M_0$ and let $\lambda$ be a Borel probability 
measure on $\T^1(\R^d)=\R^d\times\S_1^{d-1}$ which is absolutely continuous 
with respect to Lebesgue measure $\vol_{\R^d}\times\vol_{\S_1^{d-1}}$.
Then, for every $\xi\geq 0$,
\begin{equation} \label{freeCor1-ave-equ}
\lim_{\rho\to 0} 
\lambda(\{ (\vecq,\vecv)\in\T^1(\scrK_\rho) \col  \rho^{d-1} 
\tau_1(\vecq,\vecv;\rho)\geq \xi \})
= \int_\xi^\infty \Phi(\xi') \, d\xi' .
\end{equation}
\end{cor}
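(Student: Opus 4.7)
The plan is to reduce Corollary \ref{freeCor1-ave} to Corollary \ref{freeCor1} by Fubini plus dominated convergence, exploiting the fact that $\vecq M_0^{-1}\in\Q^d$ only on a set of $\vecq$ of Lebesgue measure zero, so that for $\lambda$-almost every $(\vecq,\vecv)$ we are in the ``irrational'' case where the universal density $\Phi$ appears.

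First I would write the absolutely continuous density of $\lambda$ as $f(\vecq,\vecv)$ with respect to $d\vecq\,d\!\vol_{\S_1^{d-1}}(\vecv)$ and set
\begin{equation*}
G_\rho(\vecq):=\int_{\S_1^{d-1}} f(\vecq,\vecv)\,I\bigl(\vecq\in\scrK_\rho,\;\rho^{d-1}\tau_1(\vecq,\vecv;\rho)\geq\xi\bigr)\,d\!\vol_{\S_1^{d-1}}(\vecv),
\end{equation*}
so that the left hand side of \eqref{freeCor1-ave-equ} equals $\int_{\R^d}G_\rho(\vecq)\,d\vecq$. Fix the full-measure set
$\scrA:=\bigl\{\vecq\in\R^d\setminus\scrL\col -\vecq M_0^{-1}\notin\Q^d\text{ and }\int_{\S_1^{d-1}}f(\vecq,\vecv)d\!\vol(\vecv)>0\bigr\}$; the complement of $\scrA$ in $\{\vecq\col f(\vecq,\cdot)\not\equiv 0\}$ has Lebesgue measure zero, because $\scrL$ and $M_0\Q^d$ are both countable unions of affine subspaces of dimension $<d$.

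Next I would apply Corollary \ref{freeCor1} pointwise. For each $\vecq\in\scrA$ the normalised measure $\lambda_\vecq$ on $\S_1^{d-1}$ with density $f(\vecq,\vecv)/\int f(\vecq,\vecv')d\!\vol(\vecv')$ is a Borel probability measure absolutely continuous with respect to $\vol_{\S_1^{d-1}}$, and $-\vecq M_0^{-1}\notin\Q^d$ means $\Phi_{-\vecq M_0^{-1}}=\Phi$. Thus Corollary \ref{freeCor1} gives
\begin{equation*}
\lim_{\rho\to 0}\frac{G_\rho(\vecq)}{\int_{\S_1^{d-1}}f(\vecq,\vecv)d\!\vol(\vecv)}=\int_\xi^\infty \Phi(\xi')\,d\xi',
\end{equation*}
where the indicator $I(\vecq\in\scrK_\rho)$ is harmless: for every fixed $\vecq\notin\scrL$, one has $\vecq\in\scrK_\rho$ for all sufficiently small $\rho$, so this factor equals $1$ eventually. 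Hence $G_\rho(\vecq)\to\bigl(\int_\xi^\infty\Phi(\xi')d\xi'\bigr)\int_{\S_1^{d-1}}f(\vecq,\vecv)d\!\vol(\vecv)$ for Lebesgue-a.e.\ $\vecq$.

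Finally, since $0\leq G_\rho(\vecq)\leq \int_{\S_1^{d-1}}f(\vecq,\vecv)d\!\vol(\vecv)\in L^1(\R^d)$, the dominated convergence theorem gives
\begin{equation*}
\lim_{\rho\to 0}\int_{\R^d}G_\rho(\vecq)\,d\vecq=\Bigl(\int_\xi^\infty\Phi(\xi')\,d\xi'\Bigr)\int_{\R^d}\int_{\S_1^{d-1}}f(\vecq,\vecv)\,d\!\vol(\vecv)\,d\vecq=\int_\xi^\infty\Phi(\xi')\,d\xi',
\end{equation*}
which is \eqref{freeCor1-ave-equ}. The only genuine obstacle is a bookkeeping one --- verifying that the exceptional set where one cannot invoke Corollary \ref{freeCor1} (namely $\scrL\cup M_0\Q^d$ together with $\vecq$'s for which $f(\vecq,\cdot)$ vanishes) has Lebesgue measure zero and is thus swallowed by the integration over $\vecq$. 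No new equidistribution input is required, since all the hard analysis is already encoded in Corollary \ref{freeCor1}.
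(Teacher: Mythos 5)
Your proposal is correct and follows essentially the same approach as the paper: Radon--Nikodym to get a density $f$, Fubini to disintegrate, pointwise application of Corollary~\ref{freeCor1} for almost every $\vecq$ (using that $\scrL$ and $\{-\vecq M_0^{-1}\in\Q^d\}$ are Lebesgue-null), and then dominated convergence with the majorant $\vecq\mapsto\int_{\S_1^{d-1}}f(\vecq,\vecv)\,d\!\vol(\vecv)$. The only cosmetic difference is your explicit normalisation $\lambda_\vecq$ to obtain a probability measure on $\S_1^{d-1}$, a step the paper performs implicitly.
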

\begin{proof}
By the Theorem of Radon-Nikodym %
we have
$d\lambda(\vecq,\vecv)=f(\vecq,\vecv) \, d\vecq \,d\!\vol_{\S_1^{d-1}}(\vecv)$
for some non-negative function $f\in \L^1(\R^d\times\S_1^{d-1})$
with $\|f\|_{\L^1}=1$.
By Fubini's Theorem, the left hand side of \eqref{freeCor1-ave-equ} equals
\begin{align} \label{freeCor1-ave-equ-step1}
\lim_{\rho\to 0} \int_{\R^d} \Bigl( \int_{\S_1^{d-1}}
I\bigl( \rho^{d-1} \tau_1(\vecq,\vecv;\rho)\geq \xi \bigr)
f(\vecq,\vecv) \, d\!\vol_{\S_1^{d-1}}(\vecv) \Bigr)\, d\vecq,
\end{align}
where the indicator function $I\bigl(\ldots\bigr)$ is interpreted as zero
whenever $\vecq\notin\scrK_\rho$.
For almost every $\vecq\in\R^d$ we have 
$f(\vecq,\cdot)\in \L^1(\S_1^{d-1})$ and $-\vecq M_0^{-1}\notin\Q^d$,
and for each such (fixed) point $\vecq$,
Corollary \ref{freeCor1} implies that
the inner integral in \eqref{freeCor1-ave-equ-step1} tends to 
\begin{align}
\Bigl(\int_{\S_1^{d-1}} f(\vecq,\vecv)\,d\!\vol_{\S_1^{d-1}}(\vecv)\Bigr)
\cdot \int_\xi^\infty \Phi(\xi')\, d\xi' 
\qquad \text{as } \: \rho\to 0.
\end{align}
By Lebesgue's Bounded Convergence Theorem 
(with
$\vecq\mapsto \int_{\S_1^{d-1}} f(\vecq,\vecv)\,d\!\vol_{\S_1^{d-1}}(\vecv)$
as a majorant function), we may change the 
order between $\lim_{\rho\to 0}$ and $\int_{\R^d}$ in 
\eqref{freeCor1-ave-equ-step1}, thus obtaining \eqref{freeCor1-ave-equ}.
\end{proof}

\begin{proof}[Proof of Theorem \ref{freeThm1cor}]
Let $M$ be the set of non-negative functions
$f\in \L^1(\R^d\times\S_1^{d-1})$ with $\|f\|_{\L^1}=1$.
By the Theorem of Radon-Nikodym and \eqref{TAU1TAU1DEF}, 
our task is to prove that for each $f\in M$ we have
\begin{align} \label{freeThm1cor-step1}
\lim_{\rho\to 0} \int_{\scrK_\rho} \int_{\S_1^{d-1}} 
I\Bigl( \rho^{d-1}\tau_1(\vecq,\vecv;\rho)\geq\xi\Bigr) 
\rho^{d(d-1)} f(\rho^{d-1}\vecq,\vecv) \,d\!\vol_{\S_1^{d-1}}(\vecv)\, d\vecq 
= %
\int_\xi^\infty \Phi(\xi') \, d\xi' & .
\end{align}
In fact it suffices to prove \eqref{freeThm1cor-step1}
when $f\in M$ is continuous and of compact support, since the
subset of such functions is dense in $M$
with respect to the $\L^1$-norm.

Using the $\scrL$-periodicity of $\tau_1(\cdot,\vecv;\rho)$,
the double integral in \eqref{freeThm1cor-step1} can be expressed as
\begin{align} \label{freeThm1cor-step2}
& \int_{F\cap\scrK_\rho} \int_{\S_1^{d-1}} 
I\Bigl( \rho^{d-1}\tau_1(\vecq_0,\vecv;\rho)\geq\xi\Bigr) 
\Bigl\{\rho^{d(d-1)} \sum_{\vecq\in\vecq_0+\scrL} f(\rho^{d-1}\vecq,\vecv)
\Bigr\}
\,d\!\vol_{\S_1^{d-1}}(\vecv)\, d\vecq_0,
\end{align}
where $F\subset\R^d$ is a fundamental parallelogram for $\scrL$.
But for $f$ continuous and of compact support,
the expression within the brackets in \eqref{freeThm1cor-step2} tends to
$h(\vecv):=\int_{\R^d} f(\vecq,\vecv)\,d\vecq$ as $\rho\to 0$, 
uniformly with respect to $\vecv\in\S_1^{d-1}$ and $\vecq_0\in F$.
Hence Theorem \ref{freeThm1cor} follows from Corollary~\ref{freeCor1-ave}, 
applied with $d\lambda(\vecq,\vecv)=\chi_F(\vecq)h(\vecv) \, d\vecq\,
d\!\vol_{\S_1^{d-1}}(\vecv)$.
\end{proof}

\subsection{Proofs for Section \ref{secLorentz}}\label{exactpos1proofsec}

\begin{proof}[Proof of Theorem \ref{exactpos1}]

As in Section \ref{secVisproofssec} we fix a smooth map
$E_1:\Edomain\to\SO(d)$ such that $\vecv=\vecv(\vecx)=\vece_1 E_1(\vecx)^{-1}
\in\S_1^{d-1}$ gives a diffeomorphism between the bounded open set
$\Edomain\subset\R^{d-1}$ and $\S_1^{d-1}$ minus one point.
However we now make the extra requirement that
$E_1(\vecx)=K(\vecv(\vecx))$ for all $\vecx\in\Edomain$.\footnote{For example, we may choose $\Edomain=\scrB_\pi^{d-1}$ and 
$E_1(\vecx)=K(\vece_1 E(\vecx)K_0)$ where 
$E(\vecx)=\exp \smatr 0{\vecx}{-\trans\vecx}{0_{d-1}}$ and
$K_0$ is any fixed matrix in $\SO(d)$ such that $\vecv=-\vece_1 K_0$
is the unique point where $K(\vecv)$ is not smooth.}

We again write $\lambda$ and $\vecbeta$ also for the lifts of $\lambda$
and $\vecbeta$ to the variable $\vecx$.
Now the measure appearing in the limit in \eqref{exactpos1eq} equals,
with $\vecq_{\rho,\vecbeta}(\vecx)=\vecq+\rho\vecbeta(\vecx)$:
\begin{align} \label{exactpos1lambdarestat}
\lambda\bigl(\bigl\{ \vecx\in\Edomain \col 
\rho^{d-1} \tau_1(\vecq_{\rho,\vecbeta}(\vecx),\vecv(\vecx);\rho)
\in [\xi_1,\xi_2), \:
-\vecw_1(\vecq_{\rho,\vecbeta}(\vecx),\vecv(\vecx);\rho)
\in \fU E_1(\vecx)^{-1}\bigr\}\bigr).
\end{align}
This is well defined for $\rho$ small;
more specifically, if $\rho$ is sufficiently
small then $(\vecq_{\rho,\vecbeta}(\vecx),\vecv(\vecx))\in \T^1(\scrK_\rho)$ 
for all $\vecx\in\Edomain$,
so that $\tau_1(\vecq_{\rho,\vecbeta}(\vecx),\vecv(\vecx);\rho)$ and
(if $\tau_1<\infty$)
$\vecw_1(\vecq_{\rho,\vecbeta}(\vecx),\vecv(\vecx);\rho)$ are defined.
(For recall that if $\vecq\in\scrL$ then by our assumption on $\vecbeta$
we have $\|\vecbeta(\vecx)\|\geq 1$ everywhere.)

For technical reasons we will prove Theorem \ref{exactpos1} under the
extra assumption that $\xi_1>0$. This is no loss of generality, for once
that proof is complete, the remaining case $\xi_1=0$ follows
by a simple limit argument, using Corollary \ref{freeCor2} in the form
$\lim_{\rho\to 0} \lambda(\{\vecv\in\S_1^{d-1}\col \rho^{d-1}\tau_1<\xi\})
=1-F_{0,\vecalf,\vecbeta}(0,\xi^{1/(d-1)})$ together with the fact that
$\lim_{\xi\to 0} F_{0,\vecalf,\vecbeta}(0,\xi^{1/(d-1)})=1$
(cf.\ Remark \ref{GENBETAUPPERBOUNDREM}).

The measure in \eqref{exactpos1lambdarestat} can be bounded from
above and below using the counting function 
$\scrN^{(\fU)}_{c,T}(\rho,\vecx,\vecw)$
(cf.\ \eqref{asinfu}),
taken with respect to the affine lattice $\Lalf=\scrL-\vecq$, as follows.
We will use the shorthand notation
$\scrN_{c,T}(\rho,\vecx,\vecw):=\scrN^{(\HS)}_{c,T}(\rho,\vecx,\vecw)$,
which is natural in view of \eqref{NCTOKGEN}.
Let $C=1+\sup_{\Edomain} \|\vecbeta\|$.
Now for any $0<\xi_1<\xi_2$ and any $\rho>0$ so small that
$\xi_1\rho^{1-d}-C\rho>0$, $\xi_1 \rho^{1-d}+C\rho<\xi_2 \rho^{1-d}-C\rho$
and $(\vecq_{\rho,\vecbeta}(\vecx),\vecv(\vecx))\in \T^1(\scrK_\rho)$ 
for all $\vecx\in\Edomain$, we have:
\begin{align} \label{exactpos2ul}
\lambda\bigl(\bigl\{ \vecx\in\Edomain \col 
\scrN_{0,T_1}(\rho,\vecx,\vecbeta(\vecx))=0, \:
\scrN^{(\fU)}_{c_2,T_2}(\rho,\vecx,\vecbeta(\vecx))\geq 1, \:
\scrN_{c_3,T_3}(\rho,\vecx,\vecbeta(\vecx))\leq 1\bigr\}\bigr) \hspace{40pt}&
\\ \notag
\leq
\lambda\bigl(\bigl\{ \vecx\in\Edomain \col 
\rho^{d-1} \tau_1(\vecq_{\rho,\vecbeta}(\vecx),\vecv(\vecx);\rho)
\in [\xi_1,\xi_2), \:
-\vecw_1(\vecq_{\rho,\vecbeta}(\vecx),\vecv(\vecx);\rho)
\in \fU E_1(\vecx)^{-1}\bigr\}\bigr) &
\\ \notag
\leq \lambda\bigl(\bigl\{ \vecx\in\Edomain \col 
\scrN_{0,T_4}(\rho,\vecx,\vecbeta(\vecx))=0, \:
\scrN^{(\fU)}_{c_5,T_5}(\rho,\vecx,\vecbeta(\vecx))\geq 1\bigr\}\bigr),
\hspace{80pt}&
\end{align}
where $T_j>0$, $c_j\in [0,1]$ are defined through
$T_1=c_2T_2=c_3T_3=\xi_1 \rho^{1-d}+C\rho$,
$T_2=\xi_2 \rho^{1-d}-C\rho$,
$T_3=T_5=\xi_2 \rho^{1-d}+C\rho$,
$T_4=c_5T_5=\xi_1 \rho^{1-d}-C\rho$.

To prove \eqref{exactpos2ul},
let $\vecx$ be any point in $\Edomain$ with
$\scrN_{0,T_1}(\rho,\vecx,\vecbeta(\vecx))=0,$
$\scrN^{(\fU)}_{c_2,T_2}(\rho,\vecx,\vecbeta(\vecx))\geq 1$ and
$\scrN_{c_3,T_3}(\rho,\vecx,\vecbeta(\vecx))\leq 1$.
To show the first inequality in \eqref{exactpos2ul}
it suffices to prove that these conditions imply
$\tau_1:=\tau_1(\vecq_{\rho,\vecbeta}(\vecx),\vecv(\vecx);\rho)
\in [\rho^{1-d} \xi_1,\rho^{1-d} \xi_2)$ and
$\vecw_1:=\vecw_1(\vecq_{\rho,\vecbeta}(\vecx),\vecv(\vecx);\rho)
\in-\fU E_1(\vecx)^{-1}$.

It follows from $\scrN^{(\fU)}_{c_2,T_2}(\rho,\vecx,\vecbeta(\vecx))\geq 1$ 
that there is some 
$\vecy\in\Lalf\setminus\{\bn\}$ with
$\xi_1\rho^{1-d}+C\rho \leq \|\vecy\| <\xi_2\rho^{1-d}-C\rho$
and $(\vecy-\rho \vecbeta(\vecx)) E_1(\vecx)
\in \R_{>0}\vece_1+\rho\fU_\perp$.
Since $\fU\subset\HS$ it follows that there exist 
$\vecw\in\fU$ and $t>-\rho$ such that
$(\vecy-\rho \vecbeta(\vecx)) E_1(\vecx)=t\vece_1+\rho \vecw$.
This implies in particular that $\|\vecy\|-C\rho\leq t\leq \|\vecy\|+C\rho$,
and thus
\begin{align}
\xi_1\rho^{1-d}\leq t <\xi_2\rho^{1-d}.
\end{align}
Set $\vecy':=\vecy+\vecq\in\scrL\setminus\{\vecq\}$ and recall
$\vecv=\vecv(\vecx)=\vece_1 E_1(\vecx)^{-1}$;
then our equality says
\begin{align}
\vecq_{\rho,\vecbeta}(\vecx)+t\vecv=\vecy'-\rho \vecw E_1(\vecx)^{-1}.
\end{align}
This implies $\tau_1\leq t$.
Furthermore, using $\scrN_{0,T_1}(\rho,\vecx,\vecbeta(\vecx))=0$ 
together with our requirement that if $\vecq\in\scrL$ then
$(\vecbeta(\vecv)+\R_{>0}\vecv)\cap\scrB_1^d=\emptyset$
for all $\vecv\in\S_1^{d-1}$, we conclude $\xi_1 \rho^{1-d}\leq \tau_1$.

We claim that in fact $\tau_1=t$ holds. Assume the opposite;
then we have $\xi_1 \rho^{1-d}\leq \tau_1<t<\xi_2\rho^{1-d}$.
By the definition of $\tau_1$ there exists some 
$\vecy''\in\Lalf\setminus\{\bn\}$ such that 
$\rho\vecbeta(\vecx)+(\tau_1+\ve)\vecv\in \scrB_\rho^d+\vecy''$ for all 
sufficiently small $\ve>0$.
Then $\|\vecy''\|\leq\tau_1+C\rho$, and also since
$\scrN_{0,T_1}(\rho,\vecx,\vecbeta(\vecx))=0$ we must have
$\|\vecy''\|\geq T_1=c_3T_3$;
hence we see that $\vecy''-\rho\vecbeta(\vecx)$ lies in the set defining
$\scrN_{c_3,T_3}(\rho,\vecx,\vecbeta(\vecx))$.
But $\vecy-\rho\vecbeta(\vecx)$ \textit{also} lies in this
set, and from $\tau_1<t$ we see that $\vecy\neq\vecy''$.
Hence $\scrN_{c_3,T_3}(\rho,\vecx,\vecbeta(\vecx))\geq 2$,
contradicting our assumptions.
Having thus proved $\tau_1=t$ we obtain
$\vecw_1 %
=-\vecw E_1(\vecx)^{-1}$
by the definition of $\vecw_1$, and hence both
$\tau_1\in [\xi_1\rho^{1-d},\xi_2\rho^{1-d})$ and
$\vecw_1 \in -\fU E_1(\vecx)^{-1}$.
Hence the proof of the first inequality in \eqref{exactpos2ul}
is completed.

The proof of the second inequality in \eqref{exactpos2ul} is easier,
and we leave it to the reader.

Continuing onwards, let us note the following mild generalization
of \eqref{SCRNINFZFU}.
For all $\sigma\geq 0$, $c\geq 0$, $c'>0$,
$\vecx\in\Edomain$ and any $T>0$ so large that the left hand
side is defined, we have
\begin{equation} \label{SCRNINFZFUC1C2}
	\scrN^{(\fU)}_{c,T}(\sigma T^{-\frac 1{d-1}},\vecx,\vecbeta(\vecx))
	= \#\Bigl( \fZ_{T/c'}^{(\fU)}(c'c,c',{c'}^{-\frac 1{d-1}}\sigma,\vecbeta)|_\vecx \Phi^{-t} \bigl(E_1(\vecx)^{-1},\bn\bigr) \cap (\ZZ^d+\vecalf)M_0 \setminus\{\bn\}\Bigr),
\end{equation}
with $T/c'=\e^{(d-1)t}$.
This follows directly from
\eqref{SCRNINFZFU} combined with the invariance relation 
\begin{align} \label{FZCINFZC1C2}
\fZ_{T/c'}^{(\fU)}(c'c,c',{c'}^{-\frac 1{d-1}}\sigma,\vecbeta)
=\fZ_{T}^{(\fU)}(c,1,\sigma,\vecbeta)
\begin{pmatrix} c' & \bn \\ \trans\bn & c'^{-1/(d-1)} 1_{d-1} \end{pmatrix},
\end{align}
which can be verified straight from the definition \eqref{FZFUTC1C2DEF}.

In \eqref{exactpos2ul}, introduce $\sigma_1,\ldots,\sigma_5$ through
$\rho=\sigma_j T_j^{-\frac 1{d-1}}$.
Using \eqref{SCRNINFZFUC1C2} and $T_1=c_2T_2=c_3T_3$ we see that
when $\rho$ is sufficiently small,
the left hand side in \eqref{exactpos2ul} can be expressed as
\begin{align} \notag
\lambda\Bigl(\Bigl\{\vecx\in\Edomain\col
\#\bigl(\fZ_{T_1/\xi_1}(0,\xi_1,\xi_1^{-\frac 1{d-1}}\sigma_1,\vecbeta)|_\vecx \Phi^{-t} \bigl(E_1(\vecx)^{-1},\bn\bigr) \cap (\ZZ^d+\vecalf)M_0  \setminus\{\bn\}\big)=0,\: \hspace{30pt} &
\\ \label{exactpos2ullowref}
\#\bigl(\fZ_{T_1/\xi_1}^{(\fU)}(\xi_1,\sfrac{\xi_1}{c_2},(\sfrac{\xi_1}{c_2})^{-\frac 1{d-1}}\sigma_2,\vecbeta)|_\vecx \Phi^{-t} \bigl(E_1(\vecx)^{-1},\bn\bigr) \cap (\ZZ^d+\vecalf)M_0  \setminus\{\bn\}\big)\geq 1,\: \hspace{10pt} &
\\ \notag
\#\bigl(\fZ_{T_1/\xi_1}(\xi_1,\sfrac{\xi_1}{c_3},(\sfrac{\xi_1}{c_3})^{-\frac 1{d-1}}\sigma_3,\vecbeta)|_\vecx \Phi^{-t} \bigl(E_1(\vecx)^{-1},\bn\bigr) \cap (\ZZ^d+\vecalf)M_0  \setminus\{\bn\}\big)\leq 1\Bigr\}\Bigr) &
\end{align}
with $\e^{(d-1)t}=T_1/\xi_1$, and using the notation
$\fZ_{c,T}(c_1,c_2,\sigma,\vecbeta):=
\fZ^{(\HS)}_{c,T}(c_1,c_2,\sigma,\vecbeta)$.
Recall that all $c_j,\sigma_j,T_j$ are functions of $\rho$,
and, when $\rho\to 0$, we have $T_j\to\infty$,
$\sigma_1\to \xi_1^{\frac 1{d-1}}$,
$\sigma_2,\sigma_3\to \xi_2^{\frac 1{d-1}}$,
and $c_2,c_3\to\xi_1/\xi_2$.
Note that $e^{(d-1)t}=T_1/\xi_1=\rho^{1-d}+\sfrac{C}{\xi_1}\rho$ 
is strictly decreasing as a function
of $\rho$ for small $\rho>0$; hence for small $\rho$ 
($\Leftrightarrow$ large $t$)
we may instead view $\rho$ as a function of $t$;
then also all $c_j$, $\sigma_j$, $T_j$ are functions of $t$.
Now, using an obvious shorthand notation, we have the
following sieving type identity for \eqref{exactpos2ullowref}:
\begin{align} \notag
& \lambda \bigl(\bigl\{\vecx \in \Edomain\col
\# F_{t,\vecx}^{(1)}=0, \:
\# F_{t,\vecx}^{(2)}\geq 1, \:
\# F_{t,\vecx}^{(3)}\leq 1\bigr\}\bigr) %
\\ 
& =
\lambda\bigl(\bigl\{\vecx \col \# F_{t,\vecx}^{(2)}\geq 1\bigr\}\bigr)
-
\lambda\bigl(\bigl\{\vecx \col \# F_{t,\vecx}^{(1)}\geq 1, \:
\# F_{t,\vecx}^{(2)}\geq 1 \bigr\}\bigr)
\\ \notag
& \:\: -
\lambda\bigl(\bigl\{\vecx \col \# F_{t,\vecx}^{(2)}\geq 1, \:
\# F_{t,\vecx}^{(3)}\geq 2\bigr\}\bigr)
+
\lambda\bigl(\bigl\{\vecx \col \# F_{t,\vecx}^{(1)}\geq 1, \:
\# F_{t,\vecx}^{(2)}\geq 1, \:
\# F_{t,\vecx}^{(3)}\geq 2\bigr\}\bigr). %
\end{align}
To each of the four terms in the right hand side we can now apply
the $E_1(\vecx)$-variant of Theorem \ref{thinThm-multi} 
(see Remark \ref{thinCor}) and its analogue for rational $\vecalf$
(Theorem \ref{thinThm-rat-multi}, Remark \ref{thinCor-rat}), 
in conjunction with Lemma \ref{lemCon2fU}.
If $\vecalf\in q^{-1}\Z^d$ then we obtain that as $\rho\to 0$, 
\eqref{exactpos2ullowref} tends to
\begin{align} \notag
& (\lambda \times \mu_q) \Bigl(\Bigl\{ (\vecx,M)\in\Edomain\times X_q \col
\#\bigl(\fZ(0,\xi_1,1,\vecbeta)|_\vecx \cap (\Z^d+\vecalf)M\bigr)=0, \:
\\ \label{exactpos2LIMINFBOUND}
& \hspace{180pt}
\#\bigl(\fZ^{(\fU)}(\xi_1,\xi_2,1,\vecbeta)|_\vecx \cap (\Z^d+\vecalf)M\bigr)
\geq 1,\:
\\ \notag
& \hspace{200pt}
\#\bigl(\fZ(\xi_1,\xi_2,1,\vecbeta)|_\vecx \cap (\Z^d+\vecalf)M\bigr)
\leq 1\Bigr\}\Bigr).
\end{align}
(Note that here we need not remove $\bn$ from the
set $\Z^d+\vecalf$, since $\bn$ is anyway not contained in any of the sets
$\fZ(0,\xi_1,1,\vecbeta)|_\vecx$ or $\fZ(\xi_1,\xi_2,1,\vecbeta)|_\vecx$.)
In the case $\vecalf\notin\Q^d$ we obtain the same expression but with
$\mu$, $X$ and $\Z^d$ in place of $\mu_q$, $X_q$ and $\Z^d+\vecalf$. 

Similarly, the right hand side in \eqref{exactpos2ul} can
be expressed as (using also $T_4=c_5T_5$)
\begin{align} \notag
\lambda\Bigl(\Bigl\{\vecx\in\Edomain\col
\#\bigl(\fZ_{T_4/\xi_1}(0,\xi_1,\xi_1^{-\frac 1{d-1}}\sigma_4,\vecbeta)|_\vecx \Phi^{-t} \bigl(E_1(\vecx)^{-1},\bn\bigr) \cap (\ZZ^d+\vecalf)M_0  \setminus\{\bn\}\big)=0,
\hspace{50pt} &
\\
\#\bigl(\fZ_{T_4/\xi_1}^{(\fU)}(\xi_1,\sfrac{\xi_1}{c_5},(\sfrac{\xi_1}{c_5})^{-\frac 1{d-1}}\sigma_5,\vecbeta)|_\vecx \Phi^{-t} \bigl(E_1(\vecx)^{-1},\bn\bigr) \cap (\ZZ^d+\vecalf)M_0  \setminus\{\bn\}\big)\geq 1\Bigr\}\Bigr) &
\end{align}
with $\e^{(d-1)t}=T_4/\xi_1$,
and as $\rho\to 0$ this is seen to tend to (if $\vecalf\in q^{-1}\Z^d$)
\begin{align} \notag
& (\lambda \times \mu_q) \Bigl(\Bigl\{ (\vecx,M)\in\Edomain\times X_q \col
\#\bigl(\fZ(0,\xi_1,1,\vecbeta)|_\vecx \cap (\Z^d+\vecalf) M\bigr)=0, \:
\\ \label{exactpos2LIMSUPBOUND}
& \hspace{200pt}
\#\bigl(\fZ^{(\fU)}(\xi_1,\xi_2,1,\vecbeta)|_\vecx \cap (\Z^d+\vecalf)M\bigr)
\geq 1\Bigr\}\Bigr).
\end{align}

Hence we conclude: Given any $0<\xi_1<\xi_2$, the $\liminf$ 
of the expression \eqref{exactpos1lambdarestat} 
as $\rho\to 0$ is bounded below by
\eqref{exactpos2LIMINFBOUND}, and the $\limsup$ is bounded above by
\eqref{exactpos2LIMSUPBOUND} (both with the usual modifications if
$\vecalf\notin\Q^d$).
In order to get successively sharper bounds we will now \textit{split}
the original interval $[\xi_1,\xi_2)$ into many small parts,
and apply the bounds just proved to each part.
We will also use the results on integrals over $(X,\mu)$ and $(X_q,\mu_q)$
which we developed in Sections~\ref{FOLIATIONSEC} and \ref{PROPLIMFCNSEC}.
We will give the details for the case $\vecalf\in q^{-1}\Z^d$,
but exactly the same proof with very small changes of notation
works also in the case $\vecalf\notin\Q^d$; in particular
all expressions below containing $f_0(\ldots)$ of $F_0(\ldots)$
will remain unchanged, except that they refer to the definitions
\eqref{XYFR5VARDEF}, \eqref{XYFR3VARDEF} in place of \eqref{FR5VARDEF},
\eqref{FR3VARDEF}; also some of the continuity issues below are slightly
easier in the case $\vecalf\notin\Q^d$ since we can refer to 
Proposition \ref{XYCONT3DPROP} for all that we need.

Thus from now on we assume $\vecalf\in q^{-1}\Z^d$.
Recall that we have defined (\eqref{FR5VARDEF} with $r=0$)
\begin{align} \label{F0RECALLDEF}
f_0(c_1,c_2,\sigma,\vecz,\vecy)
=\nu_\vecy\bigl(\bigl\{M\in X_q(\vecy) \col 
(\fZ(c_1,c_2,\sigma)+\vecz) \cap (\Z^d+\vecalf)M=\emptyset\bigr\}\bigr)
\end{align}
and $F_0(\xi,\vecw,\vecz)=f_0(0,\xi,1,\vecz,\xi\vece_1+\vecw+\vecz)$.
(And $F_0(\xi,\vecw,\vecz)$ is the same as $\Phi_\vecalf(\xi,\vecw,\vecz)$
in \eqref{exactpos1limitrat} in Theorem \ref{exactpos1}.)
Our goal now is to prove that 
the expression in \eqref{exactpos1lambdarestat} tends to %
\begin{align} \label{exactpos2answer}
\int_{\vecx\in\Edomain} \int_{\xi_1}^{\xi_2} \int_{\fU_\perp}
F_0(\xi,\vecw,\vecz_\vecx)
\, d\vecw d\xi d\lambda(\vecx),
\end{align}
where 
$\vecz_\vecx:=(\vecbeta(\vecx)E_1(\vecx))_\perp$.
Recall that we have already seen in Lemma \ref{FRMEASURABLELEM}
that the function $F_0(\xi,\vecw,\vecz)$ is Borel measurable on the
$\langle \xi,\vecw,\vecz\rangle$-product space;
in particular we are allowed to freely change order of integration 
in \eqref{exactpos2answer}; hence our present aim is equivalent with proving
the limit formula \eqref{exactpos1eq} in Theorem \ref{exactpos1}.

Let $0<\xi_1<\xi_2$ be given once and for all.
Take $\ve>0$ arbitrary (we will take $\ve\to 0$ in the end).
Fix a constant $C$ so large that 
$C\geq 1+\xi_2+\sup_{\Edomain} \|\vecbeta\|$, $C\geq \xi_1^{-1}$
and if $d=2$ then also require $2C^{-1} \leq \ve$.
Next choose $\eta>0$ and $N\in\Z_{\geq 2}$
as in Proposition \ref{CONT3DPROP}, for $r=0$ and our fixed $C$ and $\ve$;
if necessary shrink $\eta$ further so that $\eta<\xi_1/N$.
By Lemma \ref{FEWWITHTWOLEMMA2} we may \textit{also} assume,
after possibly shrinking $\eta$ further, that for every set
$U=\fZ(c_1,c_2,1)+\vecz$ with $\vecz\in \{0\}\times\scrB_C^{d-1}$ 
and $c_1<c_2$ satisfying $\xi_1\leq c_1<c_2\leq\xi_2$ and $c_2-c_1\leq \eta$,
we have
\begin{align} \label{FEWWITHTWOLEMMA2CONS}
\int_U \nu_\vecy \Bigl(\Bigl\{ M\in X_q(\vecy) \col
\#\Bigl(U \cap (\Z^d+\vecalf)M\Bigr) \geq 2\Bigr\}\Bigr) \, d\vecy
\leq \ve(c_2-c_1).
\end{align}

We fix a splitting $\xi_1=\theta_1<\theta_2<\ldots<\theta_n=\xi_2$
of the interval $[\xi_1,\xi_2)$ such that
$\theta_{j+1}-\theta_j<\eta$ for each $j=1,2,\ldots,n-1$.
Note that \eqref{exactpos1lambdarestat} can be expressed as
\begin{equation}
\sum_{j=1}^{n-1} \lambda\bigl(\bigl\{ \vecx\in\Edomain \col 
\rho^{d-1} \tau_1(\vecq_{\rho,\vecbeta}(\vecx),\vecv(\vecx);\rho)
\in [\theta_j,\theta_{j+1}), \:
\vecw_1(\vecq_{\rho,\vecbeta}(\vecx),\vecv(\vecx);\rho)
\in -\fU E_1(\vecx)^{-1}\bigr\}\bigr).
\end{equation}
We now apply \eqref{exactpos2LIMINFBOUND} and \eqref{exactpos2LIMSUPBOUND}
for the $\liminf$ and $\limsup$ of each term in this sum.
We get that the $\liminf$ of the total expression is
\begin{align} \notag
\geq \sum_{j=1}^{n-1} \int_{\vecx\in\Edomain} 
\mu_q \Bigl(\Bigl\{ M\in X_q \col
& \#\bigl((\fZ(0,\theta_j,1)+\vecz_\vecx)
\cap (\Z^d+\vecalf)M\bigr)=0, \hspace{72pt}
\\[-7pt]\notag
& \#\bigl((\fZ^{(\fU)}(\theta_j,\theta_{j+1},1)+\vecz_\vecx) \cap 
(\Z^d+\vecalf)M\bigr) \geq 1,
\\ \label{exactposLIMINFBOUND3}
& \hspace{20pt} \#\bigl((\fZ(\theta_j,\theta_{j+1},1)+\vecz_\vecx) \cap 
(\Z^d+\vecalf)M\bigr) \leq 1
\Bigr\}\Bigr) \, d\lambda(\vecx),
\end{align}
where %
$\fZ^{(\fU)}(c_1,c_2,\sigma)$ is defined as in \eqref{FZFULIMITDEF}.

We will next apply Proposition \ref{FOLINTPROP} to bound each term from below.
Let us fix \mbox{$j\in\{1,\ldots,n-1\}$} and $\vecx\in\Edomain$
for the moment, set
\begin{equation}
\begin{split}
S=\Bigl\{M\in X_q \col
& \#\bigl((\fZ(0,\theta_j,1)+\vecz_\vecx)
\cap (\Z^d+\vecalf)M\bigr)=0,
\\
& \#\bigl((\fZ^{(\fU)}(\theta_j,\theta_{j+1},1)+\vecz_\vecx) \cap 
(\Z^d+\vecalf)M\bigr) \geq 1\Bigr\}
\end{split}
\end{equation}
and denote by $S'$ the subset of $S$ which appears in
\eqref{exactposLIMINFBOUND3} for our fixed $j,\vecx$.
Set $U=\fZ^{(\fU)}(\theta_j,\theta_{j+1},1)+\vecz_\vecx$;
then $S\subset\bigcup_{\vecy\in U} X_q(\vecy)$ and also
$\forall \vecy_1\neq\vecy_2\in U: X_q(\vecy_1)\cap X_q(\vecy_2)\cap S'
=\emptyset$,
since $U\subset \fZ(\theta_j,\theta_{j+1},1)+\vecz_\vecx$.
Hence Proposition \ref{FOLINTPROP} applies, yielding
\begin{equation}
\begin{split}
\mu_q(S')&=\int_U \nu_\vecy(S'\cap X_q(\vecy))\, d\vecy
\\
& \geq \int_U \nu_\vecy(S\cap X_q(\vecy))\, d\vecy
\\
& \hspace{50pt}
-\int_U \nu_\vecy \Bigl(\Bigl\{ M\in X_q(\vecy) \col
\#\Bigl(\bigl(\fZ(\theta_j,\theta_{j+1},1)+\vecz_\vecx\bigr) \cap 
(\Z^d+\vecalf)M\Bigr)
\geq 2\Bigr\}\Bigr) \, d\vecy.
\end{split}
\end{equation}
Here the first integral in the right hand side equals
$\int_{\theta_j}^{\theta_{j+1}} \int_{\fU_\perp}
f_0(0,\theta_j,1,\vecz_\vecx,\xi\vece_1+\vecw+\vecz_\vecx) \, d\vecw \, d\xi$
(recall \eqref{F0RECALLDEF}), since each $M\in X_q(\vecy)$ 
with $\vecy\in U$ automatically fulfills
$\#\bigl((\fZ^{(\fU)}(\theta_j,\theta_{j+1},1)+\vecz_\vecx) \cap 
(\Z^d+\vecalf)M\bigr) \geq 1$;
and the second integral is bounded from above by $\ve(\theta_{j+1}-\theta_j)$,
by \eqref{FEWWITHTWOLEMMA2CONS}.
Adding this over all $j$ and $\vecx$ we have now proved that the total
expression in \eqref{exactposLIMINFBOUND3} is
\begin{align} \label{exactposLIMINFBOUND4}
& \geq -\ve(\xi_2-\xi_1)+
\int_{\vecx\in\Edomain}\sum_{j=1}^{n-1}
\int_{\theta_j}^{\theta_{j+1}} \int_{\fU_\perp}
f_0(0,\theta_j,1,\vecz_\vecx,\xi\vece_1+\vecw+\vecz_\vecx) \, d\vecw \, d\xi
\, d\lambda(\vecx).
\end{align}
Now for each $\langle \vecx,j,\xi,\vecw\rangle$ which
appears in the above integral, and which satisfies 
$\|\vecw+\vecz_\vecx\|\geq C^{-1}$ if $d=2$,
Lemma \ref{FrfrRELLEMMA} applies, and yields
\begin{align} \label{CONTRELFORLIMSUP}
\bigl | f_0(0,\theta_j,1,\vecz_\vecx,\xi\vece_1+\vecw+\vecz_\vecx)
-F_0(\xi,\vecw,\vecz_\vecx)\bigr |\leq \ve.
\end{align}
If $d=2$ we note that the set 
$\{\vecw \col \|\vecw+\vecz_\vecx\|< C^{-1}\}$
has measure $\leq 2C^{-1} \leq \ve$
(viz., the 1-dimensional Lebesgue measure $d\vecw$),
and for these $\vecw$'s the difference in \eqref{CONTRELFORLIMSUP}
is certainly $\leq 1$, since $0\leq f_0,F_0\leq 1$ everywhere.
Hence \eqref{exactposLIMINFBOUND4} is
\begin{align}
& \geq -2 \ve (\xi_2-\xi_1)+
\int_{\vecx\in\Edomain}
\int_{\xi_1}^{\xi_2} \int_{\fU_\perp}
\Bigl(-\ve+F_0(\xi,\vecw,\vecz_\vecx) \Bigr) 
\, d\vecw \, d\xi \, d\lambda(\vecx).
\end{align}
In conclusion, we have proved that this last expression is a lower bound
for the $\liminf$ of \eqref{exactpos1lambdarestat}.
But this is true for any $\ve>0$; hence the $\liminf$ is in fact
\begin{equation}
\begin{split}
& \geq \int_{\vecx\in\Edomain}
\int_{\xi_1}^{\xi_2} \int_{\fU_\perp}
F_0(\xi,\vecw,\vecz_\vecx) \, d\vecw \, d\xi \, d\lambda(\vecx).
\end{split}
\end{equation}

The treatment of the $\limsup$ is similar but a bit easier:
With $S$ and $U$ as before we need only notice that by
the upper bound in Proposition \ref{FOLINTPROP} we have
\begin{equation}
\begin{split}
\mu_q(S)& \leq \int_U \nu_\vecy(S\cap X_q(\vecy))\, d\vecy
\\
& =\int_{\vecx\in\Edomain}\sum_{j=1}^{n-1}
\int_{\theta_j}^{\theta_{j+1}} \int_{\fU_\perp}
f_0(0,\theta_j,1,\vecz_\vecx,\xi\vece_1+\vecw+\vecz_\vecx) \, d\vecw \, d\xi
\, d\lambda(\vecx).
\end{split}
\end{equation}
Now Lemma \ref{FrfrRELLEMMA} is applied as before,
and we obtain that the $\limsup$ of \eqref{exactpos1lambdarestat} is
\begin{equation}
\leq \ve (\xi_2-\xi_1)+
\int_{\vecx\in\Edomain}
\int_{\xi_1}^{\xi_2} \int_{\fU_\perp} \Bigl( \ve+
F_0(\xi,\vecw,\vecz_\vecx)\Bigr) \, d\vecw \, d\xi \, d\lambda(\vecx).
\end{equation}
Hence, by letting $\ve\to 0$ and combining with our result for $\liminf$,
we have finally proved our claim that \eqref{exactpos1lambdarestat}
tends to \eqref{exactpos2answer} as $\rho\to 0$.
This completes the proof of Theorem \ref{exactpos1}.
\end{proof}

\begin{proof}[Proof of Theorem \ref{exactpos2-1hit}]
Let $\lambda$ and $f$ be given as in the statement of the theorem.
By \eqref{VECV1DEF}, the left hand side of \eqref{exactpos2eq-1hit} equals
\begin{align} \label{exactpos2-1hit-proof1}
\lim_{\rho\to 0}  \int_{\S_1^{d-1}} g\big(\vecv_0, \rho^{d-1} \tau_1(\vecq_{\rho,\vecbeta}(\vecv_0),\vecv_0;\rho), 
\vecw_1(\vecq_{\rho,\vecbeta}(\vecv_0),\vecv_0;\rho)\big) d\lambda(\vecv_0),
\end{align}
where $g(\vecv_0,\xi,\vecw_1)=
f\big(\vecv_0, \xi,\vecv_0-2(\vecv_0\cdot\vecw_1)\vecw_1\big)$.
Using Corollary \ref{exactpos2} 
we obtain %
\begin{align} \notag
=\int_{\HS} \int_{\R_{>0}} \int_{\S_1^{d-1}}
f\bigl(\vecv_0,\xi,\vecv_0-2(\vecv_0\cdot(\vecomega K(\vecv_0)^{-1}))
\vecomega K(\vecv_0)^{-1}\bigr)  &
\\ \label{exactpos2-1hit-proof2}
\times
\Phi_\vecalf\bigl(\xi,\vecomega_\perp,(\vecbeta(\vecv_0)K(\vecv_0))_\perp\bigr)
\, \omega_1 \, d\lambda(\vecv_0) \, & d\xi \, d\!\vol_{\S^{d-1}_1}(\vecomega).
\end{align}
Now change the order of integration by moving the integral over
$\vecomega\in\HS$ to the innermost position, and then apply the
variable substitution \eqref{V1OMEGASUBST}
in the innermost integral; note that this
gives a diffeomorphism $\vecomega\mapsto\vecv_1$ from $\HS$ onto
$\S_1^{d-1}\setminus\{\vecv_0\}$ (the inverse map is given by
$\vecomega=%
\frac{\vece_1-\vecv_1 K(\vecv_0)}{\|\vecv_0-\vecv_1\|}$).
Recalling \eqref{exactpos2-1hit-tpdef} we then see that
\eqref{exactpos2-1hit-proof2} equals the right hand side of
\eqref{exactpos2eq-1hit}, and we are done.
\end{proof}

\subsection{Proofs for Section \ref{secVisible0}} \label{diff0}
Introduce $E_1:\Edomain\to\SO(d)$ as in Section \ref{secVisproofssec} and
write $\lambda$ also for the lift of $\lambda$ to the variable 
$\vecx\in\R^{d-1}$, as before.
Set
\begin{equation} \label{FCTCSDEF}
\fC_T(c,\sigma) = \bigl\{ \vecy\in\RR^d\setminus\{\vecnull\} \col c T \leq \|\vecy\| < T,\; \|\vecy\|^{-1} \vecy \in\fD_T(\sigma) \bigr\}
\begin{pmatrix} 
T^{-1} & \vecnull \\ \trans\vecnull & T^{1/(d-1)} 1_{d-1} \end{pmatrix} .
\end{equation}
Then
\begin{equation}
	\scrN_{c,T}(\sigma,\vece_1 E_1(\vecx)^{-1}) 
	= \#\big( \fC_T(c,\sigma) \Phi^{-t} (E_1(\vecx)^{-1},\bn) \cap (\ZZ^d+\vecalf)M_0\setminus\{\bn\} \big) ,
\end{equation}
with $T=\e^{(d-1)t}$.
As before, Theorem \ref{visThm0} and Theorem \ref{prim-visThm0} now follow from the theorems in Section \ref{secThin} and the following lemma.

\begin{lem}\label{lemCon1}
Fix $0\leq c<1$.
Let $\sigma_T$ be a continuous 
non-negative function of $T>0$ 
such that the limit $\sigma_\infty=\lim_{T\to\infty}\sigma_T$ exist. 
Then the union $\cup_{T\geq 1}\fC_T(c,\sigma_T)$ is bounded, and
\begin{equation}\label{holds}
\lim(\inf\fC_T(c,\sigma_T))^\circ \supset \fC(c,\sigma_\infty)^\circ, \qquad
\lim\overline{\sup\fC_T(c,\sigma_T)} \subset  \overline{\fC(c,\sigma_\infty)} ,
\end{equation}
where $\fC(c,\sigma)$ is as in \eqref{FCCSDEF}.
The boundary of $\fC(c,\sigma)$ has Lebesgue measure zero.
\end{lem}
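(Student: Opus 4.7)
The plan is to translate the defining inequalities of $\fC_T(c,\sigma_T)$ into explicit conditions in Euclidean coordinates and pass to the limit. For $\vecy\in\R^d\setminus\{\bn\}$, write $\vecy=r\vecv$ with $r=\|\vecy\|$ and $\vecv=(v_1,\vecv_\perp)\in\S_1^{d-1}$; then applying the matrix in \eqref{FCTCSDEF} sends $\vecy$ to $\vecx=(x_1,\vecx_\perp)$ with
\begin{equation*}
x_1=\frac{rv_1}{T},\qquad \vecx_\perp=r\vecv_\perp T^{1/(d-1)}.
\end{equation*}
The two conditions defining $\fC_T(c,\sigma_T)$ read $cT\leq r<T$ and $\vecv\in\fD_T(\sigma_T)$. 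Since $\fD_T(\sigma_T)$ is a spherical cap of volume $\tfrac{\sigma_T d}{1-c^d}T^{-d}\to 0$, its Euclidean radius $\rho_T$ satisfies $\rho_T=A(c,\sigma_T)T^{-d/(d-1)}(1+o(1))$ as $T\to\infty$ by the standard flat-cap asymptotic $\vol_{\S_1^{d-1}}(\{\vecv\col\|\vecv_\perp\|<\rho\})=\vol(\scrB_1^{d-1})\rho^{d-1}(1+O(\rho^2))$, and on this cap $v_1>1-\rho_T^2/2$.

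Boundedness of $\bigcup_{T\geq 1}\fC_T(c,\sigma_T)$ is then immediate: $0\leq x_1<1$ and $\|\vecx_\perp\|\leq T\rho_T T^{1/(d-1)}=A(c,\sigma_T)(1+o(1))$, which is bounded since $\sigma_T\to\sigma_\infty$. For the first inclusion in \eqref{holds}, fix $\vecx_0\in\fC(c,\sigma_\infty)^\circ$, so that for some $\eta>0$ one has $c+\eta<x_{0,1}<1-\eta$ and $\|\vecx_{0,\perp}\|<(x_{0,1}-\eta)A(c,\sigma_\infty)$. Given any $\vecx$ near $\vecx_0$, set $\vecy=(x_1T,\vecx_\perp T^{-1/(d-1)})$; since the second block is of order $T^{-1/(d-1)}$ we have $\|\vecy\|=x_1T(1+o(1))$ uniformly, so $cT<\|\vecy\|<T$ for $T$ large, and the direction $\vecv=\vecy/\|\vecy\|$ satisfies $\|\vecv_\perp\|=(\|\vecx_\perp\|/x_1)T^{-d/(d-1)}(1+o(1))$. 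The strict inequality $\|\vecx_\perp\|/x_1<A(c,\sigma_\infty)$ combined with $\sigma_T\to\sigma_\infty$ and the asymptotic for $\rho_T$ gives $\|\vecv_\perp\|<\rho_T$ for $T\geq T_0$ and all $\vecx$ in a fixed neighborhood of $\vecx_0$, so $\vecx_0\in\lim(\inf\fC_T(c,\sigma_T))^\circ$. The second inclusion in \eqref{holds} is proved by the contrapositive: if $\vecx_0\notin\overline{\fC(c,\sigma_\infty)}$ then one of the closed defining inequalities fails with a fixed margin, and the same asymptotics force $\vecx\notin\fC_T(c,\sigma_T)$ for all $T$ large and all $\vecx$ near $\vecx_0$.

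Finally, $\partial\fC(c,\sigma)$ is contained in the two hyperplane pieces $\{x_1=c\}$ and $\{x_1=1\}$ together with the smooth conical surface $\{\|\vecx_\perp\|=x_1 A(c,\sigma)\}$, so it has $d$-dimensional Lebesgue measure zero. The only genuinely technical point is the cap-volume-to-radius asymptotic $\rho_T=A(c,\sigma_T)T^{-d/(d-1)}(1+o(1))$ together with the uniform error estimates needed to transfer a strict geometric inequality at $\vecx_0$ to a neighborhood of $\vecx_0$ for all sufficiently large $T$; but the error in the cap asymptotic is $O(T^{-2d/(d-1)})$, which is easily dominated by the margin $\eta$ extracted from the interior condition.
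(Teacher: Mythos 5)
Your proof is correct and takes essentially the same route as the paper's: derive the radius asymptotic of the cap $\fD_T(\sigma_T)$ from its prescribed volume, translate the disc condition into a slope inequality, and propagate the resulting strict margin at an interior point $\vecx_0$ to a fixed neighborhood of $\vecx_0$ and all sufficiently large $T$ (and, for the $\limsup$ inclusion, run the same argument in the contrapositive). The paper records the same cap-radius asymptotic, phrased in terms of the ambient radius $r_T$ of $\fD_T(\sigma_T)=(\vece_1+\scrB^d_{r_T})\cap\S_1^{d-1}$ with $T^{d/(d-1)}r_T\to A(c,\sigma_\infty)$, together with the equivalent exact slope bound $\|\vecy'_\perp\|\le\frac{r_T\sqrt{4-r_T^2}}{2-r_T^2}\,y'_1$, and then defers the remaining $\eta$--$T_0$ bookkeeping to the template of Lemma \ref{lemCon2fU}; you simply carry that bookkeeping out directly. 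One small point worth tightening: the estimate $\|\vecx_\perp\|\le T^{d/(d-1)}\rho_T=A(c,\sigma_T)(1+o(1))$ is an asymptotic as $T\to\infty$, so the boundedness of $\bigcup_{1\le T\le T_0}\fC_T(c,\sigma_T)$ for a fixed finite $T_0$ should be noted separately (it is immediate from $|x_1|<1$, $\|\vecx_\perp\|<T^{d/(d-1)}$ and continuity of $\sigma_T$); this is the same split that the proof of Lemma \ref{lemCon2fU} handles implicitly.
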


\begin{proof}
From \eqref{DISCSIZE} we have 
$\fD_T(\sigma_T)=(\vece_1+\scrB_{r_T}^d)\cap \S_1^{d-1}$ where
\begin{align} \label{lemCon1rasympt}
T^{\frac d{d-1}}r_T\to \Bigl(\frac{d\sigma_\infty}{(1-c^d)\vol(\scrB_1^{d-1})}\Bigr)^{\frac 1{d-1}}=A(c,\sigma_\infty)
\qquad \text{as } \: T\to\infty,
\end{align}
In particular, for $T$ sufficiently large, if
$\vecy$ is any point in $\fC_T(c,\sigma_T)$,
and $\vecy'=Ty_1\vece_1+T^{-\frac 1{d-1}} \vecy_\perp$, 
then $\|\vecy'\|^{-1}\vecy'\in\fD_T(\sigma_T)$ implies
$y_1'>0$ and $\|\vecy'_\perp\|\leq \frac{r_T\sqrt{4-r_T^2}}{2-r_T^2} y_1'$, 
thus $y_1>0$ and
$\|\vecy_\perp\|\leq (A(c,\sigma_\infty)+\eta)y_1$, where $\eta>0$ can be made
arbitrarily small.
With these observations the proof of Lemma \ref{lemCon1} is easily completed
by mimicking the proof of Lemma \ref{lemCon2fU}.
\end{proof}

\section*{Index of notations}

\begin{center}
\begin{footnotesize}
\begin{longtable}{llr}
$\perp$ & $\vecx_\perp=\vecx-(\vecx\cdot\vece_1)\vece_1$ & \pageref{perpos}
\\
$\ASLR$ & $=\SLR\ltimes \RR^d$ & \pageref{ASLMULTLAW}
\\
$A(c,\sigma)$ & the constant in \eqref{ACSDEF} & \pageref{ACSDEF} 
\\
$\scrB^d_\rho$ & open ball of radius $\rho$, centered at the origin & \pageref{openball}
\\
$\scrB^d_T(c)$ & $=\{ \vecx\in\RR^d : c T \leq \|\vecx\| < T \}$, spherical shell & \pageref{shell}
\\
$\fB|_\vecx$ & $=(\{ \vecx \} \times \RR^d) \cap \fB$ & \pageref{FBXDEF}
\\
$\fC(c,\sigma)$ & the cone in \eqref{FCCSDEF} & \pageref{FCCSDEF}
\\
$\fC(c_1,c_2,\sigma)$ & the cone in \eqref{FCCCSDEF} & \pageref{FCCCSDEF}
\\
$\fC_T(c,\sigma)$ & the set in \eqref{FCTCSDEF} & \pageref{FCTCSDEF}
\\
$\Edomain$ & open subset of $\R^{d-1}$ & \pageref{also2}, \pageref{EDOMDEF2}
\\
$\fD_T(\sigma,\vecv)$ & small disc on $\S_1^{d-1}$ of radius $\asymp T^{-d/(d-1)}$ &  \pageref{DISCSIZE}
\\
$\fD_T(\sigma)$ & $=\fD_T(\sigma,\vece_1)$ &
\\
$\fD(\epsilon,\vecv)$ & small disc on $\S_1^{d-1}$ of radius $\sim\epsilon$ & \pageref{disceps} 
\\
$\fD(\epsilon)$ & $=\fD(\epsilon,\vece_1)$ &
\\
$\vece_1$ & $=(1,0,\ldots,0)$ &
\\
$\vece_2$ & $=(0,1,0,\ldots,0)$ &
\\
$\scrE(\fB,r)$ & $=\bigl\{ (\vecx,g)\in \RR^{d-1}\times X \col
\#(\fB|_\vecx \cap \ZZ^d g)\geq r \bigr\}$ & \pageref{SCREDEF}
\\
$\widehat\scrE_t$ & 
the family of sets in \eqref{widehatE} or \eqref{widehatE2} & \pageref{widehatE}, \pageref{widehatE2}
\\
$E_1$ & a map $\Edomain\to\SO(d)$ & \pageref{also2}, \pageref{E1DEF2}, \pageref{exactpos1proofsec}
\\
$E_{c,\vecalf}(r,\sigma)$ & limiting probability for $\scrN_{c,T}(\sigma,\vecv)=r$ & \pageref{visThm0}
\\
$\widehat E_{c,\vecalf}(r,\sigma)$ & limiting probability for $\widehat \scrN_{c,T}(\sigma,\vecv)=r$ & \pageref{prim-visThm0}
\\
$F_{c,\vecalf}(r,\sigma)$ & limiting probability for $\scrN_{c,T}(\rho,\vecv)=r$ & \pageref{visThm}
\\
$F_{c}(r,\sigma)$ & $=F_{c,\vecalf}(r,\sigma)$ for any $\vecalf\notin\Q^d$,
universal limiting probability & \pageref{FCRSDEF}
\\
$F_{c,\vecalf,\vecbeta}(r,\sigma)$ & limiting probability for $\scrN_{c,T}(\rho,\vecv,\vecbeta(\vecv))=r$ & \pageref{defF}
\\
$F^{(\fU)}_{c,\vecalf,\vecbeta}(r,\sigma)$ & limiting probability for $\scrN^{(\fU)}_{c,T}(\rho,\vecx,\vecbeta(\vecx))=r$ & \pageref{defFFU}
\\
$F_{c,\vecalf}(r,\scrQ)$ & limiting probability for $\scrN_{c,T}(\scrQ,\vecv)=r$ & \pageref{visThm3limit}
\\
$f_r(c_1,c_2,\sigma,\vecz,\vecy)$ & the volume function in \eqref{FR5VARDEF} or \eqref{XYFR5VARDEF}  & \pageref{FR5VARDEF}, \pageref{XYFR5VARDEF}
\\
$F_r(\xi,\vecw,\vecz)$ & $=f_r(0,\xi,1,\vecz,\xi\vece_1+\vecw+\vecz)$ & \pageref{FR3VARDEF}, \pageref{XYFR3VARDEF}
\\
$\mathfrak{F}_N$ & $=\bigl\{\sfrac hk \col h,k \in \Z, \: 0<h<k\leq N\bigr\}$ & \pageref{FsubN}
\\
$g_r(c_1,c_2,\sigma,\vecz,\vecy)$ & the volume function in \eqref{GR5VARDEF} or \eqref{XYGR5VARDEF}  & \pageref{GR5VARDEF}, \pageref{XYGR5VARDEF}
\\
$G_r(\xi,\vecw,\vecz)$ & $=g_r(0,\xi,1,\vecz,\xi\vece_1+\vecw+\vecz)$ & \pageref{GR3VARDEF}
\\
$H$ & $=\bigl\{g\in\SL(d,\R) \col \vece_1 g =\vece_1\bigr\}$ & \pageref{HAV}
\\
$I_q$ & $=[\Gamma(q):\Gamma(1)]$ & \pageref{SLDRHAARQ}
\\
$\scrK_\rho$ & complement of the set $\scrB^d_\rho + \scrL$ in $\RR^d$ (the ``billiard domain'') & \pageref{Krho} 
\\
$\lim(\inf \scrE_t)^\circ$ & $=\cup_{t\geq t_0} \big(\cap_{s\geq t} \scrE_s\big)^\circ$ & \pageref{LIMINFSUPTOP}
\\[2pt]
$\lim\overline{\sup \scrE_t}$ & $=\cap_{t\geq t_0} \overline{\cup_{s\geq t} \scrE_s}$  & \pageref{LIMINFSUPTOP}
\\
$\scrL$ & $=\ZZ^d M_0$, euclidean lattice of covolume 1 & \pageref{secVisible0}, \pageref{latt}
\\
$\scrL_\vecalf$ & $=(\ZZ^d+\vecalf)M_0$, affine lattice of covolume 1 & \pageref{afflatt}
\\
$\widehat\scrL_\vecalf$ & set of visible lattice points & \pageref{VISIBLECHAR}
\\
$n_-(\vecx)$ & element in $\ASLR$ & \pageref{NMINUSDEF}
\\
$\scrN_{c,T}(\sigma,\vecv)$ & $=\#\{\vecy\in\scrP_T \col
\|\vecy\|^{-1}\vecy\in \fD_T(\sigma,\vecv)\}$ & \pageref{disc00}
\\
$\scrN_{c,T}(\sigma,K)$ & $=\#(\scrP_T \cap \fD_T(\sigma) K)$ & \pageref{disc1}
\\
$\scrN_{c,T}(\rho,\vecv)$ & number of spheres in direction $\vecv$ & \pageref{asdef}
\\
$\scrN_{c,T}(\rho,\vecv,\vecw)$ & as above, but includes shift by $\rho\vecw$ & \pageref{asin}
\\
$\scrN_{c,T}^{(\fU)}(\rho,\vecx,\vecw)$ &
generalized version of $\scrN_{c,T}(\rho,\vecv,\vecw)$ & \pageref{asinfu}
\\
$\widehat{\scrN}_{c,T}(\sigma,\vecv)$ 
& analogue of $\scrN_{c,T}(\sigma,\vecv)$ for visible lattice points & \pageref{WHNDEF}
\\
$\scrN_{c,T}(\scrQ,\vecv)$ & number of $\scrQ$'s in direction $\vecv$ & \pageref{asin2}
\\
$p_{\vecalf,\vecbeta}(\vecv_0,\xi,\vecv_1)$ & joint limiting distribution for free path lengths and velocities & \pageref{exactpos2-1hit-tpdef}
\\
$P_\vecalf(s)$ & limiting gap distribution for directions of lattice points & \pageref{thmGap}
\\
$\widehat P_\vecalf(s)$ & limiting gap distribution for directions of visible lattice points & \pageref{prim-thmGap}
\\
$\scrP_T$ & $=\Lalf\cap \scrB^d_T(c)\setminus\{\vecnull\}$ & \pageref{shell}
\\
$\widehat\scrP_T$ & $=\widehat\scrL_\vecalf \cap \scrB_T^d(c)$ & \pageref{WHPDEF}
\\
$\Proj_{\{\vecv\}^\perp}$ & orthogonal projection from
$\R^d$ onto the orthogonal complement of $\vecv$ & \pageref{FZVCSDEF}
\\
$\vecq_1(\vecq,\vecv;\rho)$ & location of first collision in $\RR^d$ & \pageref{firstcoll}
\\
$\vecq_{\rho,\vecbeta}(\vecv)$ & initial position $\vecq+\rho\vecbeta(\vecv)$ & \pageref{inpo}
\\
$\S_1^{d-1}$ & unit sphere in $\RR^d$ & 
\\
$\HS$ & the hemisphere $\{\vecv=(v_1,\ldots,v_d)\in\S_1^{d-1} \col v_1>0\}$ & \pageref{HS}
\\
$s(\delta)$ & see \eqref{SDELTADEF} & \pageref{SDELTADEF}
\\
$t_\veck$ & $=\gcd(qk_1,qk_2,\ldots,qk_d)$ & \pageref{TVECKDEF}
\\
$\vecw_1(\vecq,\vecv;\rho)$ & location of first collision on $\S_1^{d-1}$ & \pageref{firstcoll}
\\
$X$ & $=\ASLASL$, space of affine lattices & \pageref{ASLMULTLAW}
\\
$X_1$ & $=\SLSL$, space of lattices & \pageref{secVisible0}
\\
$X_q$ & $=\Gamma(q)\backslash\SLR$ & \pageref{ASLMULTLAW}
\\
$X(\vecy)$ & submanifold of $X$ & \pageref{inpo}
\\
$X_q(\vecy)$ & submanifold of $X_q$ & \pageref{inpo}
\\
$X_q(\veck,\vecy)$ & connected component of $X_q(\vecy)$ & \pageref{XQKYDEF}
\\
$X_q^{(t_0)}(\vecy)$ & subset of $X_q(\vecy)$ & \pageref{XQT0DEF}
\\
$\ZZ_*^d$ & $=\ZZ^d\setminus\{\vecnull\}$ & \pageref{ZZSTARDDEF}
\\
$\widehat\Z^d$ & $=\widehat\Z_\vecnull^d$, set of primitive lattice points & \pageref{VISIBLECHAR}
\\
$\widehat\Z_\vecalf^d$ & $= \{\vecx\in(\Z^d+\vecalf)\setminus\{\bn\} \col
\gcd(q\vecx)\leq q\}$ & \pageref{VISIBLECHAR}
\\
$\fZ(c,\sigma)$ & the cylinder in \eqref{zyl} & \pageref{zyl}
\\
$\fZ_\vecv(c,\sigma)$ & the cylinder in \eqref{FZVCSDEF} & \pageref{FZVCSDEF}
\\
$\fZ(c_1,c_2,\sigma)$ & the cylinder in \eqref{FZC1C2DEF} & \pageref{FZC1C2DEF}
\\
$\widetilde{\fZ}(c_1,c_2,\sigma)$ & modification of $\fZ(c_1,c_2,\sigma)$ & \pageref{wideZ}
\\
$\fZ^{(\fU)}(c_1,c_2,\sigma)$ & generalized cylinder in \eqref{FZFULIMITDEF} & \pageref{FZFULIMITDEF}
\\
$\fZ^{(\fU)}(c_1,c_2,\sigma,\vecbeta)$ & generalized cylinder in \eqref{FZFULIMITDEF} & \pageref{FZFULIMITDEF}
\\
$\fZ^{(\fU)}_T(c_1,c_2,\sigma,\vecbeta)$ & 
the set in \eqref{FZFUTC1C2DEF} & \pageref{FZFUTC1C2DEF}
\\
$\fZ(c,\scrQ)$ & the set in \eqref{NQ} & \pageref{NQ} 
\\
$\fZ_T(c,\scrQ)$ & the set in \eqref{ZcQ} & \pageref{ZcQ} 
\\
$\Gamma(q)$ & principal congruence subgroup & \pageref{Gamq}
\\
$\kappa_q$ & the constant in \eqref{KAPPAQDEF} & \pageref{KAPPAQDEF}
\\
$\mu$ & Haar measure on $\ASL(d,\RR)$, probability measure on $X$ & \pageref{ASLMULTLAW}
\\
$\mu_H$ & Haar measure on $H$ & \pageref{MUHDEF}
\\
$\mu_q$ & Haar measure on $\SL(d,\RR)$, probability measure on $X_q$ & \pageref{ASLMULTLAW}
\\
$\nu$ & Liouville measure & \pageref{Liouville}
\\
$\nu_\vecy$ & volume measure on $X_q(\vecy)$ or $X(\vecy)$ & \pageref{NUYDEF}, \pageref{NUYDEF2}
\\
$\tau_1(\vecq,\vecv;\rho)$ & $= \inf\{ t>0 : \vecq+t\vecv \notin\scrK_\rho \}$, free path length  & \pageref{TAU1DEF0} 
\\
$\varphi_t$ & the Lorentz flow & \pageref{Lorentzflow}
\\
$\Phi(\xi)$, $\Phi_{\scrL,\vecq}(\xi)$ & limiting distribution for the free path length & \pageref{freeThm1}
\\
$\Phi_\vecalf(\xi)$ & alternative notation for $\Phi_{\scrL,\vecq}(\xi)$ & \pageref{P1}
\\
$\Phi_{\vecalf,\vecbeta}(\xi)$ & limiting disitribution for the free path length & \pageref{PALFBETDEF}
\\
$\Phi_\vecalf(\xi,\vecw,\vecz)$ & joint limiting disitribution for free path length and impact location & \pageref{exactpos1limitrat}
\\
$\Phi^t$ & element in $\ASLR$ & \pageref{PHITDEF}
\\
$\chi_{\scrA}$ & characteristic function of a set $\scrA$ & 
\\
$\Omega$ & domain of $f_r(c_1,c_2,\sigma,\vecz,\vecy)$ & \pageref{FR5VARDEF}
\\
$\Omega_C$ & $\Omega$ truncated & \pageref{OMEGACDEF}, \pageref{altOmegaC}
\end{longtable}
\end{footnotesize}
\end{center}

\end{document}